\newtheorem{assumption}{Assumption}
\newtheorem{proposition}{Proposition}
\newtheorem{theorem}{Theorem}
\newtheorem{lemma}{Lemma}
\newtheorem{definition}{Definition}
\newtheorem{remark}{Remark}
\newtheorem{proof}{Proof}
\newtheorem{example}{Example}
\begin{document}

\begin{frontmatter}

\title{A Perturbed Value-Function-Based Interior-Point Method for Perturbed Pessimistic Bilevel Problems}

\author[1]{Haimei Huo}
\ead{ab1234@mail.dlut.edu.cn}

\author[2]{Risheng Liu \corref{cor1}}
\ead{rsliu@dlut.edu.cn}

\author[3]{Zhixun Su \corref{cor1}}
\ead{zxsu@dlut.edu.cn}

\cortext[cor1]{Corresponding author}

\affiliation[1]{organization={School of Mathematical Sciences},
            addressline={Dalian University of Technology}, 
            city={Dalian},                    
            country={China}}
\affiliation[2]{organization={DUT-RU International School of Information Science and Engineering},
            addressline={Dalian University of Technology}, 
            city={Dalian},                    
            country={China}}

\affiliation[3]{organization={School of Mathematical Sciences},
            addressline={Dalian University of Technology}, 
            city={Dalian},                    
            country={China}}

\begin{abstract}
Bilevel optimizaiton serves as a powerful tool for many machine learning applications. Perturbed pessimistic bilevel problem PBP$\epsilon$, with $\epsilon$ being an arbitrary positive number, is a variant of the bilevel problem to deal with the case where there are multiple solutions in the lower level problem. However, the provably convergent algorithms for PBP$\epsilon$ with a nonlinear lower level problem are lacking. To fill the gap, we consider in the paper the problem PBP$\epsilon$ with a nonlinear lower level problem. By introducing a log-barrier function to replace the inequality constraint associated with the value function of the lower level problem, and approximating this value function, an algorithm named Perturbed Value-Function-based Interior-point Method(PVFIM) is proposed. We present a stationary condition for PBP$\epsilon$, which has not been given before, and we show that PVFIM can converge to a stationary point of PBP$\epsilon$. Finally, experiments are presented to verify the theoretical results and to show the application of the algorithm to GAN.
\end{abstract}

\begin{keyword}
Bilevel optimization \sep perturbed pessimistic bilevel problem \sep Perturbed Value-Function-based Interior-point Method(PVFIM) \sep stationary point.
\end{keyword}

\end{frontmatter}

\section{Introduction}\label{section:1}

Bilevel optimization has received significant attention in many machine learning applications including hyperparameter optimization\cite{1, 2, 3, 4, 5}, meta learning\cite{6}, neural architecture search\cite{7}, and Generative Adversarial Networks(GAN)\cite{8,9}. Mathematically, bilevel optimization can be described as follows:
\begin{align}   \label{eq1}
&\min\limits_{\boldsymbol{x}\in \mathcal{X}} F(\boldsymbol{x}, \boldsymbol{y})  \\
&\text{s.t.} ~\boldsymbol{y} \in \mathcal{S}(\boldsymbol{x}) := { \underset {\boldsymbol{y}\in \mathcal{Y}(\boldsymbol{x})} { \operatorname {arg\,min} } \, f(\boldsymbol{x}, \boldsymbol{y})} \nonumber
\end{align}
where $\mathcal{X}\subset \mathbb{R}^n$, $\mathcal{Y}(\boldsymbol{x}) \subset \mathbb{R}^m$ is a set dependent on $\boldsymbol{x}$, $f(\boldsymbol{x}, \boldsymbol{y})$, $F(\boldsymbol{x}, \boldsymbol{y}):\mathcal{X} \times \mathbb{R}^m \rightarrow \mathbb{R}$ are the objective functions of the lower level(LL) problem and upper level(UL) problem, respectively, and for each $\boldsymbol{x}$, the feasible region of $F(\boldsymbol{x}, \boldsymbol{y})$ is $\mathcal{S}(\boldsymbol{x})$, which consists of optimal solutions to the LL problem. Moreover, as the bilevel problem can be can be thought of as a two-player game, the LL problem and the UL problem also can be called the follower and the leader, respectively; see \cite{10,11}.

However, the goal of the bilevel problem in (\ref{eq1}) is ambiguous if set $\mathcal{S}(\boldsymbol{x})$ is not singleon for some $\boldsymbol{x}\in \mathcal{X}$, since the minimization in the UL problem is only with respect to(w.r.t.) $\boldsymbol{x}$. To resolve it, (\ref{eq1}) is usually reformulated into the optimistic bilevel problem or the pessimistic bilevel problem depending on practical applications\cite{11,12,13,14}. To be concrete, if the LL and UL problems are cooperative\cite{13}, problem (\ref{eq1}) is generally reformulated into the optimistic bilevel problem:
\begin{align*}  
\min\limits_{\boldsymbol{x}\in \mathcal{X}} \varphi(\boldsymbol{x}), \quad \varphi(\boldsymbol{x}):= \min\limits_{\boldsymbol{y}} \{F(\boldsymbol{x}, \boldsymbol{y}): \boldsymbol{y} \in \mathcal{S}(\boldsymbol{x})\}. \nonumber
\end{align*}
If, on the other hand, the LL and UL problems are uncooperative\cite{13}, problem (\ref{eq1}) is generally reformulated into the pessimistic bilevel problem(PBP):
\begin{align}  \label{eq2}
\min\limits_{\boldsymbol{x}\in \mathcal{X}} \varphi(\boldsymbol{x}), \quad \varphi(\boldsymbol{x}):= \max\limits_{\boldsymbol{y}} \{F(\boldsymbol{x}, \boldsymbol{y}): \boldsymbol{y} \in \mathcal{S}(\boldsymbol{x})\}.
\end{align}

Since the optimistic bilevel problem is easier to address, optimality conditions and numerical algorithms have been extensively studied; e.g., \cite{10,12,13,15,16,17}. In contrast, there are few algorithms available for the pessimistic bilevel problem.

Recently, for pessimistic bilevel problems with optimal solutions, some algorithms have been developed. Specifically, a penalty method, a reducibility method, a Kth-Best algorithm, and a descent algorithm are proposed in \cite{18}, \cite{19}, \cite{20}, and \cite{21}, respectively, to compute global or local optimal solutions for linear pessimistic bilevel problems with optimal solutions. In addition, for the general pessimistic bilevel problems with optimal solutions, a maximum entropy approach, a Relaxation-and-Correction scheme, and an algorithm named BVFSM are proposed in \cite{22}, \cite{23}, and \cite{24}, respectively, to compute global optimal solutions.

However, as is known, the optimal solutions to pessimistic bilevel problems can not be guaranteed to exist generally due to the very strong assumptions for ensuring the existence  of optimal solutions\cite{14}. In this sense, it seems that problem (\ref{eq2}) is not well defined. To address the problem, researchers have proposed to replace the PBP in (\ref{eq2}) with another solvable approximation problem\cite{14,25}. As done in \cite{14,25}, replacing the optimal point set $\mathcal{S}(\boldsymbol{x})$ in (\ref{eq2}) with the set of $\epsilon$-optimal points $\mathcal{S}_{\epsilon}(\boldsymbol{x}):=\{\boldsymbol{y} \in \mathcal{Y}(\boldsymbol{x}): f(\boldsymbol{x}, \boldsymbol{y})\le f^*(\boldsymbol{x})+\epsilon \}$, where $\epsilon$ is an arbitrary positive number and $f^*(\boldsymbol{x}) := \min_{\boldsymbol{y}\in \mathcal{Y}(\boldsymbol{x})} f(\boldsymbol{x}, \boldsymbol{y})$, a perturbed pessimistic bilevel problem PBP$\epsilon$:
\begin{align}  \label{eq3}
\min\limits_{\boldsymbol{x}\in \mathcal{X}} \varphi_{\epsilon}(\boldsymbol{x}), \quad \varphi_{\epsilon}(\boldsymbol{x}):= \max\limits_{\boldsymbol{y}} \{F(\boldsymbol{x}, \boldsymbol{y}): \boldsymbol{y} \in \mathcal{S}_{\epsilon}(\boldsymbol{x})\}. 
\end{align}
can be obtained. It is proved in \cite{14} that under certain mild assumptions, PBP$\epsilon$ is solvable for any $\epsilon>0$, and that the minimum value of PBP$\epsilon$ can approximate the infimum of PBP in (\ref{eq2}) arbitrarily well as long as we choose sufficiently small $\epsilon$. Actually, problem PBP$\epsilon$ is itself a meaningful problem. For example, the leader may only want to execute a decision that performs best among all $\epsilon$-optimal solutions. After all, it is generally difficult to obtain a globally optimal solution to the LL problem.

Therefore, in this paper, we reformulate problem (\ref{eq1}) where the leader and the follower are uncooperative as the problem PBP$\epsilon$ in (\ref{eq3}) following \cite{14,25}, and consider problem PBP$\epsilon$. As far as we know, the convergence of the existing algorithms for PBP$\epsilon$ can be guaranteed only when the lower level problem is linear. Recently, for problem PBP$\epsilon$ with the linear lower level problem, \cite{14} showed the equivalence between PBP$\epsilon$ and a single-level mathematical program MPCC$\epsilon$ regarding global optimal solutions, and a mixed integer approach is proposed to solve MPCC$\epsilon$ to obtain the global optimal solution of PBP$\epsilon$. \cite{23} mentioned that the Relaxation-and-Correction scheme for solving PBP can be used to solve problem PBP$\epsilon$ to obtain global optimal solutions. However, the Relaxation-and-Correction scheme is only used to solve the linear pessimistic bilevel problem, and the algorithms for solving general pessimistic bilevel problems are not provided. 

In the paper, our aim is to develop a provably convergent algorithm for PBP$\epsilon$ in (\ref{eq3}) with nonlinear lower level problem. Firstly, we obtain the stationary condition of PBP$\epsilon$ by using the standard version of PBP$\epsilon$ proposed in \cite{14}, the value function approach, and the Fritz-John type necessary optimality condition, as usally done for optimistic bilevel problems. Then, motivated by the recent work in \cite{24} for solving pessimistic bilevel problems, we propose an algorithm named Perturbed Value-Function-based Interior-point Method(PVFIM). To be specific, we first reformulate PBP$\epsilon$ as a problem containing an inequality constraint. Later, we introduce a log-barrier term into the UL function $F(\boldsymbol{x}, \boldsymbol{y})$ to remove the inequality constraint, and use an approximate function to replace the resultant objective function. As a result, by solving a sequence of approximate minimax problems, we can obtain the solution of PBP$\epsilon$. Theoretically, we show that PVFIM can produce a sequence from which we can obtain a stationary point of PBP$\epsilon$. Our main contributions are listed below:

\begin{enumerate}
\item[(1)] We present a stationary condition of problem PBP$\epsilon$, which has not been given before.

\item[(2)] We develop an algorithm named PVFIM for problem PBP$\epsilon$ with nonlinear lower level problem, and we prove that PVFIM can converge to a stationary point of PBP$\epsilon$; we are the first to provide the provably convergent algorithms for PBP$\epsilon$ with nonlinear lower level problem.

\item[(3)] We perform experiments to validate our theoretical results and show the potential of our proposed algorithm for GAN applications.
\end{enumerate}

For the rest of this paper, we first make some settings for problem PBP$\epsilon$ in (\ref{eq3}), and review some materials on nonsmooth analysis in Section \ref{section:2}. Next, we propose a new algorithm named PVFIM for problem PBP$\epsilon$ in Section \ref{section:3}. Then, in Section \ref{section:4}, we show the convergence of our proposed algorithm. Finally, we present the experimental results in Section \ref{section:5}, and conclude and summarize the paper in Section \ref{section:6}.

\textbf{Notations.} We denote by $\|\cdot \|$ the $l_2$ norm for vectors and spectral norm for matrices. We use $\mathbb{B}_{\delta}(\boldsymbol{x})$ to denote the open ball centered at $\boldsymbol{x}$ with radius $\delta$, and for a set $\mathcal{A}$, co$\mathcal{A}$ denotes the convex hull of $\mathcal{A}$. For $f(\boldsymbol{x}, \boldsymbol{y}) : \mathbb{R}^p \times \mathbb{R}^q\rightarrow \mathbb{R}$, $\nabla f(\boldsymbol{x}, \boldsymbol{y})$ denotes the gradient of $f(\boldsymbol{x}, \boldsymbol{y})$ taken w.r.t. $(\boldsymbol{x},\boldsymbol{y})$, $\nabla_{\boldsymbol{x}} f(\boldsymbol{x}, \boldsymbol{y})$(resp. $\nabla_{\boldsymbol{y}} f(\boldsymbol{x}, \boldsymbol{y})$) denotes the gradient of $f(\cdot, \boldsymbol{y})$(resp. $f(\boldsymbol{x}, \cdot)$) w.r.t. $\boldsymbol{x}$(resp. $\boldsymbol{y}$), and $\nabla_{\boldsymbol{y}\boldsymbol{x}} f(\boldsymbol{x}, \boldsymbol{y}):= \nabla_{\boldsymbol{x}}(\nabla_{\boldsymbol{y}}f(\boldsymbol{x},\boldsymbol{y}))$ and $\nabla_{\boldsymbol{y}\boldsymbol{y}} f(\boldsymbol{x}, \boldsymbol{y}):= \nabla_{\boldsymbol{y}}(\nabla_{\boldsymbol{y}}f(\boldsymbol{x},\boldsymbol{y}))$.

\section{Preliminaries} \label{section:2}
In this section, we first make some settings for problem PBP$\epsilon$ in (\ref{eq3}). Then we review some background materials on nonsmooth analysis.

\subsection{Problem Setting} 
In this paper, for PBP$\epsilon$ in (\ref{eq3}), we set $\mathcal{Y}(\boldsymbol{x})$ to be a fixed set $\mathcal{Y}\subset \mathbb{R}^m$ for $\boldsymbol{x}\in\mathcal{X}$, that is, for any given $\epsilon > 0$, we consider problem PBP$\epsilon$ below:
\begin{equation}  \label{eq4}
\min\limits_{\boldsymbol{x}\in \mathcal{X}} \varphi_{\epsilon}(\boldsymbol{x}), \quad \varphi_{\epsilon}(\boldsymbol{x}):= \max\limits_{\boldsymbol{y}} \{F(\boldsymbol{x}, \boldsymbol{y}): \boldsymbol{y} \in \mathcal{S}_{\epsilon}(\boldsymbol{x})\}
\end{equation}
where $\mathcal{S}_{\epsilon}(\boldsymbol{x}) := \{\boldsymbol{y}\in \mathcal{Y}: f(\boldsymbol{x}, \boldsymbol{y}) \le f^*(\boldsymbol{x}) + \epsilon \}$, and 
\begin{equation}  \label{eq5}
f^*(\boldsymbol{x}) := \min_{\boldsymbol{y}} \{f(\boldsymbol{x}, \boldsymbol{y}): \boldsymbol{y} \in \mathcal{Y}\}.
\end{equation}

In the following, we first make some assumptions on the constraint sets.

\begin{assumption}\label{assum:1}
We suppose that sets $\mathcal{X} \subset \mathbb{R}^n$ and $\mathcal{Y} \subset \mathbb{R}^m$ are nonempty, convex, and closed. Furthermore, there exist positive numbers $H$ and $M>1$ such that 
\begin{equation*}
\|\boldsymbol{x}\| \le H, \qquad \|\boldsymbol{y}\| \le M 
\end{equation*} 
for all $\boldsymbol{x}\in \mathcal{X}$ and $\boldsymbol{y} \in \mathcal{Y}$.
\end{assumption}

Next, we make some assumptions on the objective functions.

\begin{assumption}\label{assum:2}
We suppose that $\mathcal{C}\subset \mathbb{R}^n$ is a set containing $\mathcal{X}$, open, and convex; functions $F(\boldsymbol{x}, \boldsymbol{y})$, $f(\boldsymbol{x}, \boldsymbol{y}): \mathcal{C}\times \mathcal{Y}\rightarrow \mathbb{R}$ are twice continuously differentiable. Furthermore, for each $\boldsymbol{x}\in \mathcal{X}$, $F(\boldsymbol{x}, \cdot)$ is $\mu$-strongly concave on $\mathcal{Y}$; $f(\cdot, \cdot)$ is convex on $\mathcal{C}\times \mathcal{Y}$.
\end{assumption}

Based on Assumption \ref{assum:1} and Assumption \ref{assum:2}, the differentiability of $f^*(\boldsymbol{x})$ in (\ref{eq5}) can be ensured, as shown below.

\begin{proposition} \label{pro:1}
Suppose Assumptions \ref{assum:1} and \ref{assum:2} hold. Then $f^*(\boldsymbol{x})$ in (\ref{eq5}) is differentiable on $\mathcal{X}$. Furthermore, for each $\boldsymbol{x}\in \mathcal{X}$, $\{\nabla_{\boldsymbol{x}}f(\boldsymbol{x}, \boldsymbol{y}): \boldsymbol{y}\in \mathcal{S}(\boldsymbol{x})\}$ is a single point set, and 
\begin{equation*}
\nabla f^*(\boldsymbol{x}) = \nabla_{\boldsymbol{x}}f(\boldsymbol{x}, \boldsymbol{y}_1^*)
\end{equation*}
where $\boldsymbol{y}_1^* \in \mathcal{S}(\boldsymbol{x})$, and $\mathcal{S}(\boldsymbol{x}) := { { \operatorname {arg\,min} }_{\boldsymbol{y}} \, \{f(\boldsymbol{x}, \boldsymbol{y}): \boldsymbol{y} \in \mathcal{Y}\}}$.
\end{proposition}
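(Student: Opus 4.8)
The plan is to recognize $f^*$ as the value function of a parametric minimization problem and to apply a Danskin-type sensitivity theorem, after which the whole statement reduces to verifying that the gradient set $\{\nabla_{\boldsymbol{x}}f(\boldsymbol{x},\boldsymbol{y}):\boldsymbol{y}\in\mathcal{S}(\boldsymbol{x})\}$ is a single point. First I would note that under Assumption \ref{assum:1} the set $\mathcal{Y}$ is compact and under Assumption \ref{assum:2} both $f$ and $\nabla_{\boldsymbol{x}}f$ are continuous, so for each $\boldsymbol{x}$ the minimum in (\ref{eq5}) is attained and $\mathcal{S}(\boldsymbol{x})$ is nonempty and compact. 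A Danskin-type theorem then yields, for every direction $\boldsymbol{d}$, the one-sided directional derivative $(f^*)'(\boldsymbol{x};\boldsymbol{d})=\min_{\boldsymbol{y}\in\mathcal{S}(\boldsymbol{x})}\langle \nabla_{\boldsymbol{x}}f(\boldsymbol{x},\boldsymbol{y}),\boldsymbol{d}\rangle$. If the gradient set is the single point $\{\nabla_{\boldsymbol{x}}f(\boldsymbol{x},\boldsymbol{y}_1^*)\}$, this directional derivative is linear in $\boldsymbol{d}$, which is precisely the assertion that $f^*$ is differentiable with $\nabla f^*(\boldsymbol{x})=\nabla_{\boldsymbol{x}}f(\boldsymbol{x},\boldsymbol{y}_1^*)$; working on the open set $\mathcal{C}\supset\mathcal{X}$ removes any boundary subtlety in defining the derivative on $\mathcal{X}$.

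The heart of the argument, and the step I expect to be the main obstacle, is the singleton property, which is where joint convexity of $f$ must be used in an essential way (first-order convexity alone is insufficient, since monotonicity of the gradient along the flat direction imposes no constraint on the transverse components). Since $f(\boldsymbol{x},\cdot)$ is convex, $\mathcal{S}(\boldsymbol{x})$ is convex, so for any $\boldsymbol{y}_1,\boldsymbol{y}_2\in\mathcal{S}(\boldsymbol{x})$ the whole segment $\boldsymbol{y}(t):=\boldsymbol{y}_1+t(\boldsymbol{y}_2-\boldsymbol{y}_1)$, $t\in[0,1]$, lies in $\mathcal{S}(\boldsymbol{x})$ and therefore $h(t):=f(\boldsymbol{x},\boldsymbol{y}(t))\equiv f^*(\boldsymbol{x})$ is constant. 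Differentiating twice gives $h''(t)=(\boldsymbol{y}_2-\boldsymbol{y}_1)^{\top}\nabla_{\boldsymbol{y}\boldsymbol{y}}f(\boldsymbol{x},\boldsymbol{y}(t))(\boldsymbol{y}_2-\boldsymbol{y}_1)=0$. Because $f$ is jointly convex and twice continuously differentiable, its full Hessian with respect to $(\boldsymbol{x},\boldsymbol{y})$ is positive semidefinite; evaluating the associated quadratic form on the embedded vector $(\boldsymbol{0},\boldsymbol{y}_2-\boldsymbol{y}_1)$ yields zero, and for a positive semidefinite matrix a vanishing quadratic form forces the vector into the kernel. Hence the mixed second-derivative block also annihilates $\boldsymbol{y}_2-\boldsymbol{y}_1$; equivalently $\frac{d}{dt}\nabla_{\boldsymbol{x}}f(\boldsymbol{x},\boldsymbol{y}(t))=\boldsymbol{0}$ for all $t\in[0,1]$.

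Integrating this last identity over $[0,1]$ gives $\nabla_{\boldsymbol{x}}f(\boldsymbol{x},\boldsymbol{y}_2)-\nabla_{\boldsymbol{x}}f(\boldsymbol{x},\boldsymbol{y}_1)=\boldsymbol{0}$, so $\nabla_{\boldsymbol{x}}f(\boldsymbol{x},\cdot)$ is constant on $\mathcal{S}(\boldsymbol{x})$ and the gradient set is indeed a single point. Combining this with the Danskin formula from the first paragraph establishes differentiability of $f^*$ on $\mathcal{X}$ together with the stated formula $\nabla f^*(\boldsymbol{x})=\nabla_{\boldsymbol{x}}f(\boldsymbol{x},\boldsymbol{y}_1^*)$ for any $\boldsymbol{y}_1^*\in\mathcal{S}(\boldsymbol{x})$; continuity of $\nabla_{\boldsymbol{x}}f$ on the compact set upgrades the Gateaux differentiability to genuine differentiability. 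The only delicate point to handle carefully is the justification of the Danskin-type directional-derivative formula itself, which relies on the compactness of $\mathcal{Y}$ and the upper semicontinuity of the solution map $\boldsymbol{x}\mapsto\mathcal{S}(\boldsymbol{x})$; everything else is the convexity/PSD-kernel computation above.
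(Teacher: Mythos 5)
Your proposal is correct, but note that the paper contains no proof of Proposition \ref{pro:1} to compare against: the authors simply defer to Proposition 1 of \cite{16}. Your write-up therefore serves as a self-contained reconstruction of the cited result, and it follows what is essentially the classical route behind it: Danskin's theorem (compactness of $\mathcal{Y}$, continuity of $f$ and $\nabla_{\boldsymbol{x}}f$ on $\mathcal{C}\times\mathcal{Y}$ with $\mathcal{C}$ open) gives $(f^*)'(\boldsymbol{x};\boldsymbol{d})=\min_{\boldsymbol{y}\in\mathcal{S}(\boldsymbol{x})}\langle\nabla_{\boldsymbol{x}}f(\boldsymbol{x},\boldsymbol{y}),\boldsymbol{d}\rangle$, and the singleton property of the gradient set is where joint convexity must enter. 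Your identification of that hypothesis as load-bearing is exactly right: with convexity in $\boldsymbol{y}$ alone the claim fails (take $f(x,y)=q(y)+x\,r(y)$ with $r$ affine and $q$ vanishing on a segment of minimizers), and your segment/Hessian-kernel computation uses it correctly.

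Two steps deserve a more careful formulation, though neither is a genuine gap. (i) Positive semidefiniteness of the full Hessian at $(\boldsymbol{x},\boldsymbol{y}(t))$ can only be asserted along directions feasible for $\mathcal{C}\times\mathcal{Y}$, since $\mathcal{Y}$ may have empty interior in $\mathbb{R}^m$; but the directions you actually need, $(\boldsymbol{d},s(\boldsymbol{y}_2-\boldsymbol{y}_1))$ with $t\in(0,1)$, are feasible, and nonnegativity of the restricted quadratic $\boldsymbol{d}^\top\nabla_{\boldsymbol{x}\boldsymbol{x}}f(\boldsymbol{x},\boldsymbol{y}(t))\,\boldsymbol{d}+2s\,\boldsymbol{d}^\top\big(\tfrac{d}{dt}\nabla_{\boldsymbol{x}}f(\boldsymbol{x},\boldsymbol{y}(t))\big)$ for all $s\in\mathbb{R}$ (the $s^2$ term vanishes by your computation $h''\equiv 0$) forces the cross term to vanish, which is all your integration step requires; continuity then extends the conclusion from $(0,1)$ to $[0,1]$. (ii) The upgrade from linear one-sided directional derivatives to genuine differentiability should be pinned down explicitly: either observe that $f^*$ is convex on $\mathcal{C}$ (partial minimization of the jointly convex $f$ over the convex set $\mathcal{Y}$), and a convex function that is Gateaux differentiable at an interior point is Fr\'echet differentiable there; or verify that the candidate gradient $\boldsymbol{x}\mapsto\nabla_{\boldsymbol{x}}f(\boldsymbol{x},\boldsymbol{y}_1^*(\boldsymbol{x}))$ is continuous, using upper semicontinuity of $\mathcal{S}(\cdot)$ together with the constancy you just proved, whence $f^*$ is $C^1$; the phrase ``continuity of $\nabla_{\boldsymbol{x}}f$ on the compact set'' by itself is not a complete justification. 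With these two refinements your argument is a complete and valid proof of the statement the paper outsources to \cite{16}.
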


For the proof of Proposition \ref{pro:1}, you can refer to Proposition 1 in \cite{16}. In the following, some Lipschitz conditions are imposed on the objective functions.

\begin{assumption}\label{assum:3}
Let $\boldsymbol{z}_1 := (\boldsymbol{x}_1, \boldsymbol{y}_1)$, $\boldsymbol{z}_2 := (\boldsymbol{x}_2, \boldsymbol{y}_2)$. 
\begin{enumerate}
\item[(1)] $F(\boldsymbol{x}, \boldsymbol{y})$ and $f(\boldsymbol{x}, \boldsymbol{y})$ are $h_0$- and $L_0$-Lipschitz, respectively, that is, for any $\boldsymbol{x}_1$, $\boldsymbol{x}_2\in \mathcal{X}$, $\boldsymbol{y}_1$, $\boldsymbol{y}_2 \in \mathcal{Y}$, 
\begin{equation*}
|F(\boldsymbol{z}_1)-F(\boldsymbol{z}_2)|\le h_0 \|\boldsymbol{z}_1 - \boldsymbol{z}_2 \|, \qquad |f(\boldsymbol{z}_1) - f(\boldsymbol{z}_2)| \le L_0 \| \boldsymbol{z}_1 - \boldsymbol{z}_2 \|.
\end{equation*}
\item[(2)] $\nabla F(\boldsymbol{x}, \boldsymbol{y})$ and $\nabla f(\boldsymbol{x}, \boldsymbol{y})$ are $h_1$- and $L_1$-Lipschitz, respectively, that is, for any $\boldsymbol{x}_1$, $\boldsymbol{x}_2\in \mathcal{X}$, $\boldsymbol{y}_1$, $\boldsymbol{y}_2 \in \mathcal{Y}$, 
\begin{equation*}
\|\nabla F(\boldsymbol{z}_1)-\nabla F(\boldsymbol{z}_2)\| \le h_1 \|\boldsymbol{z}_1 - \boldsymbol{z}_2\|, \qquad \|\nabla f(\boldsymbol{z}_1) - \nabla f(\boldsymbol{z}_2)\| \le L_1 \|\boldsymbol{z}_1 - \boldsymbol{z}_2 \|.
\end{equation*}
\item[(3)] $\nabla_{\boldsymbol{y}\boldsymbol{x}}f(\boldsymbol{x}, \boldsymbol{y})$ and $\nabla_{\boldsymbol{y}\boldsymbol{y}} f(\boldsymbol{x}, \boldsymbol{y})$ are $L_2$- and $L_3$-Lipschitz, respectively, that is, for any $\boldsymbol{x}_1$, $\boldsymbol{x}_2 \in \mathcal{X}$, $\boldsymbol{y}_1$, $\boldsymbol{y}_2 \in \mathcal{Y}$, 
\begin{equation*}
\|\nabla_{\boldsymbol{y}\boldsymbol{x}} f(\boldsymbol{z}_1) - \nabla_{\boldsymbol{y}\boldsymbol{x}} f(\boldsymbol{z}_2)\| \le L_2 \|\boldsymbol{z}_1 - \boldsymbol{z}_2\|,~ \|\nabla_{\boldsymbol{y}\boldsymbol{y}} f(\boldsymbol{z}_1) - \nabla_{\boldsymbol{y}\boldsymbol{y}}f(\boldsymbol{z}_2)\| \le L_3 \|\boldsymbol{z}_1 - \boldsymbol{z}_2 \|.
\end{equation*}
\end{enumerate}
\end{assumption}

To facilitate our further discussion, we provide the definitions of local optimal solutions and interior points below.

\begin{definition}\label{def:1}
A point $\bar{\boldsymbol{x}}\in \mathcal{X}$ is called a local optimal solution for problem PBP$\epsilon$ in (\ref{eq4}), if there exists a neighbourhood $\mathcal{A}$ of $\bar{\boldsymbol{x}}$ such that 
\begin{equation*}
\varphi_{\epsilon}(\bar{\boldsymbol{x}}) \le \varphi_{\epsilon}(\boldsymbol{x}), ~\qquad \forall \boldsymbol{x}\in \mathcal{X}\cap \mathcal{A}.
\end{equation*}
\end{definition}

\begin{definition} \label{def:2}
Given a set $\mathcal{A}$, a point $\bar{\boldsymbol{x}}$ is called an interior point of $\mathcal{A}$, if there exists a $\delta(>0)$ such that $\mathbb{B}_{\delta}(\bar{\boldsymbol{x}}) \subset \mathcal{A}$.
\end{definition}

Furthermore, to obtain a stationary condition of PBP$\epsilon$, the following assumption is made.

\begin{assumption} \label{assum:4}
All the local optimal solutions for problem PBP$\epsilon$ in (\ref{eq4}) are in the interior of $\mathcal{X}$.
\end{assumption}

We remark that Assumption \ref{assum:4} is mild. In the following, an example which satisfies Assumption \ref{assum:4} is shown.
\begin{example}  \label{example:1}
For problem PBP$\epsilon$ in (\ref{eq4}), let $\varphi_{\epsilon}(x)= \sin x$ and $\mathcal{X}=[\pi/2, 4\pi]$. It is obvious that the local  optimal solutions of $\varphi_{\epsilon}(x)$ on $\mathcal{X}$ are $x_1 = 3\pi/2$, and $x_2 = 7\pi/2$, and all of them are the interior points of $\mathcal{X}$.
\end{example}

\subsection{Basic Tools}
For the nonsmooth function $h(\boldsymbol{x}): \mathbb{R}^n \rightarrow \mathbb{R}$, $\hat{\partial}h(\bar{\boldsymbol{x}})$ and $\partial h(\bar{\boldsymbol{x}})$ denote the limiting subgradient and the Clarke generalized gradient of $h(\boldsymbol{x})$ at $\bar{\boldsymbol{x}}$, respectively.

In the following proposition, we borrow the conclusions on the Clarke normal cone and the Clarke generalized gradient which are given in \cite{27} and (2.5), (2.6) of \cite{26}.

\begin{proposition} \label{pro:2}
\begin{enumerate}
\item[(1)] If $\mathcal{A}\subset \mathbb{R}^n$ is a nonempty, closed, and convex set, then the Clarke normal cone to $\mathcal{A}$ at $\bar{\boldsymbol{x}}\in \mathcal{A}$, i.e., $\mathcal{N}_{\mathcal{A}}(\bar{\boldsymbol{x}})$, can be expressed as follows: 
\begin{equation*}
\mathcal{N}_{\mathcal{A}}(\bar{\boldsymbol{x}}) := \{\boldsymbol{v}: \langle \boldsymbol{v}, \boldsymbol{x}-\bar{\boldsymbol{x}}\rangle \le 0, ~\forall \boldsymbol{x}\in \mathcal{A}\}.
\end{equation*}
\item[(2)] If function $h(\boldsymbol{x}): \mathbb{R}^n \rightarrow \mathbb{R}$ is Lipschitz continuous near $\bar{\boldsymbol{x}}$, then $\partial h(\bar{\boldsymbol{x}}) = \text{co}\hat{\partial}h(\bar{x})$. Furthermore, $\text{co}\hat{\partial}(-h)(\bar{\boldsymbol{x}}) = -\text{co}\hat{\partial}h(\bar{\boldsymbol{x}})$.
\end{enumerate}
\end{proposition}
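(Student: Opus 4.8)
The plan is to treat both assertions as standard consequences of nonsmooth-analysis machinery, matching the borrowed references \cite{26,27}. I would establish part (1) through the polarity relation between the Clarke tangent and normal cones, together with the fact that the Clarke tangent cone degenerates to the ordinary convex tangent cone on a convex set; part (2) I would obtain from the gradient-limit characterizations of the Clarke generalized gradient and of the limiting subgradient, combined with the symmetry of the Clarke generalized gradient under negation.

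For part (1), recall that the Clarke normal cone is defined as the polar of the Clarke tangent cone, $\mathcal{N}_{\mathcal{A}}(\bar{\boldsymbol{x}}) = \{\boldsymbol{v}: \langle \boldsymbol{v}, \boldsymbol{d}\rangle \le 0 \text{ for all } \boldsymbol{d}\in T_{\mathcal{A}}(\bar{\boldsymbol{x}})\}$. When $\mathcal{A}$ is nonempty, closed, and convex, the Clarke tangent cone coincides with the classical tangent cone, namely the closure of the cone of feasible directions $\overline{\bigcup_{t>0} t(\mathcal{A}-\bar{\boldsymbol{x}})}$. It then suffices to verify that a vector $\boldsymbol{v}$ lies in the polar of this tangent cone if and only if $\langle \boldsymbol{v}, \boldsymbol{x}-\bar{\boldsymbol{x}}\rangle \le 0$ for every $\boldsymbol{x}\in\mathcal{A}$: the ``if'' direction is immediate since each $\boldsymbol{x}-\bar{\boldsymbol{x}}$ is a feasible direction, while the ``only if'' direction follows by taking $\boldsymbol{d}=\boldsymbol{x}-\bar{\boldsymbol{x}}$ and extending to the closure by continuity of the inner product. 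This yields exactly the stated expression.

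For part (2), I would invoke the Rademacher theorem, which guarantees that a locally Lipschitz $h$ is differentiable almost everywhere on a neighbourhood of $\bar{\boldsymbol{x}}$; let $D$ denote its set of differentiability points. The Clarke generalized gradient admits the representation $\partial h(\bar{\boldsymbol{x}}) = \text{co}\{\lim_{i} \nabla h(\boldsymbol{x}_i): \boldsymbol{x}_i\to\bar{\boldsymbol{x}},\ \boldsymbol{x}_i\in D\}$, whereas for locally Lipschitz functions the limiting subgradient is the same set of gradient limits without the convex hull, $\hat{\partial}h(\bar{\boldsymbol{x}}) = \{\lim_{i}\nabla h(\boldsymbol{x}_i): \boldsymbol{x}_i\to\bar{\boldsymbol{x}},\ \boldsymbol{x}_i\in D\}$. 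Taking the convex hull of the latter therefore reproduces the former, giving $\partial h(\bar{\boldsymbol{x}}) = \text{co}\hat{\partial}h(\bar{\boldsymbol{x}})$. For the negation identity, I would use the symmetry $\partial(-h)(\bar{\boldsymbol{x}}) = -\partial h(\bar{\boldsymbol{x}})$ of the Clarke generalized gradient; combining it with the equality just established gives $\text{co}\hat{\partial}(-h)(\bar{\boldsymbol{x}}) = \partial(-h)(\bar{\boldsymbol{x}}) = -\partial h(\bar{\boldsymbol{x}}) = -\text{co}\hat{\partial}h(\bar{\boldsymbol{x}})$.

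The steps requiring the most care are the two gradient-limit representations in part (2). The Clarke representation rests on the nontrivial fact that the convexified set of gradient limits is insensitive to discarding any Lebesgue-null set, and the gradient-limit description of the limiting subgradient for Lipschitz functions is itself a theorem rather than a definition; the interesting point is precisely that the convex hull ``symmetrizes'' the limiting subgradient, which is generally not symmetric under negation. Once both representations are in hand, the polarity computation in part (1) and the symmetry argument for $-h$ are routine. Since all of these are entirely standard, in the paper they may simply be cited from \cite{26,27}.
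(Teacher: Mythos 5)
Your part (1) is correct and entirely standard: the polarity between the Clarke normal and tangent cones, plus the fact that on a nonempty closed convex set the Clarke tangent cone reduces to the classical tangent cone $\overline{\bigcup_{t>0}t(\mathcal{A}-\bar{\boldsymbol{x}})}$, gives exactly the stated expression. Note that the paper itself offers no proof of this proposition at all — it simply imports both parts from \cite{27} and (2.5), (2.6) of \cite{26} — so supplying an argument is fine, but it has to be a correct one.

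In part (2) your key intermediate claim is false. The set $\{\lim_i \nabla h(\boldsymbol{x}_i):\boldsymbol{x}_i\to\bar{\boldsymbol{x}},\ \boldsymbol{x}_i\in D\}$ is the Bouligand (B-)subdifferential $\partial_B h(\bar{\boldsymbol{x}})$, and it does \emph{not} coincide with the limiting subgradient $\hat{\partial}h(\bar{\boldsymbol{x}})$ for general Lipschitz $h$. Take $h(x)=|x|$ at $\bar{x}=0$: the gradient limits give $\{-1,+1\}$, whereas $\hat{\partial}h(0)=[-1,1]$, since for a convex function the limiting subgradient coincides with the subdifferential of convex analysis. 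So the step ``the limiting subgradient is the same set of gradient limits without the convex hull'' would fail, and your argument as written establishes Clarke's theorem $\partial h(\bar{\boldsymbol{x}})=\text{co}\,\partial_B h(\bar{\boldsymbol{x}})$ rather than the stated identity. The repair is short: one has the sandwich $\partial_B h(\bar{\boldsymbol{x}})\subset \hat{\partial}h(\bar{\boldsymbol{x}})\subset \partial h(\bar{\boldsymbol{x}})$ — at a differentiability point the gradient is the unique Fr\'echet subgradient, limits of Fr\'echet subgradients lie in $\hat{\partial}h$, and the limiting subgradient of a Lipschitz function is always contained in its Clarke generalized gradient — and convexifying this sandwich together with $\partial h(\bar{\boldsymbol{x}})=\text{co}\,\partial_B h(\bar{\boldsymbol{x}})$ yields $\partial h(\bar{\boldsymbol{x}})=\text{co}\,\hat{\partial}h(\bar{\boldsymbol{x}})$. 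With that equality in hand, your derivation of the negation identity from the Clarke symmetry $\partial(-h)(\bar{\boldsymbol{x}})=-\partial h(\bar{\boldsymbol{x}})$ is valid as it stands; indeed your closing remark about the convex hull restoring symmetry is exactly illustrated by the same example, where $\hat{\partial}(-h)(0)=\{-1,+1\}\neq -\hat{\partial}h(0)$ yet the convex hulls agree.
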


Next, following Corollary 2.4.2 in \cite{28}, we show the following calculation rule for the Clarke generalized gradient which is useful in the paper.

\begin{proposition} \label{pro:3}
Suppose $h_1(\boldsymbol{x}): \mathbb{R}^n \rightarrow \mathbb{R}$ is Lipschitz continuous near $\bar{\boldsymbol{x}}\in \mathbb{R}^n$, and $h_2(\boldsymbol{x}): \mathbb{R}^n \rightarrow \mathbb{R}$ is continuously differentiable near $\bar{\boldsymbol{x}}$. Then
\begin{equation*}
\partial(h_1 + h_2)(\bar{\boldsymbol{x}}) = \partial h_1(\bar{\boldsymbol{x}}) + \{\nabla h_2(\bar{\boldsymbol{x}})\}.
\end{equation*}
\end{proposition}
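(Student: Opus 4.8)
The plan is to reduce the set identity to an identity between support functions. Recall that for $h$ Lipschitz near $\bar{\boldsymbol{x}}$, the Clarke generalized gradient $\partial h(\bar{\boldsymbol{x}})$ is a nonempty, convex, compact set whose support function is the Clarke generalized directional derivative
\[
h^\circ(\bar{\boldsymbol{x}}; \boldsymbol{v}) := \limsup_{\substack{\boldsymbol{y}\to\bar{\boldsymbol{x}} \\ t\downarrow 0}} \frac{h(\boldsymbol{y}+t\boldsymbol{v}) - h(\boldsymbol{y})}{t},
\]
that is, $\max_{\boldsymbol{\zeta}\in\partial h(\bar{\boldsymbol{x}})}\langle \boldsymbol{\zeta}, \boldsymbol{v}\rangle = h^\circ(\bar{\boldsymbol{x}}; \boldsymbol{v})$ for every $\boldsymbol{v}\in\mathbb{R}^n$. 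Since $h_1+h_2$ is Lipschitz near $\bar{\boldsymbol{x}}$ and the right-hand set $\partial h_1(\bar{\boldsymbol{x}}) + \{\nabla h_2(\bar{\boldsymbol{x}})\}$ is a translate of a compact convex set, both sides of the claimed equality are nonempty compact convex sets, so it suffices to prove that their support functions coincide.

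The crux is the directional-derivative identity
\[
(h_1+h_2)^\circ(\bar{\boldsymbol{x}}; \boldsymbol{v}) = h_1^\circ(\bar{\boldsymbol{x}}; \boldsymbol{v}) + \langle \nabla h_2(\bar{\boldsymbol{x}}), \boldsymbol{v}\rangle.
\]
First I would observe that continuous differentiability of $h_2$ near $\bar{\boldsymbol{x}}$ implies strict differentiability there: by the mean value theorem, $[h_2(\boldsymbol{y}+t\boldsymbol{v}) - h_2(\boldsymbol{y})]/t = \langle \nabla h_2(\boldsymbol{\xi}), \boldsymbol{v}\rangle$ for some $\boldsymbol{\xi}$ on the segment joining $\boldsymbol{y}$ and $\boldsymbol{y}+t\boldsymbol{v}$, and as $\boldsymbol{y}\to\bar{\boldsymbol{x}}$, $t\downarrow 0$ we have $\boldsymbol{\xi}\to\bar{\boldsymbol{x}}$, so continuity of $\nabla h_2$ forces this quotient to converge to $\langle \nabla h_2(\bar{\boldsymbol{x}}), \boldsymbol{v}\rangle$. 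Because the $h_2$-quotient has a genuine limit, the elementary fact $\limsup(a+b)=\limsup a+\lim b$ lets me split the $\limsup$ defining $(h_1+h_2)^\circ$, which yields the displayed identity. The support function of the right-hand set equals $h_1^\circ(\bar{\boldsymbol{x}};\cdot) + \langle\nabla h_2(\bar{\boldsymbol{x}}),\cdot\rangle$, matching the left-hand support function, and equality of support functions of nonempty compact convex sets gives equality of the sets.

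The main obstacle is the split of the $\limsup$: this additivity fails for two arbitrary functions and relies essentially on the $h_2$-quotient converging rather than being merely $\limsup$-bounded. An alternative route avoids directional derivatives: apply the general Clarke subadditivity rule $\partial(g_1+g_2)(\bar{\boldsymbol{x}})\subseteq\partial g_1(\bar{\boldsymbol{x}})+\partial g_2(\bar{\boldsymbol{x}})$ together with $\partial h_2(\bar{\boldsymbol{x}})=\{\nabla h_2(\bar{\boldsymbol{x}})\}$ (which follows from Proposition \ref{pro:2}(2), since the limiting subgradient of a $C^1$ function is its gradient) to obtain the inclusion $\subseteq$; for the reverse inclusion, write $h_1=(h_1+h_2)+(-h_2)$, apply subadditivity again, and use $\partial(-h_2)(\bar{\boldsymbol{x}})=\{-\nabla h_2(\bar{\boldsymbol{x}})\}$ from the second part of Proposition \ref{pro:2}(2). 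This two-inclusion argument is shorter when the subadditivity rule may be cited, but it merely relocates the real content into that rule, whose own proof rests on the directional-derivative computation above.
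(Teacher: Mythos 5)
Your proposal is correct: the support-function argument is sound, since continuous differentiability of $h_2$ gives strict differentiability, the mean-value-theorem step makes the $h_2$-difference quotient genuinely converge to $\langle \nabla h_2(\bar{\boldsymbol{x}}), \boldsymbol{v}\rangle$, which legitimizes splitting the $\limsup$, and equality of support functions of nonempty compact convex sets then yields the set equality (your two-inclusion fallback via $h_1=(h_1+h_2)+(-h_2)$ is also valid). The paper does not prove Proposition \ref{pro:3} itself but cites Corollary 2.4.2 of \cite{28}, and your directional-derivative computation is essentially the standard proof of that cited result, so your attempt matches the intended justification.
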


Then, we consider the following optimization problem:
\begin{align}     \label{eq6}\nonumber
&\min\limits_{\boldsymbol{x}, \boldsymbol{y}} F_1(\boldsymbol{x}) + F_2(\boldsymbol{x}, \boldsymbol{y})  \\ \nonumber
& ~\text{s.t.}~~\boldsymbol{x}\in \mathcal{X}, ~\boldsymbol{y}\in \mathcal{Y}, \\  
& ~~~~ ~~f(\boldsymbol{x}, \boldsymbol{y}) - v(\boldsymbol{x}) \le 0,  \\    \nonumber
&~~~~ ~~ g(\boldsymbol{x}, \boldsymbol{y})\le 0     \nonumber
\end{align}
where 
\begin{equation} \label{eq7}
v(\boldsymbol{x}) := \min\limits_{\boldsymbol{y}}\{f(\boldsymbol{x}, \boldsymbol{y}): \boldsymbol{y}\in \mathcal{Y},~g(\boldsymbol{x}, \boldsymbol{y}) \le 0\}.
\end{equation}

Notice that $v(\boldsymbol{x})$ in (\ref{eq7}) is generally nonsmooth even if functions $f(\boldsymbol{x}, \boldsymbol{y})$ and $g(\boldsymbol{x},\boldsymbol{y})$ are smooth. Therefore, problem (\ref{eq6}) is generally a nonsmooth optimization problem. Recently, \cite{15} showed that if all the functions involved for a nonsmooth optimization problem are local Lipschitz continuous, the generalized lagrange multiplier rule of Clarke can be used to obtain the necessary optimality condition of the nonsmooth optimization problem. 

To go further, the following assumption for problem (\ref{eq6}) is made.

\begin{assumption} \label{assum:5}
Let $\mathcal{X}\subset \mathbb{R}^n$ and $\mathcal{Y}\subset \mathbb{R}^m$ be nonempty, convex, and compact. For any $\bar{\boldsymbol{x}}\in \mathcal{X}$, suppose $F_1(\boldsymbol{x}): \mathcal{X} \rightarrow \mathbb{R}$ is Lipschitz continuous near $\bar{\boldsymbol{x}}$, and $F_2(\boldsymbol{x}, \boldsymbol{y})$, $f(\boldsymbol{x}, \boldsymbol{y})$, $g(\boldsymbol{x}, \boldsymbol{y}): \mathcal{X} \times \mathcal{Y} \rightarrow \mathbb{R}$ are continuously differentiable functions.
\end{assumption}

It is obvious that Assumption \ref{assum:5} can imply that $F_2(\boldsymbol{x}, \boldsymbol{y})$, $f(\boldsymbol{x}, \boldsymbol{y})$, and $g(\boldsymbol{x}, \boldsymbol{y})$ are local Lipschitz continuous on $\mathcal{X}\times \mathcal{Y}$. In the following, a sufficient condition for the local Lipschitz continuity of $v(\boldsymbol{x})$ in (\ref{eq7}) is provided.

As preparation, for each $\boldsymbol{x}\in \mathcal{X}$, we define
\begin{equation}   \label{eq8}
\mathcal{A}_{\boldsymbol{x}} := { { \operatorname {arg\,min} }_{\boldsymbol{y}} \, \{f(\boldsymbol{x}, \boldsymbol{y}): \boldsymbol{y} \in \mathcal{Y}, ~ g(\boldsymbol{x}, \boldsymbol{y})\le 0 \}}.
\end{equation}
Furthermore, given $\boldsymbol{x}\in \mathcal{X}$, for $\boldsymbol{y}$ satisfying $\boldsymbol{y}\in \mathcal{Y}$ and $g(\boldsymbol{x}, \boldsymbol{y})\le 0$, we define
\begin{align}  \label{eq9} \nonumber
\mathcal{M}_{\boldsymbol{x}}^{\lambda}(\boldsymbol{y}) :=& \left \{ \sigma: 0 \in \lambda \nabla_{\boldsymbol{y}}f(\boldsymbol{x}, \boldsymbol{y}) + \sigma \nabla_{\boldsymbol{y}} g(\boldsymbol{x}, \boldsymbol{y}) + \mathcal{N}_{\mathcal{Y}}(\boldsymbol{y}), \right. \\  
& \left. \qquad \sigma\ge 0, \qquad \sigma g(\boldsymbol{x}, \boldsymbol{y})=0 \right \}
\end{align}
where $\lambda \in \{0, 1\}$, and the definition of $\mathcal{N}_{\mathcal{Y}}(\boldsymbol{y})$ is given in Proposition \ref{pro:2}.

\begin{proposition} \label{pro:4}
Suppose Assumption \ref{assum:5} holds. Let $\boldsymbol{x}_0$ be an interior point of $\mathcal{X}$. Define 
\begin{equation*}
\mathcal{M}_{\boldsymbol{x}_0}^0(\mathcal{A}_{\boldsymbol{x}_0}) := \mathop{\cup}\limits_{\boldsymbol{y}\in \mathcal{A}_{\boldsymbol{x}_0}} \mathcal{M}_{\boldsymbol{x}_0}^0(\boldsymbol{y})
\end{equation*}
where the definitions of $\mathcal{A}_{\boldsymbol{x}_0}$ and $\mathcal{M}_{\boldsymbol{x}_0}^0(\boldsymbol{y})$ are given in (\ref{eq8}) and (\ref{eq9}) with $\boldsymbol{x}=\boldsymbol{x}_0$ and $\lambda = 0$, respectively. If $\mathcal{M}_{\boldsymbol{x}_0}^0(\mathcal{A}_{\boldsymbol{x}_0})=\{0\}$, then $v(\boldsymbol{x})$ in (\ref{eq7}) is Lipschitz continuous near $\boldsymbol{x}_0$, and the limiting subgradient of $v(\boldsymbol{x})$ at $\boldsymbol{x}_0$ satisfies 
\begin{equation*}
\hat{\partial}v(\boldsymbol{x}_0)  \subset \{ \nabla_{\boldsymbol{x}}f(\boldsymbol{x}_0, \boldsymbol{y}) + \sigma \nabla_{\boldsymbol{x}}g(\boldsymbol{x}_0, \boldsymbol{y}): \boldsymbol{y} \in \mathcal{A}_{\boldsymbol{x}_0}, \sigma \in \mathcal{M}_{\boldsymbol{x}_0}^1(\boldsymbol{y})\}
\end{equation*} 
where the definitions of $\mathcal{A}_{\boldsymbol{x}_0}$ and $\mathcal{M}_{\boldsymbol{x}_0}^1(\boldsymbol{y})$ are given in (\ref{eq8}) and (\ref{eq9}) with $\boldsymbol{x}=\boldsymbol{x}_0$ and $\lambda = 1$, respectively.
\end{proposition}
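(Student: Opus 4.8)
The plan is to reduce the statement to a limiting-subdifferential sensitivity (marginal-function) rule for the value function $v(\boldsymbol{x})=\min_{\boldsymbol{y}\in G(\boldsymbol{x})}f(\boldsymbol{x},\boldsymbol{y})$, where $G(\boldsymbol{x}):=\{\boldsymbol{y}\in\mathcal{Y}:g(\boldsymbol{x},\boldsymbol{y})\le 0\}$, with the hypothesis $\mathcal{M}_{\boldsymbol{x}_0}^0(\mathcal{A}_{\boldsymbol{x}_0})=\{0\}$ playing the role of a constraint qualification. First I would read this hypothesis as the requirement that, at every minimizer $\boldsymbol{y}\in\mathcal{A}_{\boldsymbol{x}_0}$, the only admissible Fritz--John abnormal multiplier (the case $\lambda=0$ in (\ref{eq9})) is $\sigma=0$; equivalently, no nonzero $\sigma\ge 0$ with $\sigma g(\boldsymbol{x}_0,\boldsymbol{y})=0$ satisfies $0\in\sigma\nabla_{\boldsymbol{y}}g(\boldsymbol{x}_0,\boldsymbol{y})+\mathcal{N}_{\mathcal{Y}}(\boldsymbol{y})$. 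This is exactly the Mangasarian--Fromovitz-type qualification for the inner feasible system defining $G(\boldsymbol{x}_0)$, so it both excludes degenerate multipliers and yields metric regularity of $G$ near each point $(\boldsymbol{x}_0,\boldsymbol{y})$ with $\boldsymbol{y}\in\mathcal{A}_{\boldsymbol{x}_0}$.

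Next I would establish the local Lipschitz continuity of $v$ near $\boldsymbol{x}_0$. By Assumption \ref{assum:5}, $\mathcal{Y}$ is compact and $f$ is continuous, so $\mathcal{A}_{\boldsymbol{x}_0}$ is nonempty and compact. Using the qualification together with this compactness, I would show that the KKT multiplier sets $\mathcal{M}_{\boldsymbol{x}_0}^1(\boldsymbol{y})$ are uniformly bounded over $\boldsymbol{y}\in\mathcal{A}_{\boldsymbol{x}_0}$, arguing by contradiction: an unbounded sequence of multipliers, after normalization, would converge (using outer semicontinuity of $\mathcal{A}_{\boldsymbol{x}}$ and of the normal cone $\mathcal{N}_{\mathcal{Y}}(\cdot)$) to a nonzero abnormal multiplier, contradicting $\mathcal{M}_{\boldsymbol{x}_0}^0(\mathcal{A}_{\boldsymbol{x}_0})=\{0\}$. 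Combining this multiplier bound with the metric regularity of $G$ and the smoothness of $f,g$ controls the simultaneous perturbation of the feasible set and the objective as $\boldsymbol{x}$ varies, which gives that $v$ is Lipschitz on a full ball $\mathbb{B}_{\delta}(\boldsymbol{x}_0)$; here the assumption that $\boldsymbol{x}_0$ is interior to $\mathcal{X}$ is what lets me vary $\boldsymbol{x}$ freely in a neighbourhood.

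With Lipschitz continuity established, I would invoke the limiting-subdifferential marginal-function rule, which under the present qualification gives
\[
\hat{\partial}v(\boldsymbol{x}_0)\subset\bigcup_{\boldsymbol{y}\in\mathcal{A}_{\boldsymbol{x}_0}}\Big[\nabla_{\boldsymbol{x}}f(\boldsymbol{x}_0,\boldsymbol{y})+D^{*}G(\boldsymbol{x}_0,\boldsymbol{y})\big(\nabla_{\boldsymbol{y}}f(\boldsymbol{x}_0,\boldsymbol{y})\big)\Big],
\]
where $D^{*}G$ denotes the coderivative of the set-valued map $G$. The final step is to evaluate this coderivative at $\nabla_{\boldsymbol{y}}f(\boldsymbol{x}_0,\boldsymbol{y})$: under the qualification, it is recovered from the KKT system, producing exactly the terms $\sigma\nabla_{\boldsymbol{x}}g(\boldsymbol{x}_0,\boldsymbol{y})$ with $\sigma$ ranging over those $\sigma\ge 0$ satisfying $\sigma g(\boldsymbol{x}_0,\boldsymbol{y})=0$ and $0\in\nabla_{\boldsymbol{y}}f(\boldsymbol{x}_0,\boldsymbol{y})+\sigma\nabla_{\boldsymbol{y}}g(\boldsymbol{x}_0,\boldsymbol{y})+\mathcal{N}_{\mathcal{Y}}(\boldsymbol{y})$, i.e. $\sigma\in\mathcal{M}_{\boldsymbol{x}_0}^1(\boldsymbol{y})$. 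Substituting yields the claimed inclusion. The main obstacle I anticipate is precisely this last coderivative computation, where the constraint qualification must be used twice: to guarantee that the multiplier rule for $G$ holds in normal (KKT, $\lambda=1$) form rather than the abnormal ($\lambda=0$) form, and to keep the resulting multiplier set bounded. By contrast, the Lipschitz estimate of the second step is routine once the uniform multiplier bound is in hand, so I would keep that part brief and concentrate the effort on the passage from the abnormal-multiplier-free hypothesis to the $\mathcal{M}_{\boldsymbol{x}_0}^1(\boldsymbol{y})$ description.
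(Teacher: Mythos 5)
Your proposal is correct and follows essentially the same route as the paper, which does not prove Proposition \ref{pro:4} in-text but defers it to Proposition 2.1 and Remark 2.1 of \cite{15}: that result rests on exactly the machinery you invoke, namely reading $\mathcal{M}_{\boldsymbol{x}_0}^0(\mathcal{A}_{\boldsymbol{x}_0})=\{0\}$ as the absence of nonzero abnormal multipliers (the Mordukhovich coderivative criterion for the Lipschitz-like property of the feasible-set map $G$), which together with the inner semicompactness supplied by the compactness of $\mathcal{Y}$ yields local Lipschitz continuity of the marginal function. The subgradient inclusion then follows from the standard marginal-function rule $\hat{\partial}v(\boldsymbol{x}_0)\subset\bigcup_{\boldsymbol{y}\in\mathcal{A}_{\boldsymbol{x}_0}}\big[\nabla_{\boldsymbol{x}}f(\boldsymbol{x}_0,\boldsymbol{y})+D^{*}G(\boldsymbol{x}_0,\boldsymbol{y})\big(\nabla_{\boldsymbol{y}}f(\boldsymbol{x}_0,\boldsymbol{y})\big)\big]$ with the coderivative evaluated through the KKT system under that qualification, exactly as you describe.
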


For the proof of Proposition \ref{pro:4}, you can refer to Proposition 2.1 and Remark 2.1 in \cite{15}. In the following, a Fritz-John type necessary optimality condition for the nonsmooth problem in (\ref{eq6}) is shown. For more details, see Theorem 2.1 in \cite{15} and Proposition 1.3 in \cite{13}.

\begin{theorem} \label{tho:1}
Suppose Assumption \ref{assum:5} holds. Let $(\bar{\boldsymbol{x}}, \bar{\boldsymbol{y}})$ be a local optimal solution to problem (\ref{eq6}). If $v(\boldsymbol{x})$ is Lipschitz continuous near $\bar{\boldsymbol{x}}$, then there exist $\lambda_1\in \{0, 1\}$, $\lambda_2\ge 0$, $\lambda_3\ge 0$ not all zero such that 
\begin{align*}
&0 \in \lambda_1(\partial F_1(\bar{\boldsymbol{x}}) + \nabla_{\boldsymbol{x}}F_2(\bar{\boldsymbol{x}}, \bar{\boldsymbol{y}})) + \lambda_2 (\nabla_{\boldsymbol{x}}f(\bar{\boldsymbol{x}}, \bar{\boldsymbol{y}})-\partial v(\bar{\boldsymbol{x}})) 
+ \lambda_3 \nabla_{\boldsymbol{x}}g(\bar{\boldsymbol{x}}, \bar{\boldsymbol{y}}) + \mathcal{N}_{\mathcal{X}}(\bar{\boldsymbol{x}}), \\
&0 \in \lambda_1 \nabla_{\boldsymbol{y}} F_2(\bar{\boldsymbol{x}}, \bar{\boldsymbol{y}}) + \lambda_2 \nabla_{\boldsymbol{y}}f(\bar{\boldsymbol{x}}, \bar{\boldsymbol{y}}) + \lambda_3 \nabla_{\boldsymbol{y}}g(\bar{\boldsymbol{x}}, \bar{\boldsymbol{y}}) + \mathcal{N}_{\mathcal{Y}}(\bar{\boldsymbol{y}}), \\
& \lambda_3 g(\bar{\boldsymbol{x}}, \bar{\boldsymbol{y}}) = 0
\end{align*}
where $\partial F_1(\bar{\boldsymbol{x}})$ and $\partial v(\bar{\boldsymbol{x}})$ are the Clarke generalized gradient of $F_1(\boldsymbol{x})$ and $v(\boldsymbol{x})$ at $\bar{\boldsymbol{x}}$, respectively, and the definitions of $\mathcal{N}_{\mathcal{X}}(\bar{\boldsymbol{x}})$ and $\mathcal{N}_{\mathcal{Y}}(\bar{\boldsymbol{y}})$ are given in Proposition \ref{pro:2}.
\end{theorem}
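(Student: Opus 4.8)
The plan is to regard problem (\ref{eq6}) as a nonsmooth inequality-constrained optimization problem over the abstract constraint set $\mathcal{X}\times\mathcal{Y}$, and to invoke the generalized Lagrange multiplier rule of Clarke in its Fritz-John form. Writing $\Phi(\boldsymbol{x},\boldsymbol{y}):=F_1(\boldsymbol{x})+F_2(\boldsymbol{x},\boldsymbol{y})$ for the objective and $\psi(\boldsymbol{x},\boldsymbol{y}):=f(\boldsymbol{x},\boldsymbol{y})-v(\boldsymbol{x})$ for the value-function constraint, the feasible region of (\ref{eq6}) is $\{(\boldsymbol{x},\boldsymbol{y})\in\mathcal{X}\times\mathcal{Y}: \psi(\boldsymbol{x},\boldsymbol{y})\le0,\ g(\boldsymbol{x},\boldsymbol{y})\le0\}$. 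By Assumption \ref{assum:5} the functions $F_2$, $f$, $g$ are continuously differentiable and $F_1$ is locally Lipschitz, while $v$ is locally Lipschitz near $\bar{\boldsymbol{x}}$ by hypothesis; hence $\Phi$, $\psi$ and $g$ are all locally Lipschitz near $(\bar{\boldsymbol{x}},\bar{\boldsymbol{y}})$, which is exactly the regularity required to apply Clarke's rule.

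First I would apply the nonsmooth Fritz-John condition to the local minimizer $(\bar{\boldsymbol{x}},\bar{\boldsymbol{y}})$: there exist multipliers $\lambda_1\in\{0,1\}$, $\lambda_2\ge0$, $\lambda_3\ge0$, not all zero, with
\begin{equation*}
0\in\lambda_1\,\partial\Phi(\bar{\boldsymbol{x}},\bar{\boldsymbol{y}})+\lambda_2\,\partial\psi(\bar{\boldsymbol{x}},\bar{\boldsymbol{y}})+\lambda_3\,\partial g(\bar{\boldsymbol{x}},\bar{\boldsymbol{y}})+\mathcal{N}_{\mathcal{X}\times\mathcal{Y}}(\bar{\boldsymbol{x}},\bar{\boldsymbol{y}}),
\end{equation*}
together with the complementarity relations for the active inequality constraints. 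The next step is to evaluate each Clarke subdifferential and normal cone separately. Since $\mathcal{X}$ and $\mathcal{Y}$ are convex and closed, $\mathcal{N}_{\mathcal{X}\times\mathcal{Y}}(\bar{\boldsymbol{x}},\bar{\boldsymbol{y}})=\mathcal{N}_{\mathcal{X}}(\bar{\boldsymbol{x}})\times\mathcal{N}_{\mathcal{Y}}(\bar{\boldsymbol{y}})$ via Proposition \ref{pro:2}. For the objective, Proposition \ref{pro:3} gives $\partial\Phi(\bar{\boldsymbol{x}},\bar{\boldsymbol{y}})=(\partial F_1(\bar{\boldsymbol{x}})+\nabla_{\boldsymbol{x}}F_2(\bar{\boldsymbol{x}},\bar{\boldsymbol{y}}))\times\{\nabla_{\boldsymbol{y}}F_2(\bar{\boldsymbol{x}},\bar{\boldsymbol{y}})\}$, because $F_2$ is smooth and $F_1$ is independent of $\boldsymbol{y}$. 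For the value-function constraint, $f$ is smooth and $-v$ is Lipschitz and independent of $\boldsymbol{y}$, so Proposition \ref{pro:3} together with Proposition \ref{pro:2}(2) yields $\partial\psi(\bar{\boldsymbol{x}},\bar{\boldsymbol{y}})=(\nabla_{\boldsymbol{x}}f(\bar{\boldsymbol{x}},\bar{\boldsymbol{y}})-\partial v(\bar{\boldsymbol{x}}))\times\{\nabla_{\boldsymbol{y}}f(\bar{\boldsymbol{x}},\bar{\boldsymbol{y}})\}$; the smoothness of $g$ makes $\partial g$ collapse to the single gradient $(\nabla_{\boldsymbol{x}}g,\nabla_{\boldsymbol{y}}g)$.

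Substituting these expressions and reading off the $\boldsymbol{x}$- and $\boldsymbol{y}$-components separately produces exactly the two inclusions in the statement. It then remains to treat the complementarity conditions: the constraint $g$ contributes $\lambda_3 g(\bar{\boldsymbol{x}},\bar{\boldsymbol{y}})=0$ in the usual way, whereas the value-function constraint needs no separate condition because it is automatically active --- for any feasible $(\boldsymbol{x},\boldsymbol{y})$ one has $f(\boldsymbol{x},\boldsymbol{y})\ge v(\boldsymbol{x})$ by the definition of $v$ in (\ref{eq7}), so $\psi(\bar{\boldsymbol{x}},\bar{\boldsymbol{y}})=0$ holds identically and its multiplier $\lambda_2$ is unconstrained by complementarity. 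I expect the main obstacle to be justifying the blockwise subdifferential calculus for the nonsmooth constraint $\psi$, namely that $\partial\psi$ decomposes along the $\boldsymbol{x}$- and $\boldsymbol{y}$-blocks with the Lipschitz term $v$ entering only through $-\partial v(\bar{\boldsymbol{x}})$; this is precisely where the hypothesis that $v$ is Lipschitz near $\bar{\boldsymbol{x}}$, the sum rule of Proposition \ref{pro:3}, and the negation rule of Proposition \ref{pro:2}(2) are essential, while identifying the normal cones and assembling the components is routine.
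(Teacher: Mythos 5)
Your proposal is correct and takes essentially the same route as the paper: the paper does not prove Theorem \ref{tho:1} itself but defers to Theorem 2.1 in \cite{15} and Proposition 1.3 in \cite{13}, which obtain it exactly as you do --- by applying Clarke's generalized Fritz--John multiplier rule to the locally Lipschitz data over the abstract constraint set $\mathcal{X}\times\mathcal{Y}$, evaluating the subdifferentials blockwise via the sum rule (Proposition \ref{pro:3}) and the negation rule (Proposition \ref{pro:2}), and splitting the normal cone as $\mathcal{N}_{\mathcal{X}}(\bar{\boldsymbol{x}})\times\mathcal{N}_{\mathcal{Y}}(\bar{\boldsymbol{y}})$. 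Your observation that the value-function constraint $f(\boldsymbol{x},\boldsymbol{y})-v(\boldsymbol{x})\le 0$ is automatically active at every feasible point, so that no complementarity condition accompanies $\lambda_2$, is precisely the standard ingredient of the value-function approach and explains why the theorem states complementarity only for $g$.
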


\section{Algorithm} \label{section:3}
The structure of problem PBP$\epsilon$ in (\ref{eq4}) is intricate. It involves three intricated optimization problems. To obtain a problem easier to tackle, the existing studies \cite{14,23} for problem PBP$\epsilon$ focus on reformulating PBP$\epsilon$ into a single level mathematical program MPCC$\epsilon$ based on the KKT condition. However, problem PBP$\epsilon$ and MPCC$\epsilon$ are only equivalent in the sense of global optimal solutions. Thus, they need to solve MPCC$\epsilon$ to global optimum to obtain a solution of PBP$\epsilon$. Unfortunately, MPCC$\epsilon$ is still hard to solve, and the solution procedure for MPCC$\epsilon$ to obtain the global optimal solutions is not practical for machine learning applications since there are many constraints for the MPCC$\epsilon$. 

In the paper, we propose a new approach for PBP$\epsilon$, i.e., reformulating PBP$\epsilon$ into a two level approximate minimax problem, and solving a sequence of approximate minimax problem to obtain the solution of PBP$\epsilon$.

\subsection{Approximate Minimax Problem}

Given $\boldsymbol{x}\in \mathcal{X}$, recall that $\varphi_{\epsilon}(\boldsymbol{x})$ in (\ref{eq4}) is the value function of the following optimization problem:
\begin{equation}   \label{eq10}
\max\limits_{\boldsymbol{y}} F(\boldsymbol{x}, \boldsymbol{y})  \qquad \text{s.t.}~ ~\boldsymbol{y} \in \mathcal{S}_{\epsilon}(\boldsymbol{x})
\end{equation}
where $\mathcal{S}_{\epsilon}(\boldsymbol{x}):=\{\boldsymbol{y} \in \mathcal{Y} : f(\boldsymbol{x}, \boldsymbol{y})\le f^*(\boldsymbol{x})+\epsilon \}$, and $f^*(\boldsymbol{x})$
is defined in (\ref{eq5}).

By using the definition of $\mathcal{S}_{\epsilon}(\boldsymbol{x})$, problem (\ref{eq10}) can be written as 
\begin{equation*}   
\max\limits_{\boldsymbol{y}} F(\boldsymbol{x}, \boldsymbol{y})  \qquad \text{s.t.}~ ~\boldsymbol{y} \in \mathcal{Y}, ~f(\boldsymbol{x}, \boldsymbol{y})\le f^*(\boldsymbol{x})+\epsilon .
\end{equation*}

Then, motivated by the recent work in \cite{24} for solving problem PBP, a log-barrier term is introduced into the objective function $F(\boldsymbol{x}, \boldsymbol{y})$ to remove the inequality constraint, and the following problem
\begin{equation}   \label{eq11}
\max\limits_{\boldsymbol{y} \in \mathcal{Y}} F(\boldsymbol{x}, \boldsymbol{y}) + \tau \ln( f^*(\boldsymbol{x}) + \epsilon - f(\boldsymbol{x}, \boldsymbol{y})) 
\end{equation}
where $\tau \in (0, 1)$, is obtained.

It is obvious that $f^*(\boldsymbol{x})$ in (\ref{eq11}) is an implicitly defined function. To obtain the value of $f^*(\boldsymbol{x})$, we need to solve problem 
\begin{equation}   \label{eq12}
\min\limits_{\boldsymbol{y} \in \mathcal{Y}} f(\boldsymbol{x}, \boldsymbol{y}) 
\end{equation}
to global optimality, which is impractical. In order to deal with the problem, we replace $f^*(\boldsymbol{x})$ in (\ref{eq11}) with 
\begin{equation}   \label{eq13}
f_J(\boldsymbol{x}):= f(\boldsymbol{x}, \boldsymbol{y}_J(\boldsymbol{x})).
\end{equation}
where $\boldsymbol{y}_J(\boldsymbol{x})$, an approximate solution to problem (\ref{eq12}), is obtained by running $J$ steps of projected gradient descent starting from initial value $\boldsymbol{y}_0(\boldsymbol{x}) \in \mathcal{Y}$ in the form of 
\begin{equation}   \label{eq14}
\boldsymbol{y}_{j+1}(\boldsymbol{x}) = \text{proj}_{\mathcal{Y}}\big(\boldsymbol{y}_j(\boldsymbol{x}) -  \alpha \nabla_{\boldsymbol{y}}f(\boldsymbol{x}, \boldsymbol{y}_j(\boldsymbol{x}))\big)
\end{equation}
with $j=0,\ldots, J-1$, where $\alpha$ is the stepsize.

As a result, we approximate PBP$\epsilon$ in (\ref{eq4}) by the minimax problem
\begin{equation}  \label{eq15}
\min\limits_{\boldsymbol{x}\in \mathcal{X}} \varphi_{\epsilon, \tau, J}(\boldsymbol{x}), \quad \varphi_{\epsilon, \tau, J}(\boldsymbol{x}):= \max\limits_{\boldsymbol{y}\in \mathcal{Y}} G_{\epsilon, \tau, J}(\boldsymbol{x}, \boldsymbol{y})
\end{equation}
where $0<\tau<1$, $J$ is a positive integer,
\begin{equation}  \label{eq16}
G_{\epsilon, \tau, J}(\boldsymbol{x}, \boldsymbol{y}) = F(\boldsymbol{x}, \boldsymbol{y}) + \tau \ln(f_J(\boldsymbol{x})+ \epsilon - f(\boldsymbol{x}, \boldsymbol{y}))
\end{equation}
and the definition of $f_J(\boldsymbol{x})$ is given in (\ref{eq13}).

To go further, the following assumption is made on the projected gradient descent steps for solving problem (\ref{eq12}).

\begin{assumption} \label{assum:6}
For the projected gradient descent steps in (\ref{eq14}), let $\alpha = 1/L_1$, $\boldsymbol{y}_0(\boldsymbol{x}) = \boldsymbol{y}_0 \in \mathcal{Y}$ for any $\boldsymbol{x}\in \mathcal{X}$, and for any $\boldsymbol{x}\in \mathcal{X}$, $\boldsymbol{y}_0 \in \mathcal{Y}$, positive integer $J$, the projected gradient descent steps to solve problem (\ref{eq12}) in (\ref{eq14}) satisfy
\begin{align}   \label{eq17}
\boldsymbol{y}_{j+1}(\boldsymbol{x}) & = \text{proj}_{\mathcal{Y}}\big(\boldsymbol{y}_j(\boldsymbol{x}) -  \alpha \nabla_{\boldsymbol{y}}f(\boldsymbol{x}, \boldsymbol{y}_j(\boldsymbol{x}))\big) \\  \nonumber
& = \boldsymbol{y}_j(\boldsymbol{x}) -  \alpha \nabla_{\boldsymbol{y}}f(\boldsymbol{x}, \boldsymbol{y}_j(\boldsymbol{x})), ~ j=0, \ldots, J-1
\end{align}
where $L_1$ is given in Assumption \ref{assum:3}.
\end{assumption}

We remark that Assumption \ref{assum:6} is mild. In the following, an example which satisfies Assumption \ref{assum:6} is shown.

\begin{example}  \label{example:2}
For problem (\ref{eq12}), we let $\mathcal{X}=[\pi/2, 4\pi]$, $\mathcal{Y}=[-40, 40] \times [-20, 20]$, $f(x, \boldsymbol{y})= ([\boldsymbol{y}]_1 - 2 [\boldsymbol{y}]_2)^2 + x$, where $\boldsymbol{y} := ([\boldsymbol{y}]_1, [\boldsymbol{y}]_2)$. By simple calculations, it is easy to know that for any $x \in \mathcal{X}$, $\boldsymbol{y}_j(x)\in \mathcal{Y}$, $0<\alpha < 1/8$, we have 
\begin{equation*}
\boldsymbol{y}_j(x) - \alpha \nabla_{\boldsymbol{y}}f(x, \boldsymbol{y}_j(x)) \in \mathcal{Y}.
\end{equation*}
Therefore, for any $x\in \mathcal{X}$, $\boldsymbol{y}_0 \in \mathcal{Y}$, positive integer $J$, the formula in (\ref{eq17}) always can be satisfied.
\end{example}

Under Assumptions \ref{assum:2} and \ref{assum:6}, $f_J(\boldsymbol{\boldsymbol{x}})$ in (\ref{eq13}) can be ensured to be differentiable(see Lemma \ref{lemma:4} in the appendices). Then, for the minimax problem in (\ref{eq15}), we define the first order Nash equilibrium point as follows.

\begin{definition}  \label{def:3}
We say that a point $(\bar{\boldsymbol{x}}, \bar{\boldsymbol{y}})$, which satisfies $\bar{\boldsymbol{x}}\in \mathcal{X}$, $\bar{\boldsymbol{y}}\in \mathcal{Y}$, and $f_J(\bar{\boldsymbol{x}}) + \epsilon - f(\bar{\boldsymbol{x}}, \bar{\boldsymbol{y}})> 0$, is a $\sigma$-first order Nash equilibrium($\sigma$-FNE) point of problem (\ref{eq15}) if $(\bar{\boldsymbol{x}}, \bar{\boldsymbol{y}})$ satisfies
\begin{align*}
& \langle  \nabla_{\boldsymbol{x}}G_{\epsilon, \tau, J}(\bar{\boldsymbol{x}}, \bar{\boldsymbol{y}}), \boldsymbol{x} - \bar{\boldsymbol{x}} \rangle \ge - \sigma, \qquad  \forall \boldsymbol{x}\in \mathcal{X}, \\
& \|\nabla_{\boldsymbol{y}} G_{\epsilon, \tau, J}(\bar{\boldsymbol{x}}, \bar{\boldsymbol{y}}) \| \le \sigma.
\end{align*}

\end{definition}

Furthermore, recall that for each $\boldsymbol{x}\in \mathcal{X}$, $\varphi_{\epsilon, \tau, J}(\boldsymbol{x})$ in (\ref{eq15}) is the value function of maximization problem:
\begin{equation}  \label{eq18}
\max\limits_{\boldsymbol{y}\in \mathcal{Y}} G_{\epsilon, \tau, J}(\boldsymbol{x}, \boldsymbol{y})
\end{equation}
where $G_{\epsilon, \tau, J}(\boldsymbol{x}, \boldsymbol{y})$ is given in (\ref{eq16}). In the following, an assumption on the maximization problem in (\ref{eq18}) is made.

\begin{assumption} \label{assum:7}
There exists a positive number $c(c\le \min\{\epsilon/2, L_0 H, 1\})$ such that for any $0<\tau<1$, positive integer $J$, $\boldsymbol{x}\in \mathcal{X}$, at least a global optimal solution to problem (\ref{eq18}) is in the set $\mathcal{Y}_J$ defined as follows
\begin{equation}\label{eq19}
\mathcal{Y}_J := \mathop{\cap}\limits_{\boldsymbol{z}\in \mathcal{X}}\{ \boldsymbol{y} \in \mathcal{Y}: f_J(\boldsymbol{z}) +\epsilon - f(\boldsymbol{z}, \boldsymbol{y}) \ge c \}
\end{equation}
where $H$ is given in Assumption \ref{assum:1}, $L_0$ is given in Assumption \ref{assum:3}, and for each $\boldsymbol{z}\in \mathcal{X}$, the definition of $f_{J}(\boldsymbol{z})$ is given in (\ref{eq13}).
\end{assumption}

\begin{remark}\label{remark:1}
Under Assumptions \ref{assum:1}, \ref{assum:2}, and \ref{assum:7}, for any positive integer $J$, the set $\mathcal{Y}_J$ defined in (\ref{eq19}) is nonempty, compact, and convex. To be specific, let $J_0$ be any given positive integer. From Assumption \ref{assum:7}, we know that $\mathcal{Y}_{J_0}$ is nonempty since for any $\tau_0 \in (0, 1)$, $\boldsymbol{x}_0\in \mathcal{X}$, at least a global optimal solution to problem (\ref{eq18}) with $\tau = \tau_0$, $J=J_0$, $\boldsymbol{x}=\boldsymbol{x}_0$ is in the set $\mathcal{Y}_{J_0}$; from Assumption \ref{assum:1}, we know that $\mathcal{Y}_{J_0}$ is bounded; from Assumption \ref{assum:2}, we know that $f(\boldsymbol{x},\boldsymbol{y})$ is continuous and convex w.r.t. $\boldsymbol{y}$, and therefore, $\mathcal{Y}_{J_0}$ is closed and convex. Therefore, $\mathcal{Y}_{J_0}$ is nonempty, compact, and convex.
\end{remark}

In the following, an example which satisfies Assumption \ref{assum:7} is shown.

\begin{example}  \label{example:3}
For problem PBP$\epsilon$ in (\ref{eq4}), we let $\mathcal{X}=[\pi/2, 4\pi]$, $\mathcal{Y}=[-40, 40] \times [-20, 20]$, $F(x, \boldsymbol{y}) = -([\boldsymbol{y}]_1 - x)^2 - ([\boldsymbol{y}]_2 - x/2)^2 + \sin x$, $f(x, \boldsymbol{y})= ([\boldsymbol{y}]_1 - 2 [\boldsymbol{y}]_2)^2 + x$, where $\boldsymbol{y} := ([\boldsymbol{y}]_1, [\boldsymbol{y}]_2)$. Following the above discussion, PBP$\epsilon$ can be approximated by the minimax problem in (\ref{eq15}), and the corresponding maximization problem is in (\ref{eq18}). Setting $c=\epsilon/2$. It is easy to know that for any $x \in \mathcal{X}$, positive integer $J$, we have $\mathcal{S}(x)\subset \{ \boldsymbol{y}\in \mathcal{Y}: f_J(x) + \epsilon/2 - f(x, \boldsymbol{y}) \ge 0\}$, which can be obtained by using the definition of $\mathcal{S}(x)$ in Proposition \ref{pro:1} and the definition of $f_J(x)$ in (\ref{eq13}). As a result, for any positive integer $J$, we have 
\begin{equation*}
\mathop{\cap}_{x\in \mathcal{X}} \mathcal{S}(x)\subset \mathcal{Y}_J
\end{equation*}
where $\mathcal{Y}_J$ is defined in (\ref{eq19}) with $c = \epsilon/2$. Furthermore, by simple calculations, it is easy to know that $\mathop{\cap}_{x \in \mathcal{X}} {\mathcal{S}(x)}= \{ \boldsymbol{y}=([\boldsymbol{y}]_1, [\boldsymbol{y}_2)] \in \mathcal{Y}: [\boldsymbol{y}]_1 = 2[\boldsymbol{y}]_2 \}$, and for any $\tau>0$, positive integer $J$, $x \in \mathcal{X}$, the global optimal solution to problem (\ref{eq18}) is $(x, x/2)$, which belongs to the set $\mathop{\cap}_{x \in \mathcal{X}} \mathcal{S}(x)$, and thus belongs to the set $\mathcal{Y}_J$. Therefore, Assumption \ref{assum:7} is satisfied by setting $c=\epsilon/2$.
\end{example}

Next, for any $0<\tau<1$, positive integer $J$, $\boldsymbol{x}\in \mathcal{X}$, we consider the following optimization problem 
\begin{equation}  \label{eq20}
\max\limits_{\boldsymbol{y}\in \mathcal{Y}_J(\boldsymbol{x})} G_{\epsilon, \tau, J}(\boldsymbol{x}, \boldsymbol{y})
\end{equation}
where 
\begin{equation} \label{eq21}
\mathcal{Y}_J(\boldsymbol{x}) := \bigg\{ \boldsymbol{y} \in \mathcal{Y}: f_J(\boldsymbol{x}) + \epsilon - f(\boldsymbol{x}, \boldsymbol{y})\ge \frac{c_0}{2}\bigg\}
\end{equation}
, and $c_0$ is a sufficient small number. Notice that if Assumption \ref{assum:7} holds, and we set $c_0=c$($c$ in Assumption \ref{assum:7}), the optimal solution to problem (\ref{eq20}) must be an optimal solution to problem (\ref{eq18}) since for the $\mathcal{Y}_J$ defined in (\ref{eq19}), we have $\mathcal{Y}_J\subset \mathcal{Y}_J(\boldsymbol{x})$. Thus, we can obtain an approximate solution to problem (\ref{eq18}) by solving problem (\ref{eq20}). To facilicate our theoretical analysis, in the following, instead of solving problem (\ref{eq18}) directly, we solve problem (\ref{eq20}) to obtain an approximate solution to problem (\ref{eq18}).

\subsection{A New Algorithm Named PVFIM for Problem PBP$\epsilon$ in (\ref{eq4})}

Based on the above discussion, a new algorithm named PVFIM is proposed for problem PBP$\epsilon$ in (\ref{eq4}), as shown in Algorithm \ref{alg:1}. For the minimax problem in (\ref{eq15}), by setting $\tau=\tau_l$, $J=J_l$, a $\sigma_l$-FNE point $(\boldsymbol{x}_l, \boldsymbol{y}_l)$ can be obtained(see line 2 in Algorithm \ref{alg:1}). By varying the values of $\tau_l$, $J_l$, $\sigma_l$, it produces a sequence of approximate first order Nash equilibrium points $\{(\boldsymbol{x}_l, \boldsymbol{y}_l)\}$, from which we can obtain a stationary point of problem PBP$\epsilon$. 

\begin{algorithm}[htb]
\caption{A Perturbed Value-Function-Based Interior-Point Method(PVFIM) for PBP$\epsilon$ in (\ref{eq4})}  
\label{alg:1}
\begin{algorithmic}[1]
\REQUIRE  $\{\tau_l\}_{l=1}^{\infty}$, $\{J_l\}_{l=1}^{\infty}$, $\{\sigma_l\}_{l=1}^{\infty}$, $c_0$. Set $l=1$\\
\REPEAT
\STATE Find a $\sigma_l$-FNE point $(\boldsymbol{x}_l, \boldsymbol{y}_l)$ for problem (\ref{eq15}) with $\tau = \tau_l$, $J=J_l$ via Algirithm \ref{alg:2} \\
\STATE $l = l + 1$
\UNTIL{convergence of $(\boldsymbol{x}_l, \boldsymbol{y}_l)$ to a point $(\bar{\boldsymbol{x}}, \bar{\boldsymbol{y}})$ }
\end{algorithmic}
\end{algorithm}

\begin{algorithm}[htb]
\caption{Find a $\sigma_l$-FNE Point $(\boldsymbol{x}_l, \boldsymbol{y}_l)$ for Problem (\ref{eq15}) with $\tau = \tau_l$, $J=J_l$ }  
\label{alg:2}
\begin{algorithmic}[1]
\REQUIRE  $\boldsymbol{x}_0 \in \mathcal{X}$, stepsizes $\alpha$, $\beta$, and $\eta$, iteration steps $T_l$ and $K_l$. Set $T=T_l$, $K=K_l$\\
\FOR{$t = 0, \ldots, T-1$}
\STATE Initialize $\boldsymbol{y}_0(\boldsymbol{x}_t) \in \mathcal{Y}$ \\
\FOR {$j = 0, \ldots, J-1$}
\STATE $\boldsymbol{y}_{j+1}(\boldsymbol{x}_t) = \text{proj}_{\mathcal{Y}}\big(\boldsymbol{y}_j(\boldsymbol{x}_t) -  \alpha \nabla_{\boldsymbol{y}}f(\boldsymbol{x}_t, \boldsymbol{y}_j(\boldsymbol{x}_t))\big)$\\
\ENDFOR
\STATE Compute $f_J(\boldsymbol{x}_t)$ via (\ref{eq13}) \\
\STATE Set $\mathcal{Y}_J(\boldsymbol{x}_t)$ to be the set in (\ref{eq21}) with $\boldsymbol{x}=\boldsymbol{x}_t$ and $c_0$ given in Algorithm \ref{alg:1}   \\
\STATE Initialize $\boldsymbol{y}_0(\boldsymbol{x}_t) \in \mathcal{Y}_J(\boldsymbol{x}_t)$
\FOR {$k = 0, \ldots, K-1$}
\STATE $\boldsymbol{y}_{k+1}(\boldsymbol{x}_t)= \text{proj}_{\mathcal{Y}_J(\boldsymbol{x}_t)}(\boldsymbol{y}_k(\boldsymbol{x}_t) +  \beta \nabla_{\boldsymbol{y}}G_{\epsilon, \tau, J}(\boldsymbol{x}_t, \boldsymbol{y}_k(\boldsymbol{x}_t)))$\\
\ENDFOR 
\STATE $\boldsymbol{x}_{t+1} = \text{proj}_{\mathcal{X}}(\boldsymbol{x}_t - \eta \boldsymbol{a}_t )$, where $\boldsymbol{a}_t$ is in (\ref{eq22}) \\
\ENDFOR
\STATE Theory: choose $\bar{t}\in \{0, \ldots, T-1\}$, set $\boldsymbol{x}_l = \boldsymbol{x}_{\bar{t}}$, $\boldsymbol{y}_l = \boldsymbol{y}_K(\boldsymbol{x}_{l}) $

Practice: set $\boldsymbol{x}_l = \boldsymbol{x}_T$, $\boldsymbol{y}_l = \boldsymbol{y}_{K}(\boldsymbol{x}_l)$
\end{algorithmic}
\end{algorithm}

We next illustrate how to obtain a $\sigma_l$-FNE point for problem (\ref{eq15}) with $\tau = \tau_l$, $J=J_l$. As shown in Algorithm \ref{alg:2}, for each $\boldsymbol{x}_t$, it first runs $J$ steps of projected gradient descent starting from initial value $\boldsymbol{y}_0(\boldsymbol{x_t})$ in the form of (\ref{eq14}) to obtain an approximate solution $\boldsymbol{y}_J(\boldsymbol{x}_t)$ to problem (\ref{eq12}) with $\boldsymbol{x}=\boldsymbol{x}_t$(see line 4 in Algorithm \ref{alg:2}), and thus obtain the value of $f_J(\boldsymbol{x}_t)$(see line 6 in Algorithm \ref{alg:2}). Then by setting the projected region to be $\mathcal{Y}_J(\boldsymbol{x}_t)$(see line 7 in Algorithm \ref{alg:2}), it runs $K(=K_l)$ steps of projected gradient ascent to obtain an approximate solution $\boldsymbol{y}_K(\boldsymbol{x}_t)$ to problem (\ref{eq20}), and thus obtain an approximate solution to the maximization problem of problem (\ref{eq15}) with $\boldsymbol{x}=\boldsymbol{x}_t$(see line 10 in Algorithm \ref{alg:2}). Based on the output $\boldsymbol{y}_J(\boldsymbol{x}_t)$ in line 4 of Algorithm \ref{alg:2} and the output $\boldsymbol{y}_K(\boldsymbol{x}_t)$ in line 10 of Algorithm \ref{alg:2}, it constructs
\begin{align} \label{eq22}
\boldsymbol{a}_t  = \nabla_{\boldsymbol{x}}F(\boldsymbol{x}_t, \boldsymbol{y}_K(\boldsymbol{x}_t))  + \frac{\tau}{f_J(\boldsymbol{x}_t) + \epsilon - f(\boldsymbol{x}_t, \boldsymbol{y}_K(\boldsymbol{x}_t))}\boldsymbol{b}_t
\end{align}
with $\boldsymbol{b}_t = \nabla_{\boldsymbol{x}} f(\boldsymbol{x}_t, \boldsymbol{y}_J(\boldsymbol{x}_t)) - \nabla_{\boldsymbol{x}}f(\boldsymbol{x}_t, \boldsymbol{y}_K(\boldsymbol{x}_t))$ as an estimate of the gradient $\nabla \varphi_{\epsilon, \tau, J}(\boldsymbol{x}_t)$.

\section{Theoretical Results} \label{section:4}
In the section, we first present the stationary condition for problem PBP$\epsilon$ in (\ref{eq4}). Then, we prove that PVFIM can converge to the stationary point of problem PBP$\epsilon$. Detailed theoretical analysis can be found in the appendices.

\subsection{Stationary Condition for Problem PBP$\epsilon$ in (\ref{eq4})}

Recently, the necessary optimality conditions\cite{26} for PBP are derived. It is naturally to think of utilizing the necessary optimality conditions of PBP to obtain the stationary condition of PBP$\epsilon$ since PBP$\epsilon$ is a variant of PBP. However, the necessary optimality conditions for PBP are obtained under additional Lipschitz conditions, which are very strong and would limit the application scope of the algorithm. To deal with the problem, we use the standard version of PBP$\epsilon$, the value function approach, and the Fritz-John type necessary optimality condition to obtain the stationary condition of PBP$\epsilon$.

Following \cite{14}, the standard version of problem PBP$\epsilon$ in (\ref{eq4}), denoted as SPBP$\epsilon$, is as follows

\begin{equation}   \label{eq23}
\min\limits_{\boldsymbol{x}, \boldsymbol{y}} F(\boldsymbol{x}, \boldsymbol{y}), \qquad \text{s.t.}~ ~\boldsymbol{x}\in \mathcal{X},~\boldsymbol{y} \in \mathcal{R}_{\epsilon}(\boldsymbol{x})
\end{equation}
where 
\begin{equation}   \label{eq24}
\mathcal{R}_{\epsilon}(\boldsymbol{x}) := { \underset {\boldsymbol{y}\in \mathcal{S}_{\epsilon}(\boldsymbol{x})} { \operatorname {arg\,max} } \, F(\boldsymbol{x}, \boldsymbol{y})}.
\end{equation}

The relation between local optimal points of PBP$\epsilon$ and SPBP$\epsilon$ can be established as follows; see Proposition 4.2 in \cite{14}.

\begin{proposition}[\cite{14}] \label{pro:5}
Suppose Assumptions \ref{assum:1} and \ref{assum:2} hold. 
\begin{enumerate}
\item[(1)] If $\bar{\boldsymbol{x}}\in \mathcal{X}$ is a local optimal solution to problem PBP$\epsilon$ in (\ref{eq4}), then $(\bar{\boldsymbol{x}}, \bar{\boldsymbol{y}})$ is a local optimal solution to problem SPBP$\epsilon$ in (\ref{eq23}) for any $\bar{\boldsymbol{y}}\in \mathcal{R}_{\epsilon}(\bar{\boldsymbol{x}})$.
\item[(2)] Let $\bar{\boldsymbol{x}} \in \mathcal{X}$. If for any $\bar{\boldsymbol{y}}\in \mathcal{R}_{\epsilon}(\bar{\boldsymbol{x}})$, $(\bar{\boldsymbol{x}}, \bar{\boldsymbol{y}})$ is a local optimal solution to problem SPBP$\epsilon$ in (\ref{eq23}), then $\bar{\boldsymbol{x}}$ is a local optimal solution to problem PBP$\epsilon$ in (\ref{eq4}).
\end{enumerate}
\end{proposition}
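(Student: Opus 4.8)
The plan is to exploit a single structural fact: on the feasible set of SPBP$\epsilon$ the objective agrees with $\varphi_{\epsilon}$. Indeed, if $\boldsymbol{y}\in\mathcal{R}_{\epsilon}(\boldsymbol{x})$, then by (\ref{eq24}) $\boldsymbol{y}$ maximizes $F(\boldsymbol{x},\cdot)$ over $\mathcal{S}_{\epsilon}(\boldsymbol{x})$, so $F(\boldsymbol{x},\boldsymbol{y})=\varphi_{\epsilon}(\boldsymbol{x})$. Two preliminary observations will carry most of the work. First, under Assumptions \ref{assum:1} and \ref{assum:2} the set $\mathcal{S}_{\epsilon}(\boldsymbol{x})=\{\boldsymbol{y}\in\mathcal{Y}:f(\boldsymbol{x},\boldsymbol{y})\le f^*(\boldsymbol{x})+\epsilon\}$ is nonempty (it contains any minimizer of $f(\boldsymbol{x},\cdot)$), compact, and convex, being a sublevel set of the convex function $f(\boldsymbol{x},\cdot)$ intersected with the convex compact $\mathcal{Y}$. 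Since $F(\boldsymbol{x},\cdot)$ is $\mu$-strongly concave, the maximizer over this set is unique, so $\mathcal{R}_{\epsilon}(\boldsymbol{x})$ is a singleton for every $\boldsymbol{x}\in\mathcal{X}$; I write $\mathcal{R}_{\epsilon}(\boldsymbol{x})=\{\boldsymbol{y}(\boldsymbol{x})\}$. Second, the map $\boldsymbol{x}\mapsto\boldsymbol{y}(\boldsymbol{x})$ is continuous, which I address at the end.

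For Part (1), I would start from a neighbourhood $\mathcal{A}$ with $\varphi_{\epsilon}(\bar{\boldsymbol{x}})\le\varphi_{\epsilon}(\boldsymbol{x})$ for all $\boldsymbol{x}\in\mathcal{X}\cap\mathcal{A}$, and fix any $\bar{\boldsymbol{y}}\in\mathcal{R}_{\epsilon}(\bar{\boldsymbol{x}})$, so that $F(\bar{\boldsymbol{x}},\bar{\boldsymbol{y}})=\varphi_{\epsilon}(\bar{\boldsymbol{x}})$. Choosing a neighbourhood $\mathcal{B}$ of $(\bar{\boldsymbol{x}},\bar{\boldsymbol{y}})$ whose projection onto the $\boldsymbol{x}$-coordinate lies in $\mathcal{A}$, I would observe that for any SPBP$\epsilon$-feasible $(\boldsymbol{x},\boldsymbol{y})\in\mathcal{B}$ one has $\boldsymbol{y}\in\mathcal{R}_{\epsilon}(\boldsymbol{x})$, hence $F(\boldsymbol{x},\boldsymbol{y})=\varphi_{\epsilon}(\boldsymbol{x})\ge\varphi_{\epsilon}(\bar{\boldsymbol{x}})=F(\bar{\boldsymbol{x}},\bar{\boldsymbol{y}})$. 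This is precisely local optimality of $(\bar{\boldsymbol{x}},\bar{\boldsymbol{y}})$ for SPBP$\epsilon$, and the argument is identical for every $\bar{\boldsymbol{y}}\in\mathcal{R}_{\epsilon}(\bar{\boldsymbol{x}})$.

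For Part (2), I would assume $(\bar{\boldsymbol{x}},\boldsymbol{y}(\bar{\boldsymbol{x}}))$ is locally optimal for SPBP$\epsilon$ with neighbourhood $\mathcal{B}$, and use continuity of $\boldsymbol{x}\mapsto\boldsymbol{y}(\boldsymbol{x})$ to pick a neighbourhood $\mathcal{A}$ of $\bar{\boldsymbol{x}}$ small enough that $(\boldsymbol{x},\boldsymbol{y}(\boldsymbol{x}))\in\mathcal{B}$ for all $\boldsymbol{x}\in\mathcal{X}\cap\mathcal{A}$. For such $\boldsymbol{x}$ the point $(\boldsymbol{x},\boldsymbol{y}(\boldsymbol{x}))$ is SPBP$\epsilon$-feasible and in $\mathcal{B}$, so local optimality gives $F(\bar{\boldsymbol{x}},\boldsymbol{y}(\bar{\boldsymbol{x}}))\le F(\boldsymbol{x},\boldsymbol{y}(\boldsymbol{x}))$, i.e. $\varphi_{\epsilon}(\bar{\boldsymbol{x}})\le\varphi_{\epsilon}(\boldsymbol{x})$, which is local optimality of $\bar{\boldsymbol{x}}$ for PBP$\epsilon$.

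The hard part is the continuity of the solution map $\boldsymbol{x}\mapsto\boldsymbol{y}(\boldsymbol{x})$ invoked in Part (2). I would obtain it from a Berge-type maximum theorem, which needs continuity of the feasible-set multifunction $\boldsymbol{x}\mapsto\mathcal{S}_{\epsilon}(\boldsymbol{x})$. Outer semicontinuity is immediate from the closedness of the defining inequality together with boundedness of $\mathcal{Y}$ and continuity of $f^*$ (Proposition \ref{pro:1}) and $f$. The delicate direction is inner semicontinuity, and this is exactly where the perturbation $\epsilon>0$ is essential: since any minimizer $\boldsymbol{y}^*(\boldsymbol{x})$ of $f(\boldsymbol{x},\cdot)$ satisfies $f(\boldsymbol{x},\boldsymbol{y}^*(\boldsymbol{x}))=f^*(\boldsymbol{x})<f^*(\boldsymbol{x})+\epsilon$, the set $\mathcal{S}_{\epsilon}(\boldsymbol{x})$ admits a Slater point, and a standard convex-feasibility argument then yields inner semicontinuity. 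With both semicontinuities in hand, Berge's theorem gives outer semicontinuity of the $\operatorname{arg\,max}$ multifunction, and uniqueness from the strong concavity of $F(\boldsymbol{x},\cdot)$ upgrades this to continuity of the single-valued map $\boldsymbol{x}\mapsto\boldsymbol{y}(\boldsymbol{x})$, closing the argument.
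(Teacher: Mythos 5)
Your proof is correct, and it is worth noting at the outset that the paper itself offers no proof of this proposition: it is imported from Proposition 4.2 of \cite{14}, so the comparison is with the cited source rather than with an internal argument. Your Part (1) is the computation any proof must make --- on the SPBP$\epsilon$ feasible set the objective coincides with $\varphi_{\epsilon}$, so local optimality transfers directly, with no uniqueness or continuity needed. Where you genuinely diverge is Part (2): the statement is phrased for a possibly set-valued $\mathcal{R}_{\epsilon}$ (hence ``for any $\bar{\boldsymbol{y}}\in \mathcal{R}_{\epsilon}(\bar{\boldsymbol{x}})$''), and a proof at that generality must work with upper semicontinuity of the argmax correspondence together with a compactness/covering argument over $\mathcal{R}_{\epsilon}(\bar{\boldsymbol{x}})$. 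You instead invoke the $\mu$-strong concavity of $F(\boldsymbol{x},\cdot)$ from Assumption \ref{assum:2} --- legitimately available here, since the proposition assumes Assumptions \ref{assum:1} and \ref{assum:2} --- to collapse $\mathcal{R}_{\epsilon}(\boldsymbol{x})$ to a singleton $\boldsymbol{y}(\boldsymbol{x})$ and reduce Part (2) to continuity of that selection via Berge's maximum theorem. Your continuity argument correctly isolates the crux: inner semicontinuity of $\boldsymbol{x}\mapsto \mathcal{S}_{\epsilon}(\boldsymbol{x})$ holds precisely because $\epsilon>0$ supplies a Slater point, $f(\boldsymbol{x}, \boldsymbol{y}^*(\boldsymbol{x})) = f^*(\boldsymbol{x}) < f^*(\boldsymbol{x})+\epsilon$, and this is exactly what fails at $\epsilon=0$ --- which is why no such equivalence is claimed for PBP itself. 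The trade-off between the two routes: yours is shorter and more transparent but tied to strong concavity of the upper-level objective; the set-valued route covers merely continuous $F$ at the cost of heavier correspondence machinery. The only soft spot is that you assert rather than execute the Slater-to-l.s.c. step, but since the convex-combination argument (for $\boldsymbol{y}_{\lambda} = (1-\lambda)\boldsymbol{y}_0 + \lambda \boldsymbol{y}^*(\bar{\boldsymbol{x}})$ one gets $f(\bar{\boldsymbol{x}}, \boldsymbol{y}_{\lambda}) < f^*(\bar{\boldsymbol{x}})+\epsilon$, which persists for $\boldsymbol{x}$ near $\bar{\boldsymbol{x}}$ by continuity of $f$ and of $f^*$) is textbook, this is a citation-level omission rather than a gap.
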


Proposition \ref{pro:5} shows that problem PBP$\epsilon$ in (\ref{eq4}) and problem SPBP$\epsilon$ in (\ref{eq23}) are equivalent in terms of local optimal solutions if $\mathcal{R}_{\epsilon}(\boldsymbol{x})$ is a single point set for any $\boldsymbol{x}\in \mathcal{X}$. Notice that by using the value function approach \cite{12,13,15}, SPBP$\epsilon$ in (\ref{eq23}) can be written as the following single level mathematical programming problem, denoted as SPBP$\epsilon$ still, 
\begin{align}     \label{eq25}\nonumber
&\min\limits_{\boldsymbol{x}, \boldsymbol{y}} F(\boldsymbol{x}, \boldsymbol{y})  \\ \nonumber
& ~\text{s.t.}~~\boldsymbol{x}\in \mathcal{X}, ~\boldsymbol{y}\in \mathcal{Y}, \\  
& ~~~~ ~~F^*(\boldsymbol{x})\le F(\boldsymbol{x}, \boldsymbol{y}),  \\    \nonumber
&~~~~ ~~ f(\boldsymbol{x}, \boldsymbol{y})\le f^*(\boldsymbol{x}) + \epsilon     \nonumber
\end{align}
with the help of function 
\begin{equation} \label{eq26}
F^*(\boldsymbol{x}):= \max\limits_{\boldsymbol{y}\in \mathcal{S}_{\epsilon}(\boldsymbol{x})} F(\boldsymbol{x}, \boldsymbol{y})
\end{equation}
and the definition of $\mathcal{S}_{\epsilon}(\boldsymbol{x})$.

The equivalence between Problem SPBP$\epsilon$ in (\ref{eq23}) and problem SPBP$\epsilon$ in (\ref{eq25}) is shown below.

\begin{proposition} \label{pro:6}
Suppose Assumptions \ref{assum:1} and \ref{assum:2} hold. Problem SPBP$\epsilon$ in (\ref{eq23}) and problem SPBP$\epsilon$ in (\ref{eq25}) are fully equivalent in terms of local and global optimal solutions.
\end{proposition}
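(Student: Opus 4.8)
The plan is to show that the two formulations have the \emph{same} objective and the \emph{same} feasible region, after which the asserted equivalence is immediate. Write the feasible set of SPBP$\epsilon$ in (\ref{eq23}) as $\Omega_1 := \{(\boldsymbol{x},\boldsymbol{y}): \boldsymbol{x}\in\mathcal{X},\ \boldsymbol{y}\in\mathcal{R}_{\epsilon}(\boldsymbol{x})\}$ and that of SPBP$\epsilon$ in (\ref{eq25}) as $\Omega_2 := \{(\boldsymbol{x},\boldsymbol{y}): \boldsymbol{x}\in\mathcal{X},\ \boldsymbol{y}\in\mathcal{Y},\ F^*(\boldsymbol{x})\le F(\boldsymbol{x},\boldsymbol{y}),\ f(\boldsymbol{x},\boldsymbol{y})\le f^*(\boldsymbol{x})+\epsilon\}$. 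Since both problems minimize the same function $F(\boldsymbol{x},\boldsymbol{y})$, it suffices to prove $\Omega_1=\Omega_2$; the equivalence in terms of local and global optima then follows at once, because when the objectives and feasible sets coincide a point is a (local or global) minimizer of one problem if and only if it is a minimizer of the other.

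Before comparing the sets, I would first record that every object involved is well defined under Assumptions \ref{assum:1} and \ref{assum:2}. For each $\boldsymbol{x}\in\mathcal{X}$, the set $\mathcal{Y}$ is nonempty and compact and $f(\boldsymbol{x},\cdot)$ is continuous, so the minimum $f^*(\boldsymbol{x})$ in (\ref{eq5}) is attained; consequently $\mathcal{S}_{\epsilon}(\boldsymbol{x})$ is nonempty (it contains any minimizer of $f(\boldsymbol{x},\cdot)$) and, being the intersection of $\mathcal{Y}$ with a sublevel set of the convex function $f(\boldsymbol{x},\cdot)$, it is convex and compact. As $F(\boldsymbol{x},\cdot)$ is continuous, $F^*(\boldsymbol{x})$ in (\ref{eq26}) is attained and $\mathcal{R}_{\epsilon}(\boldsymbol{x})$ in (\ref{eq24}) is nonempty. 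This guarantees that both the constraints in (\ref{eq25}) and the set $\mathcal{R}_{\epsilon}(\boldsymbol{x})$ in (\ref{eq23}) are meaningful for every $\boldsymbol{x}\in\mathcal{X}$.

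Next I would establish the two inclusions. For $\Omega_1\subseteq\Omega_2$: if $(\boldsymbol{x},\boldsymbol{y})\in\Omega_1$, then $\boldsymbol{y}\in\mathcal{R}_{\epsilon}(\boldsymbol{x})\subseteq\mathcal{S}_{\epsilon}(\boldsymbol{x})$, so $\boldsymbol{y}\in\mathcal{Y}$ and $f(\boldsymbol{x},\boldsymbol{y})\le f^*(\boldsymbol{x})+\epsilon$; moreover, by the definitions of $\mathcal{R}_{\epsilon}(\boldsymbol{x})$ in (\ref{eq24}) and $F^*(\boldsymbol{x})$ in (\ref{eq26}), $F(\boldsymbol{x},\boldsymbol{y})=\max_{\boldsymbol{y}'\in\mathcal{S}_{\epsilon}(\boldsymbol{x})}F(\boldsymbol{x},\boldsymbol{y}')=F^*(\boldsymbol{x})$, so the constraint $F^*(\boldsymbol{x})\le F(\boldsymbol{x},\boldsymbol{y})$ holds and $(\boldsymbol{x},\boldsymbol{y})\in\Omega_2$. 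For $\Omega_2\subseteq\Omega_1$: if $(\boldsymbol{x},\boldsymbol{y})\in\Omega_2$, then $\boldsymbol{y}\in\mathcal{Y}$ together with $f(\boldsymbol{x},\boldsymbol{y})\le f^*(\boldsymbol{x})+\epsilon$ gives $\boldsymbol{y}\in\mathcal{S}_{\epsilon}(\boldsymbol{x})$, whence $F(\boldsymbol{x},\boldsymbol{y})\le F^*(\boldsymbol{x})$ by the definition of $F^*(\boldsymbol{x})$; combining this with the feasibility constraint $F^*(\boldsymbol{x})\le F(\boldsymbol{x},\boldsymbol{y})$ forces $F(\boldsymbol{x},\boldsymbol{y})=F^*(\boldsymbol{x})$, so $\boldsymbol{y}$ attains the maximum of $F(\boldsymbol{x},\cdot)$ over $\mathcal{S}_{\epsilon}(\boldsymbol{x})$, i.e. $\boldsymbol{y}\in\mathcal{R}_{\epsilon}(\boldsymbol{x})$ and $(\boldsymbol{x},\boldsymbol{y})\in\Omega_1$. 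This yields $\Omega_1=\Omega_2$.

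There is no genuinely hard step here: the entire content is the observation that the pair of constraints $f(\boldsymbol{x},\boldsymbol{y})\le f^*(\boldsymbol{x})+\epsilon$ and $F^*(\boldsymbol{x})\le F(\boldsymbol{x},\boldsymbol{y})$ in (\ref{eq25}) encodes exactly membership in $\mathcal{R}_{\epsilon}(\boldsymbol{x})$. The only point deserving care is the well-definedness of $f^*$, $F^*$, $\mathcal{S}_{\epsilon}$, and $\mathcal{R}_{\epsilon}$ (nonemptiness and attainment of the extrema), which is precisely where Assumptions \ref{assum:1} and \ref{assum:2} are invoked; once $\Omega_1=\Omega_2$ is in hand, the claim about local and global optima is automatic, since the two formulations are then literally the same minimization problem.
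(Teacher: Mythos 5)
Your proof is correct and is essentially the argument the paper delegates to its citation (the paper gives no written proof, only "can be easily obtained following Remark 3.1 in \cite{10}", which is exactly this observation): the constraints $f(\boldsymbol{x},\boldsymbol{y})\le f^*(\boldsymbol{x})+\epsilon$ and $F^*(\boldsymbol{x})\le F(\boldsymbol{x},\boldsymbol{y})$ encode membership in $\mathcal{R}_{\epsilon}(\boldsymbol{x})$, so the two problems have identical objectives and feasible sets. Your explicit verification of the two inclusions and of attainment of the extrema under Assumptions \ref{assum:1} and \ref{assum:2} fills in precisely what that remark contains, with no gap.
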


Proposition \ref{pro:6} can be easily obtained following Remark 3.1 in \cite{10}. By combining Proposition \ref{pro:5} and Proposition \ref{pro:6}, we know that, if $\bar{\boldsymbol{x}}$ is a local optimal solution to problem PBP$\epsilon$ in (\ref{eq4}), then $(\bar{\boldsymbol{x}}, \bar{\boldsymbol{y}})$ is a local optimal solution to problem SPBP$\epsilon$ in (\ref{eq25}) for any $\bar{\boldsymbol{y}}\in \mathcal{R}_{\epsilon}(\bar{\boldsymbol{x}})$. Thus, the stationary condition for problem PBP$\epsilon$ can be obtained by using the necessary optimality condition of problem SPBP$\epsilon$ in (\ref{eq25}).

In the following, a Fritz-John type necessary optimality(FJ) condition for problem SPBP$\epsilon$ in (\ref{eq25}) is shown, which is obtained based on the nonsmooth analytical material introduced in Section \ref{section:2}. 

\begin{theorem}[FJ condition] \label{tho:2}
Suppose Assumptions \ref{assum:1} and \ref{assum:2} hold. Let $(\bar{\boldsymbol{x}}, \bar{\boldsymbol{y}})$ be a local optimal solution to problem SPBP$\epsilon$ in (\ref{eq25}), with $\bar{\boldsymbol{x}}$ being an interior point of $\mathcal{X}$. Then there exist $\lambda_1 \in \{0, 1\}$, $\lambda_2 \ge 0$, and $\lambda_3 \ge 0$ not all zero, inteters $I$, $J$, $r_{ij}\ge 0$ satisfying $\sum_{i=1}^{I}\sum_{j=1}^J r_{ij}=1$, $\boldsymbol{y}_i \in \mathcal{R}_{\epsilon}(\bar{\boldsymbol{x}})$, $\sigma_{ij}\ge 0$, where $i\in \{1, \ldots, I\}$, $j\in \{1, \ldots, J\}$, and the definition of $\mathcal{R}_{\epsilon}(\bar{\boldsymbol{x}})$ is given in (\ref{eq24}), such that 
\begin{align*}    \nonumber
& 0 = \lambda_1 \nabla_{\boldsymbol{x}}F(\bar{\boldsymbol{x}}, \bar{\boldsymbol{y}}) + \lambda_2 \bigg(\sum_{i=1}^I \sum_{j=1}^J r_{ij}\boldsymbol{e}_{ij} - \nabla_{\boldsymbol{x}} F(\bar{\boldsymbol{x}}, \bar{\boldsymbol{y}})\bigg)  + \lambda_3(\nabla_{\boldsymbol{x}} f(\bar{\boldsymbol{x}}, \bar{\boldsymbol{y}}) - \nabla f^*(\bar{\boldsymbol{x}})),  \\ \nonumber
& 0 \in \lambda_1 \nabla_{\boldsymbol{y}}F(\bar{\boldsymbol{x}}, \bar{\boldsymbol{y}}) - \lambda_2 \nabla_{\boldsymbol{y}}F(\bar{\boldsymbol{x}}, \bar{\boldsymbol{y}}) + \lambda_3 \nabla_{\boldsymbol{y}}f(\bar{\boldsymbol{x}}, \bar{\boldsymbol{y}}) + \mathcal{N}_{\mathcal{Y}}(\bar{\boldsymbol{y}}), \\
& \lambda_3(f(\bar{\boldsymbol{x}}, \bar{\boldsymbol{y}}) - f^*(\bar{\boldsymbol{x}}) - \epsilon) = 0
\end{align*}
and for each $i\in \{1, \ldots, I\}$, $j\in \{1, \ldots, J\}$, 
\begin{align*}
& \sigma_{ij}(f(\bar{\boldsymbol{x}}, \boldsymbol{y}_i) - f^*(\bar{\boldsymbol{x}}) - \epsilon) = 0, \\
& 0 \in -\nabla_{\boldsymbol{y}}F(\bar{\boldsymbol{x}}, \boldsymbol{y}_i)+ \sigma_{ij}\nabla_{\boldsymbol{y}}f(\bar{\boldsymbol{x}}, \boldsymbol{y}_i) + \mathcal{N}_{\mathcal{Y}}(\boldsymbol{y}_i)
\end{align*}
where $\boldsymbol{e}_{ij} = \nabla_{\boldsymbol{x}} F(\bar{\boldsymbol{x}}, \boldsymbol{y}_i) - \sigma_{ij}(\nabla_{\boldsymbol{x}}f(\bar{\boldsymbol{x}}, \boldsymbol{y}_i) - \nabla f^*(\bar{\boldsymbol{x}}))$ for $i \in \{1, \ldots, I\}$, $j\in \{1, \ldots, J\}$, $\nabla f^*(\bar{\boldsymbol{x}})$ can be computed through Proposition \ref{pro:1}, and the definition of $\mathcal{N}_{\mathcal{Y}}(\bar{\boldsymbol{y}})$ is given in Proposition \ref{pro:2}.
\end{theorem}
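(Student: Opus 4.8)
The plan is to reduce Theorem~\ref{tho:2} to the Fritz--John rule of Theorem~\ref{tho:1} by casting SPBP$\epsilon$ in (\ref{eq25}) into the template problem (\ref{eq6}). First I would invoke Propositions~\ref{pro:5} and~\ref{pro:6}: since $\bar{\boldsymbol{x}}$ is a local optimal solution of PBP$\epsilon$, the pair $(\bar{\boldsymbol{x}}, \bar{\boldsymbol{y}})$ is a local optimal solution of SPBP$\epsilon$ in (\ref{eq25}) for every $\bar{\boldsymbol{y}}\in \mathcal{R}_{\epsilon}(\bar{\boldsymbol{x}})$. The crucial observation is that the constraint $F^*(\boldsymbol{x})\le F(\boldsymbol{x}, \boldsymbol{y})$ is itself a value-function inequality: writing $\tilde{f}(\boldsymbol{x}, \boldsymbol{y}):=-F(\boldsymbol{x}, \boldsymbol{y})$ and $\tilde{g}(\boldsymbol{x}, \boldsymbol{y}):=f(\boldsymbol{x}, \boldsymbol{y})-f^*(\boldsymbol{x})-\epsilon$, the set $\mathcal{S}_{\epsilon}(\boldsymbol{x})$ equals $\{\boldsymbol{y}\in\mathcal{Y}: \tilde{g}(\boldsymbol{x}, \boldsymbol{y})\le 0\}$, and $F^*(\boldsymbol{x})=-\tilde{v}(\boldsymbol{x})$ with $\tilde{v}(\boldsymbol{x}):=\min_{\boldsymbol{y}}\{\tilde{f}(\boldsymbol{x}, \boldsymbol{y}):\boldsymbol{y}\in\mathcal{Y},\ \tilde{g}(\boldsymbol{x}, \boldsymbol{y})\le 0\}$. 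Hence $F^*(\boldsymbol{x})\le F(\boldsymbol{x}, \boldsymbol{y})$ is precisely $\tilde{f}(\boldsymbol{x}, \boldsymbol{y})-\tilde{v}(\boldsymbol{x})\le 0$, so (\ref{eq25}) matches (\ref{eq6}) with $F_1\equiv 0$, $F_2=F$, lower-level objective $\tilde{f}$, value function $\tilde{v}$, and smooth constraint $\tilde{g}$; the latter is continuously differentiable because $f$ is twice differentiable and $f^*$ is differentiable by Proposition~\ref{pro:1}, so Assumption~\ref{assum:5} is inherited from Assumptions~\ref{assum:1} and~\ref{assum:2}.

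Next I would establish the constraint qualification needed to apply Proposition~\ref{pro:4} to $\tilde{v}$, namely $\mathcal{M}^0_{\bar{\boldsymbol{x}}}(\mathcal{A}_{\bar{\boldsymbol{x}}})=\{0\}$, where here $\mathcal{A}_{\bar{\boldsymbol{x}}}=\mathcal{R}_{\epsilon}(\bar{\boldsymbol{x}})$. This is where the perturbation $\epsilon>0$ does the work: if some $\sigma>0$ belonged to $\mathcal{M}^0_{\bar{\boldsymbol{x}}}(\boldsymbol{y})$ for some $\boldsymbol{y}\in\mathcal{R}_{\epsilon}(\bar{\boldsymbol{x}})$, then $\sigma\tilde{g}(\bar{\boldsymbol{x}}, \boldsymbol{y})=0$ would force $f(\bar{\boldsymbol{x}}, \boldsymbol{y})=f^*(\bar{\boldsymbol{x}})+\epsilon$, while $0\in\sigma\nabla_{\boldsymbol{y}}f(\bar{\boldsymbol{x}}, \boldsymbol{y})+\mathcal{N}_{\mathcal{Y}}(\boldsymbol{y})$ combined with the convexity of $f(\bar{\boldsymbol{x}}, \cdot)$ (Assumption~\ref{assum:2}) would make $\boldsymbol{y}$ a global minimizer of $f(\bar{\boldsymbol{x}}, \cdot)$ over $\mathcal{Y}$, i.e.\ $f(\bar{\boldsymbol{x}}, \boldsymbol{y})=f^*(\bar{\boldsymbol{x}})$; since $\epsilon>0$ this is a contradiction, so $\sigma=0$. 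With the qualification in hand, Proposition~\ref{pro:4} gives that $\tilde{v}=-F^*$ is Lipschitz continuous near $\bar{\boldsymbol{x}}$ and bounds $\hat{\partial}\tilde{v}(\bar{\boldsymbol{x}})$ by the set of vectors $-\nabla_{\boldsymbol{x}}F(\bar{\boldsymbol{x}}, \boldsymbol{y})+\sigma(\nabla_{\boldsymbol{x}}f(\bar{\boldsymbol{x}}, \boldsymbol{y})-\nabla f^*(\bar{\boldsymbol{x}}))$ over $\boldsymbol{y}\in\mathcal{R}_{\epsilon}(\bar{\boldsymbol{x}})$ and $\sigma\in\mathcal{M}^1_{\bar{\boldsymbol{x}}}(\boldsymbol{y})$, where $\mathcal{M}^1_{\bar{\boldsymbol{x}}}(\boldsymbol{y})$ is exactly the inner-problem multiplier set described by the last two displayed relations of the theorem.

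Then I would apply Theorem~\ref{tho:1} directly. Since $\bar{\boldsymbol{x}}$ is interior, $\mathcal{N}_{\mathcal{X}}(\bar{\boldsymbol{x}})=\{0\}$, and substituting $\tilde{f}$, $\tilde{g}$ into the two inclusions of Theorem~\ref{tho:1} immediately reproduces the $\boldsymbol{y}$-equation (using $\nabla_{\boldsymbol{y}}\tilde{g}=\nabla_{\boldsymbol{y}}f$) and the complementarity $\lambda_3(f(\bar{\boldsymbol{x}}, \bar{\boldsymbol{y}})-f^*(\bar{\boldsymbol{x}})-\epsilon)=0$. For the $\boldsymbol{x}$-equation, Theorem~\ref{tho:1} furnishes a selection $\boldsymbol{w}\in\partial\tilde{v}(\bar{\boldsymbol{x}})$ in the term $\lambda_2(\nabla_{\boldsymbol{x}}\tilde{f}(\bar{\boldsymbol{x}}, \bar{\boldsymbol{y}})-\boldsymbol{w})$. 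By Proposition~\ref{pro:2}(2), $\partial\tilde{v}(\bar{\boldsymbol{x}})=\text{co}\,\hat{\partial}\tilde{v}(\bar{\boldsymbol{x}})$, so combining the Proposition~\ref{pro:4} bound with Carath\'eodory's theorem I can write $\boldsymbol{w}=-\sum_{i,j}r_{ij}\boldsymbol{e}_{ij}$ with $r_{ij}\ge 0$, $\sum_{i,j}r_{ij}=1$, $\boldsymbol{y}_i\in\mathcal{R}_{\epsilon}(\bar{\boldsymbol{x}})$, $\sigma_{ij}\in\mathcal{M}^1_{\bar{\boldsymbol{x}}}(\boldsymbol{y}_i)$, and $\boldsymbol{e}_{ij}$ as defined. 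Using $\nabla_{\boldsymbol{x}}\tilde{f}(\bar{\boldsymbol{x}}, \bar{\boldsymbol{y}})=-\nabla_{\boldsymbol{x}}F(\bar{\boldsymbol{x}}, \bar{\boldsymbol{y}})$, the term becomes $\lambda_2(\sum_{i,j}r_{ij}\boldsymbol{e}_{ij}-\nabla_{\boldsymbol{x}}F(\bar{\boldsymbol{x}}, \bar{\boldsymbol{y}}))$, which together with the $\lambda_1$ and $\lambda_3$ contributions yields the first displayed equation.

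The main obstacle I anticipate is concentrated in the passage through $\tilde{v}=-F^*$. The first difficulty is verifying the constraint qualification $\mathcal{M}^0_{\bar{\boldsymbol{x}}}(\mathcal{R}_{\epsilon}(\bar{\boldsymbol{x}}))=\{0\}$ over the possibly non-singleton set $\mathcal{R}_{\epsilon}(\bar{\boldsymbol{x}})$, since this is exactly what legitimizes invoking Proposition~\ref{pro:4}; the Slater-type argument above settles it, but it rests essentially on $\epsilon>0$ and the convexity of $f(\bar{\boldsymbol{x}}, \cdot)$. The second is the sign and convex-hull bookkeeping when turning the abstract inclusion involving $-\lambda_2\partial\tilde{v}(\bar{\boldsymbol{x}})$ into the explicit finite convex combination with the multipliers $\sigma_{ij}$; here one must track carefully that the elements of $\hat{\partial}\tilde{v}(\bar{\boldsymbol{x}})$ are the negatives of the $\boldsymbol{e}_{ij}$, so that the signs inside $\boldsymbol{e}_{ij}$ and in front of $\nabla f^*(\bar{\boldsymbol{x}})$ emerge exactly as stated.
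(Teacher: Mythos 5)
Your proposal is correct and takes essentially the same route as the paper's own proof: you recast the constraint $F^*(\boldsymbol{x})\le F(\boldsymbol{x},\boldsymbol{y})$ as a value-function inequality with $\tilde{v}=-F^*$, verify the constraint qualification of Proposition~\ref{pro:4} by the same $\epsilon>0$/convexity contradiction that the paper isolates as Lemma~\ref{lemma:1}, and then apply Theorem~\ref{tho:1} with $\mathcal{N}_{\mathcal{X}}(\bar{\boldsymbol{x}})=\{0\}$ together with the representation of $\partial F^*(\bar{\boldsymbol{x}})=\mathrm{co}\,\hat{\partial}\tilde{v}$-type elements as a finite convex combination, with the signs of the $\boldsymbol{e}_{ij}$ tracked exactly as in the paper's equations (\ref{eq35})--(\ref{eq36}). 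The only cosmetic deviation is your opening appeal to Propositions~\ref{pro:5} and~\ref{pro:6}, which is unnecessary because the theorem already hypothesizes that $(\bar{\boldsymbol{x}},\bar{\boldsymbol{y}})$ is a local optimal solution of (\ref{eq25}).
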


In the following, we show that for any feasible point $(\bar{\boldsymbol{x}}, \bar{\boldsymbol{y}})$ of problem SPBP$\epsilon$ in (\ref{eq25}), with $\bar{\boldsymbol{x}}$ being an interior point of $\mathcal{X}$, the above FJ condition with $\lambda_1 = 0$ can be satisfied.

\begin{proposition}\label{pro:7}
Suppose Assumptions \ref{assum:1} and \ref{assum:2} hold. If $(\bar{\boldsymbol{x}}, \bar{\boldsymbol{y}})$ is a feasible point of problem SPBP$\epsilon$ in (\ref{eq25}) with $\bar{\boldsymbol{x}}$ being an interior point of $\mathcal{X}$, then $(\bar{\boldsymbol{x}}, \bar{\boldsymbol{y}})$ satisfies the FJ condition in Theorem \ref{tho:2} with $\lambda_1 = 0$.
\end{proposition}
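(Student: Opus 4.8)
The plan is to exhibit explicit multipliers that realize the Fritz--John system of Theorem \ref{tho:2} with $\lambda_1=0$, reducing everything to the inner maximization problem $\max_{\boldsymbol{y}\in\mathcal{Y}}\{F(\bar{\boldsymbol{x}},\boldsymbol{y}):f(\bar{\boldsymbol{x}},\boldsymbol{y})\le f^*(\bar{\boldsymbol{x}})+\epsilon\}$ at the fixed point $\bar{\boldsymbol{x}}$. First I would record that feasibility of $(\bar{\boldsymbol{x}},\bar{\boldsymbol{y}})$ for (\ref{eq25}) already forces $\bar{\boldsymbol{y}}\in\mathcal{R}_{\epsilon}(\bar{\boldsymbol{x}})$: since $f(\bar{\boldsymbol{x}},\bar{\boldsymbol{y}})\le f^*(\bar{\boldsymbol{x}})+\epsilon$ and $\bar{\boldsymbol{y}}\in\mathcal{Y}$ we have $\bar{\boldsymbol{y}}\in\mathcal{S}_{\epsilon}(\bar{\boldsymbol{x}})$, so $F(\bar{\boldsymbol{x}},\bar{\boldsymbol{y}})\le F^*(\bar{\boldsymbol{x}})$ by (\ref{eq26}); combined with the feasibility inequality $F^*(\bar{\boldsymbol{x}})\le F(\bar{\boldsymbol{x}},\bar{\boldsymbol{y}})$ this gives $F(\bar{\boldsymbol{x}},\bar{\boldsymbol{y}})=F^*(\bar{\boldsymbol{x}})$, i.e.\ $\bar{\boldsymbol{y}}$ maximizes the inner problem and hence lies in $\mathcal{R}_{\epsilon}(\bar{\boldsymbol{x}})$ defined in (\ref{eq24}).

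The key step is to certify a genuine KKT multiplier (leading coefficient exactly $1$) for this inner maximization at $\bar{\boldsymbol{y}}$. Here I would invoke Slater's condition, which holds precisely because $\epsilon>0$: by Assumptions \ref{assum:1} and \ref{assum:2} the minimum $f^*(\bar{\boldsymbol{x}})=\min_{\boldsymbol{y}\in\mathcal{Y}}f(\bar{\boldsymbol{x}},\boldsymbol{y})$ is attained at some $\boldsymbol{y}^*\in\mathcal{Y}$ with $f(\bar{\boldsymbol{x}},\boldsymbol{y}^*)=f^*(\bar{\boldsymbol{x}})<f^*(\bar{\boldsymbol{x}})+\epsilon$, so the constraint $f(\bar{\boldsymbol{x}},\cdot)\le f^*(\bar{\boldsymbol{x}})+\epsilon$ is strictly satisfied. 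Since $F(\bar{\boldsymbol{x}},\cdot)$ is concave and $f(\bar{\boldsymbol{x}},\cdot)$ and $\mathcal{Y}$ are convex, this convex maximum at $\bar{\boldsymbol{y}}$ admits a multiplier $\sigma\ge 0$ with $0\in-\nabla_{\boldsymbol{y}}F(\bar{\boldsymbol{x}},\bar{\boldsymbol{y}})+\sigma\nabla_{\boldsymbol{y}}f(\bar{\boldsymbol{x}},\bar{\boldsymbol{y}})+\mathcal{N}_{\mathcal{Y}}(\bar{\boldsymbol{y}})$ and $\sigma(f(\bar{\boldsymbol{x}},\bar{\boldsymbol{y}})-f^*(\bar{\boldsymbol{x}})-\epsilon)=0$, which is exactly the form the per-$(i,j)$ relations of Theorem \ref{tho:2} require.

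With this multiplier in hand the rest is bookkeeping. I would set $\lambda_1=0$, $\lambda_2=1$, $\lambda_3=\sigma$, take $I=J=1$, $\boldsymbol{y}_1=\bar{\boldsymbol{y}}$, $r_{11}=1$, and $\sigma_{11}=\sigma$, then check each line of the system. The two per-$(i,j)$ relations, together with the second and third lines of the first block, are exactly the KKT stationarity and complementarity obtained above; the choice $\lambda_2=1$ ensures the multipliers are not all zero; and $\nabla f^*(\bar{\boldsymbol{x}})$ is well defined by Proposition \ref{pro:1} because $\bar{\boldsymbol{x}}$ is interior (this interiority is also what removes the $\mathcal{N}_{\mathcal{X}}(\bar{\boldsymbol{x}})$ term from the first line). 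The only line needing a computation is the first, where, using $\boldsymbol{e}_{11}=\nabla_{\boldsymbol{x}}F(\bar{\boldsymbol{x}},\bar{\boldsymbol{y}})-\sigma(\nabla_{\boldsymbol{x}}f(\bar{\boldsymbol{x}},\bar{\boldsymbol{y}})-\nabla f^*(\bar{\boldsymbol{x}}))$, the right-hand side equals $(\boldsymbol{e}_{11}-\nabla_{\boldsymbol{x}}F(\bar{\boldsymbol{x}},\bar{\boldsymbol{y}}))+\sigma(\nabla_{\boldsymbol{x}}f(\bar{\boldsymbol{x}},\bar{\boldsymbol{y}})-\nabla f^*(\bar{\boldsymbol{x}}))=0$ by exact cancellation of the $\sigma$ terms.

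The main obstacle is not the algebra but the justification in the second step: everything downstream is forced once the inner maximization is known to carry a KKT multiplier with leading coefficient $1$ at $\bar{\boldsymbol{y}}$. This is exactly the point where $\epsilon>0$ is indispensable, since without the resulting strict feasibility one could only guarantee a Fritz--John multiplier for the inner problem and the per-$(i,j)$ relations might then be unsatisfiable. I would therefore foreground the Slater argument and treat the multiplier verification as routine; the statement itself signals that the FJ condition of Theorem \ref{tho:2} is degenerate (holding at every interior-feasible point with $\lambda_1=0$), which is presumably the motivation for isolating a stronger stationarity notion with $\lambda_1=1$.
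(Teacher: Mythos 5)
Your proof is correct, but it takes a genuinely different route from the paper's. Both arguments open identically: feasibility plus the definition of $F^*$ in (\ref{eq26}) force $F^*(\bar{\boldsymbol{x}})=F(\bar{\boldsymbol{x}},\bar{\boldsymbol{y}})$, hence $\bar{\boldsymbol{y}}\in\mathcal{R}_{\epsilon}(\bar{\boldsymbol{x}})$. From there the paper observes that $(\bar{\boldsymbol{x}},\bar{\boldsymbol{y}})$ is a \emph{global minimizer} of the auxiliary problem $\min\{F^*(\boldsymbol{x})-F(\boldsymbol{x},\boldsymbol{y}):\boldsymbol{x}\in\mathcal{X},\ \boldsymbol{y}\in\mathcal{Y},\ f(\boldsymbol{x},\boldsymbol{y})\le f^*(\boldsymbol{x})+\epsilon\}$ (the objective is nonnegative on the feasible set and vanishes at $(\bar{\boldsymbol{x}},\bar{\boldsymbol{y}})$), and then invokes the nonsmooth Fritz--John result (Theorem \ref{tho:1}) together with the Lipschitz continuity and Clarke-subdifferential representation of $F^*$ from Lemma \ref{lemma:1}; the multipliers $\lambda_2\in\{0,1\}$, $\lambda_3\ge 0$ are only asserted to exist, and the data $I$, $J$, $r_{ij}$, $\boldsymbol{y}_i$, $\sigma_{ij}$ come from expressing the relevant element of $\partial F^*(\bar{\boldsymbol{x}})$ as a convex combination as in (\ref{eq35}). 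You instead build everything by hand: since $\epsilon>0$ and $f^*(\bar{\boldsymbol{x}})$ is attained on the compact set $\mathcal{Y}$, Slater's condition holds for the inner concave maximization, classical convex duality yields a genuine KKT multiplier $\sigma\in\mathcal{M}_{\bar{\boldsymbol{x}}}^{1}(\bar{\boldsymbol{y}})$ at $\bar{\boldsymbol{y}}$, and the explicit choices $\lambda_1=0$, $\lambda_2=1$, $\lambda_3=\sigma$, $I=J=1$, $\boldsymbol{y}_1=\bar{\boldsymbol{y}}$, $r_{11}=1$, $\sigma_{11}=\sigma$ satisfy the whole system of Theorem \ref{tho:2} by exact cancellation, as you verify line by line. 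Your route is more elementary---it bypasses Theorem \ref{tho:1} and Lemma \ref{lemma:1} entirely---and slightly sharper, since it pins the objective-type multiplier to $\lambda_2=1$ rather than $\lambda_2\in\{0,1\}$. What the paper's route buys is economy of tools: it reuses exactly the machinery already developed for Theorem \ref{tho:2}. It is worth noting that the constraint-qualification content of your Slater step is the same fact the paper establishes inside the proof of Lemma \ref{lemma:1}, namely that $\mathcal{M}_{\bar{\boldsymbol{x}}}^{0}(\boldsymbol{y})=\{0\}$ for $\boldsymbol{y}\in\mathcal{R}_{\epsilon}(\bar{\boldsymbol{x}})$, proved by the same contradiction between an active constraint and $\epsilon>0$ under convexity of $f(\bar{\boldsymbol{x}},\cdot)$; so the two arguments exploit the perturbation $\epsilon>0$ in precisely parallel ways, yours through duality and the paper's through the limiting-subgradient calculus of Proposition \ref{pro:4}.
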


Nevertheless, the FJ condition for problem SPBP$\epsilon$ is a weaker optimality condition. However, the assumption for the KKT condition(i.e., the FJ condition in Theorem \ref{tho:2} with $\lambda_1=1$) in \cite{15} is very strong, which would limit the application of the algorithms. Therefore, we still use the FJ condition in the paper. In fact, our proposed algorithm can converge to a point which satisfies the FJ condition in Theorem \ref{tho:2} with $\lambda_1=1$.

Notice that all the local optimal points for PBP$\epsilon$ in (\ref{eq4}) satisfy the necessary optimality condition in Theorem \ref{tho:2}, which can be obtained based on Assumption \ref{assum:4} and the relation between local optimal points of PBP$\epsilon$ and SPBP$\epsilon$ in (\ref{eq25}). Then, we define the following stationary points for problem PBP$\epsilon$.
\begin{definition}
Suppose Assumptions \ref{assum:1}, \ref{assum:2}, and \ref{assum:4} hold. We say that a point $\bar{\boldsymbol{x}}\in \mathcal{X}$ is a stationary point of problem PBP$\epsilon$ in (\ref{eq4}) if there exists $\bar{\boldsymbol{y}}\in \mathcal{R}_{\epsilon}(\bar{\boldsymbol{x}})$, $\lambda_1 \in \{0, 1\}$, $\lambda_2 \ge 0$, and $\lambda_3 \ge 0$ not all zero, inteters $I$, $J$, $r_{ij}\ge 0$ satisfying $\sum_{i=1}^{I}\sum_{j=1}^J r_{ij}=1$, $\boldsymbol{y}_i \in \mathcal{R}_{\epsilon}(\bar{\boldsymbol{x}})$, $\sigma_{ij}\ge 0$, where $i\in \{1, \ldots, I\}$, $j\in \{1, \ldots, J\}$, and the definition of $\mathcal{R}_{\epsilon}(\bar{\boldsymbol{x}})$ is given in (\ref{eq24}), such that the conclusion in Theorem \ref{tho:2} holds.
\end{definition}

\subsection{Convergence of PVFIM in Algorithm \ref{alg:1}}

In this subsection, we show the convergence of PVFIM in Algorithm \ref{alg:1}. Firstly, we show that for any given $\sigma \in (0,1)$, Algorithm \ref{alg:2} can find a $\sigma$-FNE point for problem (\ref{eq15}) in which $J$ is an arbitrary positive integer, and $\tau$ satisfies the inequality in (\ref{eq27}) if we choose $T$, $K$ to be the numbers that satisfy the inequalities in (\ref{eq28}).

\begin{theorem} \label{tho:3}
Suppose Assumptions \ref{assum:1}, \ref{assum:2}, \ref{assum:3}, \ref{assum:4}, \ref{assum:6}, and \ref{assum:7} hold. Let $c_0$ in Algorithm \ref{alg:1} be the number equal to $c$ in Assumption \ref{assum:7}. Assume that for any $\tau \in(0, 1)$, positive integer $J$, $\boldsymbol{x}\in \mathcal{X}$, 
\begin{equation*}
\boldsymbol{y}^*(\boldsymbol{x}):= { \underset {\boldsymbol{y}\in \mathcal{Y}_{J}(\boldsymbol{x})} { \operatorname {arg\,max} } \, G_{\epsilon, \tau, J}(\boldsymbol{x}, \boldsymbol{y})}
\end{equation*}
is an interior point of $\mathcal{Y}$, where $G_{\epsilon, \tau, J}(\boldsymbol{x}, \boldsymbol{y})$ is given in (\ref{eq16}), and $\mathcal{Y}_J(\boldsymbol{x})$ is given in (\ref{eq21}) with $c_0 = c$. Let $l_2 \in (0, 1)$. Given $\sigma \in(0, 1)$, for problem (\ref{eq15}), let $J$ be an arbitrary positive integer, and $\tau$ be a positive number satisfying
\begin{equation} \label{eq27}
\tau \le \frac{c\sigma^2}{36HJL_0\bar{l}(J)}
\end{equation}
where 
\begin{equation*} 
\bar{l}(J) = \max\bigg\{ \frac{(2H(L_{\varphi}(J)/2 + l_2) + h_0 + 4/cL_0)^2}{l_2}, l_1(J)\bigg\},
\end{equation*}
, $l_1(J) = \max\{1, L_{11}(J)H, L_G\}$, and $H$, $h_0$, $L_0$, and $c$ are given in Assumptions \ref{assum:1}, \ref{assum:3}, \ref{assum:3}, and \ref{assum:7}, respectively, and $L_{11}(J)$, $L_G$, $L_{\varphi}(J)$ are given in Lemmas \ref{lemma:5}, \ref{lemma:6}, and \ref{lemma:7} in the appendices, respectively. Choose stepsizes $\alpha$, $\beta$, $\eta$ in Algorithm \ref{alg:2} to be $\alpha = 1/L_1$, $\beta = 1/L_G$, and $\eta = 1/(L_{\varphi}(J)/2+l_2)$, where $L_1$ is given in Assumption \ref{assum:3}. Then if 
\begin{equation}   \label{eq28}
T \ge \frac{9M_3\bar{l}(J)}{\sigma^2}, \qquad  K \ge \frac{2\ln(\sigma^2/(36M\bar{l}(J)^2))}{\ln(1-\mu/L_G)}
\end{equation}
where $M_3$ is given in Lemma \ref{lemma:8} in the appendices, and $M$ and $\mu$ are given in Assumptions \ref{assum:1} and \ref{assum:2}, respectively, Algorithm \ref{alg:2} can find a $\sigma$-FNE point for problem (\ref{eq15}), that is, there exists $t \in \{0, \ldots, T-1\}$ such that $(\boldsymbol{x}_t, \boldsymbol{y}_K(\boldsymbol{x}_t))$ is a $\sigma$-FNE point of problem (\ref{eq15}).
\end{theorem}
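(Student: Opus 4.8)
The plan is to verify the two defining inequalities of a $\sigma$-FNE point (Definition \ref{def:3}) at some iterate $(\boldsymbol{x}_t, \boldsymbol{y}_K(\boldsymbol{x}_t))$, treating the inner projected gradient-ascent loop and the outer projected gradient-descent loop separately and then combining the resulting error bounds. I would start with the $\boldsymbol{y}$-inequality $\|\nabla_{\boldsymbol{y}} G_{\epsilon, \tau, J}(\boldsymbol{x}_t, \boldsymbol{y}_K(\boldsymbol{x}_t))\| \le \sigma$. Since $F(\boldsymbol{x}_t, \cdot)$ is $\mu$-strongly concave (Assumption \ref{assum:2}) and the log-barrier term is concave, $G_{\epsilon, \tau, J}(\boldsymbol{x}_t, \cdot)$ is $\mu$-strongly concave with $L_G$-Lipschitz gradient (Lemma \ref{lemma:6}). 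Projected gradient ascent with $\beta = 1/L_G$ therefore contracts the distance to the maximizer $\boldsymbol{y}^*(\boldsymbol{x}_t)$ at the linear rate $(1-\mu/L_G)$, so $\|\boldsymbol{y}_K(\boldsymbol{x}_t) - \boldsymbol{y}^*(\boldsymbol{x}_t)\|$ decays geometrically, and the choice of $K$ in (\ref{eq28}) is exactly what drives this distance below the prescribed tolerance. Because $\boldsymbol{y}^*(\boldsymbol{x}_t)$ is assumed interior to $\mathcal{Y}$ and, by Assumption \ref{assum:7} with $c_0 = c$, the barrier-region constraint is inactive there, the maximizer is effectively unconstrained and $\nabla_{\boldsymbol{y}} G_{\epsilon, \tau, J}(\boldsymbol{x}_t, \boldsymbol{y}^*(\boldsymbol{x}_t)) = 0$; Lipschitz-smoothness of the gradient then turns the distance bound into $\|\nabla_{\boldsymbol{y}} G\| \le \sigma$.

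Next I would control the bias of the gradient estimate $\boldsymbol{a}_t$ in (\ref{eq22}) against the true gradient $\nabla \varphi_{\epsilon, \tau, J}(\boldsymbol{x}_t)$. By a Danskin-type argument—again using that $\boldsymbol{y}^*(\boldsymbol{x}_t)$ is an interior, constraint-inactive maximizer, so the feasible set is locally fixed—one gets $\nabla \varphi_{\epsilon, \tau, J}(\boldsymbol{x}_t) = \nabla_{\boldsymbol{x}} G_{\epsilon, \tau, J}(\boldsymbol{x}_t, \boldsymbol{y}^*(\boldsymbol{x}_t))$, where the partial in $\boldsymbol{x}$ carries the full dependence through $f_J(\boldsymbol{x}_t)$. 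The estimate $\boldsymbol{a}_t$ then departs from this for two reasons: it evaluates at $\boldsymbol{y}_K(\boldsymbol{x}_t)$ rather than $\boldsymbol{y}^*(\boldsymbol{x}_t)$, an error controlled by $\|\boldsymbol{y}_K - \boldsymbol{y}^*\|$ through the Lipschitz constants of Assumption \ref{assum:3} with the prefactor $\tau/(f_J + \epsilon - f)$ kept bounded by the buffer $c$ of Assumption \ref{assum:7}; and it replaces the total derivative $\nabla f_J(\boldsymbol{x}_t)$ (whose existence is Lemma \ref{lemma:4}), which accounts for the dependence of $\boldsymbol{y}_J$ on $\boldsymbol{x}$, by the partial $\nabla_{\boldsymbol{x}} f(\boldsymbol{x}_t, \boldsymbol{y}_J(\boldsymbol{x}_t))$, an error that enters multiplied by $\tau$ and is hence suppressed by the smallness condition (\ref{eq27}). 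Summing these, I expect a uniform bound $\|\boldsymbol{a}_t - \nabla \varphi_{\epsilon, \tau, J}(\boldsymbol{x}_t)\| = O(\sigma)$ once $K$ and $\tau$ are chosen as prescribed.

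Finally I would run the standard inexact descent analysis for the outer loop. Using the $L_{\varphi}(J)$-smoothness of $\varphi_{\epsilon, \tau, J}$ (Lemma \ref{lemma:7}) and the stepsize $\eta = 1/(L_{\varphi}(J)/2 + l_2)$, the descent lemma applied to $\boldsymbol{x}_{t+1} = \text{proj}_{\mathcal{X}}(\boldsymbol{x}_t - \eta \boldsymbol{a}_t)$ gives a per-iteration decrease of $\varphi_{\epsilon, \tau, J}$ up to a term quadratic in the bias from the previous paragraph. Telescoping over $t = 0, \dots, T-1$ and invoking the uniform range bound $M_3$ (Lemma \ref{lemma:8}), the smallest gradient-mapping norm over the iterates is $O(1/T) + O(\sigma)$, so $T \ge 9 M_3 \bar{l}(J)/\sigma^2$ forces an index $t$ at which the linearized stationarity gap $\min_{\boldsymbol{x} \in \mathcal{X}} \langle \nabla \varphi_{\epsilon, \tau, J}(\boldsymbol{x}_t), \boldsymbol{x} - \boldsymbol{x}_t \rangle \ge -\sigma$ (after splitting the $\sigma$ budget across the terms, which is what the numerical constants $9$, $36$ encode). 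Transferring this from $\nabla \varphi_{\epsilon, \tau, J}(\boldsymbol{x}_t) = \nabla_{\boldsymbol{x}} G(\boldsymbol{x}_t, \boldsymbol{y}^*(\boldsymbol{x}_t))$ to $\nabla_{\boldsymbol{x}} G(\boldsymbol{x}_t, \boldsymbol{y}_K(\boldsymbol{x}_t))$—using the inner-loop bound on $\|\boldsymbol{y}_K - \boldsymbol{y}^*\|$ together with $\|\boldsymbol{x} - \boldsymbol{x}_t\| \le 2H$ from Assumption \ref{assum:1}—yields the $\boldsymbol{x}$-inequality of Definition \ref{def:3}, and combined with the first paragraph this identifies $(\boldsymbol{x}_t, \boldsymbol{y}_K(\boldsymbol{x}_t))$ as a $\sigma$-FNE point.

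I expect the main obstacle to be the gradient-bias estimate of the second paragraph: $\boldsymbol{a}_t$ compounds the inexactness of the inner maximizer $\boldsymbol{y}_K$, the inexact value function $f_J$ with its partial-for-total gradient substitution, and the nonlinear $1/(f_J + \epsilon - f)$ prefactor, while all relevant Lipschitz constants grow with $J$ through $\bar{l}(J)$, $L_{11}(J)$ (Lemma \ref{lemma:5}), and $L_{\varphi}(J)$. The bound must be made uniform in $\boldsymbol{x}_t$ and kept $O(\sigma)$ for the prescribed $\tau$, and the crucial ingredient making this possible is keeping the barrier denominator bounded away from zero via the buffer $c$ in Assumption \ref{assum:7}, which simultaneously bounds the prefactor and justifies the Danskin identification.
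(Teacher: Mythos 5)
Your proposal is correct and follows essentially the same route as the paper's proof: the $\boldsymbol{y}$-inequality via the $\mu$-strongly-concave, $L_G$-smooth inner contraction combined with $\nabla_{\boldsymbol{y}}G_{\epsilon,\tau,J}(\boldsymbol{x}_t,\boldsymbol{y}^*(\boldsymbol{x}_t))=0$ from interiority (the paper's Lemma \ref{lemma:9}), the bias bound on $\boldsymbol{a}_t$ against $\nabla\varphi_{\epsilon,\tau,J}(\boldsymbol{x}_t)$ split into the $\boldsymbol{y}_K$-versus-$\boldsymbol{y}^*$ error (controlled by $K$) and the $\tau$-suppressed total-versus-partial $\nabla f_J$ term (controlled by (\ref{eq27})), and the telescoped inexact-descent argument with the range bound $M_3$ over $T$ iterations. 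The only cosmetic difference is bookkeeping: the paper defines the stationarity gap $\boldsymbol{g}_t$ directly in terms of $\nabla_{\boldsymbol{x}}G_{\epsilon,\tau,J}(\boldsymbol{x}_t,\boldsymbol{y}_K(\boldsymbol{x}_t))$ and splits off the correction term $\boldsymbol{b}$ up front, whereas you run the descent analysis on $\nabla\varphi_{\epsilon,\tau,J}$ and transfer to $\nabla_{\boldsymbol{x}}G_{\epsilon,\tau,J}(\boldsymbol{x}_t,\boldsymbol{y}_K(\boldsymbol{x}_t))$ at the end via the diameter $2H$ — the same estimates in a different order.
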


Then, we show that Algorithm \ref{alg:1} can converge to a stationary point of problem PBP$\epsilon$ in (\ref{eq4}).

\begin{theorem} \label{tho:4}
Under the same conditions of Theorem \ref{tho:3}. Let $\{\sigma_l\}_{l=1}^{\infty}$, $\{J_l\}_{l=1}^{\infty}$, $\{\tau_l\}_{l=1}^{\infty}$ in Algorithm \ref{alg:1} be the sequences such that $\sigma_l \rightarrow 0$, $J_l \rightarrow \infty$, and for each positive integer $l$, $\sigma_l \in (0, 1)$, $\tau_l$ is the number satisfying (\ref{eq27}) with $\sigma = \sigma_l$, $J=J_l$, $T_l$, $K_l$ are the numbers satisfying the inequalities in (\ref{eq28}) with $\sigma = \sigma_l$ and $J = J_l$, stepsizes $\alpha$, $\beta$, $\eta$ are chosen according to Theorem \ref{tho:3} with $J=J_l$, and $(\boldsymbol{x}_l, \boldsymbol{y}_l)$ be the $\sigma_l$-FNE point generated by Algorithm \ref{alg:2} for solving problem (\ref{eq15}) with $\tau = \tau_l$, $J=J_l$. Then if $(\bar{\boldsymbol{x}}, \bar{\boldsymbol{y}})$ is an accumulation point of the sequence $\{(\boldsymbol{x}_l, \boldsymbol{y}_l)\}_{l=1}^{\infty}$ with $\bar{\boldsymbol{x}}$ being an interior point of $\mathcal{X}$, $\bar{\boldsymbol{x}}$ is a stationary point of problem PBP$\epsilon$ in (\ref{eq4}).
\end{theorem}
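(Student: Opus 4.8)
The plan is to take an accumulation point $(\bar{\boldsymbol{x}},\bar{\boldsymbol{y}})$, pass along a convergent subsequence to the limit in the two defining inequalities of the $\sigma_l$-FNE point, and then exhibit explicit multipliers realizing the stationarity condition built on Theorem \ref{tho:2}. Fixing a subsequence (not relabeled) along which $(\boldsymbol{x}_l,\boldsymbol{y}_l)\to(\bar{\boldsymbol{x}},\bar{\boldsymbol{y}})$ and introducing the barrier coefficient
\begin{equation*}
\gamma_l := \frac{\tau_l}{f_{J_l}(\boldsymbol{x}_l)+\epsilon-f(\boldsymbol{x}_l,\boldsymbol{y}_l)},
\end{equation*}
Definition \ref{def:3} for $(\boldsymbol{x}_l,\boldsymbol{y}_l)$ with $\tau=\tau_l$, $J=J_l$ reads
\begin{align*}
& \big\langle \nabla_{\boldsymbol{x}}F(\boldsymbol{x}_l,\boldsymbol{y}_l)+\gamma_l\big(\nabla_{\boldsymbol{x}}f_{J_l}(\boldsymbol{x}_l)-\nabla_{\boldsymbol{x}}f(\boldsymbol{x}_l,\boldsymbol{y}_l)\big),\,\boldsymbol{x}-\boldsymbol{x}_l\big\rangle \ge -\sigma_l,\quad \forall\boldsymbol{x}\in\mathcal{X},\\
& \big\|\nabla_{\boldsymbol{y}}F(\boldsymbol{x}_l,\boldsymbol{y}_l)-\gamma_l\nabla_{\boldsymbol{y}}f(\boldsymbol{x}_l,\boldsymbol{y}_l)\big\|\le\sigma_l.
\end{align*}
Since $\boldsymbol{y}_l=\boldsymbol{y}_K(\boldsymbol{x}_l)$ is produced by projected ascent onto $\mathcal{Y}_{J_l}(\boldsymbol{x}_l)$ in (\ref{eq21}) with $c_0=c$, the slack is bounded below, $f_{J_l}(\boldsymbol{x}_l)+\epsilon-f(\boldsymbol{x}_l,\boldsymbol{y}_l)\ge c/2$, so that $0\le\gamma_l\le 2\tau_l/c$.

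The central step is to show that both barrier terms vanish in the limit. First, the calibration (\ref{eq27}) forces $\tau_l\to0$, since its right-hand side tends to $0$ as $\sigma_l\to0$ and $J_l\to\infty$; combined with $\gamma_l\le2\tau_l/c$ this gives $\gamma_l\to0$. The $\boldsymbol{y}$-barrier term $\gamma_l\nabla_{\boldsymbol{y}}f(\boldsymbol{x}_l,\boldsymbol{y}_l)$ then vanishes, as $\nabla_{\boldsymbol{y}}f$ is bounded on the compact domain. The delicate term is $\gamma_l\nabla_{\boldsymbol{x}}f_{J_l}(\boldsymbol{x}_l)$: the sensitivity of the $J$-step inner iterate to $\boldsymbol{x}$ may grow with $J$, so $\|\nabla_{\boldsymbol{x}}f_{J_l}(\boldsymbol{x}_l)\|$ is controlled only by the $J$-dependent constant $L_{11}(J_l)$ of Lemma \ref{lemma:5}, namely $\|\nabla_{\boldsymbol{x}}f_{J_l}(\boldsymbol{x}_l)\|=O(L_{11}(J_l))$. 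This is precisely why (\ref{eq27}) carries $\bar l(J)$: using $\bar l(J_l)\ge l_1(J_l)\ge L_{11}(J_l)H$ together with $\gamma_l\le2\tau_l/c$, one obtains $\gamma_l L_{11}(J_l)\le \sigma_l^2/(18H^2J_lL_0)\to0$, so the $\boldsymbol{x}$-barrier term also converges to $0$. Finally, the projected-gradient value estimate for the convex inner problem gives $f_{J_l}(\boldsymbol{x}_l)-f^*(\boldsymbol{x}_l)\to0$, and continuity of $f^*$ (Proposition \ref{pro:1}) yields $f_{J_l}(\boldsymbol{x}_l)\to f^*(\bar{\boldsymbol{x}})$.

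Passing to the limit and using $\sigma_l\to0$ then yields $\nabla_{\boldsymbol{y}}F(\bar{\boldsymbol{x}},\bar{\boldsymbol{y}})=0$ and, since $\bar{\boldsymbol{x}}$ is interior to $\mathcal{X}$ so that $\mathcal{N}_{\mathcal{X}}(\bar{\boldsymbol{x}})=\{0\}$, $\nabla_{\boldsymbol{x}}F(\bar{\boldsymbol{x}},\bar{\boldsymbol{y}})=0$. Moreover the slack bound gives in the limit $f(\bar{\boldsymbol{x}},\bar{\boldsymbol{y}})\le f^*(\bar{\boldsymbol{x}})+\epsilon-c/2<f^*(\bar{\boldsymbol{x}})+\epsilon$, so $\bar{\boldsymbol{y}}\in\mathcal{S}_\epsilon(\bar{\boldsymbol{x}})$. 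Because $F(\bar{\boldsymbol{x}},\cdot)$ is $\mu$-strongly concave by Assumption \ref{assum:2}, the condition $\nabla_{\boldsymbol{y}}F(\bar{\boldsymbol{x}},\bar{\boldsymbol{y}})=0$ identifies $\bar{\boldsymbol{y}}$ as the \emph{unique} unconstrained maximizer of $F(\bar{\boldsymbol{x}},\cdot)$ over $\mathbb{R}^m$; being also feasible, it is the maximizer over $\mathcal{S}_\epsilon(\bar{\boldsymbol{x}})$, i.e.\ $\bar{\boldsymbol{y}}\in\mathcal{R}_\epsilon(\bar{\boldsymbol{x}})$ as defined in (\ref{eq24}).

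It then remains to exhibit the multipliers of Theorem \ref{tho:2}. I would take $\lambda_1=\lambda_2=1$, $\lambda_3=0$, index range $I=1$ with a single inner index $j=1$, $r_{11}=1$, $\boldsymbol{y}_1=\bar{\boldsymbol{y}}$, and $\sigma_{11}=0$, so that $\boldsymbol{e}_{11}=\nabla_{\boldsymbol{x}}F(\bar{\boldsymbol{x}},\bar{\boldsymbol{y}})=0$. With these choices every relation in the stationarity condition reduces, via $\nabla_{\boldsymbol{x}}F(\bar{\boldsymbol{x}},\bar{\boldsymbol{y}})=\nabla_{\boldsymbol{y}}F(\bar{\boldsymbol{x}},\bar{\boldsymbol{y}})=0$ and $0\in\mathcal{N}_{\mathcal{Y}}(\bar{\boldsymbol{y}})$, to an identity; the complementarity relations hold trivially because $\lambda_3=\sigma_{11}=0$, while $\sum r_{ij}=1$ and $(\lambda_1,\lambda_2,\lambda_3)\ne0$. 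Hence $\bar{\boldsymbol{x}}$ satisfies the stationarity condition, indeed with $\lambda_1=1$, completing the argument. I expect the main obstacle to be the second paragraph: quantitatively dominating the $J$-dependent barrier term $\gamma_l\nabla_{\boldsymbol{x}}f_{J_l}(\boldsymbol{x}_l)$, where the growth of the inner-solution sensitivity in $J$ must be absorbed by the smallness of $\tau_l$ dictated by (\ref{eq27}).
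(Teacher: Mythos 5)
Your proposal is correct and follows essentially the same route as the paper's proof: pass to the limit in the two $\sigma_l$-FNE inequalities, kill the barrier terms via the calibration (\ref{eq27}) and the bound $\gamma_l \le 2\tau_l/c$ from $\boldsymbol{y}_l \in \mathcal{Y}_{J_l}(\boldsymbol{x}_l)$, obtain $\nabla_{\boldsymbol{x}}F(\bar{\boldsymbol{x}},\bar{\boldsymbol{y}})=\nabla_{\boldsymbol{y}}F(\bar{\boldsymbol{x}},\bar{\boldsymbol{y}})=0$ using interiority, prove $\bar{\boldsymbol{y}}\in\mathcal{R}_{\epsilon}(\bar{\boldsymbol{x}})$ via strong concavity together with $f_{J_l}(\boldsymbol{x}_l)\to f^{*}(\bar{\boldsymbol{x}})$ (Lemma \ref{lemma:2} plus continuity of $f^{*}$), and exhibit the same trivial multipliers ($I=J=1$, $r_{11}=1$, $\sigma_{11}=0$, $\boldsymbol{y}_1=\bar{\boldsymbol{y}}$, $\lambda_3=0$; the paper's $\lambda_2=2$ versus your $\lambda_2=1$ is immaterial). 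The only cosmetic deviation is in controlling the $\boldsymbol{x}$-barrier term: the paper uses the chain-rule identity $\nabla f_{J}(\boldsymbol{x})-\nabla_{\boldsymbol{x}}f(\boldsymbol{x},\boldsymbol{y}_J(\boldsymbol{x}))=(\nabla\boldsymbol{y}_J(\boldsymbol{x}))^{\top}\nabla_{\boldsymbol{y}}f(\boldsymbol{x},\boldsymbol{y}_J(\boldsymbol{x}))$ with $\|\nabla\boldsymbol{y}_{J}\|\le J$ and $\|\nabla f_J\|\le M_1(J)=L_0(J+1)$ from Lemma \ref{lemma:4} (the sharper and more natural citation than your appeal to $L_{11}(J)$ from Lemma \ref{lemma:5}, which bounds $M_1(J)$ only up to a constant), but your absorption of the growth by $\bar{l}(J_l)\ge L_{11}(J_l)H$ in (\ref{eq27}) is quantitatively valid.
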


We remark that there must exist accumulation points for the sequence $\{(\boldsymbol{x}_l, \boldsymbol{y}_l)\}_{l=1}^{\infty}$ by the compactness of sets $\mathcal{X}$ and $\mathcal{Y}$, and the $\boldsymbol{x}$-component of the accumulation points can be guaranteed to be in the interior of $\mathcal{X}$ if $\mathcal{X}$ is a sufficient large region. Furthermore, let $l_0$ be a positive integer, for each positive integer $l$, let $\sigma_{l} = \bar{\sigma}_l(1 - \mu/L_G)^{{(l+ l_0)}/{2}}$, $J_l = l + l_0$, $\tau_l = (1 - \mu/L_G)^{l+l_0}$, $T_l = (1 - \mu/L_G)^{-(l+l_0)}$, and $K_l = 2(l+l_0)$, where 
\begin{equation*}   
\bar{\sigma}_l  =\max\bigg\{\frac{6(HJL_0 \bar{l}(J))^{1/2}}{c^{1/2}}, (9M_3\bar{l}(J))^{\frac{1}{2}}, 6\bar{l}(J)M^{1/2}\bigg\},   
\end{equation*}
with $J = l+ l_0$ and $\bar{\sigma}_l = \mathcal{O}((l + l_0)^8)$, and $H$, $L_0$, $c$, $L_G$, $M_3$, $M$, $\mu$ are the numbers same to that given in Theorem \ref{tho:3}. By simple calculations, it is easy to know the sequences $\{\sigma_l\}_{l=1}^{\infty}$, $\{J_l\}_{l=1}^{\infty}$, $\{\tau_l\}_{l=1}^{\infty}$, $\{T_l\}_{l=1}^{\infty}$, $\{K_l\}_{l=1}^{\infty}$ satisfy the condition in Theorem \ref{tho:4} by choosing appropriate $l_0$.

\section{Experimental Results}\label{section:5}

We first validate the theoretical results on a synthetic perturbed pessimistic bilevel problem. Then we evaluate the performance of our proposed algorithm PVFIM on the applications to GAN.

\subsection{Synthetic Perturbed Pessimistic Bilevel Problems}
In this subsection, we perform experiments on the perturbed pessimistic bilevel problem PBP$\epsilon$ in (\ref{eq4}) whose constraint sets and objective functions are given in Example \ref{example:3}. The experimental details are in the appendices.

By simple calculations, it is easy to know that this perturbed pessimistic bilevel problem PBP$\epsilon$ satisfies all the conditions in Theorem \ref{tho:4}.

Following the discussion in Section \ref{section:4}, PBP$\epsilon$ can be reformulated as the problem SPBP$\epsilon$ in (\ref{eq25}). By simple calculations, for each $x$, $\mathcal{R}_{\epsilon}(x) = \{\boldsymbol{y} = ([\boldsymbol{y}]_1, [\boldsymbol{y}]_2): [\boldsymbol{y}]_1 = x, [\boldsymbol{y}]_2 = x/2\}$($\mathcal{R}_{\epsilon}(x)$ is given in (\ref{eq24})), which is a single point set, and thus problem PBP$\epsilon$ and problem SPBP$\epsilon$ in (\ref{eq25}) are equivalent in terms of local optimal solutions(see Propositions \ref{pro:5}, \ref{pro:6}). Furthermore, the feasible points to problem SPBP$\epsilon$ in (\ref{eq25}) are $(x, \boldsymbol{y})$ with $x \in \mathcal{X}$, $[\boldsymbol{y}]_1 = x$, $[\boldsymbol{y}]_2 = x/2$ and  the local optimal points to problem SPBP$\epsilon$ in (\ref{eq25}) are $(x_1^*, \boldsymbol{y}_1^*)$ with $x_1^* = 3\pi/2$, $[\boldsymbol{y}_1^*]_1 = 3\pi/2$, $[\boldsymbol{y}_1^*]_2 = 3\pi/4$, and $(x_2^*, \boldsymbol{y}_2^*)$ with $x_2^* = 7\pi/2$, $[\boldsymbol{y}_2^*]_1 = 7\pi/2$, $[\boldsymbol{y}_2^*]_2 = 7\pi/4$, which are also global optimal points.

\begin{figure*}[!t]
\centering
\subfloat[]{\includegraphics[scale = 0.18]{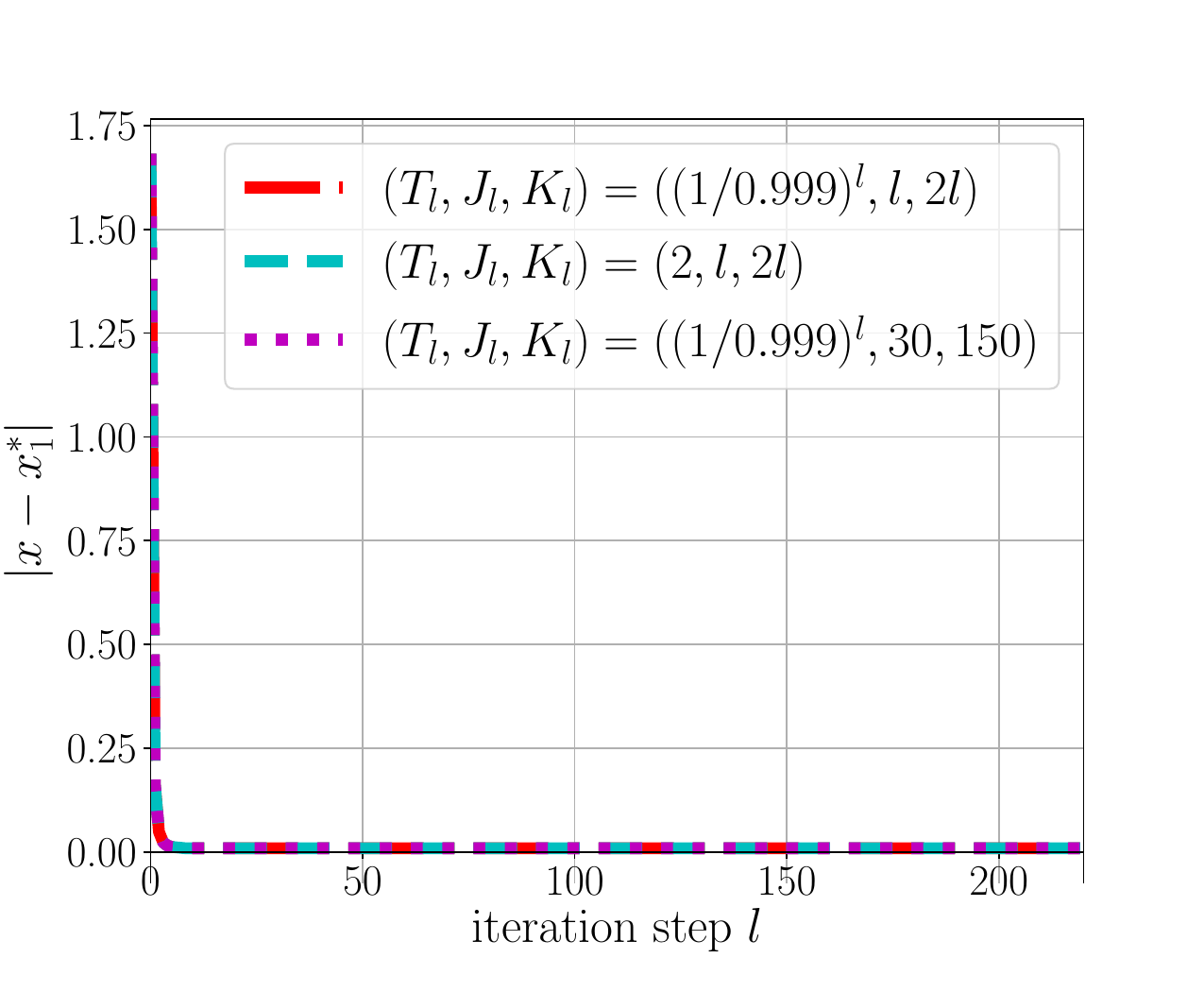}%
\label{fig1_1}}
\hfil
\subfloat[]{\includegraphics[scale = 0.18]{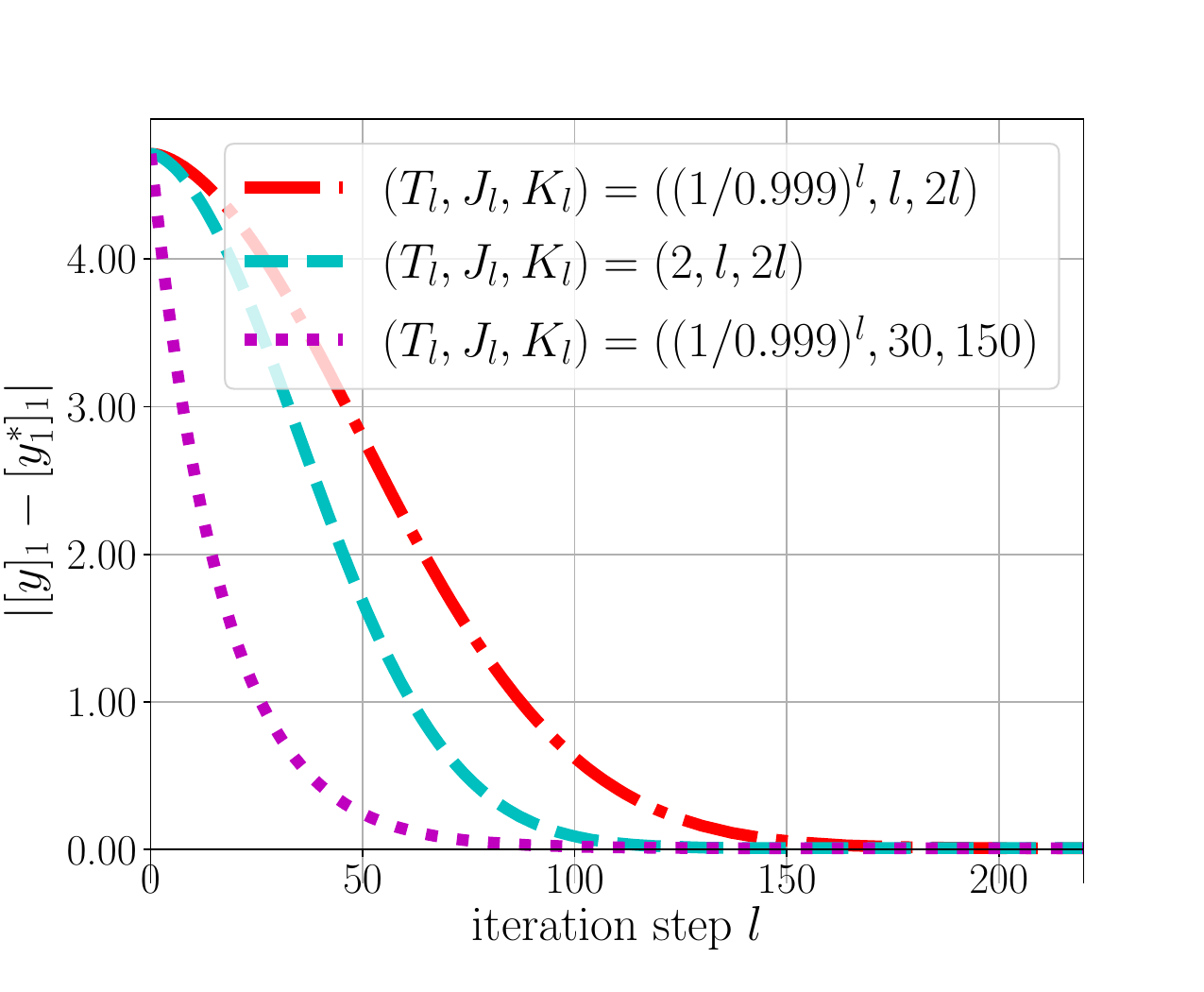}%
\label{fig1_2}}
\hfil
\subfloat[]{\includegraphics[scale = 0.18]{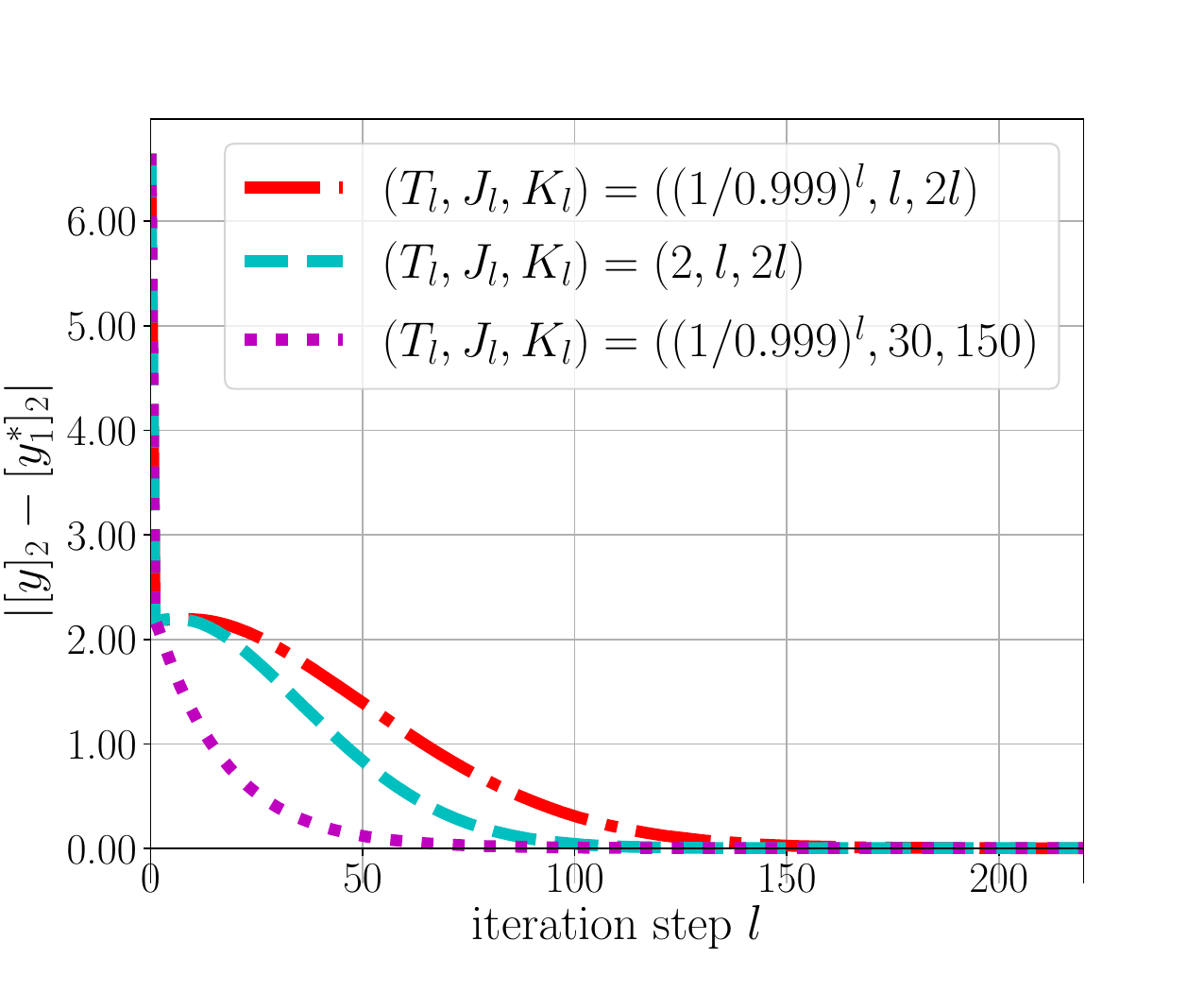}%
\label{fig1_3}}
\caption{Illustrating the convergence of PVFIM to the global optimal point of problem PBP$\epsilon$ in (\ref{eq4}) with different choices of $(T_l, J_l, K_l)$ for each positive integer $l$.}
\label{fig:1}
\end{figure*}

\begin{figure*}[!t]
\centering
\subfloat[]{\includegraphics[scale = 0.18]{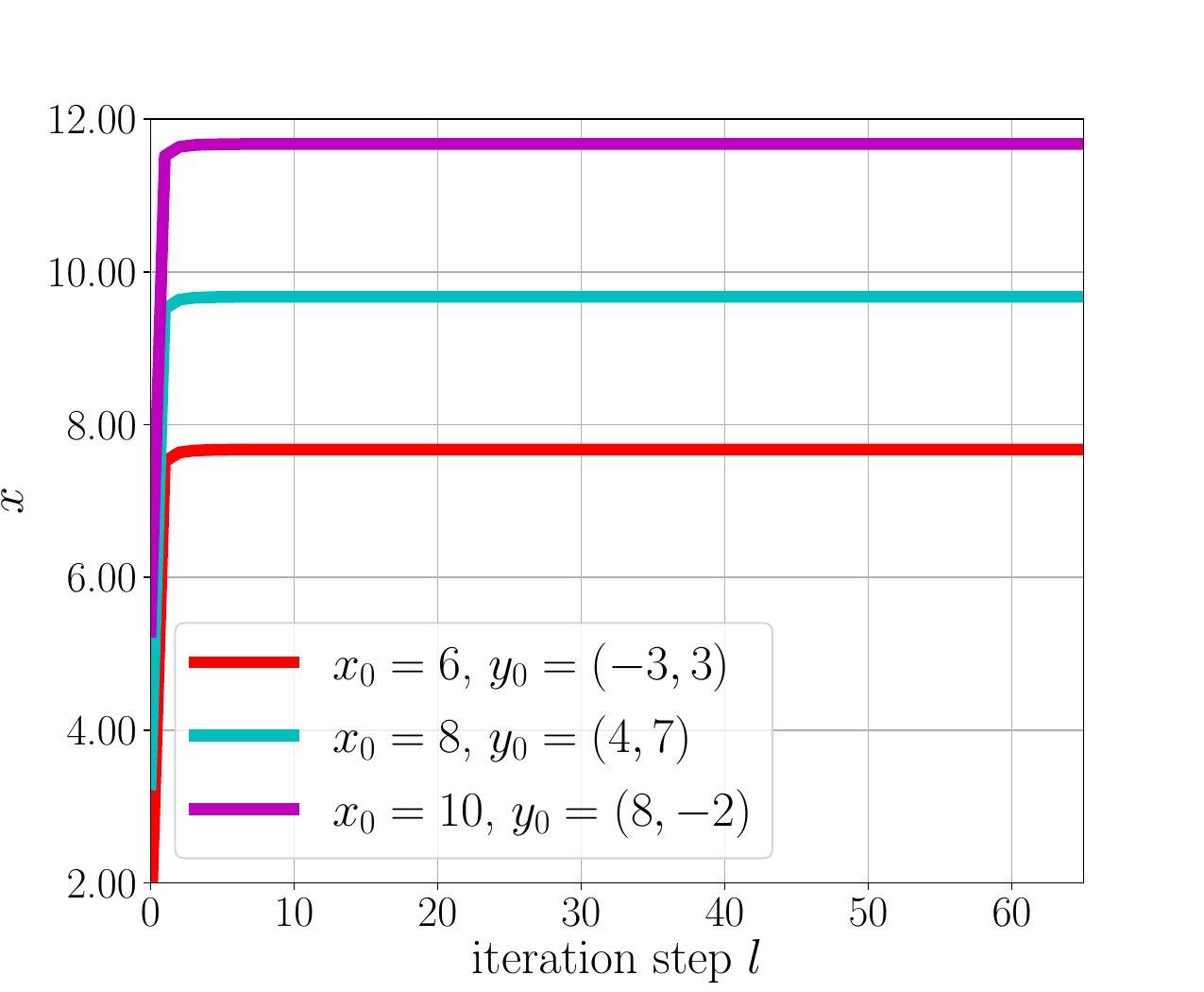}%
\label{fig2_1}}
\hfil
\subfloat[]{\includegraphics[scale = 0.18]{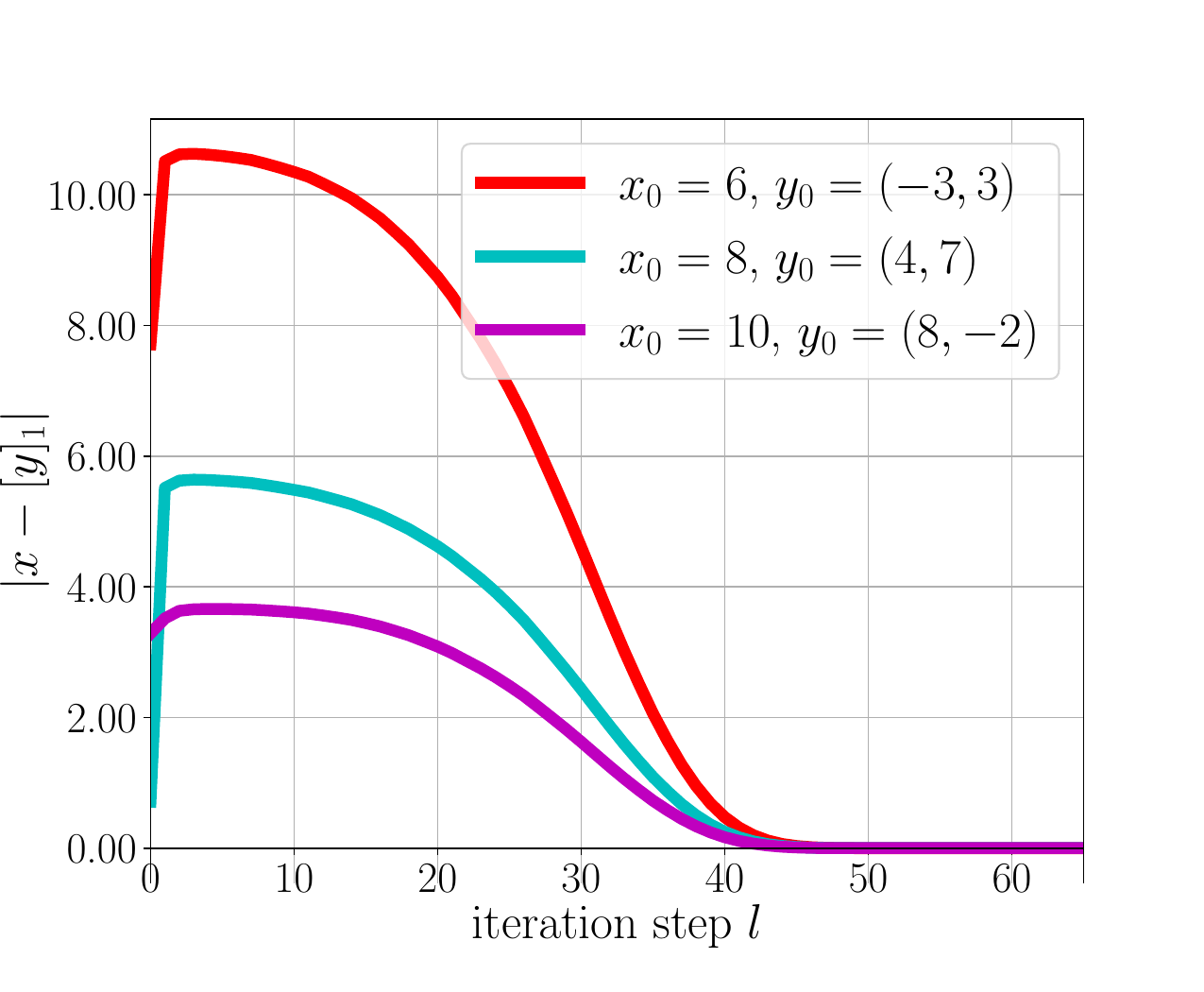}%
\label{fig2_2}}
\hfil
\subfloat[]{\includegraphics[scale = 0.18]{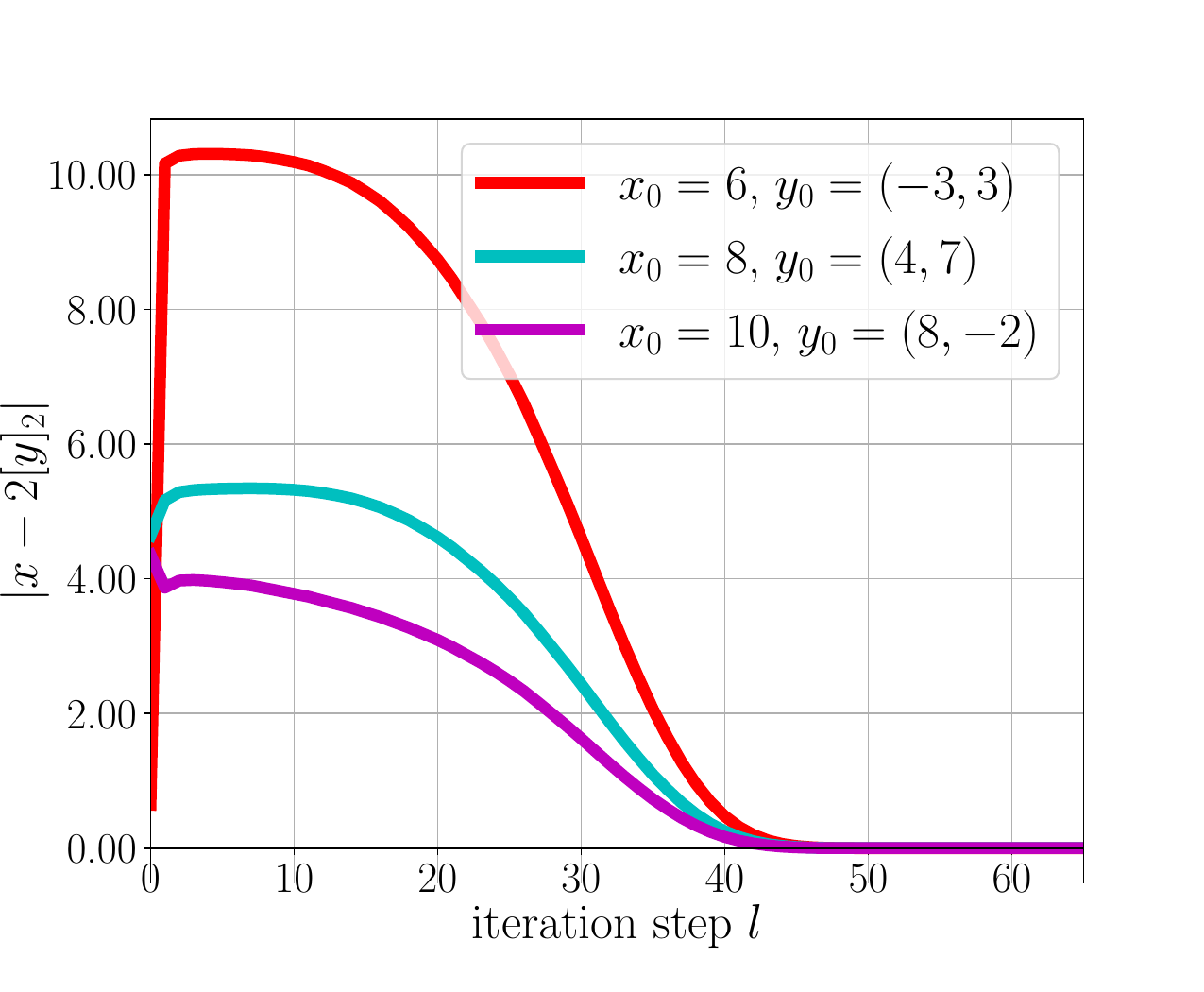}%
\label{fig2_3}}
\caption{Illustrating the numerical performance of PVFIM with different choices of initial points.}
\label{fig:2}
\end{figure*}

In Figure \ref{fig:1}, we set the initial point to be $x_0 = 3.03$, $\boldsymbol{y}_0 = (0, 9)$, and evaluate the performance of PVFIM with different choices of $(T_l, J_l, K_l)$ for each positive $l$. It shows that, for all the cases, PVFIM can converge to the optimal point $(x_1^*, \boldsymbol{y}_1^*)$ of problem SPBP$\epsilon$ in (\ref{eq25}), which naturally implies that PVFIM can converge to the optimal point $x_1^*$ of problem PBP$\epsilon$. Notice that $(T_l, J_l, K_l)=(2, l, 2l)$ and $(T_l, J_l, K_l)=((1/0.999)^l, 30, 150)$ do not follow the condition in Theorem \ref{tho:4}, and under the case that $(T_l, J_l, K_l)=((1/0.999)^l, 30, 150)$, PVFIM converges fastest among all the cases. Therefore, Figure \ref{fig:1} shows that in practice, $(T_l, J_l, K_l)$ does not have to follow the condition in Theorem \ref{tho:4}, and choosing the appropriate $(T_l, J_l, K_l)$ can accelerate the convergence of PVFIM.

In Figure \ref{fig:2}, we choose different initial points to evaluate the performance of PVFIM. It shows that PVFIM always can converge to a feasible point, whose $x$-component is an interior point of $\mathcal{X}$, of problem SPBP$\epsilon$ in (\ref{eq25}), and thus PVFIM always can converge to a stationary point of problem PBP$\epsilon$(see Proposition \ref{pro:7}).

\subsection{Generative Adversarial Networks}

In this subsection, we perform experiments on GAN to illustrate the applications of PVFIM for solving perturbed pessimistic bilevel problems. GAN generates samples from a data distribution by gaming, in which two models, i.e., a generator $G$ generating data, and a discriminator $D$ classifying the data as real or generated, are involved\cite{8}. The training objective for the discriminator is to correctly classify samples, while the training objective for the generator is to make the discriminator misclassify samples. The training objective of GAN can be expressed as the bilevel problem in (\ref{eq1})\cite{29} in which 
\begin{equation*}
F(\boldsymbol{x}, \boldsymbol{y}) = -\mathbb{E}_{\boldsymbol{u}\sim p_u}\log(D(\boldsymbol{y}; G(\boldsymbol{x}; \boldsymbol{u}))) - c_1 \|\boldsymbol{y}\|^2,
\end{equation*}
and 
\begin{align*}
f(\boldsymbol{x}, \boldsymbol{y}) &= -\mathbb{E}_{\boldsymbol{v}\sim p_{data}}\log D(\boldsymbol{y};\boldsymbol{v})  - \mathbb{E}_{\boldsymbol{u}\sim p_u}\log(1 - D(\boldsymbol{y}; G(\boldsymbol{x}; \boldsymbol{u}))) + c_2 \|\boldsymbol{x}\|^2
\end{align*}
where $\boldsymbol{x}$ denotes the parameters of the generator $G$, $\boldsymbol{y}$ denotes the parameters of the discriminator $D$, $p_{data}$ is the real data distribution, $p_u$ is the model generator distribution to be learned, $G(\boldsymbol{x}; \boldsymbol{u})$ is the data generated by the generator, $c_1$ is the coefficient of the regular term introduced to ensure the strong concavity of $F(\boldsymbol{x}, \cdot)$ w.r.t. $\boldsymbol{y}$, and $c_2$ is introduced to ensure the fully convexity of $f(\cdot, \cdot)$ w.r.t. $(\boldsymbol{x}, \boldsymbol{y})$. Notice that for the discriminator, only the parameters of the last linear layer are updated.

At present, there are mainly two methods to train GAN. The first method to train GAN is to perform gradient descent on $\boldsymbol{x}$ and ascent on $\boldsymbol{y}$ from the perspective of minimax optimization, which is done in the original GAN\cite{8}. While the second method to train GAN, done in unrolled GAN\cite{9}, is based on the unrolled optimizaiton to update $\boldsymbol{y}$ from the perspective of bilevel optimization. However, the convergence of the unrolled optimization methods for bilevel problems can be guaranteed only in the case where the solution to the LL problem is unique.

If considering GAN from the perspective of bilevel optimizaiton, we think that it would be suitable to reformulate the bilevel problem corresponding to GAN as the perturbed pessimistic bilevel problem in (\ref{eq4}) to deal with the case where there are multiple solutions in the LL problem, since the objectives of the generator and the discriminator are adversary. Therefore, PVFIM provides a method to train GAN.

In the following, we compare GAN, unrolled GAN, with our proposed method on different datasets. The experimental details are in the appendices.

Please note that in practie, the parameters $\boldsymbol{x}$ and $\boldsymbol{y}$ are located in bounded sets. Thus, the fesible regions of $\boldsymbol{x}$ and $\boldsymbol{y}$ can be considered to be large regions, and the projected operations in lines 4, 12 of Algorithm \ref{alg:2} can be ignored automatically. Furthermore, the projected operations in line 10 of Algorithm \ref{alg:2} can be ignored automatically if we choose $c_0$ in Algorithm \ref{alg:1} to be a sufficiently small number.

\subsubsection{Synthetic Data} We first evaluate the performance of our method PVFIM on a synthesized 2D dataset following \cite{9}. The real data distribution is a mixture of 5 Gaussians with standard deviation 0.02, and the probability of sampling points from each of the modes is 0.35, 0.35, 0.1, 0.1, and 0.1, respectively. The target samples are drawn from the real data distribution and the number of target samples is 512.
\begin{figure*}[!t]
\centering
\subfloat[]{\includegraphics[scale = 0.17]{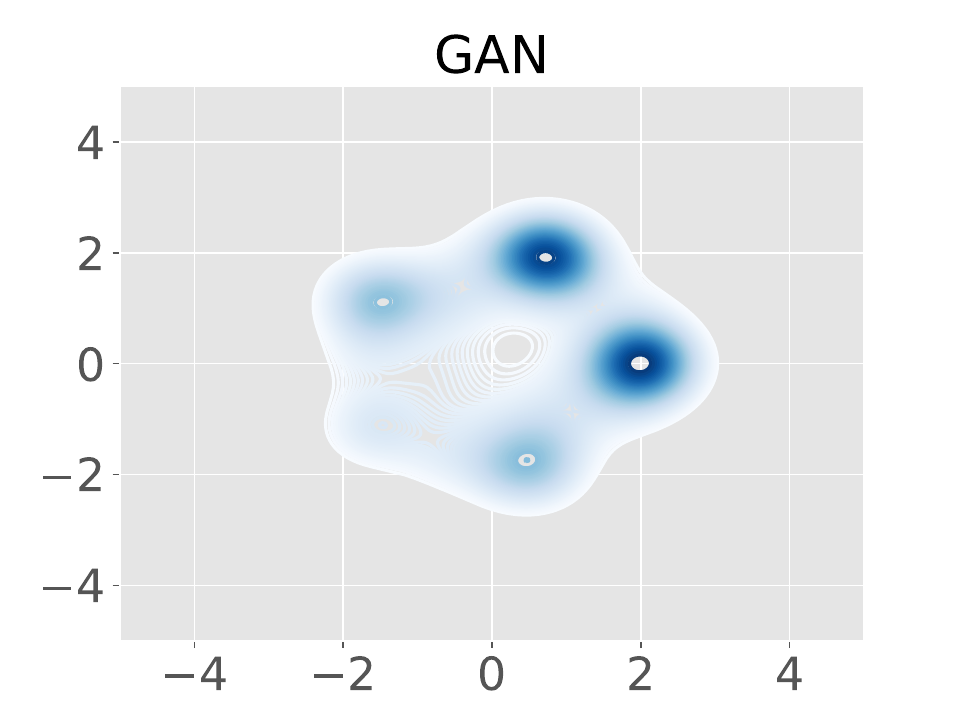}%
\label{fig3_1}}
\hfil
\subfloat[]{\includegraphics[scale = 0.17]{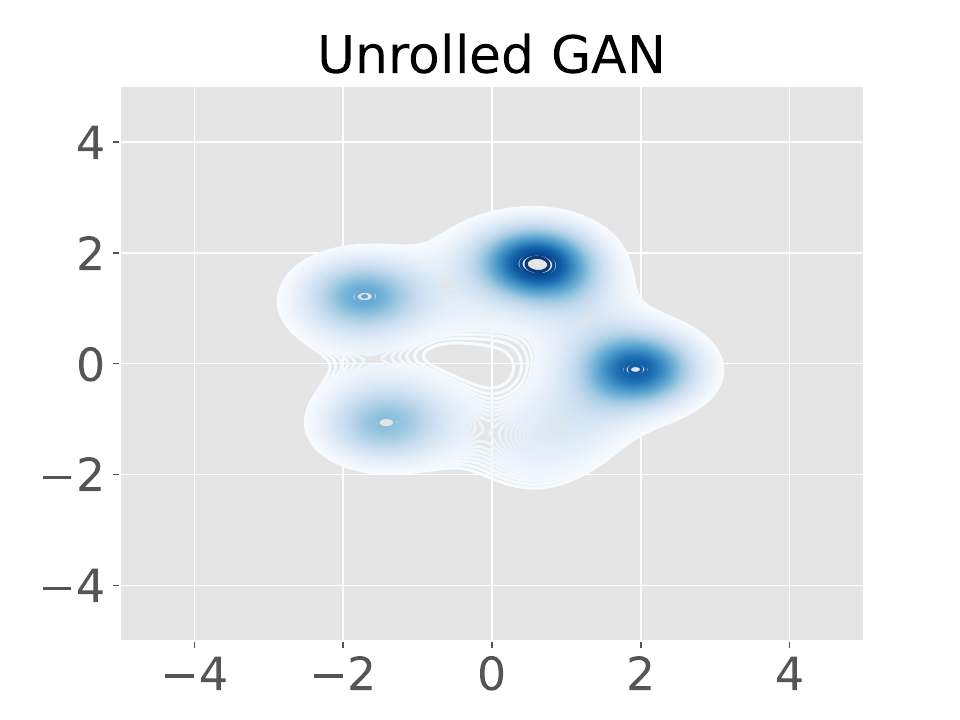}%
\label{fig3_2}}
\hfil
\subfloat[]{\includegraphics[scale = 0.17]{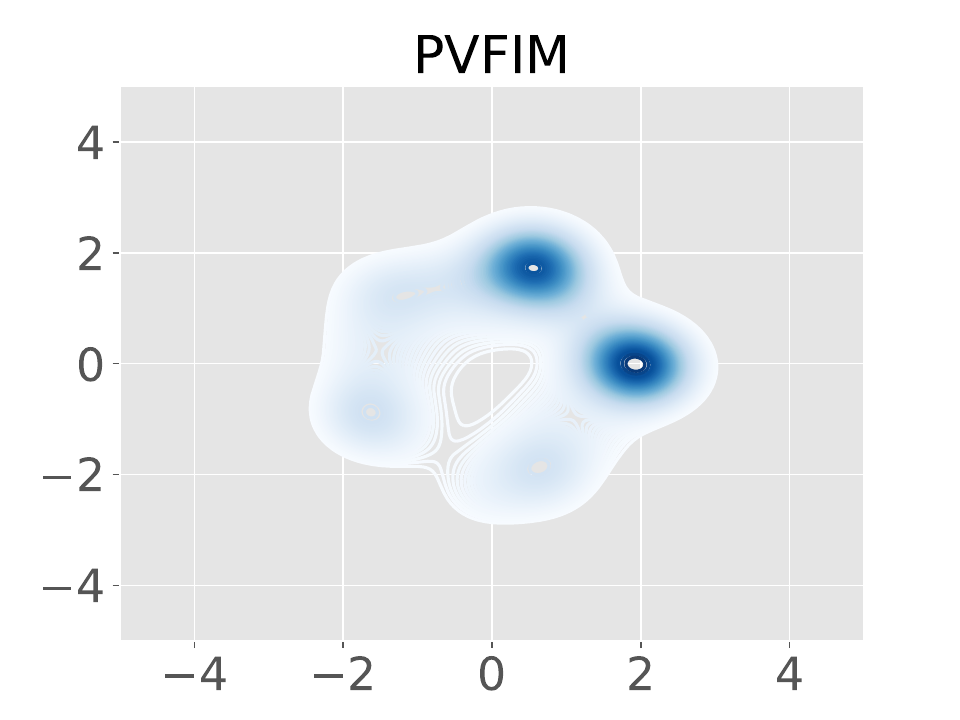}%
\label{fig3_3}}
\hfil
\subfloat[]{\includegraphics[scale = 0.17]{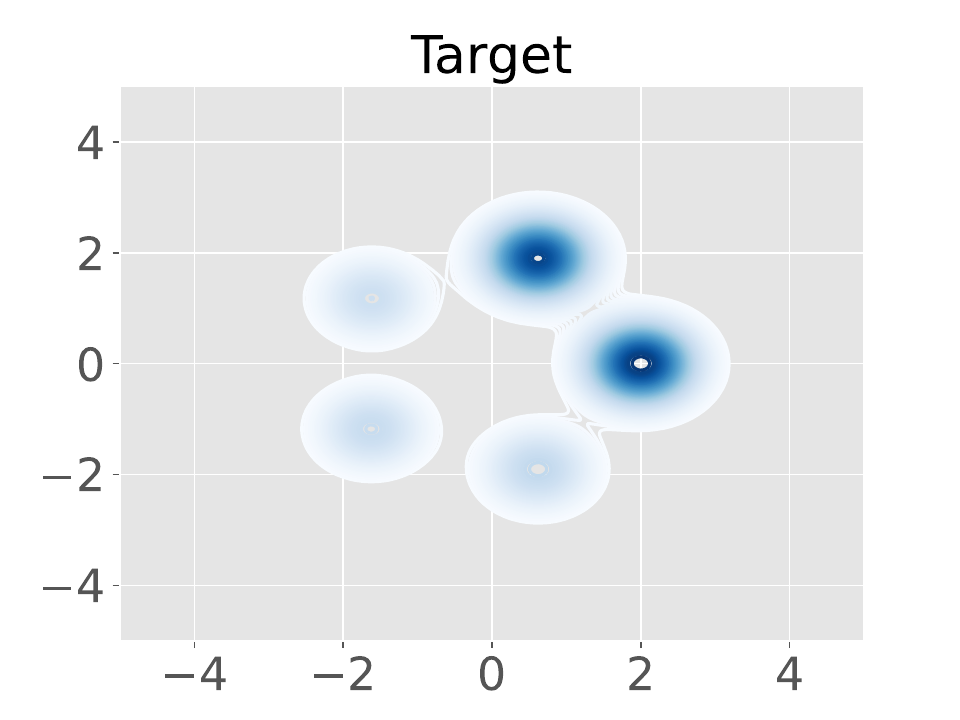}%
\label{fig_fourth_case}}
\caption{Comparison of the generated samples which have the smallest Wasserstein distance to the target samples during the iterations. (a) samples generated by GAN.(b) samples generated by Unrolled GAN. (c) samples generated by PVFIM. (d) target samples.}
\label{fig:3}
\end{figure*}

\begin{table}[htb]   
\centering
\begin{tabular}{cccc}\hline
 & GAN & Unrolled GAN & PVFIM \\  \hline
Wasserstein distance & 0.452 & 0.346 & \textbf{0.299} \\ \hline
\end{tabular}
\caption{The Wasserstin distance between the target samples and the generated samples in Fig. \ref{fig:3}.}
\label{table1}
\end{table}

From Figure \ref{fig:3} and Table \ref{table1}, we observe that PVFIM can generate samples which has the smallest Wasserstein distance to the target samples, which shows the validality of PVFIM on the application to GAN with synthetic data. Furthermore, since PVFIM can be used to solve the bilevel problem with multiple solutions in the lower level problem, it obtains better performance than unrolled GAN in the experiments. 

\subsubsection{Real-World Data} We then perform experiments on two real-world datasets: MNIST\cite{30} and CIFAR10, to evaluate the performance of PVFIM. The MNIST dataset consists of labeled images of $28 \times 28$ grayscale digits, and the CIFAR10 dataset is a natural scene dataset of $32 \times 32$. Inception Score(IS) is employed to evaluate the quality and diversity of the generated images, and the Frechet Inception Distance score(FID) is used for measuring the Frechet distance between the real and generated data distributions.

\begin{table}[htb]   
\centering
\begin{tabular}{ccccc}\hline
\multicolumn{1}{c}{\multirow{2}{*}{Method}} & \multicolumn{2}{c}{MNIST} & \multicolumn{2}{c}{CIFAR10} \\ 
 & IS & FID & IS& FID \\  \hline
GAN & 6.05 & 189 &2.89  &241  \\
Unrolled GAN &4.60  &250 &2.88  & 264\\ 
PVFIM & \textbf{6.32} & \textbf{181} &\textbf{2.90} & \textbf{223}  \\  \hline
\end{tabular}
\caption{Comparison of the best FID and IS during the iterations.}
\label{table2}
\end{table}

\begin{figure*}[!t]
\centering
\subfloat[]{\includegraphics[scale = 0.17]{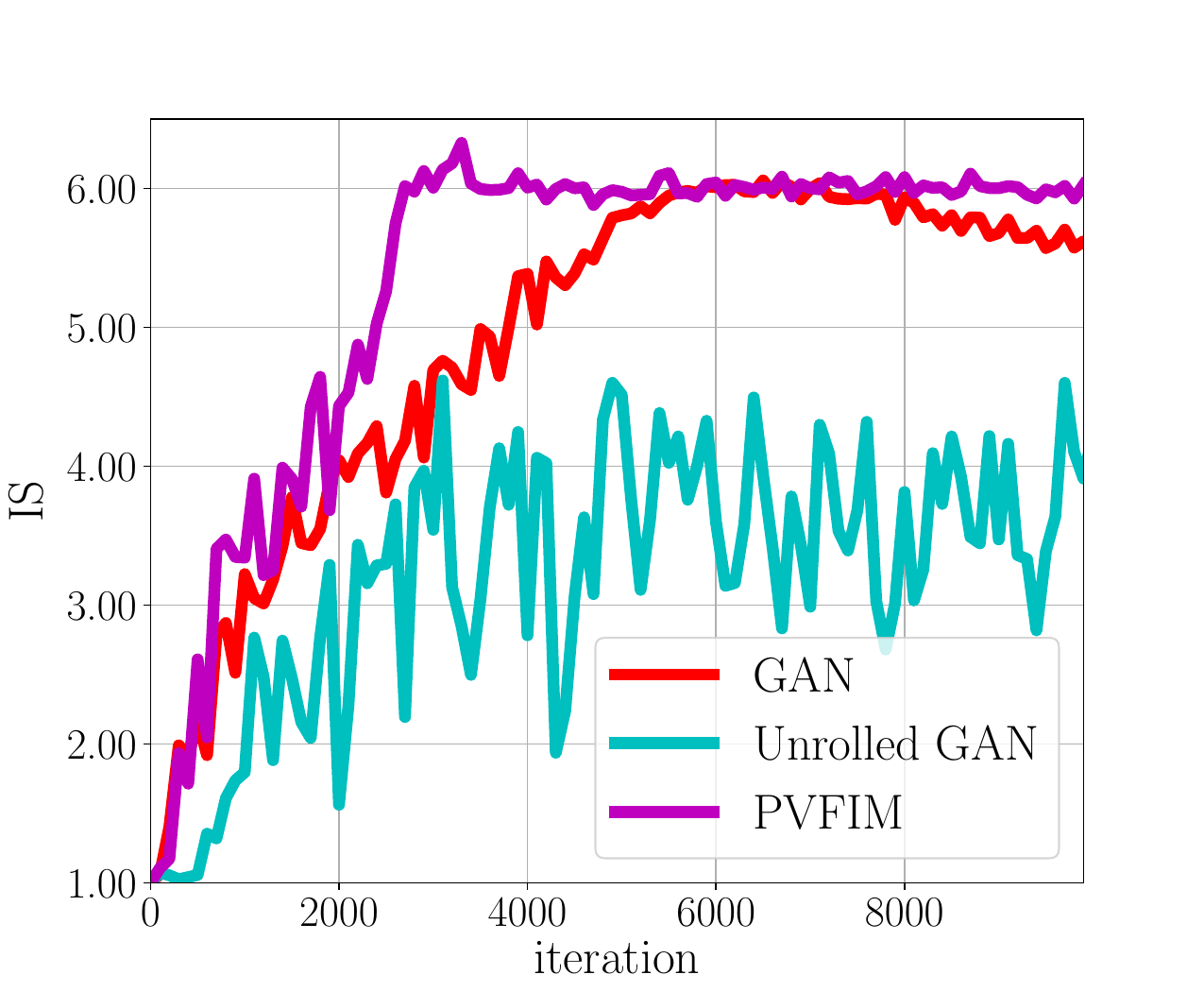}%
\label{fig4_1}}
\hfil
\subfloat[]{\includegraphics[scale = 0.17]{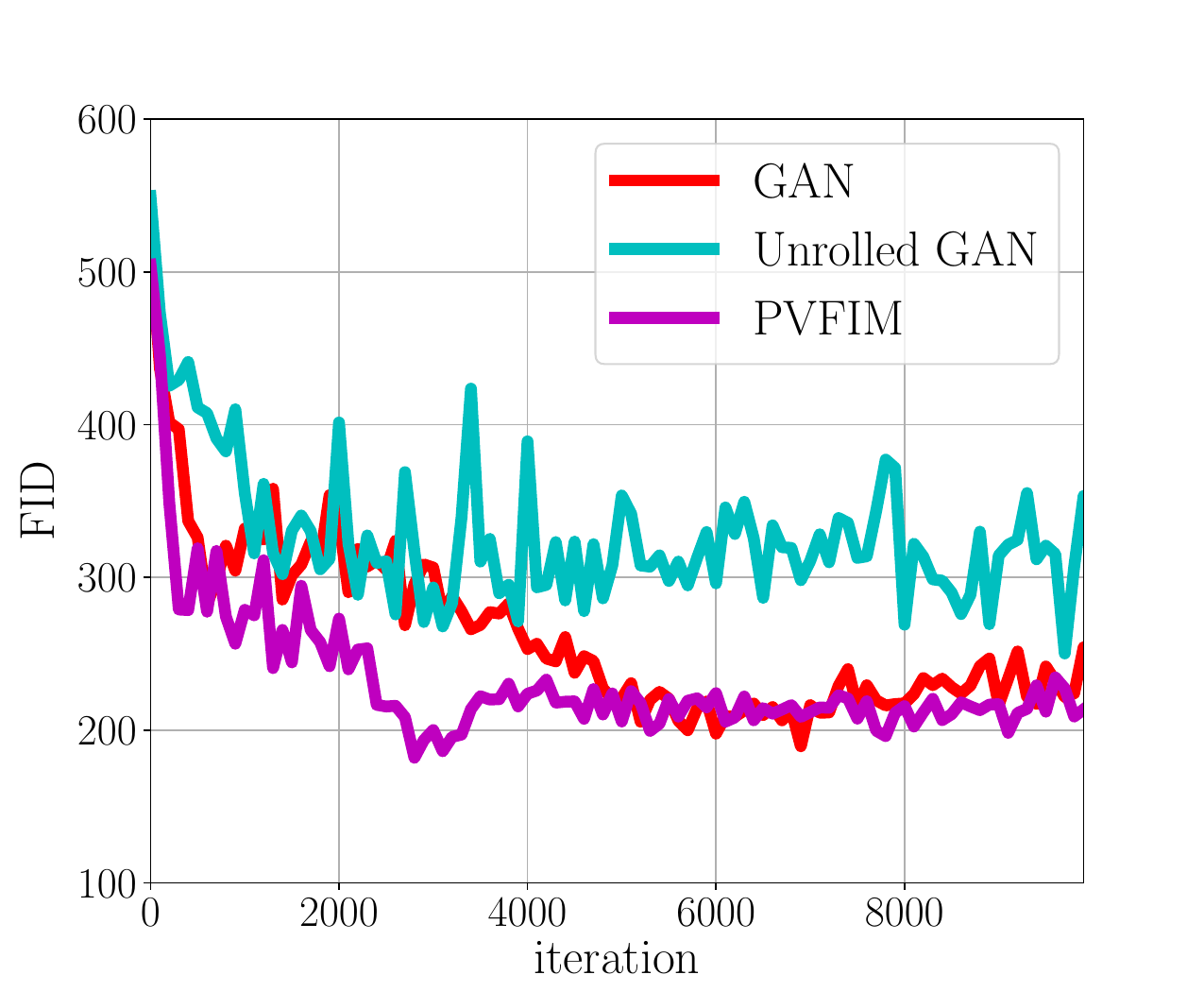}%
\label{fig4_2}}\\
\subfloat[]{\includegraphics[scale = 0.17]{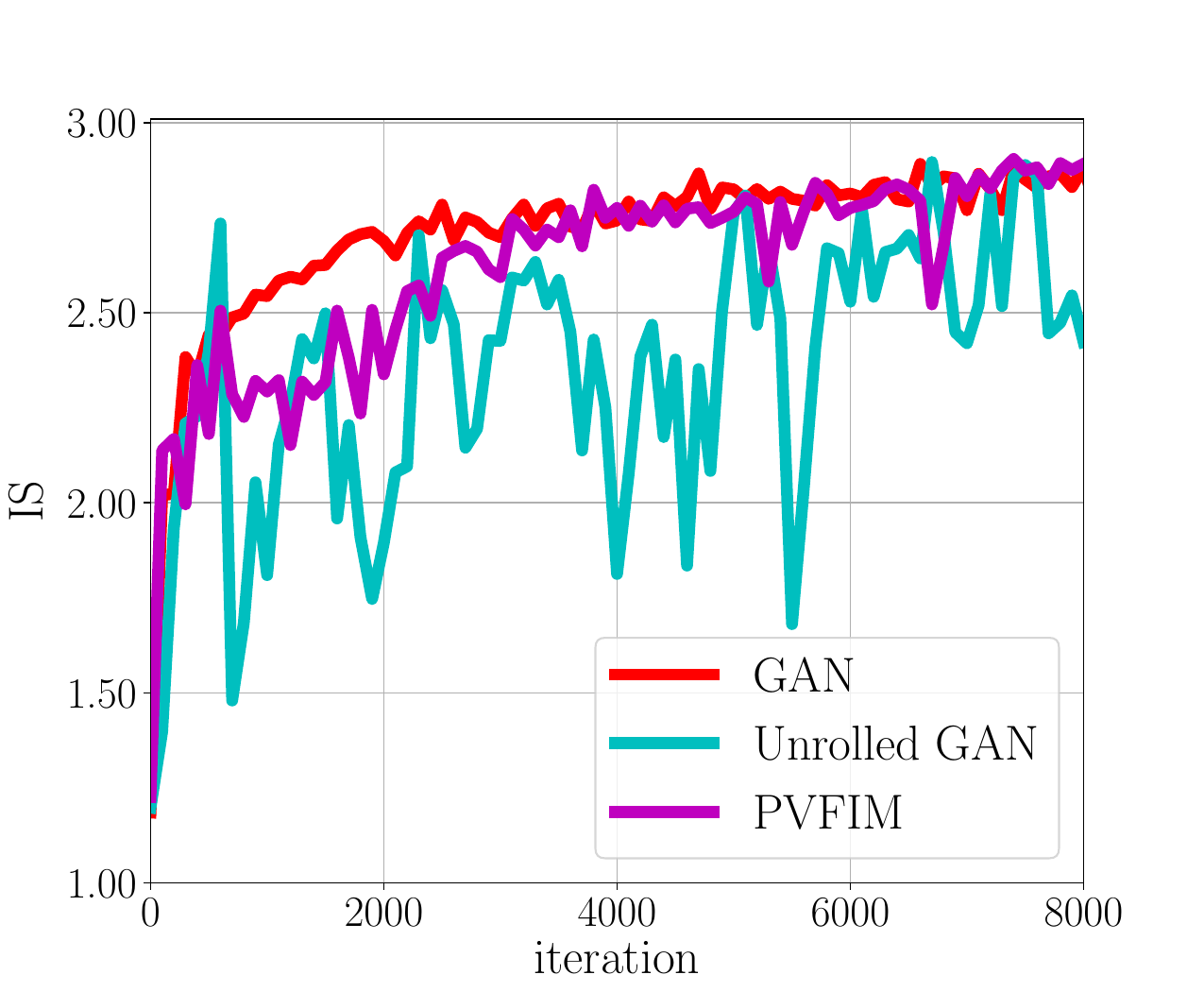}%
\label{fi4_3}}
\hfil
\subfloat[]{\includegraphics[scale = 0.17]{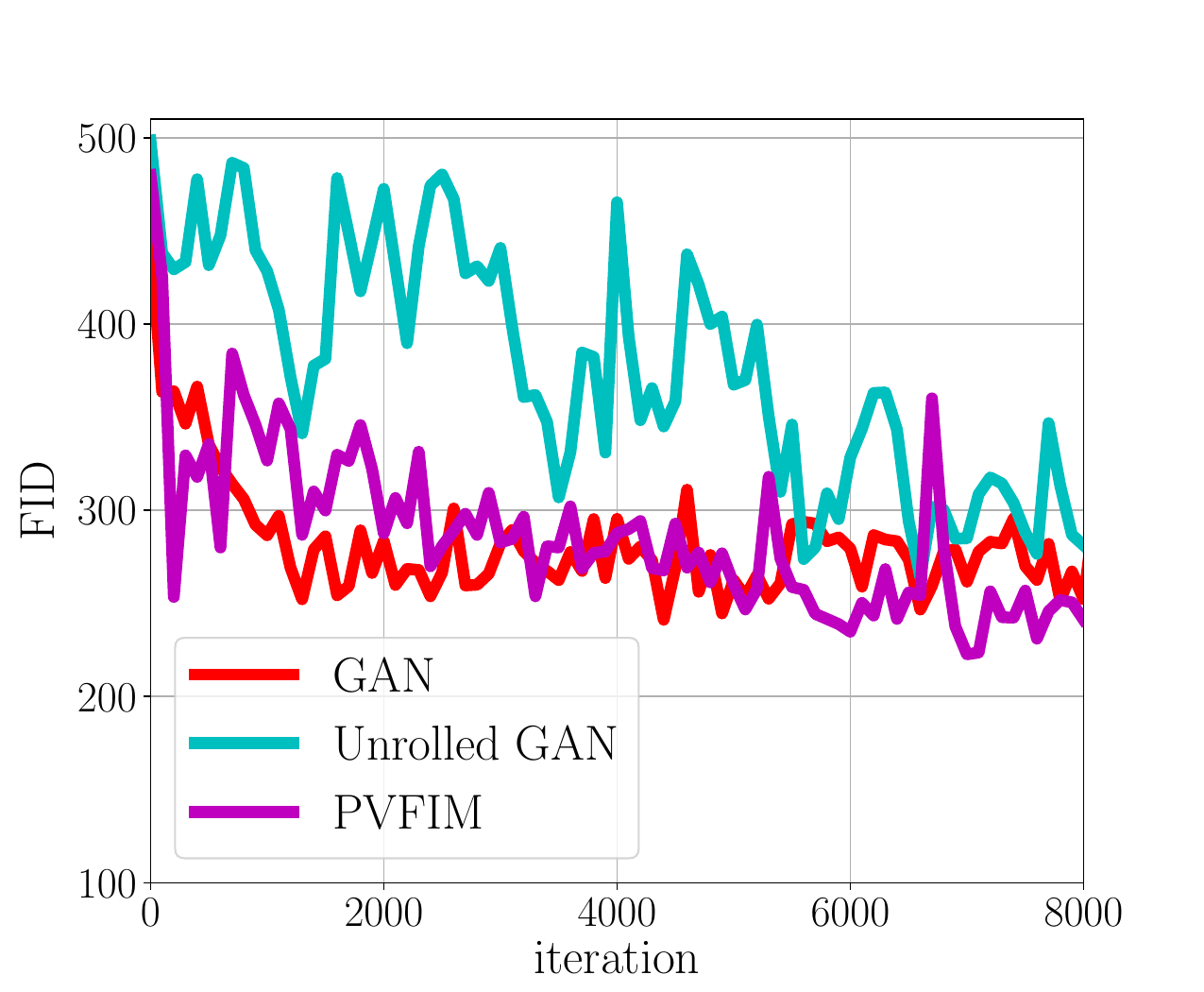}%
\label{fig4_4}}
\caption{Comparison of the IS(the higher, the better) and FID(the smaller, the better) over the iterations. (a), (b) present the results on the MNIST dataset, and (c), (d) present the results on the CIFAR10 dataset.}
\label{fig:4}
\end{figure*}

Figure \ref{fig:4} and Table \ref{table2} show that PVFIM can obtain the lowest FID and highest IS on both MNIST and CIFAR10 datasets, which shows the validality of PVFIM on the application to GAN with real-world data. Furthermore, since PVFIM can be used to solve the bilevel problem with multiple solutions in the lower level problem, it obtains better performance than unrolled GAN in the experiments.

\section{Conclusion} \label{section:6}
In the paper, we provide the first provably convergent algorithm for the perturbed pessimistic bilevel problem PBP$\epsilon$ with nonlinear lower level problem, and we provide a stationary condition for problem PBP$\epsilon$, which has not been given before. Moreover, the experiments are presented to validate the theoretical results and show the potential of our proposed algorithm in the applications to GAN.

\section*{Acknowledgments}
The work is partially supported by the National Key R\&D Program of China (2018AAA0100300 and 2020YFB1313503), National Natural Science Foundation of China(Nos. 61922019, 61733002, 61672125, and 61976041), and LiaoNing Revitalization Talents Program(XLYC1807088).

\appendix  
\section{Proof of Theorem \ref{tho:2} and Proposition \ref{pro:7}}  \label{appendix:1}

In this section, we first obtain a Fritz-John type necessary optimality condition for problem SPBP$\epsilon$ in (\ref{eq25}).

It needs to remark that although $F(\boldsymbol{x}, \boldsymbol{y})$, $f(\boldsymbol{x}, \boldsymbol{y})$, and $f^*(\boldsymbol{x})$ are differentiable under Assumptions \ref{assum:1} and \ref{assum:2}, where the differentiability of $f^*(\boldsymbol{x})$ can be obtained from Proposition \ref{pro:1}, $F^*(\boldsymbol{x})$ defined in (\ref{eq26}) is generally nonsmooth. As a result, problem SPBP$\epsilon$ in (\ref{eq25}) is a nonsmooth problem in general. Recall that if $F^*(\boldsymbol{x})$ is local Lipschitz continuous, a Fritz-John type necessary optimality condition for SPBP$\epsilon$ in (\ref{eq25}) can be obtained following from Theorem \ref{tho:1}. For more details, see the nonsmooth analysis material given in the main text.

In the following, we first investigate the local Lipschitz continuity of $F^*(\boldsymbol{x})$. Notice that by using the definition of $\mathcal{S}_{\epsilon}(\boldsymbol{x})$ in (\ref{eq10}), $F^*(\boldsymbol{x})$ in (\ref{eq26}) can be equivalently converted into 
\begin{equation} \label{eq29} \tag{29}
F^*(\boldsymbol{x}):= \max\limits_{\boldsymbol{y}}\{ F(\boldsymbol{x}, \boldsymbol{y}): \boldsymbol{y}\in \mathcal{Y}, f(\boldsymbol{x}, \boldsymbol{y}) \le f^*(\boldsymbol{x}) + \epsilon \}.
\end{equation}
Similar to the discussion in the nonsmooth analysis material of the main text, given $\boldsymbol{x}\in \mathcal{X}$, for $\boldsymbol{y}$ satisfying $\boldsymbol{y}\in \mathcal{Y}$, $f(\boldsymbol{x}, \boldsymbol{y}) \le f^*(\boldsymbol{x})+ \epsilon$, we define
\begin{align}  \label{eq30} \nonumber   
\mathcal{M}_{\boldsymbol{x}}^{\lambda}(\boldsymbol{y}) :=& \left \{ \sigma: 0 \in -\lambda \nabla_{\boldsymbol{y}}F(\boldsymbol{x}, \boldsymbol{y}) + \sigma \nabla_{\boldsymbol{y}} f(\boldsymbol{x}, \boldsymbol{y}) + \mathcal{N}_{\mathcal{Y}}(\boldsymbol{y}), \right. \\   \tag{30}
& \left. \qquad \sigma\ge 0, \qquad \sigma (f(\boldsymbol{x}, \boldsymbol{y})-f^*(\boldsymbol{x})-\epsilon)=0 \right \}
\end{align}
where $\lambda \in \{0, 1\}$, and the definition of $\mathcal{N}_{\mathcal{Y}}(\boldsymbol{y})$ is given in Proposition \ref{pro:2}. Then a sufficient condition for the local Lipschitz continuity of $F^*(\boldsymbol{x})$ is shown below.
\begin{lemma} \label{lemma:1}
Suppose Assumptions \ref{assum:1} and \ref{assum:2} hold. For problem SPBP$\epsilon$ in (\ref{eq25}), if $\boldsymbol{x}_0$ is an interior point of $\mathcal{X}$, then $F^*(\boldsymbol{x})$ in (\ref{eq26}) is Lipschitz continuous near $\boldsymbol{x}_0$, and the Clarke generalized gradient of $F^*(\boldsymbol{x})$ at $\boldsymbol{x}_0$ satisfies
\begin{equation*}
\partial F^*(\boldsymbol{x}_0) \subset \text{co}\mathcal{A}(\boldsymbol{x}_0)
\end{equation*}
where 
\begin{equation*}
\mathcal{A}(\boldsymbol{x}_0) := \mathop{\cup}\limits_{\boldsymbol{y}\in \mathcal{R}_{\epsilon}(\boldsymbol{x}_0)}\{ \nabla_{\boldsymbol{x}}F(\boldsymbol{x}_0, \boldsymbol{y}) - \sigma(\nabla_{\boldsymbol{x}}f(\boldsymbol{x}_0, \boldsymbol{y})- \nabla f^*(\boldsymbol{x}_0)): \sigma \in \mathcal{M}_{\boldsymbol{x}_0}^{1}(\boldsymbol{y})\}
\end{equation*}
with the definitions of $\mathcal{R}_{\epsilon}(\boldsymbol{x}_0)$ and $\mathcal{M}_{\boldsymbol{x_0}}^{1}(\boldsymbol{y})$ given in (\ref{eq24}) and (\ref{eq30}) with $\boldsymbol{x}=\boldsymbol{x}_0$ and $\lambda = 1$, respectively, and $\nabla f^*(\boldsymbol{x}_0)$ can be computed through Proposition \ref{pro:1}.
\end{lemma}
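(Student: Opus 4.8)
The plan is to recognize $F^*$ as the negative of a minimum-type value function and then invoke Proposition \ref{pro:4}. Writing $F^*(\boldsymbol{x}) = -v(\boldsymbol{x})$ with
\[
v(\boldsymbol{x}) := \min_{\boldsymbol{y}}\{-F(\boldsymbol{x},\boldsymbol{y}): \boldsymbol{y}\in\mathcal{Y},\ g(\boldsymbol{x},\boldsymbol{y})\le 0\},\qquad g(\boldsymbol{x},\boldsymbol{y}):=f(\boldsymbol{x},\boldsymbol{y})-f^*(\boldsymbol{x})-\epsilon,
\]
the problem defining $v$ is exactly of the form (\ref{eq7}) under the substitution $f\mapsto -F$ and $g\mapsto f-f^*-\epsilon$. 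Since $f^*$ does not depend on $\boldsymbol{y}$, we have $\nabla_{\boldsymbol{y}} g = \nabla_{\boldsymbol{y}} f$ and $\nabla_{\boldsymbol{x}} g = \nabla_{\boldsymbol{x}} f - \nabla f^*$; inserting these into the multiplier set (\ref{eq9}) of Proposition \ref{pro:4} reproduces verbatim the set $\mathcal{M}_{\boldsymbol{x}}^{\lambda}(\boldsymbol{y})$ defined in (\ref{eq30}), while the argmin set $\mathcal{A}_{\boldsymbol{x}_0}$ of (\ref{eq8}) becomes the argmax set $\mathcal{R}_{\epsilon}(\boldsymbol{x}_0)$ of (\ref{eq24}). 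Thus, once the hypotheses of Proposition \ref{pro:4} are checked, its conclusion translates directly into the claimed description of $\partial F^*$.

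Before applying Proposition \ref{pro:4}, I would verify its standing hypothesis, Assumption \ref{assum:5}, for the reduced problem. Compactness and convexity of $\mathcal{X},\mathcal{Y}$ follow from Assumption \ref{assum:1}, and $-F$ is continuously differentiable by Assumption \ref{assum:2}. The only nonroutine point is that $g$ must be continuously differentiable, i.e.\ that $f^*\in C^1$ near $\boldsymbol{x}_0$. Proposition \ref{pro:1} already gives differentiability of $f^*$; moreover, since $f$ is jointly convex on $\mathcal{C}\times\mathcal{Y}$ (Assumption \ref{assum:2}), partial minimization over $\boldsymbol{y}\in\mathcal{Y}$ makes $f^*$ convex, and a finite convex function differentiable on an open neighbourhood of the interior point $\boldsymbol{x}_0$ is automatically $C^1$ there. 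Hence $g$ is $C^1$ near $\boldsymbol{x}_0$ and Assumption \ref{assum:5} holds.

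The crux of the argument is verifying the constraint qualification $\mathcal{M}_{\boldsymbol{x}_0}^0(\mathcal{R}_{\epsilon}(\boldsymbol{x}_0))=\{0\}$ required by Proposition \ref{pro:4}. Fix $\boldsymbol{y}\in\mathcal{R}_{\epsilon}(\boldsymbol{x}_0)$ and suppose $\sigma\in\mathcal{M}_{\boldsymbol{x}_0}^0(\boldsymbol{y})$ with $\sigma>0$. The inclusion $0\in\sigma\nabla_{\boldsymbol{y}} f(\boldsymbol{x}_0,\boldsymbol{y})+\mathcal{N}_{\mathcal{Y}}(\boldsymbol{y})$ together with $\sigma>0$ gives $-\nabla_{\boldsymbol{y}} f(\boldsymbol{x}_0,\boldsymbol{y})\in\mathcal{N}_{\mathcal{Y}}(\boldsymbol{y})$, which by the normal-cone formula in Proposition \ref{pro:2} means $\langle\nabla_{\boldsymbol{y}} f(\boldsymbol{x}_0,\boldsymbol{y}),\boldsymbol{y}'-\boldsymbol{y}\rangle\ge 0$ for all $\boldsymbol{y}'\in\mathcal{Y}$; by convexity of $f(\boldsymbol{x}_0,\cdot)$ this is precisely the optimality condition for $\boldsymbol{y}$ to minimize $f(\boldsymbol{x}_0,\cdot)$ over $\mathcal{Y}$, so $f(\boldsymbol{x}_0,\boldsymbol{y})=f^*(\boldsymbol{x}_0)$. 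On the other hand, the complementarity $\sigma(f(\boldsymbol{x}_0,\boldsymbol{y})-f^*(\boldsymbol{x}_0)-\epsilon)=0$ with $\sigma>0$ forces $f(\boldsymbol{x}_0,\boldsymbol{y})=f^*(\boldsymbol{x}_0)+\epsilon$, contradicting $\epsilon>0$. Hence no positive multiplier is admissible, $\mathcal{M}_{\boldsymbol{x}_0}^0(\boldsymbol{y})=\{0\}$ for every $\boldsymbol{y}\in\mathcal{R}_{\epsilon}(\boldsymbol{x}_0)$, and the qualification holds. I expect this step to be the main obstacle: it is exactly where the positivity of the perturbation $\epsilon$ is indispensable, separating active $\epsilon$-optimal points from true minimizers so that the qualification cannot fail.

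With Assumption \ref{assum:5} and the constraint qualification in hand, Proposition \ref{pro:4} yields that $v=-F^*$ is Lipschitz near $\boldsymbol{x}_0$, hence so is $F^*$, and that
\[
\hat{\partial}v(\boldsymbol{x}_0)\subset\{-\nabla_{\boldsymbol{x}}F(\boldsymbol{x}_0,\boldsymbol{y})+\sigma(\nabla_{\boldsymbol{x}}f(\boldsymbol{x}_0,\boldsymbol{y})-\nabla f^*(\boldsymbol{x}_0)):\boldsymbol{y}\in\mathcal{R}_{\epsilon}(\boldsymbol{x}_0),\ \sigma\in\mathcal{M}_{\boldsymbol{x}_0}^1(\boldsymbol{y})\}.
\]
Finally I would pass back to $F^*=-v$ using Proposition \ref{pro:2}(2): since $\partial F^*(\boldsymbol{x}_0)=\text{co}\,\hat{\partial}(-v)(\boldsymbol{x}_0)=-\text{co}\,\hat{\partial}v(\boldsymbol{x}_0)$, negating the displayed set and taking the convex hull turns each generator $-\nabla_{\boldsymbol{x}}F+\sigma(\nabla_{\boldsymbol{x}}f-\nabla f^*)$ into $\nabla_{\boldsymbol{x}}F-\sigma(\nabla_{\boldsymbol{x}}f-\nabla f^*)$, which is exactly the defining generator of $\mathcal{A}(\boldsymbol{x}_0)$. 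Therefore $\partial F^*(\boldsymbol{x}_0)\subset\text{co}\,\mathcal{A}(\boldsymbol{x}_0)$, as claimed.
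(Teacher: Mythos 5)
Your proposal is correct and follows essentially the same route as the paper's proof: rewrite $F^*$ as the negative of a min-type value function with constraint $g=f-f^*-\epsilon$, verify the qualification $\mathcal{M}_{\boldsymbol{x}_0}^0(\boldsymbol{y})=\{0\}$ on the argmin set via exactly the same convexity-plus-complementarity contradiction using $\epsilon>0$, apply Proposition \ref{pro:4}, and flip signs through Proposition \ref{pro:2}(2). Your explicit check that $f^*$ is $C^1$ near $\boldsymbol{x}_0$ (convexity of $f^*$ plus differentiability on an open neighbourhood) is a hypothesis of Assumption \ref{assum:5} that the paper leaves implicit, so this is a welcome refinement rather than a deviation.
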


\begin{proof}
Define 
\begin{equation*}
\bar{F}(\boldsymbol{x}):= \min\limits_{\boldsymbol{y}}\{ -F(\boldsymbol{x}, \boldsymbol{y}): \boldsymbol{y}\in \mathcal{Y}, f(\boldsymbol{x}, \boldsymbol{y}) \le f^*(\boldsymbol{x}) + \epsilon \}.
\end{equation*}
It is obvious that $F^*(\boldsymbol{x}) = - \bar{F}(\boldsymbol{x})$ on $\mathcal{X}$ by the equivalent definition of $F^*(\boldsymbol{x})$ given in (\ref{eq29}).

We first prove the Lipschitz continuity of $\bar{F}(\boldsymbol{x})$ near $\boldsymbol{x}_0$, which further implies the Lipschitz continuity of $F^*(\boldsymbol{x})$ near $\boldsymbol{x}_0$. To start with, we define 
\begin{equation} \label{eq31} \tag{31}
\bar{\mathcal{R}}_{\epsilon}(\boldsymbol{x}_0) := { \underset {\boldsymbol{y}} { \operatorname {arg\,min} } \, \{-F(\boldsymbol{x}_0, \boldsymbol{y}): \boldsymbol{y} \in \mathcal{Y}, f(\boldsymbol{x}_0, \boldsymbol{y})\le f^*(\boldsymbol{x}_0) + \epsilon \}}.
\end{equation}
Notice that $\boldsymbol{x}_0$ is an interior point of $\mathcal{X}$. From Proposition \ref{pro:4}, we know that the Lipschitz continuity of $\bar{F}(\boldsymbol{x})$ near $\boldsymbol{x}_0$ can be guaranteed if $\mathcal{M}_{\boldsymbol{x}_0}^{0}(\boldsymbol{y})=\{0\}$ for all $\boldsymbol{y}\in \bar{\mathcal{R}}_{\epsilon}(\boldsymbol{x}_0)$, where the definition of $\mathcal{M}_{\boldsymbol{x}_0}^{0}(\boldsymbol{y})$ is given in (\ref{eq30}) with $\boldsymbol{x}=\boldsymbol{x}_0$ and $\lambda = 0$.

Then, in the following, we prove that $\mathcal{M}_{\boldsymbol{x}_0}^{0}(\boldsymbol{y}) = \{0\}$ for all $\boldsymbol{y} \in \bar{\mathcal{R}}_{\epsilon}(\boldsymbol{x}_0)$. Recall that for any $\boldsymbol{y}_0 \in \bar{\mathcal{R}}_{\epsilon}(\boldsymbol{x}_0)$, 

\begin{align*}  
\mathcal{M}_{\boldsymbol{x}_0}^{0}(\boldsymbol{y}_0) :=& \left \{ \sigma: 0 \in  \sigma \nabla_{\boldsymbol{y}} f(\boldsymbol{x}_0, \boldsymbol{y}_0) + \mathcal{N}_{\mathcal{Y}}(\boldsymbol{y}_0), \right. \\   
& \left. \qquad \sigma\ge 0, \qquad \sigma (f(\boldsymbol{x}_0, \boldsymbol{y}_0)-f^*(\boldsymbol{x}_0)-\epsilon)=0 \right \}.
\end{align*}
Assume that there exists $\sigma>0$ such that 
\begin{equation*}
0 \in \sigma \nabla_{\boldsymbol{y}}f(\boldsymbol{x}_0, \boldsymbol{y}_0) + \mathcal{N}_{\mathcal{Y}}(\boldsymbol{y}_0), \qquad \sigma(f(\boldsymbol{x}_0, \boldsymbol{y}_0) - f^*(\boldsymbol{x}_0) - \epsilon) = 0.
\end{equation*}
Then, we must have 
\begin{equation}  \label{eq32} \tag{32}
-\nabla_{\boldsymbol{y}}f(\boldsymbol{x}_0, \boldsymbol{y}_0) \in \mathcal{N}_{\mathcal{Y}}(\boldsymbol{y}_0),
\end{equation}
and 
\begin{equation}  \label{eq33} \tag{33}
f(\boldsymbol{x}_0, \boldsymbol{y}_0) - f^*(\boldsymbol{x}_0) - \epsilon = 0.
\end{equation}

From (\ref{eq32}) and the definition of $\mathcal{N}_{\mathcal{Y}}(\boldsymbol{y}_0)$ in Proposition \ref{pro:2}, we know that $\langle \nabla_{\boldsymbol{y}} f(\boldsymbol{x}_0, \boldsymbol{y}_0), \boldsymbol{y} - \boldsymbol{y}_0 \rangle \ge 0$, $\forall \boldsymbol{y}\in \mathcal{Y}$, which further implies that $f(\boldsymbol{x}_0, \boldsymbol{y}_0) = \min_{\boldsymbol{y}}\{ f(\boldsymbol{x}_0, \boldsymbol{y}): \boldsymbol{y}\in \mathcal{Y}\}$ since $f(\boldsymbol{x}_0, \cdot)$ is convex on $\mathcal{Y}$(see Assumption \ref{assum:2}). Therefore, we have $f(\boldsymbol{x}_0, \boldsymbol{y}_0) = f^*(\boldsymbol{x}_0)$ by the definition of $f^*(\boldsymbol{x})$ in (\ref{eq5}). However, it contradicts with the formula in (\ref{eq33}), since $\epsilon$ is a positive number. Therefore, $\mathcal{M}_{\boldsymbol{x}_0}^{0}(\boldsymbol{y}) = \{0\}$ for all $\boldsymbol{y}\in \bar{\mathcal{R}}_{\epsilon}(\boldsymbol{x}_0)$. As a result, both $\bar{F}(\boldsymbol{x})$ and $F^*(\boldsymbol{x})$ are Lipschitz continuous near $\boldsymbol{x}_0$.

Next, we consider the Clarke generalized gradient of $F^*(\boldsymbol{x})$ at $\boldsymbol{x}_0$, i.e., $\partial F^*(\boldsymbol{x}_0)$.

Recall that $F^*(\boldsymbol{x}_0) = -\bar{F}(\boldsymbol{x}_0)$. From Proposition \ref{pro:2}, we know that 
\begin{equation} \label{eq34} \tag{34}
\partial F^*(\boldsymbol{x}_0) = \text{co} \hat{\partial}(-\bar{F}(\boldsymbol{x}_0))= -\text{co} \hat{\partial}\bar{F}(\boldsymbol{x}_0).
\end{equation}
Since $\boldsymbol{x}_0$ is an interior point of $\mathcal{X}$, and $\mathcal{M}_{\boldsymbol{x}_0}^{0}(\boldsymbol{y}) = \{0\}$ for all $\boldsymbol{y}\in \bar{\mathcal{R}}_{\epsilon}(\boldsymbol{x}_0)$, from Proposition \ref{pro:4}, $\hat{\partial}\bar{F}(\boldsymbol{x}_0)$ satisfies 
\begin{equation*}
\hat{\partial}\bar{F}(\boldsymbol{x}_0)\subset \mathop{\cup}\limits_{\boldsymbol{y}\in \bar{\mathcal{R}}_{\epsilon}(\boldsymbol{x}_0)} \{ -\nabla_{\boldsymbol{x}}F(\boldsymbol{x}_0, \boldsymbol{y}) + \sigma(\nabla_{\boldsymbol{x}}f(\boldsymbol{x}_0, \boldsymbol{y}) - \nabla f^*(\boldsymbol{x}_0)): \sigma \in \mathcal{M}_{\boldsymbol{x}_0}^{1}(\boldsymbol{y})\}
\end{equation*}
where the definitions of $\mathcal{M}_{\boldsymbol{x}_0}^{1}(\boldsymbol{y})$ and $\bar{\mathcal{R}}_{\epsilon}(\boldsymbol{x}_0)$ are given in (\ref{eq30}) and (\ref{eq31}) with $\boldsymbol{x} = \boldsymbol{x}_0$ and $\lambda = 1$, respectively, and $\nabla f^*(\boldsymbol{x}_0)$ can be computed via Proposition \ref{pro:1}.

Furthermore, by the definition of $\mathcal{A}(\boldsymbol{x}_0)$ and the fact that $\bar{\mathcal{R}}_{\epsilon}(\boldsymbol{x}_0) = \mathcal{R}_{\epsilon}(\boldsymbol{x}_0)$, we know that $\hat{\partial}F(\boldsymbol{x}_0)\subset - \mathcal{A}(\boldsymbol{x}_0)$. Combined with (\ref{eq34}), that $\partial F^*(\boldsymbol{x}_0)\subset \text{co}\mathcal{A}(\boldsymbol{x}_0)$ can be obtained. Then, the proof is complete.

\end{proof}

\subsection{Proof of Theorem \ref{tho:2}}
In the following, the proof of Theorem \ref{tho:2} is shown.

\begin{proof}
Firstly, since $\bar{\boldsymbol{x}}$ is an interior point of $\mathcal{X}$, from Lemma \ref{lemma:1}, we know that $F^*(\boldsymbol{x})$ in (\ref{eq26}) is Lipschitz continuous near $\bar{\boldsymbol{x}}$, and the Clarke generalized gradient satisfies $\partial F^*(\bar{\boldsymbol{x}})\subset \text{co}\mathcal{A}(\bar{\boldsymbol{x}})$, i.e., there exist positive integers $I$, $J$, $r_{ij}\ge 0$ satisfying $\sum_{i=1}^{I}\sum_{j=1}^J r_{ij}=1$, $\boldsymbol{y}_i \in \mathcal{R}_{\epsilon}(\bar{\boldsymbol{x}})$, $\sigma_{ij}\in \mathcal{M}_{\bar{\boldsymbol{x}}}^{1}(\boldsymbol{y}_i)$ with $\mathcal{M}_{\bar{\boldsymbol{x}}}^1(\boldsymbol{y}_i)$ given in (\ref{eq30}) with $\boldsymbol{x} = \bar{\boldsymbol{x}}$, $\lambda = 1$, where $i\in \{1, \ldots, I\}$, $j\in \{1, \ldots, J\}$, such that 
\begin{equation}  \label{eq35} \tag{35}
\partial F^*(\bar{\boldsymbol{x}}) = \sum\limits_{i=1}^{I}\sum\limits_{j=1}^{J}r_{ij}(\nabla_{\boldsymbol{x}}F(\bar{\boldsymbol{x}}, \boldsymbol{y}_i) - \sigma_{ij}(\nabla_{\boldsymbol{x}}f(\bar{\boldsymbol{x}}, \boldsymbol{y}_i)- \nabla f^*(\bar{\boldsymbol{x}})))
\end{equation}
where we use the definition of $\mathcal{A}(\bar{\boldsymbol{x}})$ given in Lemma \ref{lemma:1} and the definition of convex hull.

Then, since $(\bar{\boldsymbol{x}}, \bar{\boldsymbol{y}})$ is a local optimal solution to problem SPBP$\epsilon$ in (\ref{eq25}), from Theorem \ref{tho:1}, we know that there exist $\lambda_1 \in \{0, 1\}$, $\lambda_2 \ge 0$, $\lambda_3 \ge 0$ not all zero such that 
\begin{align} \label{eq36}  \nonumber
&0 = \lambda_1 \nabla_{\boldsymbol{x}}F(\bar{\boldsymbol{x}}, \bar{\boldsymbol{y}}) + \lambda_2(\partial F^*(\bar{\boldsymbol{x}}) - \nabla_{\boldsymbol{x}}F(\bar{\boldsymbol{x}}, \bar{\boldsymbol{y}})) + \lambda_3(\nabla_{\boldsymbol{x}}f(\bar{\boldsymbol{x}}, \bar{\boldsymbol{y}}) - \nabla f^*(\bar{\boldsymbol{x}})), \\  \tag{36}
&0 \in \lambda_1 \nabla_{\boldsymbol{y}}F(\bar{\boldsymbol{x}}, \bar{\boldsymbol{y}}) - \lambda_2 \nabla_{\boldsymbol{y}}F(\bar{\boldsymbol{x}}, \bar{\boldsymbol{y}}) + \lambda_3\nabla_{\boldsymbol{y}}f(\bar{\boldsymbol{x}}, \bar{\boldsymbol{y}}) +\mathcal{N}_{\mathcal{Y}}(\bar{\boldsymbol{y}}),\\
& \lambda_3 (f(\bar{\boldsymbol{x}}, \bar{\boldsymbol{y}}) - f^*(\bar{\boldsymbol{x}}) - \epsilon) = 0   \nonumber
\end{align}
where $\partial F^*(\bar{\boldsymbol{x}})$ is given in (\ref{eq35}), $\nabla f^*(\bar{\boldsymbol{x}})$ can be computed through Proposition \ref{pro:1}, the definition of $\mathcal{N}_{\mathcal{Y}}(\bar{\boldsymbol{y}})$ is given in Proposition \ref{pro:2}, and we use the fact that $\mathcal{N}_{\mathcal{X}}(\bar{\boldsymbol{x}}) = 0$ since $\bar{\boldsymbol{x}}$ is an interior point of $\mathcal{X}$. 

Combining (\ref{eq35}), (\ref{eq36}) with the fact that $\sigma_{ij}\in \mathcal{M}_{\bar{\boldsymbol{x}}}^{1}(\boldsymbol{y}_i)$ for $i\in \{1, \ldots, I\}$, $j\in \{1, \ldots, J\}$, the proof is complete.
\end{proof}

\subsection{Proof of Proposition \ref{pro:7}}

In the following, the proof of Proposition \ref{pro:7} is shown.

\begin{proof}
Since $(\bar{\boldsymbol{x}}, \bar{\boldsymbol{y}})$ is a feasible point of problem SPBP$\epsilon$ in (\ref{eq25}), we know that $(\bar{\boldsymbol{x}}, \bar{\boldsymbol{y}})$ satisfies the following constraints:
\begin{equation}  \label{eq37} \tag{37}
\bar{\boldsymbol{x}}\in \mathcal{X}, \qquad \bar{\boldsymbol{y}}\in \mathcal{Y}, \qquad f(\bar{\boldsymbol{x}}, \bar{\boldsymbol{y}})\le f^*(\bar{\boldsymbol{x}}) + \epsilon, \qquad F^*(\bar{\boldsymbol{x}}) \le F(\bar{\boldsymbol{x}}, \bar{\boldsymbol{y}}).
\end{equation}
Moreover, by the definition of $F^*(\boldsymbol{x})$ in (\ref{eq26}), it is easy to know that for any $\boldsymbol{x}\in \mathcal{X}$, $\boldsymbol{y}\in \mathcal{Y}$ satisfying $f(\boldsymbol{x}, \boldsymbol{y}) \le f^*(\boldsymbol{x})+\epsilon$, we always have 
\begin{equation} \label{eq38} \tag{38}
F^*(\boldsymbol{x})-F(\boldsymbol{x}, \boldsymbol{y}) \ge 0 .
\end{equation}

Combining (\ref{eq37}) with (\ref{eq38}), we know that $F(\bar{\boldsymbol{x}}, \bar{\boldsymbol{y}}) \le F^*(\bar{\boldsymbol{x}}) \le F(\bar{\boldsymbol{x}}, \bar{\boldsymbol{y}})$, i.e., $F^*(\bar{\boldsymbol{x}}) - F(\bar{\boldsymbol{x}}, \bar{\boldsymbol{y}}) = 0$. Therefore, $(\bar{\boldsymbol{x}}, \bar{\boldsymbol{y}})$ is the global optimal solution of the following problem
\begin{align*}     
&\min\limits_{\boldsymbol{x}, \boldsymbol{y}} F^*(\boldsymbol{x}) - F(\boldsymbol{x}, \boldsymbol{y})\\
& ~\text{s.t.}~~\boldsymbol{x}\in \mathcal{X}, ~\boldsymbol{y}\in \mathcal{Y}, \\  
& ~~~~ ~~f(\boldsymbol{x}, \boldsymbol{y}) \le f^*(\boldsymbol{x})+ \epsilon.  \\    
\end{align*}

Notice that following the discussion in the proof of Theorem \ref{tho:2}, we know that $F^*(\boldsymbol{x})$ is Lipschitz continuous near $\bar{\boldsymbol{x}}$, and thus from Theorem \ref{tho:1} in the main text, there exist $\lambda_2 \in \{0, 1\}$, $\lambda_3$ not all zero such that 
\begin{align} \label{eq39}  \nonumber
&0 =  \lambda_2 (\partial F^*(\bar{\boldsymbol{x}}) - \nabla_{\boldsymbol{x}} F(\bar{\boldsymbol{x}}, \bar{\boldsymbol{y}})) + \lambda_3 (\nabla_{\boldsymbol{x}} f(\bar{\boldsymbol{x}}, \bar{\boldsymbol{y}}) - \nabla f^*(\bar{\boldsymbol{x}})), \\  \tag{39}
& 0 \in - \lambda_2 \nabla_{\boldsymbol{y}} F(\bar{\boldsymbol{x}}, \bar{\boldsymbol{y}}) + \lambda_3 \nabla_{\boldsymbol{y}}f(\bar{\boldsymbol{x}}, \bar{\boldsymbol{y}}) +\mathcal{N}_{\mathcal{Y}}(\bar{\boldsymbol{y}}), \\   \nonumber
& \lambda_3 (f(\bar{\boldsymbol{x}}, \bar{\boldsymbol{y}}) - f^*(\bar{\boldsymbol{x}}) - \epsilon) = 0
\end{align}
where $\partial F^*(\bar{\boldsymbol{x}})$ is given in (\ref{eq35}), $\nabla f^*(\bar{\boldsymbol{x}})$ can be computed via Proposition \ref{pro:1}, the definition of $\mathcal{N}_{\mathcal{Y}}(\bar{\boldsymbol{y}})$ is given in Proposition \ref{pro:2}, and we use the fact that $\mathcal{N}_{\mathcal{X}}(\bar{\boldsymbol{x}}) = 0$ since $\bar{\boldsymbol{x}}$ is an interior point of $\mathcal{X}$. 

Combining (\ref{eq35}) with (\ref{eq39}), we know that $(\bar{\boldsymbol{x}}, \bar{\boldsymbol{y}})$ satisfies the Fritz-John type necessary optimality condition in Theorem \ref{tho:2} with $\lambda_1=0$. Then the proof is complete.
\end{proof}

\section{Proof of Theorem \ref{tho:3} and Theorem \ref{tho:4}}\label{appendix:2}

\subsection{Proof of Supporting Lemmas}

In the following, we first present some results that will be used in the proof.

For the optimization problem
\begin{equation}  \label{eq40} \tag{40}
\min\limits_{\boldsymbol{x} \in \mathcal{A}} f(\boldsymbol{x})
\end{equation}
where $\mathcal{A}\subset \mathbb{R}^n$ is a nonempty compact convex set, we have the following results, which can be found in Theorem 10.21 and Theorem 10.29 of \cite{31}.

\begin{lemma}  \label{lemma:2}
For problem (\ref{eq40}), suppose $f(\boldsymbol{x})$ is $L_{f}$ smooth, convex over $\mathcal{A}$. Let $f_{\text{opt}}$ be the optimal value. Apply the projected gradient descent 
\begin{equation*}
\boldsymbol{x}_{k+1}= \text{proj}_{\mathcal{A}}\bigg(\boldsymbol{x}_k - \frac{1}{L_f}\nabla f(\boldsymbol{x}_k)\bigg)
\end{equation*}
to solve problem (\ref{eq40}) with $\boldsymbol{x}_0$ being the initial value. Then for any ${\boldsymbol{x}^*}\in { \operatorname {arg\,min} }_{\boldsymbol{x} \in \mathcal{A}}f(\boldsymbol{x})$ and $k > 0$, we have 
\begin{equation*}
f(\boldsymbol{x}_k) - f_{\text{opt}} \le \frac{1}{2k}L_f \| \boldsymbol{x}_0 - \boldsymbol{x}^* \|^2.
\end{equation*}
\end{lemma}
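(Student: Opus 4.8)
The plan is to give the standard $\mathcal{O}(1/k)$ analysis of projected gradient descent, organized around the gradient mapping $\boldsymbol{G}_k := L_f(\boldsymbol{x}_k - \boldsymbol{x}_{k+1})$, so that one iteration reads $\boldsymbol{x}_{k+1} = \boldsymbol{x}_k - \frac{1}{L_f}\boldsymbol{G}_k$. First I would record three ingredients. The descent lemma for $L_f$-smooth $f$ gives $f(\boldsymbol{x}_{k+1}) \le f(\boldsymbol{x}_k) + \langle \nabla f(\boldsymbol{x}_k), \boldsymbol{x}_{k+1}-\boldsymbol{x}_k\rangle + \frac{L_f}{2}\|\boldsymbol{x}_{k+1}-\boldsymbol{x}_k\|^2$, while convexity gives $f(\boldsymbol{x}_k) \le f(\boldsymbol{z}) + \langle \nabla f(\boldsymbol{x}_k), \boldsymbol{x}_k - \boldsymbol{z}\rangle$ for every $\boldsymbol{z}\in\mathcal{A}$. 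The crucial third ingredient is the variational characterization of the projection: since $\boldsymbol{x}_{k+1} = \text{proj}_{\mathcal{A}}(\boldsymbol{x}_k - \frac{1}{L_f}\nabla f(\boldsymbol{x}_k))$, we have $\langle \boldsymbol{x}_k - \frac{1}{L_f}\nabla f(\boldsymbol{x}_k) - \boldsymbol{x}_{k+1},\, \boldsymbol{z}-\boldsymbol{x}_{k+1}\rangle \le 0$ for all $\boldsymbol{z}\in\mathcal{A}$, which lets me eliminate the unknown gradient term.

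Then I would combine the three inequalities. Adding the first two and using the projection inequality to bound $\langle\nabla f(\boldsymbol{x}_k),\boldsymbol{x}_{k+1}-\boldsymbol{z}\rangle$ yields, after rewriting everything through $\boldsymbol{G}_k$, the fundamental one-step bound $f(\boldsymbol{x}_{k+1}) \le f(\boldsymbol{z}) + \langle \boldsymbol{G}_k, \boldsymbol{x}_k-\boldsymbol{z}\rangle - \frac{1}{2L_f}\|\boldsymbol{G}_k\|^2$, valid for every $\boldsymbol{z}\in\mathcal{A}$. Taking $\boldsymbol{z}=\boldsymbol{x}_k$ gives the sufficient-decrease property $f(\boldsymbol{x}_{k+1}) \le f(\boldsymbol{x}_k) - \frac{1}{2L_f}\|\boldsymbol{G}_k\|^2$, so $\{f(\boldsymbol{x}_k)\}$ is nonincreasing; taking $\boldsymbol{z}=\boldsymbol{x}^*$ and completing the square via $\boldsymbol{x}_{k+1}=\boldsymbol{x}_k-\frac{1}{L_f}\boldsymbol{G}_k$ turns the right-hand side into $\frac{L_f}{2}(\|\boldsymbol{x}_k-\boldsymbol{x}^*\|^2 - \|\boldsymbol{x}_{k+1}-\boldsymbol{x}^*\|^2)$, giving the telescoping inequality $f(\boldsymbol{x}_{k+1}) - f_{\text{opt}} \le \frac{L_f}{2}(\|\boldsymbol{x}_k-\boldsymbol{x}^*\|^2 - \|\boldsymbol{x}_{k+1}-\boldsymbol{x}^*\|^2)$.

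Finally I would sum the telescoping inequality over the first $k$ iterations; the right-hand side collapses to $\frac{L_f}{2}(\|\boldsymbol{x}_0-\boldsymbol{x}^*\|^2 - \|\boldsymbol{x}_k-\boldsymbol{x}^*\|^2) \le \frac{L_f}{2}\|\boldsymbol{x}_0-\boldsymbol{x}^*\|^2$. Because $\{f(\boldsymbol{x}_i)\}$ is nonincreasing, $f(\boldsymbol{x}_k) - f_{\text{opt}}$ is no larger than the average of the $k$ summands, whence $k\big(f(\boldsymbol{x}_k) - f_{\text{opt}}\big) \le \frac{L_f}{2}\|\boldsymbol{x}_0-\boldsymbol{x}^*\|^2$, which is exactly the claimed bound. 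I expect the main obstacle to be the careful bookkeeping in deriving the fundamental one-step bound: one must invoke the projection's variational inequality at precisely the right place to replace the gradient inner product, and then re-express the descent-lemma and projection terms purely through $\boldsymbol{G}_k$ so that the square completes cleanly; the telescoping and averaging at the end are routine once that inequality is established.
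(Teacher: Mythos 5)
Your proof is correct: the fundamental one-step bound $f(\boldsymbol{x}_{k+1}) \le f(\boldsymbol{z}) + \langle \boldsymbol{G}_k, \boldsymbol{x}_k-\boldsymbol{z}\rangle - \frac{1}{2L_f}\|\boldsymbol{G}_k\|^2$ follows exactly as you describe from the descent lemma, convexity, and the projection's variational inequality, the completion of the square at $\boldsymbol{z}=\boldsymbol{x}^*$ is an exact identity, and the telescoping plus monotonicity yields the stated rate. The paper itself offers no proof of this lemma---it simply cites Theorem 10.21 of \cite{31}---and your argument is precisely the standard prox-grad/gradient-mapping proof underlying that citation, so there is nothing to add.
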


\begin{lemma} \label{lemma:3}
For problem (\ref{eq40}), suppose $f(\boldsymbol{x})$ is $L_f$ smooth, $\delta$ strongly convex over $\mathcal{A}$. Let $f_{\text{opt}}$ be the optimal value and $\boldsymbol{x}^*$ be the optimal solution. Apply the projected gradient descent 
\begin{equation*}
\boldsymbol{x}_{k+1} = \text{proj}_{\mathcal{A}}\bigg(\boldsymbol{x}_k - \frac{1}{L_f}\nabla f(\boldsymbol{x}_k)\bigg)
\end{equation*}
to solve problem (\ref{eq40}) with $\boldsymbol{x}_0$ being the initial value. Then, for any $k>0$, we have 
\begin{align*}
& \|\boldsymbol{x}_k - \boldsymbol{x}^*\|^2 \le \bigg(1 - \frac{\delta}{L_f}\bigg)^k \|\boldsymbol{x}_0 - \boldsymbol{x}^*\|^2,  \\
& f(\boldsymbol{x}_k) - f_{\text{opt}} \le \frac{L_f}{2}\bigg( 1 - \frac{\delta}{L_f}\bigg)^k \|\boldsymbol{x}_0 - \boldsymbol{x}^*\|^2.
\end{align*}
\end{lemma}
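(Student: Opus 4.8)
The plan is to treat this as the standard linear-convergence estimate for the proximal/projected gradient method. Since $\mathcal{A}$ is convex, projected gradient descent with step $1/L_f$ is exactly the proximal gradient step for $f$ plus the indicator of $\mathcal{A}$, so I expect both displayed inequalities to drop out of a single descent (``fundamental prox-grad'') inequality, and the write-up can then refer to the routine algebra in \cite{31}.

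First I would record two facts about one step. Writing $\boldsymbol{x}_{k+1}=\text{proj}_{\mathcal{A}}(\boldsymbol{x}_k-\tfrac{1}{L_f}\nabla f(\boldsymbol{x}_k))$, the variational characterization of the Euclidean projection onto the convex set $\mathcal{A}$ gives $\langle \boldsymbol{x}_k-\tfrac{1}{L_f}\nabla f(\boldsymbol{x}_k)-\boldsymbol{x}_{k+1},\,\boldsymbol{y}-\boldsymbol{x}_{k+1}\rangle\le 0$ for every $\boldsymbol{y}\in\mathcal{A}$, equivalently $\langle\nabla f(\boldsymbol{x}_k),\boldsymbol{x}_{k+1}-\boldsymbol{y}\rangle\le -L_f\langle\boldsymbol{x}_{k+1}-\boldsymbol{x}_k,\boldsymbol{x}_{k+1}-\boldsymbol{y}\rangle$. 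Combining this with the descent lemma from $L_f$-smoothness, $f(\boldsymbol{x}_{k+1})\le f(\boldsymbol{x}_k)+\langle\nabla f(\boldsymbol{x}_k),\boldsymbol{x}_{k+1}-\boldsymbol{x}_k\rangle+\tfrac{L_f}{2}\|\boldsymbol{x}_{k+1}-\boldsymbol{x}_k\|^2$, and with the $\delta$-strong-convexity lower bound $f(\boldsymbol{y})\ge f(\boldsymbol{x}_k)+\langle\nabla f(\boldsymbol{x}_k),\boldsymbol{y}-\boldsymbol{x}_k\rangle+\tfrac{\delta}{2}\|\boldsymbol{y}-\boldsymbol{x}_k\|^2$, I would derive for all $\boldsymbol{y}\in\mathcal{A}$ the inequality $f(\boldsymbol{x}_{k+1})-f(\boldsymbol{y})\le \tfrac{L_f}{2}\|\boldsymbol{x}_k-\boldsymbol{y}\|^2-\tfrac{L_f}{2}\|\boldsymbol{x}_{k+1}-\boldsymbol{y}\|^2-\tfrac{\delta}{2}\|\boldsymbol{x}_k-\boldsymbol{y}\|^2$, where the cross terms collapse via the polarization identity $\|\boldsymbol{x}_{k+1}-\boldsymbol{x}_k\|^2+2\langle\boldsymbol{x}_k-\boldsymbol{y},\boldsymbol{x}_{k+1}-\boldsymbol{x}_k\rangle=\|\boldsymbol{x}_{k+1}-\boldsymbol{y}\|^2-\|\boldsymbol{x}_k-\boldsymbol{y}\|^2$.

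Next I would specialize to $\boldsymbol{y}=\boldsymbol{x}^*$, obtaining $f(\boldsymbol{x}_{k+1})-f_{\text{opt}}\le\tfrac{L_f-\delta}{2}\|\boldsymbol{x}_k-\boldsymbol{x}^*\|^2-\tfrac{L_f}{2}\|\boldsymbol{x}_{k+1}-\boldsymbol{x}^*\|^2$. Both conclusions follow from this one estimate. Since $f(\boldsymbol{x}_{k+1})-f_{\text{opt}}\ge 0$, discarding that term yields $\|\boldsymbol{x}_{k+1}-\boldsymbol{x}^*\|^2\le(1-\delta/L_f)\|\boldsymbol{x}_k-\boldsymbol{x}^*\|^2$, and a trivial induction on $k$ gives the first displayed bound. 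Discarding instead the nonpositive term $-\tfrac{L_f}{2}\|\boldsymbol{x}_{k+1}-\boldsymbol{x}^*\|^2$ gives $f(\boldsymbol{x}_{k+1})-f_{\text{opt}}\le\tfrac{L_f}{2}(1-\delta/L_f)\|\boldsymbol{x}_k-\boldsymbol{x}^*\|^2$; substituting the iterate bound just proved and reindexing yields the second displayed bound.

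The only delicate point is the derivation of the fundamental prox-grad inequality: one must add the projection inequality, the smoothness upper bound, and the strong-convexity lower bound in the correct order so that the gradient terms cancel and only squared-distance terms survive. An alternative and slightly more elementary route to the first bound alone is to note that $\boldsymbol{x}^*$ is a fixed point of the map $\boldsymbol{x}\mapsto\text{proj}_{\mathcal{A}}(\boldsymbol{x}-\tfrac{1}{L_f}\nabla f(\boldsymbol{x}))$, invoke nonexpansiveness of the projection, and then bound $\|(\boldsymbol{x}_k-\boldsymbol{x}^*)-\tfrac{1}{L_f}(\nabla f(\boldsymbol{x}_k)-\nabla f(\boldsymbol{x}^*))\|^2$ by combining co-coercivity of $\nabla f$ with the strong-convexity inequality $\langle\nabla f(\boldsymbol{x}_k)-\nabla f(\boldsymbol{x}^*),\boldsymbol{x}_k-\boldsymbol{x}^*\rangle\ge\delta\|\boldsymbol{x}_k-\boldsymbol{x}^*\|^2$; this reproduces the factor $1-\delta/L_f$ but still needs the prox-grad inequality for the function-value bound. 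As the result is entirely standard and quoted from \cite{31}, I would keep the exposition short and defer the elementary manipulations to the cited theorems.
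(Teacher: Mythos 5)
Your proof is correct, and it matches the source the paper relies on: the paper gives no proof of Lemma~\ref{lemma:3} at all, simply citing Theorems 10.21 and 10.29 of \cite{31}, and your derivation --- combining the projection variational inequality, the descent lemma, and $\delta$-strong convexity into the fundamental prox-grad inequality $f(\boldsymbol{x}_{k+1})-f(\boldsymbol{y})\le \frac{L_f}{2}\|\boldsymbol{x}_k-\boldsymbol{y}\|^2-\frac{L_f}{2}\|\boldsymbol{x}_{k+1}-\boldsymbol{y}\|^2-\frac{\delta}{2}\|\boldsymbol{x}_k-\boldsymbol{y}\|^2$, then specializing to $\boldsymbol{y}=\boldsymbol{x}^*$ and discarding one nonnegative term or the other --- is exactly the standard argument behind the cited theorem, with both displayed bounds following correctly after the induction and reindexing you describe.
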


In the following, some results on functions $\boldsymbol{y}_j(\boldsymbol{x})$ and $f_j(\boldsymbol{x})$ are shown. 

\begin{lemma}  \label{lemma:4}
Suppose Assumptions \ref{assum:1}, \ref{assum:2}, \ref{assum:3}, and \ref{assum:6} hold. Let $J$ be an arbitrary positive integer. For each $\boldsymbol{x}\in \mathcal{X}$, apply $J$ steps of projected gradient descent in (\ref{eq14}) to solve problem (\ref{eq12}), and obtain the values $\boldsymbol{y}_1(\boldsymbol{x}), \ldots, \boldsymbol{y}_J(\boldsymbol{x})$. Define 
\begin{equation}    \label{eq41}  \tag{41}
f_{j}(\boldsymbol{x}):= f(\boldsymbol{x}, \boldsymbol{y}_j(\boldsymbol{x}))
\end{equation}
where $\boldsymbol{x}\in \mathcal{X}$, $j\in \{1, \ldots, J\}$. Then for any $j \in \{1, \ldots, J\}$, $\boldsymbol{y}_j(\boldsymbol{x})$ and $f_j(\boldsymbol{x})$ are differentiable on $\mathcal{X}$, and for any $\boldsymbol{x}\in \mathcal{X}$, $j\in \{1, \ldots, J\}$, we have 
\begin{equation*}
\|\nabla \boldsymbol{y}_j(\boldsymbol{x})\| \le M_0(J), \qquad \| \nabla f_j(\boldsymbol{x})\| \le M_1(J)
\end{equation*}
where $M_0(J)=J$, $M_1(J)= L_0(J+1)$. Furthermore, $f_J(\boldsymbol{x})$ is $M_2(J)$ smooth on $\boldsymbol{x}\in \mathcal{X}$, where 
\begin{equation*}
M_2(J) = L_1(1 + J)^2 + \frac{L_0 J}{L_1}(1 + J)(L_2 + L_3 J)
\end{equation*}
with $L_0$, $L_1$, $L_2$, $L_3$ given in Assumption \ref{assum:3}.
\end{lemma}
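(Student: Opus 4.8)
The plan is to run everything as an explicit induction on the inner index $j$, exploiting that under Assumption \ref{assum:6} the projection in (\ref{eq14}) is inactive, so $\boldsymbol{y}_{j+1}(\boldsymbol{x}) = \boldsymbol{y}_j(\boldsymbol{x}) - \alpha\nabla_{\boldsymbol{y}}f(\boldsymbol{x},\boldsymbol{y}_j(\boldsymbol{x}))$ is an honest smooth recursion. Since $\boldsymbol{y}_0(\boldsymbol{x})\equiv \boldsymbol{y}_0$ is constant and $\nabla_{\boldsymbol{y}}f$ is continuously differentiable (Assumption \ref{assum:2}), an induction on $j$ shows each $\boldsymbol{y}_j(\boldsymbol{x})$ is continuously differentiable on $\mathcal{X}$, and hence so is $f_j(\boldsymbol{x})=f(\boldsymbol{x},\boldsymbol{y}_j(\boldsymbol{x}))$. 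Differentiating the recursion by the chain rule produces the Jacobian recursion
\begin{equation*}
\nabla\boldsymbol{y}_{j+1}(\boldsymbol{x}) = \big(I - \alpha\nabla_{\boldsymbol{y}\boldsymbol{y}}f(\boldsymbol{x},\boldsymbol{y}_j(\boldsymbol{x}))\big)\nabla\boldsymbol{y}_j(\boldsymbol{x}) - \alpha\nabla_{\boldsymbol{y}\boldsymbol{x}}f(\boldsymbol{x},\boldsymbol{y}_j(\boldsymbol{x})),
\end{equation*}
which drives all subsequent estimates.

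For the first two bounds I would use $\alpha=1/L_1$ together with the convexity of $f$ and the $L_1$-Lipschitzness of $\nabla f$ (Assumptions \ref{assum:2}, \ref{assum:3}): these force the eigenvalues of $\nabla_{\boldsymbol{y}\boldsymbol{y}}f$ to lie in $[0,L_1]$, so $\|I-\alpha\nabla_{\boldsymbol{y}\boldsymbol{y}}f\|\le 1$, while $\|\nabla_{\boldsymbol{y}\boldsymbol{x}}f\|\le L_1$ because it is a block of a Hessian of spectral norm at most $L_1$. Taking norms in the Jacobian recursion then gives $\|\nabla\boldsymbol{y}_{j+1}(\boldsymbol{x})\|\le\|\nabla\boldsymbol{y}_j(\boldsymbol{x})\|+1$, and since $\nabla\boldsymbol{y}_0=0$ this telescopes to $\|\nabla\boldsymbol{y}_j(\boldsymbol{x})\|\le j\le J=M_0(J)$. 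Applying the chain rule to $f_j$ and bounding $\|\nabla_{\boldsymbol{x}}f\|,\|\nabla_{\boldsymbol{y}}f\|\le L_0$ yields $\|\nabla f_j(\boldsymbol{x})\|\le L_0+jL_0\le L_0(J+1)=M_1(J)$. The uniform Jacobian bound, with $\mathcal{X}$ convex, also shows via the mean value inequality that $\boldsymbol{y}_j$ is $j$-Lipschitz, a fact needed below.

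The smoothness of $f_J$ is the main obstacle, since it requires controlling how the Jacobian $\nabla\boldsymbol{y}_J(\boldsymbol{x})$ itself varies with $\boldsymbol{x}$. The plan is a second induction: differencing the Jacobian recursion at $\boldsymbol{x}_1,\boldsymbol{x}_2$, inserting $\|I-\alpha\nabla_{\boldsymbol{y}\boldsymbol{y}}f\|\le1$ and $\|\nabla\boldsymbol{y}_j\|\le j$ from the previous step, and combining the $j$-Lipschitzness of $\boldsymbol{y}_j$ with the Lipschitzness of $\nabla_{\boldsymbol{y}\boldsymbol{y}}f$ and $\nabla_{\boldsymbol{y}\boldsymbol{x}}f$ (Assumption \ref{assum:3}(3)). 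This yields a recursion of the form $\ell_{j+1}\le\ell_j+(1+j)(L_2+jL_3)/L_1$ for the Lipschitz modulus $\ell_j$ of $\nabla\boldsymbol{y}_j$. Summing from $\ell_0=0$ and bounding each summand crudely by its largest value gives $\ell_J\le J(1+J)(L_2+L_3J)/L_1$.

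Finally I would assemble the constant. Writing $\nabla f_J(\boldsymbol{x})=\nabla_{\boldsymbol{x}}f(\boldsymbol{x},\boldsymbol{y}_J(\boldsymbol{x}))+\nabla\boldsymbol{y}_J(\boldsymbol{x})^{\top}\nabla_{\boldsymbol{y}}f(\boldsymbol{x},\boldsymbol{y}_J(\boldsymbol{x}))$ and differencing at $\boldsymbol{x}_1,\boldsymbol{x}_2$, the first term contributes Lipschitz constant $L_1(1+J)$ (via $L_1$-Lipschitzness of $\nabla f$ and $J$-Lipschitzness of $\boldsymbol{y}_J$); the second, handled by the product rule for Lipschitz maps using $\|\nabla\boldsymbol{y}_J\|\le J$, $\|\nabla_{\boldsymbol{y}}f\|\le L_0$, and the $L_1(1+J)$-Lipschitzness of $\boldsymbol{x}\mapsto\nabla_{\boldsymbol{y}}f(\boldsymbol{x},\boldsymbol{y}_J(\boldsymbol{x}))$, contributes $\ell_JL_0+JL_1(1+J)$. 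Adding these and substituting the bound for $\ell_J$ gives exactly $M_2(J)=L_1(1+J)^2+\frac{L_0J}{L_1}(1+J)(L_2+L_3J)$, completing the proof.
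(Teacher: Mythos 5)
Your proposal is correct and, in all essentials, follows the paper's own proof: the same use of Assumption \ref{assum:6} to replace the projection by the smooth recursion, the same induction for differentiability, the same bound $\|\mathbf{I}-\tfrac{1}{L_1}\nabla_{\boldsymbol{y}\boldsymbol{y}}f\|\le 1$ (convexity plus $L_1$-smoothness) driving $\|\nabla\boldsymbol{y}_j(\boldsymbol{x})\|\le j$, the same chain-rule bound for $\nabla f_j$, and the same three-term decomposition of $\nabla f_J(\boldsymbol{x}_1)-\nabla f_J(\boldsymbol{x}_2)$ as in (\ref{eq44})--(\ref{eq45}). The one step where you genuinely deviate is the control of how $\nabla\boldsymbol{y}_J$ varies: the paper differentiates the Jacobian recursion once more and bounds the second derivative $\|\nabla^2\boldsymbol{y}_J(\boldsymbol{x})\|$ in (\ref{eq46})--(\ref{eq47}), which forces it to invoke third-order quantities $\nabla_{\boldsymbol{y}\boldsymbol{y}\boldsymbol{x}}f$ and $\nabla_{\boldsymbol{y}\boldsymbol{y}\boldsymbol{y}}f$ whose existence is not actually supplied by Assumption \ref{assum:2} (which only posits twice continuous differentiability); their norms are then bounded via the Lipschitz constants of Assumption \ref{assum:3}(3). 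Your alternative---differencing the Jacobian recursion at $\boldsymbol{x}_1,\boldsymbol{x}_2$ to get the Lipschitz-modulus recursion $\ell_{j+1}\le \ell_j + (1+j)(L_2+jL_3)/L_1$ directly from Assumption \ref{assum:3}(3) and the $j$-Lipschitzness of $\boldsymbol{y}_j$---yields exactly the same per-step increment and the same final constant $M_2(J)$, but without ever assuming $f$ is three times differentiable, so it is marginally more economical in its hypotheses and arguably repairs a small rigor gap in the paper's argument; the paper's version, in exchange, gives the slightly stronger pointwise conclusion that $\nabla^2\boldsymbol{y}_J$ exists and is bounded. Your assembly of the final constant (splitting $\nabla f_J$ into $\nabla_{\boldsymbol{x}}f$ and $(\nabla\boldsymbol{y}_J)^\top\nabla_{\boldsymbol{y}}f$ and applying the Lipschitz product rule) reproduces the paper's computation term for term.
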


\begin{proof}
We first prove the differentiability of $\boldsymbol{y}_j(\boldsymbol{x})$ and $f_j(\boldsymbol{x})$. From Assumption \ref{assum:6}, we know that for any $\boldsymbol{x}\in \mathcal{X}$, $j\in \{1, \ldots, J\}$, 
\begin{equation} \label{eq42}  \tag{42}
\boldsymbol{y}_j(\boldsymbol{x}) = \boldsymbol{y}_{j-1}(\boldsymbol{x})- \frac{1}{L_1}\nabla_{\boldsymbol{y}}f(\boldsymbol{x}, \boldsymbol{y}_{j-1}(\boldsymbol{x})).
\end{equation}
Then, when $j=1$, we know that 
\begin{equation}  \label{eq43}  \tag{43}
\boldsymbol{y}_1(\boldsymbol{x}) = \boldsymbol{y}_0 - \frac{1}{L_1}\nabla_{\boldsymbol{y}}f(\boldsymbol{x}, \boldsymbol{y}_0)
\end{equation}
where we use the fact that $\boldsymbol{y}_0(\boldsymbol{x})=\boldsymbol{y}_0$. From Assumption \ref{assum:2}, the differentiability of $\boldsymbol{y}_1(\boldsymbol{x})$ on $\mathcal{X}$ can be obtained. By using induction, assume that $\boldsymbol{y}_{j-1}(\boldsymbol{x})$ is differentiable on $\mathcal{X}$. It is easy to know that $\boldsymbol{y}_j(\boldsymbol{x})$ in (\ref{eq42}) is differentiable on $\mathcal{X}$. Therefore, $\boldsymbol{y}_j(\boldsymbol{x})$ is differentiable on $\mathcal{X}$ for any $j\in \{1, \ldots, J\}$. Furthermore, by the definition of $f_j(\boldsymbol{x})$ in (\ref{eq41}), the differentiability of $f_j(\boldsymbol{x})$ on $\mathcal{X}$ also can be obtained by using Assumption \ref{assum:2} and the differentiability of $\boldsymbol{y}_j(\boldsymbol{x})$ on $\mathcal{X}$, with $j\in \{1, \ldots, J\}$.

Next, we prove the boundness of $\nabla \boldsymbol{y}_j(\boldsymbol{x})$ and $\nabla f_{j}(\boldsymbol{x})$.

In the following, we first prove that for any $\boldsymbol{x}\in \mathcal{X}$, $j\in \{1, \ldots, J\}$, we have $\|\nabla \boldsymbol{y}_j(\boldsymbol{x})\|\le j$ . From (\ref{eq43}), it is easy to know that $\|\nabla \boldsymbol{y}_1(\boldsymbol{x})\| \le 1$ for any $\boldsymbol{x}\in \mathcal{X}$ by using Assumption \ref{assum:3}. By using induction, we suppose that $\|\nabla \boldsymbol{y}_j(\boldsymbol{x})\| \le j$ for any $\boldsymbol{x}\in \mathcal{X}$, with $j \in \{1, \ldots, J-1\}$. Then from (\ref{eq42}), we have 
\begin{equation*}
\nabla \boldsymbol{y}_{j+1}(\boldsymbol{x}) = \bigg(\mathbf{I} - \frac{1}{L_1}\nabla_{\boldsymbol{y}\boldsymbol{y}}f(\boldsymbol{x}, \boldsymbol{y}_j(\boldsymbol{x}))\bigg)\nabla \boldsymbol{y}_j(\boldsymbol{x}) - \frac{1}{L_1}\nabla_{\boldsymbol{y}\boldsymbol{x}}f(\boldsymbol{x}, \boldsymbol{y}_j(\boldsymbol{x})),
\end{equation*}
and thus for any $\boldsymbol{x}\in \mathcal{X}$, 
\begin{align*}
\|\nabla \boldsymbol{y}_{j+1}(\boldsymbol{x})\| & \le \bigg\| \mathbf{I} - \frac{1}{L_1}\nabla_{\boldsymbol{y}\boldsymbol{y}}f(\boldsymbol{x}, \boldsymbol{y}_j(\boldsymbol{x}))\bigg\|\|\nabla \boldsymbol{y}_j(\boldsymbol{x})\| + \frac{1}{L_1}\|\nabla_{\boldsymbol{y}\boldsymbol{x}}f(\boldsymbol{x}, \boldsymbol{y}_j(\boldsymbol{x}))\| \\
& \overset{(i)}{\le} j+1
\end{align*}
where $(i)$ can be obtained by using Assumptions \ref{assum:2} and \ref{assum:3}. Therefore, it is proved, and as a result for any $\boldsymbol{x}\in \mathcal{X}$, $j\in \{1, \ldots, J\}$, we have $\|\nabla \boldsymbol{y}_j(\boldsymbol{x})\| \le M_0(J)$, where we use the fact that $M_0(J)=J$.

In the following, we prove the boundness of $\nabla f_j(\boldsymbol{x})$ on $\mathcal{X}$ for any $j\in \{1, \ldots, J\}$. For any $\boldsymbol{x}\in \mathcal{X}$, $j\in \{1, \ldots, J\}$, by the definition of $f_j(\boldsymbol{x})$ in (\ref{eq41}), we have 
\begin{equation*}
\nabla f_j(\boldsymbol{x}) = \nabla_{\boldsymbol{x}}f(\boldsymbol{x}, \boldsymbol{y}_j(\boldsymbol{x})) + (\nabla \boldsymbol{y}_j(\boldsymbol{x}))^\top \nabla_{\boldsymbol{y}}f(\boldsymbol{x}, \boldsymbol{y}_j(\boldsymbol{x})).
\end{equation*}
Then, 
\begin{align*}
\|\nabla f_j(\boldsymbol{x})\| &\le \| \nabla_{\boldsymbol{x}} f(\boldsymbol{x}, \boldsymbol{y}_j(\boldsymbol{x}))\| + \|\nabla \boldsymbol{y}_j(\boldsymbol{x})\|\|\nabla_{\boldsymbol{y}} f(\boldsymbol{x}, \boldsymbol{y}_j(\boldsymbol{x}))\|  \\
& \overset{(i)}{\le} M_1(J)
\end{align*}
where $(i)$ can be obtained by using the definition of $M_1(J)$, Assumption \ref{assum:3}, and the fact that $\|\nabla \boldsymbol{y}_j(\boldsymbol{x})\|  \le M_0(J)$ with $M_0(J)=J$.

Based on the above discussion, the boundness of $\nabla \boldsymbol{y}_j(\boldsymbol{x})$ and $\nabla f_j(\boldsymbol{x})$ are proved. In the following, we prove the Lipschitz smoothness of $f_J(\boldsymbol{x})$ on $\mathcal{X}$. By the definition of $f_J(\boldsymbol{x})$ in (\ref{eq41}), we have 

\begin{equation*}
\nabla f_J(\boldsymbol{x}) = \nabla_{\boldsymbol{x}}f(\boldsymbol{x}, \boldsymbol{y}_J(\boldsymbol{x})) + (\nabla \boldsymbol{y}_J(\boldsymbol{x}))^\top \nabla_{\boldsymbol{y}}f(\boldsymbol{x}, \boldsymbol{y}_J(\boldsymbol{x})).
\end{equation*}
Then for any $\boldsymbol{x}_1$, $\boldsymbol{x}_2 \in \mathcal{X}$, we have 
\begin{align}  \label{eq44}   \nonumber
&\|\nabla f_J(\boldsymbol{x}_1) - \nabla f_J(\boldsymbol{x}_2)\|  \\   \nonumber
&\le \|\nabla_{\boldsymbol{x}}f(\boldsymbol{x}_1, \boldsymbol{y}_J(\boldsymbol{x}_1)) - \nabla_{x}f(\boldsymbol{x}_2, \boldsymbol{y}_J(\boldsymbol{x}_2))\| \\  \nonumber
&~~~+ \|(\nabla \boldsymbol{y}_J(\boldsymbol{x}_1))^\top \nabla_{\boldsymbol{y}}f(\boldsymbol{x}_1, \boldsymbol{y}_J(\boldsymbol{x}_1)) - (\nabla \boldsymbol{y}_J(\boldsymbol{x}_2))^\top \nabla_{\boldsymbol{y}}f(\boldsymbol{x}_2, \boldsymbol{y}_J(\boldsymbol{x}_2))\|\\   \nonumber
& \le \|\nabla_{\boldsymbol{x}}f(\boldsymbol{x}_1, \boldsymbol{y}_J(\boldsymbol{x}_1)) - \nabla_{\boldsymbol{x}}f(\boldsymbol{x}_2, \boldsymbol{y}_J(\boldsymbol{x}_2))\|+ \|\nabla \boldsymbol{y}_J(\boldsymbol{x}_1)\| \|\nabla_{\boldsymbol{y}}f(\boldsymbol{x}_1, \boldsymbol{y}_J(\boldsymbol{x}_1)) - \nabla_{\boldsymbol{y}}f(\boldsymbol{x}_2, \boldsymbol{y}_J(\boldsymbol{x}_2))\| \\     \nonumber
&~~~ + \|\nabla \boldsymbol{y}_J(\boldsymbol{x}_1) - \nabla \boldsymbol{y}_J(\boldsymbol{x}_2)\| \|\nabla_{\boldsymbol{y}}f(\boldsymbol{x}_2, \boldsymbol{y}_J(\boldsymbol{x}_2))\| \\  \nonumber
& \overset{(i)}{\le} \|\nabla_{\boldsymbol{x}}f(\boldsymbol{x}_1, \boldsymbol{y}_J(\boldsymbol{x}_1)) - \nabla_{\boldsymbol{x}}f(\boldsymbol{x}_2, \boldsymbol{y}_J(\boldsymbol{x}_2))\| + J\|\nabla_{\boldsymbol{y}} f(\boldsymbol{x}_1, \boldsymbol{y}_J(\boldsymbol{x}_1)) - \nabla_{\boldsymbol{y}}f(\boldsymbol{x}_2, \boldsymbol{y}_J(\boldsymbol{x}_2))\|\\  \nonumber
& ~~~+ L_0 \|\nabla \boldsymbol{y}_J(\boldsymbol{x}_1) - \nabla \boldsymbol{y}_J(\boldsymbol{x}_2)\|    \tag{44}
\end{align} 
where $(i)$ is obtained by using Assumption \ref{assum:3} and the fact that $\| \nabla \boldsymbol{y}_J(\boldsymbol{x}) \|\le M_0(J) = J$ for any $\boldsymbol{x}\in \mathcal{X}$.

On one hand, for any $\boldsymbol{x}_1$, $\boldsymbol{x}_2 \in \mathcal{X}$, we have 

\begin{align}   \label{eq45}   \nonumber
&\|\nabla f(\boldsymbol{x}_1, \boldsymbol{y}_J(\boldsymbol{x}_1)) - \nabla f(\boldsymbol{x}_2, \boldsymbol{y}_J(\boldsymbol{x}_2))\|   \\   \nonumber
& \le \|\nabla f(\boldsymbol{x}_1, \boldsymbol{y}_J(\boldsymbol{x}_1)) - \nabla f(\boldsymbol{x}_1, \boldsymbol{y}_J(\boldsymbol{x}_2))\| + \|\nabla f(\boldsymbol{x}_1, \boldsymbol{y}_J(\boldsymbol{x}_2))- \nabla f(\boldsymbol{x}_2, \boldsymbol{y}_J(\boldsymbol{x}_2))\|  \\  \tag{45}
& \overset{(i)}{\le} L_1 \|\boldsymbol{y}_J(\boldsymbol{x}_1) - \boldsymbol{y}_J(\boldsymbol{x}_2)\| + L_1\|\boldsymbol{x}_1 - \boldsymbol{x}_2\|  \\   \nonumber
&  \overset{(ii)}{\le} (L_1 J + L_1)\|\boldsymbol{x}_1 - \boldsymbol{x}_2\|  \nonumber
\end{align}
where $(i)$ follows from Assumption \ref{assum:3}, $(ii)$ holds since $\|\nabla \boldsymbol{y}_J(\boldsymbol{x})\| \le M_0(J) = J$ for any $\boldsymbol{x}\in \mathcal{X}$.

On the other hand, it can be proved that for any $\boldsymbol{x}\in \mathcal{X}$, 
\begin{equation}    \label{eq46}  \tag{46}
\|\nabla^2 \boldsymbol{y}_J(\boldsymbol{x})\| \le \frac{1}{L_1}J(1 + J)(L_2 + L_3J)
\end{equation}
where $\nabla^2 \boldsymbol{y}_J(\boldsymbol{x})$ denotes the Hessian matrix of function $\boldsymbol{y}_J(\boldsymbol{x})$ w.r.t. $\boldsymbol{x}$. To be specific, from (\ref{eq43}), we know that $\nabla \boldsymbol{y}_1(\boldsymbol{x}) = -1/L_1 \nabla_{\boldsymbol{y}\boldsymbol{x}}f(\boldsymbol{x}, \boldsymbol{y}_0)$, and therefore, 
\begin{equation}  \label{eq47}  \tag{47}
\|\nabla^2 \boldsymbol{y}_1(\boldsymbol{x})\|  \le \frac{L_2}{L_1}
\end{equation}
which can be obtained by using Assumption \ref{assum:3}. Furthermore, from (\ref{eq42}), by simple calculation, it is easy to know that for any $j \in \{1, \ldots, J\}$, $\boldsymbol{x}\in \mathcal{X}$, 
\begin{equation*}
\nabla \boldsymbol{y}_j(\boldsymbol{x}) = \bigg(\mathbf{I} - \frac{1}{L_1}\nabla_{\boldsymbol{y}\boldsymbol{y}}f(\boldsymbol{x}, \boldsymbol{y}_{j-1}(\boldsymbol{x}))\bigg)\nabla \boldsymbol{y}_{j-1}(\boldsymbol{x}) - \frac{1}{L_1}\nabla_{\boldsymbol{y}\boldsymbol{x}}f(\boldsymbol{x}, \boldsymbol{y}_{j-1}(\boldsymbol{x})).
\end{equation*}
As a result, 
\begin{align*}
&\nabla^2 \boldsymbol{y}_j(\boldsymbol{x}) \\
=& \bigg(\mathbf{I} - \frac{1}{L_1}\nabla_{\boldsymbol{y}\boldsymbol{y}}f(\boldsymbol{x}, \boldsymbol{y}_{j-1}(\boldsymbol{x}))\bigg) \nabla^2 \boldsymbol{y}_{j-1}(\boldsymbol{x})\\
& -\frac{1}{L_1}\bigg(\nabla_{\boldsymbol{y}\boldsymbol{y}\boldsymbol{x}}f(\boldsymbol{x}, \boldsymbol{y}_{j-1}(\boldsymbol{x})) + \nabla_{\boldsymbol{y}\boldsymbol{y}\boldsymbol{y}}f(\boldsymbol{x}, \boldsymbol{y}_{j-1}(\boldsymbol{x}))\nabla \boldsymbol{y}_{j-1}(\boldsymbol{x})\bigg)\nabla \boldsymbol{y}_{j-1}(\boldsymbol{x}) \\
& - \frac{1}{L_1}\bigg(\nabla_{\boldsymbol{y}\boldsymbol{x}\boldsymbol{x}} f(\boldsymbol{x}, \boldsymbol{y}_{j-1}(\boldsymbol{x})) + \nabla_{\boldsymbol{y}\boldsymbol{x}\boldsymbol{y}} f(\boldsymbol{x}, \boldsymbol{y}_{j-1}(\boldsymbol{x})) \nabla \boldsymbol{y}_{j-1}(\boldsymbol{x})\bigg)
\end{align*}
and 
\begin{align*}
& \|\nabla^2 \boldsymbol{y}_j(\boldsymbol{x})\| \\
& \overset{(i)}{\le} \|\nabla^2 \boldsymbol{y}_{j-1}(\boldsymbol{x})\| + \frac{1}{L_1}J(L_3 + L_3J)+ \frac{1}{L_1}(L_2 + L_2 J)\\
& = \|\nabla^2 \boldsymbol{y}_{j-1}(\boldsymbol{x})\| + \frac{1}{L_1}(1 + J)(L_2 + L_3 J)
\end{align*}
where $(i)$ follows from Assumptions \ref{assum:2} and \ref{assum:3}.
Therefore, 
\begin{align*}
\|\nabla^2 \boldsymbol{y}_J(\boldsymbol{x})\| &\le \|\nabla^2 \boldsymbol{y}_{J-1}(\boldsymbol{x})\| + \frac{1}{L_1}(1 + J)(L_2 + L_3 J) \\
& \le \|\nabla^2 \boldsymbol{y}_1(\boldsymbol{x})\| +  \frac{1}{L_1}(J-1)(1+J)(L_2 + L_3 J)   \\
& \overset{(i)}{\le} \frac{J}{L_1}(1 + J)(L_2 + L_3 J)
\end{align*}
where $(i)$ follows from the inequality in (\ref{eq47}). Then it is proved.

Combining (\ref{eq44}), (\ref{eq45}), with (\ref{eq46}), we have 
\begin{align*}
\|\nabla f_J(\boldsymbol{x}_1) - \nabla f_J(\boldsymbol{x}_2)\| \le \bigg(L_1(1 + J)^2 + \frac{1}{L_1}L_0J(1 + J)(L_2 + L_3J)\bigg)\|\boldsymbol{x}_1 - \boldsymbol{x}_2\|.
\end{align*}
\end{proof}

In the following, the Lipschitz continuity of the gradient of function $G_{\epsilon, \tau, J}(\boldsymbol{x}, \boldsymbol{y})$ defined in (\ref{eq16}) is shown.

\begin{lemma}  \label{lemma:5}
Suppose Assumptions \ref{assum:1}, \ref{assum:2}, \ref{assum:3}, \ref{assum:6}, and \ref{assum:7} hold. For the function $G_{\epsilon, \tau, J}(\boldsymbol{x}, \boldsymbol{y})$ defined in (\ref{eq16}), let $\tau \in (0, 1)$, positive integer $J$ be any given numbers. Then for any $\boldsymbol{x}_1$, $\boldsymbol{x}_2 \in \mathcal{X}$, $\boldsymbol{y}$, $\boldsymbol{y}_1$, $\boldsymbol{y}_2 \in \mathcal{Y}_J$, where $\mathcal{Y}_J$ is defined in (\ref{eq19}), we have 
\begin{align*}
& \|\nabla_{\boldsymbol{x}} G_{\epsilon, \tau, J}(\boldsymbol{x}_1, \boldsymbol{y}_1) - \nabla_{\boldsymbol{x}} G_{\epsilon, \tau, J}(\boldsymbol{x}_2, \boldsymbol{y}_2)\| \le L_{11}(J)\big(\|\boldsymbol{x}_1 - \boldsymbol{x}_2\|^2 + \|\boldsymbol{y}_1 - \boldsymbol{y}_2\|^2\big)^{1/2}, \\
& \|\nabla_{\boldsymbol{y}} G_{\epsilon, \tau, J}(\boldsymbol{x}_1, \boldsymbol{y}) - \nabla_{\boldsymbol{y}}G_{\epsilon, \tau, J}(\boldsymbol{x}_2, \boldsymbol{y})\| \le L_{12}(J)\|\boldsymbol{x}_1 - \boldsymbol{x}_2\|
\end{align*}
where 
\begin{align*}
& L_{11}(J) = h_1 + \frac{1}{c}(M_2(J) + L_1)+ \frac{1}{c^2}(M_1(J) + L_0)^2,\\
& L_{12}(J) = h_1 + \frac{1}{c}L_1 + \frac{1}{c^2}L_0(M_1(J) + L_0),
\end{align*}
$h_1$, $L_1$, $L_0$ are given in Assumption \ref{assum:3}, $M_1(J)$, $M_2(J)$ are given in Lemma \ref{lemma:4}, and $c$ is given in Assumption \ref{assum:7}.
\end{lemma}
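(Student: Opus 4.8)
The plan is to differentiate $G_{\epsilon, \tau, J}$ explicitly and then control each piece. Writing $p(\boldsymbol{x}, \boldsymbol{y}) := f_J(\boldsymbol{x}) + \epsilon - f(\boldsymbol{x}, \boldsymbol{y})$ for the argument of the logarithm, the chain rule gives
\[
\nabla_{\boldsymbol{x}} G_{\epsilon, \tau, J} = \nabla_{\boldsymbol{x}} F + \frac{\tau}{p}\big(\nabla f_J - \nabla_{\boldsymbol{x}} f\big), \qquad \nabla_{\boldsymbol{y}} G_{\epsilon, \tau, J} = \nabla_{\boldsymbol{y}} F - \frac{\tau}{p}\nabla_{\boldsymbol{y}} f,
\]
where the arguments $(\boldsymbol{x}, \boldsymbol{y})$ are suppressed. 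The crucial structural fact is that on $\mathcal{Y}_J$ the denominator is bounded below: by the very definition of $\mathcal{Y}_J$ in (\ref{eq19}), $p(\boldsymbol{x}, \boldsymbol{y}) \ge c$ for every $\boldsymbol{x} \in \mathcal{X}$ and $\boldsymbol{y} \in \mathcal{Y}_J$, and moreover $\tau < 1$. These two bounds are what make the barrier term well-behaved.

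First I would assemble the auxiliary estimates on $p$ and its gradient from the hypotheses. Using $\|\nabla f_J\| \le M_1(J)$ and the $M_2(J)$-smoothness of $f_J$ from Lemma \ref{lemma:4}, together with the $L_0$-Lipschitzness of $f$ and the $L_1$-Lipschitzness of $\nabla f$ from Assumption \ref{assum:3}, I obtain the bound $\|\nabla_{\boldsymbol{x}} p\| \le M_1(J) + L_0$, the gradient Lipschitz estimate $\|\nabla_{\boldsymbol{x}} p(\boldsymbol{x}_1, \boldsymbol{y}_1) - \nabla_{\boldsymbol{x}} p(\boldsymbol{x}_2, \boldsymbol{y}_2)\| \le (M_2(J) + L_1)\|\boldsymbol{z}_1 - \boldsymbol{z}_2\|$, and the value Lipschitz estimate $|p(\boldsymbol{x}_1, \boldsymbol{y}_1) - p(\boldsymbol{x}_2, \boldsymbol{y}_2)| \le (M_1(J) + L_0)\|\boldsymbol{z}_1 - \boldsymbol{z}_2\|$, where $\boldsymbol{z}_i = (\boldsymbol{x}_i, \boldsymbol{y}_i)$ and I use $\|\boldsymbol{x}_1 - \boldsymbol{x}_2\| \le \|\boldsymbol{z}_1 - \boldsymbol{z}_2\|$ to merge the pure-$\boldsymbol{x}$ and full-variable contributions.

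Then for the first inequality I would split by the triangle inequality into the $F$-part, bounded by $h_1\|\boldsymbol{z}_1 - \boldsymbol{z}_2\|$ since $\nabla F$ is $h_1$-Lipschitz, and the barrier part. For the barrier part I apply the add-and-subtract decomposition
\[
\frac{\nabla_{\boldsymbol{x}} p_1}{p_1} - \frac{\nabla_{\boldsymbol{x}} p_2}{p_2} = \frac{\nabla_{\boldsymbol{x}} p_1 - \nabla_{\boldsymbol{x}} p_2}{p_1} + \nabla_{\boldsymbol{x}} p_2\Big(\frac{1}{p_1} - \frac{1}{p_2}\Big),
\]
so that, after inserting $\tau < 1$, $p_i \ge c$, and the three auxiliary estimates, the barrier part is bounded by $\big[\tfrac{1}{c}(M_2(J)+L_1) + \tfrac{1}{c^2}(M_1(J)+L_0)^2\big]\|\boldsymbol{z}_1 - \boldsymbol{z}_2\|$, which combines with $h_1$ to give exactly $L_{11}(J)$. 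The second inequality is proved identically with $\nabla_{\boldsymbol{y}} f$ in place of $\nabla_{\boldsymbol{x}} p$: here only $\boldsymbol{x}$ varies, so every Lipschitz estimate picks up only $\|\boldsymbol{x}_1 - \boldsymbol{x}_2\|$, and using $\|\nabla_{\boldsymbol{y}} f\| \le L_0$ and the $L_1$-Lipschitzness of $\nabla_{\boldsymbol{y}} f$ yields the constant $L_{12}(J)$.

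The routine triangle-inequality bookkeeping is lengthy but not conceptually hard; the only point requiring care is the uniform lower bound $p \ge c$, which is precisely why the analysis restricts the maximization to $\mathcal{Y}_J$ rather than all of $\mathcal{Y}$ — without it the quotient $\tau/p$ could blow up and no Lipschitz constant would exist. I expect the main obstacle to be simply matching the stated constants $L_{11}(J)$ and $L_{12}(J)$ term-by-term, in particular tracking which estimates contribute a factor $1/c$ (from a single division) versus $1/c^2$ (from the difference of reciprocals).
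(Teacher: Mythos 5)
Your proposal is correct and follows essentially the same route as the paper's proof: a triangle-inequality split into the $F$-part and the barrier part, followed by the standard add-and-subtract decomposition with the lower bound $p \ge c$ on $\mathcal{Y}_J$, $\tau<1$, and the estimates from Assumption \ref{assum:3} and Lemma \ref{lemma:4}. The only cosmetic difference is that you merge the barrier numerator into a single function $\nabla_{\boldsymbol{x}}p = \nabla f_J - \nabla_{\boldsymbol{x}}f$, whereas the paper bounds the $\nabla f_J$ and $\nabla_{\boldsymbol{x}}f$ quotients separately (its (\ref{eq48})--(\ref{eq50})); the constants agree exactly because $M_1(J)(M_1(J)+L_0) + L_0(M_1(J)+L_0) = (M_1(J)+L_0)^2$.
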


\begin{proof}
By the definition of $G_{\epsilon, \tau, J}(\boldsymbol{x}, \boldsymbol{y})$ in (\ref{eq16}), we have 
\begin{equation*}
\nabla_{\boldsymbol{x}}G_{\epsilon, \tau, J}(\boldsymbol{x}, \boldsymbol{y}) = \nabla_{\boldsymbol{x}}F(\boldsymbol{x}, \boldsymbol{y}) + \frac{\tau}{f_J(\boldsymbol{x}) + \epsilon - f(\boldsymbol{x}, \boldsymbol{y})}(\nabla f_J(\boldsymbol{x}) - \nabla_{\boldsymbol{x}}f(\boldsymbol{x}, \boldsymbol{y})).
\end{equation*}

Then for $\boldsymbol{x}_1$, $\boldsymbol{x}_2 \in \mathcal{X}$, $\boldsymbol{y}_1, \boldsymbol{y}_2 \in \mathcal{Y}_J$, we have 
\begin{align}\label{eq48} \nonumber
& \|\nabla_{\boldsymbol{x}} G_{\epsilon, \tau, J}(\boldsymbol{x}_1, \boldsymbol{y}_1) - \nabla_{\boldsymbol{x}}G_{\epsilon, \tau, J}(\boldsymbol{x}_2, \boldsymbol{y}_2)\|   \\   \nonumber
& \overset{(i)}{\le }\|\nabla_{\boldsymbol{x}} F(\boldsymbol{x}_1, \boldsymbol{y}_1) - \nabla_{\boldsymbol{x}} F(\boldsymbol{x}_2, \boldsymbol{y}_2)\| \\   \nonumber
&~~~+ \bigg\|\frac{\tau}{f_J(\boldsymbol{x}_1)+ \epsilon - f(\boldsymbol{x}_1, \boldsymbol{y}_1)} \nabla f_J(\boldsymbol{x}_1) - \frac{\tau}{f_J(\boldsymbol{x}_2) + \epsilon - f(\boldsymbol{x}_2, \boldsymbol{y}_2)} \nabla f_J(\boldsymbol{x}_2)\bigg\| \\   \tag{48}
& ~~~+ \bigg\| \frac{\tau}{f_J(\boldsymbol{x}_1) + \epsilon - f(\boldsymbol{x}_1, \boldsymbol{y}_1)}\nabla_{\boldsymbol{x}}f(\boldsymbol{x}_1, \boldsymbol{y}_1) - \frac{\tau}{f_J(\boldsymbol{x}_2) +\epsilon - f(\boldsymbol{x}_2, \boldsymbol{y}_2)}\nabla_{\boldsymbol{x}}f(\boldsymbol{x}_2, \boldsymbol{y}_2) \bigg\|   \nonumber
\end{align}
where $(i)$ uses the triangle inequality.

On one hand, 
\begin{align}   \label{eq49} \nonumber
& \bigg\| \frac{\tau}{f_J(\boldsymbol{x}_1) + \epsilon - f(\boldsymbol{x}_1, \boldsymbol{y}_1)}\nabla f_{J}(\boldsymbol{x}_1) - \frac{\tau}{f_J(\boldsymbol{x}_2) + \epsilon - f(\boldsymbol{x}_2, \boldsymbol{y}_2)}\nabla f_J(\boldsymbol{x}_2)  \bigg\|   \\  \nonumber
&\overset{(i)}{\le} \frac{\tau}{f_J(\boldsymbol{x}_1) + \epsilon - f(\boldsymbol{x}_1,\boldsymbol{y}_1)}\|\nabla f_J(\boldsymbol{x}_1) - \nabla f_J(\boldsymbol{x}_2)\|  \\ \nonumber
& ~~~+ \tau \|\nabla f_J(\boldsymbol{x}_2)\| \bigg|\frac{1}{f_J(\boldsymbol{x}_1) + \epsilon - f(\boldsymbol{x}_1, \boldsymbol{y}_1)}- \frac{1}{f_J(\boldsymbol{x}_2) + \epsilon - f(\boldsymbol{x}_2, \boldsymbol{y}_2)}\bigg|  \\   \tag{49}
& \overset{(ii)}{\le} \frac{\tau}{c}M_2(J)\|\boldsymbol{x}_1 - \boldsymbol{x}_2\| + \tau M_1(J)\bigg|\frac{f_J(\boldsymbol{x}_2) - f_J(\boldsymbol{x}_1) - f(\boldsymbol{x}_2, \boldsymbol{y}_2) + f(\boldsymbol{x}_1, \boldsymbol{y}_1)}{(f_J(\boldsymbol{x}_1) + \epsilon - f(\boldsymbol{x}_1, \boldsymbol{y}_1))(f_J(\boldsymbol{x}_2) + \epsilon - f(\boldsymbol{x}_2, \boldsymbol{y}_2))}\bigg|    \\   \nonumber
& \overset{(iii)}{\le} \bigg(\frac{\tau}{c}M_2(J) + \frac{\tau}{c^2}M_1(J)(M_1(J) + L_0)\bigg) \big(\|\boldsymbol{x}_1 - \boldsymbol{x}_2\|^2 + \|\boldsymbol{y}_1 - \boldsymbol{y}_2\|^2\big)^{\frac{1}{2}}
\end{align}
where $(i)$ uses the triangle inequality, $(ii)$ follows from Lemma \ref{lemma:4} and the fact that $\boldsymbol{y}_1 \in \mathcal{Y}_J$, and $(iii)$ follows from Assumption \ref{assum:3}, Lemma \ref{lemma:4}, and the fact that $\boldsymbol{y}_1$, $\boldsymbol{y}_2 \in \mathcal{Y}_J$.

On the other hand, 
\begin{align}    \label{eq50}  \nonumber
& \bigg\| \frac{\tau}{f_J(\boldsymbol{x}_1) + \epsilon - f(\boldsymbol{x}_1, \boldsymbol{y}_1)}\nabla_{\boldsymbol{x}}f(\boldsymbol{x}_1, \boldsymbol{y}_1) - \frac{\tau}{f_J(\boldsymbol{x}_2)+ \epsilon - f(\boldsymbol{x}_2, \boldsymbol{y}_2)} \nabla_{\boldsymbol{x}}f(\boldsymbol{x}_2, \boldsymbol{y}_2)  \bigg\|  \\  \nonumber
& \overset{(i)} {\le} \tau \|\nabla_{\boldsymbol{x}} f(\boldsymbol{x}_1, \boldsymbol{y}_1)\| \bigg|\frac{1}{f_J(\boldsymbol{x}_1)+ \epsilon - f(\boldsymbol{x}_1, \boldsymbol{y}_1)} - \frac{1}{f_J(\boldsymbol{x}_2)+ \epsilon - f(\boldsymbol{x}_2, \boldsymbol{y}_2)}\bigg|   \\   \tag{50}
&~~~ + \frac{\tau}{f_J(\boldsymbol{x}_2) + \epsilon - f(\boldsymbol{x}_2, \boldsymbol{y}_2)} \|\nabla_{\boldsymbol{x}} f(\boldsymbol{x}_1, \boldsymbol{y}_1) - \nabla_{\boldsymbol{x}} f(\boldsymbol{x}_2, \boldsymbol{y}_2)\|   \\  \nonumber
&  \overset{(ii)}{\le} \frac{\tau}{c^2}L_0 \bigg(|f_J(\boldsymbol{x}_1) - f_J(\boldsymbol{x}_2)| + |f(\boldsymbol{x}_1, \boldsymbol{y}_1) - f(\boldsymbol{x}_2, \boldsymbol{y}_2)|\bigg)+ \frac{\tau}{c}L_1 \big(\|\boldsymbol{x}_1 - \boldsymbol{x}_2\|^2 + \|\boldsymbol{y}_1 - \boldsymbol{y}_1\|^2\big)^{\frac{1}{2}} \\ \nonumber
& \overset{(iii)}{\le} \bigg(\frac{\tau}{c^2}L_0\big(M_1(J)+ L_0\big)+ \frac{\tau}{c}L_1\bigg)\big(\|\boldsymbol{x}_1 - \boldsymbol{x}_2\|^2 + \|\boldsymbol{y}_1 - \boldsymbol{y}_1\|^2\big)^{\frac{1}{2}}
\end{align}
where $(i)$ uses the triangle inequality, $(ii)$ follows from Assumption \ref{assum:3} and the fact that $\boldsymbol{y}_1$, $\boldsymbol{y}_2 \in \mathcal{Y}_J$, $(iii)$ follows from Assumption \ref{assum:3} and Lemma \ref{lemma:4}.

Combining (\ref{eq48}), (\ref{eq49}), with (\ref{eq50}), we have 
\begin{align*}
&\|\nabla_{\boldsymbol{x}} G_{\epsilon, \tau, J}(\boldsymbol{x}_1, \boldsymbol{y}_1) - \nabla_{\boldsymbol{x}}G_{\epsilon, \tau, J}(\boldsymbol{x}_2, \boldsymbol{y}_2)\|  \\
& \overset{(i)}{\le} \bigg(h_1 + \frac{\tau}{c}(M_2(J)+ L_1) + \frac{\tau}{c^2}(M_1(J)+ L_0)^2\bigg)\big(\|\boldsymbol{x}_1 - \boldsymbol{x}_2\|^2 + \|\boldsymbol{y}_1 - \boldsymbol{y}_2\|^2\big)^{\frac{1}{2}}\\
&\overset{(ii)}{\le} \bigg(h_1 + \frac{1}{c}(M_2(J)+ L_1) + \frac{1}{c^2}(M_1(J)+ L_0)^2\bigg)\big(\|\boldsymbol{x}_1 - \boldsymbol{x}_2\|^2 + \|\boldsymbol{y}_1 - \boldsymbol{y}_2\|^2\big)^{\frac{1}{2}}\\
\end{align*}
where $(i)$ follows from Assumption \ref{assum:3}, $(ii)$ follows from the fact that $0< \tau <1$.

Furthermore, by the definition of $G_{\epsilon, \tau, J}(\boldsymbol{x}, \boldsymbol{y})$ in (\ref{eq16}), we have  
\begin{equation}  \label{eq51}  \tag{51}
\nabla_{\boldsymbol{y}} G_{\epsilon, \tau, J}(\boldsymbol{x}, \boldsymbol{y}) = \nabla_{\boldsymbol{y}} F(\boldsymbol{x}, \boldsymbol{y}) - \frac{\tau}{f_J(\boldsymbol{x})+ \epsilon - f(\boldsymbol{x}, \boldsymbol{y})} \nabla_{\boldsymbol{y}}f(\boldsymbol{x}, \boldsymbol{y}).
\end{equation}
Then, for any $\boldsymbol{x}_1$, $\boldsymbol{x}_2 \in \mathcal{X}$, $\boldsymbol{y}\in \mathcal{Y}_J$, we have
\begin{align*}
&\|\nabla_{\boldsymbol{y}}G_{\epsilon, \tau, J}(\boldsymbol{x}_1, \boldsymbol{y}) - \nabla_{\boldsymbol{y}}G_{\epsilon, \tau, J}(\boldsymbol{x}_2, \boldsymbol{y})\|   \\
& \overset{(i)}{\le} \|\nabla_{\boldsymbol{y}}F(\boldsymbol{x}_1, \boldsymbol{y}) - \nabla_{\boldsymbol{y}}F(\boldsymbol{x}_2, \boldsymbol{y})\| + \frac{\tau}{f_J(\boldsymbol{x}_1) + \epsilon - f(\boldsymbol{x}_1, \boldsymbol{y})}  \| \nabla_{\boldsymbol{y}}f(\boldsymbol{x}_1, \boldsymbol{y})- \nabla_{\boldsymbol{y}}f(\boldsymbol{x}_2, \boldsymbol{y}) \|  \\
& ~~~+  \tau \|\nabla_{\boldsymbol{y}} f(\boldsymbol{x}_2, \boldsymbol{y})\| \bigg|\frac{1}{f_J(\boldsymbol{x}_1)+ \epsilon - f(\boldsymbol{x}_1, \boldsymbol{y})} - \frac{1}{f_J(\boldsymbol{x}_2)+ \epsilon - f(\boldsymbol{x}_2, \boldsymbol{y})}\bigg|  \\
& \overset{(ii)}{\le} h_1 \|\boldsymbol{x}_1 - \boldsymbol{x}_2\| + \frac{\tau}{c}L_1\|\boldsymbol{x}_1 - \boldsymbol{x}_2\| + \frac{\tau}{c^2}L_0 \bigg(M_1(J)\|\boldsymbol{x}_1 - \boldsymbol{x}_2\| + L_0 \|\boldsymbol{x}_1 - \boldsymbol{x}_2\|\bigg)  \\
& \le \bigg(h_1 + \frac{1}{c}L_1 + \frac{1}{c^2}L_0\big(M_1(J) + L_0\big)\bigg) \|\boldsymbol{x}_1 - \boldsymbol{x}_2\|
\end{align*}
where $(i)$ uses the triangle inequality, $(ii)$ follows from Assumption \ref{assum:3}, the fact that $\boldsymbol{y}\in \mathcal{Y}_J$, and Lemma \ref{lemma:4}.
\end{proof}

Let $c_0$ in (\ref{eq21}) be the number that equals to $c$ in Assumption \ref{assum:7}. Recall that for any $\tau\in (0, 1)$, positive integer $J$, $\boldsymbol{x}\in \mathcal{X}$, an approximate solution to problem (\ref{eq20}) can be obtained by running $K$ steps of projected gradient ascent starting from initial value $\boldsymbol{y}_0(\boldsymbol{x}) \in \mathcal{Y}_J(x)$ in the form of 
\begin{equation} \label{eq52}  \tag{52}
\boldsymbol{y}_{k+1}(\boldsymbol{x}) = \text{proj}_{\mathcal{Y}_J(\boldsymbol{x})}(\boldsymbol{y}_k(\boldsymbol{x}) + \beta \nabla_{\boldsymbol{y}} G_{\epsilon, \tau, J}(\boldsymbol{x}, \boldsymbol{y}_k(\boldsymbol{x})))
\end{equation}
with $k=0, \ldots, K-1$, $\beta$ being the stepsize, and $\mathcal{Y}_J(\boldsymbol{x})$ given in (\ref{eq21}) with $c_0=c$.

In the following, some properties on function $G_{\epsilon, \tau, J}(\boldsymbol{x}, \boldsymbol{y})$, and some convergence results on the above projected gradient ascent steps are shown.

\begin{lemma} \label{lemma:6}
Suppose Assumptions \ref{assum:1}, \ref{assum:2}, \ref{assum:3}, \ref{assum:6}, and \ref{assum:7} hold. Then, for any given $\tau \in (0, 1)$, positive integer $J$, $\boldsymbol{x}\in \mathcal{X}$, $G_{\epsilon, \tau, J}(\boldsymbol{x}, \boldsymbol{y})$ defined in (\ref{eq16}) is $L_G$ smooth, $\mu$-strongly concave w.r.t. $\boldsymbol{y}$ on the set $\mathcal{Y}_J(\boldsymbol{x})$, where $L_G = h_1 + 2/cL_1 + 4/c^2 L_0^2$, $\mu$ is given in Assumption \ref{assum:2}, $h_1$, $L_0$, $L_1$ are given in Assumption \ref{assum:3}, $c$ is given in Assumption \ref{assum:7}, and $\mathcal{Y}_J(\boldsymbol{x})$ is defined in (\ref{eq21}) with $c_0 = c$; let stepsize $\beta$ in (\ref{eq52}) be $\beta = 1/L_G$, then for any $K>0$, we have 
\begin{align*}
& G_{\epsilon, \tau, J}(\boldsymbol{x}, \boldsymbol{y}^*(\boldsymbol{x})) - G_{\epsilon, \tau, J}(\boldsymbol{x}, \boldsymbol{y}_K(\boldsymbol{x})) \le \frac{L_G}{2}\bigg(1 - \frac{\mu}{L_G}\bigg)^K \|\boldsymbol{y}_0(\boldsymbol{x})- \boldsymbol{y}^*(\boldsymbol{x})\|^2,\\
& \|\boldsymbol{y}_K(\boldsymbol{x}) - \boldsymbol{y}^*(\boldsymbol{x})\| \le \bigg(1 - \frac{\mu}{L_G}\bigg)^{\frac{K}{2}}\|\boldsymbol{y}_0(\boldsymbol{x}) - \boldsymbol{y}^*(\boldsymbol{x})\|
\end{align*}
where $\boldsymbol{y}_0(\boldsymbol{x})$ is the initial value, $\boldsymbol{y}_K(\boldsymbol{x})$ is the output of the $K$-th iteration in (\ref{eq52}), and $\boldsymbol{y}^*(\boldsymbol{x})$ is the optimal solution to problem (\ref{eq20}), i.e., 
\begin{equation}  \label{eq53} \tag{53}
\boldsymbol{y}^*(\boldsymbol{x}):=  { \underset {\boldsymbol{y}\in \mathcal{Y}_J(\boldsymbol{x})} { \operatorname {arg\,max} } \, G_{\epsilon, \tau, J}(\boldsymbol{x}, \boldsymbol{y})}.
\end{equation}
\end{lemma}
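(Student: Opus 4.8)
The plan is to establish the three assertions in order: $\mu$-strong concavity, $L_G$-smoothness, and then the two convergence bounds by reducing the projected gradient ascent to Lemma \ref{lemma:3}.

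First I would verify that $G_{\epsilon, \tau, J}(\boldsymbol{x}, \cdot)$ is $\mu$-strongly concave on $\mathcal{Y}_J(\boldsymbol{x})$. Writing $G_{\epsilon, \tau, J}(\boldsymbol{x}, \boldsymbol{y}) = F(\boldsymbol{x}, \boldsymbol{y}) + \tau \ln(f_J(\boldsymbol{x}) + \epsilon - f(\boldsymbol{x}, \boldsymbol{y}))$, Assumption \ref{assum:2} gives that $F(\boldsymbol{x}, \cdot)$ is $\mu$-strongly concave, so it remains to show the barrier term is concave in $\boldsymbol{y}$. Since $f(\boldsymbol{x}, \cdot)$ is convex (Assumption \ref{assum:2}), the inner map $\boldsymbol{y} \mapsto f_J(\boldsymbol{x}) + \epsilon - f(\boldsymbol{x}, \boldsymbol{y})$ is concave and, by the defining inequality of $\mathcal{Y}_J(\boldsymbol{x})$ in (\ref{eq21}) with $c_0 = c$, is at least $c/2 > 0$ there; composing the increasing concave map $\ln(\cdot)$ with a positive concave function yields a concave function, hence $\tau \ln(\cdot)$ is concave for $\tau > 0$. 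A $\mu$-strongly concave plus a concave function is $\mu$-strongly concave, and restriction to the convex subset $\mathcal{Y}_J(\boldsymbol{x})$ preserves this.

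Next I would prove $L_G$-smoothness in $\boldsymbol{y}$ on $\mathcal{Y}_J(\boldsymbol{x})$. Starting from the expression for $\nabla_{\boldsymbol{y}} G_{\epsilon, \tau, J}$ in (\ref{eq51}), I bound $\|\nabla_{\boldsymbol{y}} G_{\epsilon, \tau, J}(\boldsymbol{x}, \boldsymbol{y}_1) - \nabla_{\boldsymbol{y}} G_{\epsilon, \tau, J}(\boldsymbol{x}, \boldsymbol{y}_2)\|$ for $\boldsymbol{y}_1, \boldsymbol{y}_2 \in \mathcal{Y}_J(\boldsymbol{x})$. The $\nabla_{\boldsymbol{y}} F$ contribution gives $h_1 \|\boldsymbol{y}_1 - \boldsymbol{y}_2\|$ by Assumption \ref{assum:3}. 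For the barrier contribution $\tfrac{\tau}{d_i}\nabla_{\boldsymbol{y}} f(\boldsymbol{x}, \boldsymbol{y}_i)$ with $d_i := f_J(\boldsymbol{x}) + \epsilon - f(\boldsymbol{x}, \boldsymbol{y}_i)$, I would split through the triangle inequality exactly as in the proof of Lemma \ref{lemma:5}: the gradient-difference term uses $\|\nabla_{\boldsymbol{y}} f(\boldsymbol{x}, \boldsymbol{y}_1) - \nabla_{\boldsymbol{y}} f(\boldsymbol{x}, \boldsymbol{y}_2)\| \le L_1 \|\boldsymbol{y}_1 - \boldsymbol{y}_2\|$ with $d_1 \ge c/2$, contributing $\tfrac{2\tau}{c}L_1$; the denominator-difference term uses $|d_1 - d_2| = |f(\boldsymbol{x}, \boldsymbol{y}_1) - f(\boldsymbol{x}, \boldsymbol{y}_2)| \le L_0 \|\boldsymbol{y}_1 - \boldsymbol{y}_2\|$, $\|\nabla_{\boldsymbol{y}} f\| \le L_0$, and $d_1 d_2 \ge c^2/4$, contributing $\tfrac{4\tau}{c^2}L_0^2$. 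The lower bound $d_i \ge c/2$ is precisely the membership condition of $\mathcal{Y}_J(\boldsymbol{x})$ with $c_0 = c$; this is the step that keeps the barrier gradient well-behaved and is the main point to get right. Using $\tau < 1$ collects the constant $L_G = h_1 + \tfrac{2}{c}L_1 + \tfrac{4}{c^2}L_0^2$.

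Finally, for the convergence bounds I would first note that $\mathcal{Y}_J(\boldsymbol{x})$ is nonempty, compact, and convex: nonemptiness from $\mathcal{Y}_J \subset \mathcal{Y}_J(\boldsymbol{x})$ together with Remark \ref{remark:1}, boundedness from Assumption \ref{assum:1}, and closedness/convexity because the constraint is a sublevel set of the continuous convex function $f(\boldsymbol{x}, \cdot)$. Applying the two parts just proved to $-G_{\epsilon, \tau, J}(\boldsymbol{x}, \cdot)$, this function is $L_G$ smooth and $\mu$-strongly convex on $\mathcal{Y}_J(\boldsymbol{x})$, and the ascent recursion (\ref{eq52}) with $\beta = 1/L_G$ coincides with the projected gradient descent of Lemma \ref{lemma:3} applied to $-G_{\epsilon, \tau, J}(\boldsymbol{x}, \cdot)$ with $L_f = L_G$ and $\delta = \mu$. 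Invoking Lemma \ref{lemma:3} with optimal solution $\boldsymbol{y}^*(\boldsymbol{x})$ from (\ref{eq53}) yields the iterate bound $\|\boldsymbol{y}_K(\boldsymbol{x}) - \boldsymbol{y}^*(\boldsymbol{x})\|^2 \le (1 - \mu/L_G)^K \|\boldsymbol{y}_0(\boldsymbol{x}) - \boldsymbol{y}^*(\boldsymbol{x})\|^2$ (take square roots for the stated form), and the value-gap bound, upon noting that the optimality gap of $-G$ equals $G_{\epsilon, \tau, J}(\boldsymbol{x}, \boldsymbol{y}^*(\boldsymbol{x})) - G_{\epsilon, \tau, J}(\boldsymbol{x}, \boldsymbol{y}_K(\boldsymbol{x}))$. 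I expect no serious obstacle beyond tracking the denominator lower bound $c/2$ and the sign bookkeeping in the ascent-to-descent reduction.
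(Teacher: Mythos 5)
Your proposal is correct and follows essentially the same route as the paper's proof: the identical triangle-inequality decomposition of $\nabla_{\boldsymbol{y}} G_{\epsilon,\tau,J}$ with the denominator bounds $d_i \ge c/2$ and $d_1 d_2 \ge c^2/4$ yielding $L_G = h_1 + \frac{2}{c}L_1 + \frac{4}{c^2}L_0^2$, the compactness and convexity of $\mathcal{Y}_J(\boldsymbol{x})$ via the Remark~\ref{remark:1} argument, and the reduction of the ascent recursion (\ref{eq52}) to Lemma~\ref{lemma:3} applied to $-G_{\epsilon,\tau,J}(\boldsymbol{x},\cdot)$. The only difference is that you spell out the $\mu$-strong concavity (log of a positive concave function composed with the increasing concave $\ln$), which the paper asserts directly from Assumption~\ref{assum:2}; this is a welcome detail, not a different argument.
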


\begin{proof}
Given any $\tau \in (0, 1)$, positive integer $J$, $\boldsymbol{x}\in \mathcal{X}$. Firstly, we prove that $G_{\epsilon, \tau, J}(\boldsymbol{x}, \boldsymbol{y})$ is $L_G$ smooth w.r.t. $\boldsymbol{y}$ on the set $\mathcal{Y}_J(\boldsymbol{x})$. From (\ref{eq51}), it is easy to know that for any $\boldsymbol{x}\in \mathcal{X}$, $\boldsymbol{y}_1$, $\boldsymbol{y}_2 \in \mathcal{Y}_J(\boldsymbol{x})$, 
\begin{align*}
& \|\nabla_{\boldsymbol{y}} G_{\epsilon, \tau, J}(\boldsymbol{x}, \boldsymbol{y}_1) - \nabla_{\boldsymbol{y}}G_{\epsilon, \tau, J}(\boldsymbol{x}, \boldsymbol{y}_2)\|  \\
& \le \|\nabla_{\boldsymbol{y}}F(\boldsymbol{x}, \boldsymbol{y}_1) - \nabla_{\boldsymbol{y}}F(\boldsymbol{x}, \boldsymbol{y}_2)\|  \\
& ~~~+ \bigg\|\frac{\tau}{f_J(\boldsymbol{x}) + \epsilon - f(\boldsymbol{x}, \boldsymbol{y}_1)} \nabla_{\boldsymbol{y}}f(\boldsymbol{x}, \boldsymbol{y}_1) - \frac{\tau}{f_J(\boldsymbol{x}) + \epsilon - f(\boldsymbol{x}, \boldsymbol{y}_2)}\nabla_{\boldsymbol{y}}f(\boldsymbol{x}, \boldsymbol{y}_2)\bigg\| \\
& \overset{(i)}{\le}h_1 \|\boldsymbol{y}_1 - \boldsymbol{y}_2\| + \bigg\|\frac{\tau}{f_J(\boldsymbol{x}) + \epsilon - f(\boldsymbol{x}, \boldsymbol{y}_1)} \nabla_{\boldsymbol{y}}f(\boldsymbol{x}, \boldsymbol{y}_1) - \frac{\tau}{f_J(\boldsymbol{x}) + \epsilon - f(\boldsymbol{x}, \boldsymbol{y}_1)}\nabla_{\boldsymbol{y}}f(\boldsymbol{x}, \boldsymbol{y}_2)\bigg\|\\
& ~~~+ \bigg\| \frac{\tau}{f_J(\boldsymbol{x}) + \epsilon - f(\boldsymbol{x}, \boldsymbol{y}_1)}\nabla_{\boldsymbol{y}}f(\boldsymbol{x}, \boldsymbol{y}_2) - \frac{\tau}{f_{J}(\boldsymbol{x}) + \epsilon - f(\boldsymbol{x}, \boldsymbol{y}_2)}\nabla_{\boldsymbol{y}}f(\boldsymbol{x}, \boldsymbol{y}_2) \bigg\|  \\
& \overset{(ii)}{\le} h_1 \|\boldsymbol{y}_1 - \boldsymbol{y}_2\| + \frac{2\tau}{c}L_1\|\boldsymbol{y}_1 - \boldsymbol{y}_2\| + \tau L_0 \bigg|\frac{f(\boldsymbol{x}, \boldsymbol{y}_1) - f(\boldsymbol{x}, \boldsymbol{y}_2)}{(f_J(\boldsymbol{x}) + \epsilon - f(\boldsymbol{x}, \boldsymbol{y}_1))(f_J(\boldsymbol{x}) + \epsilon - f(\boldsymbol{x}, \boldsymbol{y}_2))}\bigg|  \\
& \overset{(iii)}{\le}\bigg(h_1 + \frac{2}{c}L_1 + \frac{4}{c^2}L_0^2\bigg)\|\boldsymbol{y}_1 - \boldsymbol{y}_2\|
\end{align*}
where $(i)$ follows from Assumption \ref{assum:3} and the triangle inequality, $(ii)$ follows from Assumption \ref{assum:3} and the fact that $\boldsymbol{y}_1 \in \mathcal{Y}_J(\boldsymbol{x})$, $(iii)$ follows from Assumption \ref{assum:3}, the fact that $\boldsymbol{y}_1$, $\boldsymbol{y}_2 \in \mathcal{Y}_J(\boldsymbol{x})$, and the fact that $0 < \tau < 1$. 

Therefore, $G_{\epsilon, \tau, J}(\boldsymbol{x}, \boldsymbol{y})$ is $L_G$ smooth w.r.t. $\boldsymbol{y}$ on the set $\mathcal{Y}_J(\boldsymbol{x})$. Furthermore, from Assumption \ref{assum:2}, it is easy to know that $G_{\epsilon, \tau, J}(\boldsymbol{x}, \boldsymbol{y})$ is $\mu$-strongly concave w.r.t. $\boldsymbol{y}$ on the set $\mathcal{Y}_J(\boldsymbol{x})$, and following the same discussion in Remark \ref{remark:1}, it is easy to know that $\mathcal{Y}_J(\boldsymbol{x})$ is a nonempty compact convex set. Then, from Lemma \ref{lemma:3}, for any $K>0$, we have 
\begin{align*}
& G_{\epsilon, \tau, J}(\boldsymbol{x}, \boldsymbol{y}^*(\boldsymbol{x})) - G_{\epsilon, \tau, J}(\boldsymbol{x}, \boldsymbol{y}_K(\boldsymbol{x})) \le \frac{L_G}{2}\bigg(1 - \frac{\mu}{L_G}\bigg)^K \big\|\boldsymbol{y}_0(\boldsymbol{x}) - \boldsymbol{y}^*(\boldsymbol{x})\big\|^2,  \\
&\|\boldsymbol{y}_K(\boldsymbol{x}) - \boldsymbol{y}^*(\boldsymbol{x})\| \le \bigg(1 - \frac{\mu}{L_G}\bigg)^{\frac{K}{2}}\|\boldsymbol{y}_0(\boldsymbol{x}) - \boldsymbol{y}^*(\boldsymbol{x})\|.
\end{align*}
Then the proof is complete.
\end{proof}

In the following, we show that function $\varphi_{\epsilon, \tau, J}(\boldsymbol{x})$ defined in (\ref{eq15}) is differentiable and Lipschitz smooth on $\mathcal{X}$.

\begin{lemma}  \label{lemma:7}
Suppose Assumptions \ref{assum:1}, \ref{assum:2}, \ref{assum:3}, \ref{assum:6}, and \ref{assum:7} hold. Then for any $\tau\in (0, 1)$, positive integer $J$, the function $\varphi_{\epsilon, \tau, J}(\boldsymbol{x})$ defined in (\ref{eq15}) is differentiable on $\mathcal{X}$, and for each $\boldsymbol{x}\in \mathcal{X}$, 
\begin{equation*}
\nabla \varphi_{\epsilon, \tau, J}(\boldsymbol{x}) = \nabla_{\boldsymbol{x}} G_{\epsilon, \tau, J}(\boldsymbol{x}, \boldsymbol{y}^*(\boldsymbol{x}))
\end{equation*}
where function $G_{\epsilon, \tau, J}(\boldsymbol{x}, \boldsymbol{y})$ is given in (\ref{eq16}), and $\boldsymbol{y}^*(\boldsymbol{x})$ is defined in (\ref{eq53}) with $\mathcal{Y}_J(\boldsymbol{x})$ given in (\ref{eq21}) with $c_0 = c$($c$ given in Assumption \ref{assum:7}); furthermore, function $\varphi_{\epsilon, \tau, J}(\boldsymbol{x})$ is $L_{\varphi}(J)$ smooth on $\mathcal{X}$, i.e., for any $\boldsymbol{x}_1$, $\boldsymbol{x}_2\in \mathcal{X}$, we have 
\begin{equation*}
\|\nabla \varphi_{\epsilon, \tau, J}(\boldsymbol{x}_1) - \nabla \varphi_{\epsilon, \tau, J}(\boldsymbol{x}_2)\| \le L_{\varphi}(J)\|\boldsymbol{x}_1 - \boldsymbol{x}_2\|
\end{equation*}
where $L_{\varphi}(J)= L_{11}(J)(1 + 1/\mu L_{12}(J))$, $L_{11}(J)$ and $L_{12}(J)$ are given in Lemma \ref{lemma:5}, and $\mu$ is given in Assumption \ref{assum:2}.
\end{lemma}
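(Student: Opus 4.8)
The plan is to reduce the $\boldsymbol{x}$-dependent constrained maximization defining $\boldsymbol{y}^*(\boldsymbol{x})$ to a maximization over the \emph{fixed} compact convex set $\mathcal{Y}_J$, and then to combine a Danskin-type argument for the gradient formula with a standard variational-inequality estimate for the Lipschitz continuity of the solution map.

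The crucial first step is to show that, for every $\boldsymbol{x}\in\mathcal{X}$, the maximizer $\boldsymbol{y}^*(\boldsymbol{x})$ in fact lies in the fixed set $\mathcal{Y}_J$ of (\ref{eq19}). Since $f(\boldsymbol{x},\cdot)$ is convex by Assumption \ref{assum:2}, the barrier argument $f_J(\boldsymbol{x})+\epsilon-f(\boldsymbol{x},\boldsymbol{y})$ is concave in $\boldsymbol{y}$, so $G_{\epsilon,\tau,J}(\boldsymbol{x},\cdot)$ is $\mu$-strongly concave on its domain (Lemma \ref{lemma:6}) and has a \emph{unique} maximizer over $\mathcal{Y}$; by Assumption \ref{assum:7} (with $c_0=c$) this maximizer belongs to $\mathcal{Y}_J$. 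Because $\mathcal{Y}_J\subset\mathcal{Y}_J(\boldsymbol{x})\subset\mathcal{Y}$, the maximizers over all three sets coincide, whence $\boldsymbol{y}^*(\boldsymbol{x})$ is the maximizer of $G_{\epsilon,\tau,J}(\boldsymbol{x},\cdot)$ over $\mathcal{Y}_J$ and
\[
\varphi_{\epsilon,\tau,J}(\boldsymbol{x})=\max_{\boldsymbol{y}\in\mathcal{Y}_J}G_{\epsilon,\tau,J}(\boldsymbol{x},\boldsymbol{y}).
\]
By Remark \ref{remark:1} the set $\mathcal{Y}_J$ is nonempty, compact, and convex, and on $\mathcal{X}\times\mathcal{Y}_J$ the barrier argument is bounded below by $c>0$, so $G_{\epsilon,\tau,J}$ is $C^1$ there (using that $f_J$ is $C^1$ by Lemma \ref{lemma:4}). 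The differentiability claim and the gradient formula then follow from Danskin's theorem on the fixed compact set $\mathcal{Y}_J$: uniqueness of the maximizer together with continuity of $\nabla_{\boldsymbol{x}}G_{\epsilon,\tau,J}$ gives $\nabla\varphi_{\epsilon,\tau,J}(\boldsymbol{x})=\nabla_{\boldsymbol{x}}G_{\epsilon,\tau,J}(\boldsymbol{x},\boldsymbol{y}^*(\boldsymbol{x}))$.

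For the smoothness estimate I would start from the gradient formula and write, for $\boldsymbol{x}_1,\boldsymbol{x}_2\in\mathcal{X}$,
\[
\|\nabla\varphi_{\epsilon,\tau,J}(\boldsymbol{x}_1)-\nabla\varphi_{\epsilon,\tau,J}(\boldsymbol{x}_2)\|=\|\nabla_{\boldsymbol{x}}G_{\epsilon,\tau,J}(\boldsymbol{x}_1,\boldsymbol{y}^*(\boldsymbol{x}_1))-\nabla_{\boldsymbol{x}}G_{\epsilon,\tau,J}(\boldsymbol{x}_2,\boldsymbol{y}^*(\boldsymbol{x}_2))\|,
\]
then bound the right-hand side by the first inequality of Lemma \ref{lemma:5}, which applies because $\boldsymbol{y}^*(\boldsymbol{x}_1),\boldsymbol{y}^*(\boldsymbol{x}_2)\in\mathcal{Y}_J$, obtaining the factor $L_{11}(J)\big(\|\boldsymbol{x}_1-\boldsymbol{x}_2\|^2+\|\boldsymbol{y}^*(\boldsymbol{x}_1)-\boldsymbol{y}^*(\boldsymbol{x}_2)\|^2\big)^{1/2}$. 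It then remains to prove the solution map is Lipschitz, i.e. $\|\boldsymbol{y}^*(\boldsymbol{x}_1)-\boldsymbol{y}^*(\boldsymbol{x}_2)\|\le (L_{12}(J)/\mu)\|\boldsymbol{x}_1-\boldsymbol{x}_2\|$. I would get this by the usual two-point argument over the convex set $\mathcal{Y}_J$: write the first-order optimality (variational) inequality for the concave maximization at both points, add them, invoke $\mu$-strong concavity of $G_{\epsilon,\tau,J}(\boldsymbol{x}_2,\cdot)$ to produce the term $\mu\|\boldsymbol{y}^*(\boldsymbol{x}_1)-\boldsymbol{y}^*(\boldsymbol{x}_2)\|^2$, and control the remaining cross term $\langle\nabla_{\boldsymbol{y}}G_{\epsilon,\tau,J}(\boldsymbol{x}_1,\boldsymbol{y}^*(\boldsymbol{x}_1))-\nabla_{\boldsymbol{y}}G_{\epsilon,\tau,J}(\boldsymbol{x}_2,\boldsymbol{y}^*(\boldsymbol{x}_1)),\,\boldsymbol{y}^*(\boldsymbol{x}_2)-\boldsymbol{y}^*(\boldsymbol{x}_1)\rangle$ by Cauchy--Schwarz and the second inequality of Lemma \ref{lemma:5} (again valid since $\boldsymbol{y}^*(\boldsymbol{x}_1)\in\mathcal{Y}_J$). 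Substituting this bound and using $\sqrt{1+a^2}\le 1+a$ for $a\ge 0$ yields exactly $L_{\varphi}(J)=L_{11}(J)(1+L_{12}(J)/\mu)$.

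I expect the main obstacle to be the first step rather than the later estimates: legitimately replacing the moving feasible set $\mathcal{Y}_J(\boldsymbol{x})$ by the fixed set $\mathcal{Y}_J$. Both Danskin's theorem and the variational-inequality Lipschitz estimate require a feasible set independent of $\boldsymbol{x}$, and Assumption \ref{assum:7} is precisely what guarantees that the maximizer never leaves the fixed set $\mathcal{Y}_J$ and keeps the barrier constraint strictly inactive, so that $G_{\epsilon,\tau,J}$ is genuinely $C^1$ and the whole argument can be carried out on $\mathcal{Y}_J$. The remaining steps are then routine applications of Lemmas \ref{lemma:5} and \ref{lemma:6}.
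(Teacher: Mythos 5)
Your proposal is correct and follows essentially the same route as the paper's proof: reduction of the $\boldsymbol{x}$-dependent constraint set $\mathcal{Y}_J(\boldsymbol{x})$ to the fixed compact convex set $\mathcal{Y}_J$ via Assumption \ref{assum:7} and the inclusions $\mathcal{Y}_J\subset\mathcal{Y}_J(\boldsymbol{x})\subset\mathcal{Y}$, Danskin's theorem for the gradient formula, the two-point variational-inequality argument with $\mu$-strong concavity and Lemma \ref{lemma:5} to get $\|\boldsymbol{y}^*(\boldsymbol{x}_1)-\boldsymbol{y}^*(\boldsymbol{x}_2)\|\le (L_{12}(J)/\mu)\|\boldsymbol{x}_1-\boldsymbol{x}_2\|$, and then the same final combination (the paper splits by the triangle inequality and applies Lemma \ref{lemma:5} twice, while you apply it once to the joint perturbation and use $\sqrt{1+a^2}\le 1+a$, which yields the identical constant $L_{\varphi}(J)=L_{11}(J)(1+L_{12}(J)/\mu)$).
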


\begin{proof}
Given any $\tau\in (0,1)$, positive integer $J$. Let $\mathcal{Y}_J$ be the set defined in (19). we first prove the first conclusion. By the definition of $\varphi_{\epsilon, \tau, J}(\boldsymbol{x})$ in (15) and Assumption \ref{assum:7}, we know that for any $\boldsymbol{x}\in \mathcal{X}$, $\varphi_{\epsilon, \tau, J}(\boldsymbol{x})$ in (\ref{eq15}) can be written as 
\begin{equation}  \label{eq54}  \tag{54}
\varphi_{\epsilon, \tau, J}(\boldsymbol{x}) := \max\limits_{\boldsymbol{y}\in \mathcal{Y}_J}G_{\epsilon, \tau, J}(\boldsymbol{x}, \boldsymbol{y}) 
\end{equation}
where $G_{\epsilon, \tau, J}(\boldsymbol{x}, \boldsymbol{y})$ is given in (\ref{eq16}), and $\mathcal{Y}_J$ is nonempty, compact, and convex(see Remark \ref{remark:1}).

From Assumption \ref{assum:2}, it is easy to know that function $G_{\epsilon, \tau, J}(\boldsymbol{x}, \boldsymbol{y})$ is continuously differentiable on $\mathcal{X}\times \mathcal{Y}_J$, and for any $\boldsymbol{x}\in \mathcal{X}$, $G_{\epsilon, \tau, J}(\boldsymbol{x}, \cdot)$ is $\mu$-strongly concave w.r.t. $\boldsymbol{y}$ on $\mathcal{Y}_J$. Furthermore, for any $\boldsymbol{x}\in \mathcal{X}$, $\boldsymbol{y}^*(\boldsymbol{x})$ defined in (\ref{eq53}) is a maximal solution of function $G_{\epsilon, \tau, J}(\boldsymbol{x}, \boldsymbol{y})$ on the set $\mathcal{Y}_J$, i.e., 
\begin{equation}  \label{eq55}   \tag{55}
\boldsymbol{y}^*(\boldsymbol{x}) :=  { \underset {\boldsymbol{y}\in \mathcal{Y}_J} { \operatorname {arg\,max} } \, G_{\epsilon, \tau, J}(\boldsymbol{x}, \boldsymbol{y})}.
\end{equation}
In fact, given any $\boldsymbol{x}\in \mathcal{X}$. By using Assumption \ref{assum:7} and the fact that $\mathcal{Y}_J \subset \mathcal{Y}_J(\boldsymbol{x})\subset \mathcal{Y}$(see the definition of $\mathcal{Y}_J$ in (\ref{eq19}) and the definition of $\mathcal{Y}_J(\boldsymbol{x})$ in (\ref{eq21})), it is easy to know that 
\begin{equation*}
\max\limits_{\boldsymbol{y}\in \mathcal{Y}_J}G_{\epsilon, \tau, J}(\boldsymbol{x}, \boldsymbol{y}) = \max\limits_{\boldsymbol{y}\in \mathcal{Y}_J(\boldsymbol{x})}G_{\epsilon, \tau, J}(\boldsymbol{x}, \boldsymbol{y})
\end{equation*}
which is equal to the optimal value of problem (\ref{eq18}), and 
\begin{equation*}
{ \underset {\boldsymbol{y}\in \mathcal{Y}_J} { \operatorname {arg\,max} } \, G_{\epsilon, \tau, J}(\boldsymbol{x}, \boldsymbol{y})} \subset { \underset {\boldsymbol{y}\in \mathcal{Y}_J(\boldsymbol{x})} { \operatorname {arg\,max} } \, G_{\epsilon, \tau, J}(\boldsymbol{x}, \boldsymbol{y})}.
\end{equation*}
Since $\operatorname{arg\, max}_{\boldsymbol{y}\in \mathcal{Y}_J}G_{\epsilon, \tau, J}(\boldsymbol{x}, \boldsymbol{y})$ is nonempty(see Assumption \ref{assum:7}) and there exists an unique maximal solution for function $G_{\epsilon, \tau, J}(\boldsymbol{x}, \boldsymbol{y})$ on the set $\mathcal{Y}_J(\boldsymbol{x})$ due to the strong concavity of $G_{\epsilon, \tau, J}(\boldsymbol{x}, \boldsymbol{y})$ on the set $\mathcal{Y}_J(\boldsymbol{x})$(see Lemma \ref{lemma:6}), it is easy to know that the formula in (\ref{eq55}) holds. 

Then following from the Danskin's theorem in  \cite{13}, and using the equivalence form of function $\varphi_{\epsilon, \tau, J}(\boldsymbol{x})$ in (\ref{eq54}), we know that $\varphi_{\epsilon, \tau, J}(\boldsymbol{x})$ is differentiable on $\mathcal{X}$, and 
\begin{equation}  \label{eq56}\tag{56}
\nabla \varphi_{\epsilon, \tau, J}(\boldsymbol{x}) = \nabla_{\boldsymbol{x}}G_{\epsilon, \tau, J}(\boldsymbol{x}, \boldsymbol{y}^*(\boldsymbol{x}))
\end{equation}
with $\boldsymbol{y}^*(\boldsymbol{x})$ given in (\ref{eq53}).

In the following, the Lipschitz continuity of $\nabla \varphi_{\epsilon, \tau, J}(\boldsymbol{x})$ on $\mathcal{X}$ is proved. For any $\boldsymbol{x}_1$, $\boldsymbol{x}_2 \in \mathcal{X}$, from (\ref{eq56}), we know that 
\begin{equation}  \label{eq57} \tag{57}
\nabla \varphi_{\epsilon, \tau, J}(\boldsymbol{x}_1) = \nabla_{\boldsymbol{x}} G_{\epsilon, \tau, J}(\boldsymbol{x}_1, \boldsymbol{y}^*(\boldsymbol{x}_1)), \qquad \nabla \varphi_{\epsilon, \tau, J}(\boldsymbol{x}_2) = \nabla_{\boldsymbol{x}} G_{\epsilon, \tau, J}(\boldsymbol{x}_2, \boldsymbol{y}^*(\boldsymbol{x}_2))
\end{equation}
where $\boldsymbol{y}^*(\boldsymbol{x}_1)$ and $\boldsymbol{y}^*(\boldsymbol{x}_2)$ are given in (\ref{eq53}) with $\boldsymbol{x}= \boldsymbol{x}_1$ and $\boldsymbol{x}=\boldsymbol{x}_2$, respectively.

Since $\boldsymbol{y}^*(\boldsymbol{x}_1)$, $\boldsymbol{y}^*(\boldsymbol{x}_2) \in \mathcal{Y}_J$(see (\ref{eq55})), and $-G_{\epsilon, \tau, J}(\boldsymbol{x}, \cdot)$ is $\mu$-strongly convex w.r.t. $\boldsymbol{y}$ on $\mathcal{Y}_J$ for each $\boldsymbol{x}\in \mathcal{X}$, we have 
\begin{align*} 
G_{\epsilon, \tau, J}(\boldsymbol{x}_2, \boldsymbol{y}^*(\boldsymbol{x}_2)) &\le G_{\epsilon, \tau, J}(\boldsymbol{x}_2, \boldsymbol{y}^*(\boldsymbol{x}_1)) + \langle \nabla_{\boldsymbol{y}}G_{\epsilon, \tau, J}(\boldsymbol{x}_2, \boldsymbol{y}^*(\boldsymbol{x}_1)), \boldsymbol{y}^*(\boldsymbol{x}_2) - \boldsymbol{y}^*(\boldsymbol{x}_1)\rangle \\
&- \frac{\mu}{2}\|\boldsymbol{y}^*(\boldsymbol{x}_2) - \boldsymbol{y}^*(\boldsymbol{x}_1)\|^2,  
\end{align*}
and 
\begin{align*}
G_{\epsilon, \tau, J}(\boldsymbol{x}_2, \boldsymbol{y}^*(\boldsymbol{x}_1)) &\le G_{\epsilon, \tau, J}(\boldsymbol{x}_2, \boldsymbol{y}^*(\boldsymbol{x}_2)) + \langle \nabla_{\boldsymbol{y}}G_{\epsilon, \tau, J}(\boldsymbol{x}_2, \boldsymbol{y}^*(\boldsymbol{x}_2)), \boldsymbol{y}^*(\boldsymbol{x}_1) - \boldsymbol{y}^*(\boldsymbol{x}_2)\rangle \\
&- \frac{\mu}{2}\|\boldsymbol{y}^*(\boldsymbol{x}_2) - \boldsymbol{y}^*(\boldsymbol{x}_1)\|^2.
\end{align*}
From the above two inequalities, we have 
\begin{equation}  \label{eq58}  \tag{58}
\langle \nabla_{\boldsymbol{y}} G_{\epsilon, \tau, J}(\boldsymbol{x}_2, \boldsymbol{y}^*(\boldsymbol{x}_1)), \boldsymbol{y}^*(\boldsymbol{x}_2) - \boldsymbol{y}^*(\boldsymbol{x}_1)\rangle \ge \mu \|\boldsymbol{y}^*(\boldsymbol{x}_2) - \boldsymbol{y}^*(\boldsymbol{x}_1)\|^2
\end{equation}
where we use the fact that $\langle \nabla_{\boldsymbol{y}} G_{\epsilon, \tau, J}(\boldsymbol{x}_2, \boldsymbol{y}^*(\boldsymbol{x}_2)), \boldsymbol{y}^*(\boldsymbol{x}_1) - \boldsymbol{y}^*(\boldsymbol{x}_2) \rangle \le 0$ since $G_{\epsilon, \tau, J}(\boldsymbol{x}_2, \cdot)$ is strongly concave w.r.t. $\boldsymbol{y}$ on $\mathcal{Y}_J$ and $\boldsymbol{y}^*(\boldsymbol{x}_2) :=  \operatorname {arg\,max}_{\boldsymbol{y}\in \mathcal{Y}_J}G_{\epsilon, \tau, J}(\boldsymbol{x}_2, \boldsymbol{y})$. Furthermore, since $G_{\epsilon, \tau, J}(\boldsymbol{x}_1, \cdot)$ is strongly concave w.r.t. $\boldsymbol{y}$ on $\mathcal{Y}_J$, and $\boldsymbol{y}^*(\boldsymbol{x}_1) :=  \operatorname {arg\,max}_{\boldsymbol{y}\in \mathcal{Y}_J}G_{\epsilon, \tau, J}(\boldsymbol{x}_1, \boldsymbol{y})$, we have 
\begin{equation}   \label{eq59}  \tag{59}
\langle \nabla_{\boldsymbol{y}}G_{\epsilon, \tau, J}(\boldsymbol{x}_1, \boldsymbol{y}^*(\boldsymbol{x}_1)), \boldsymbol{y}^*(\boldsymbol{x}_2)- \boldsymbol{y}^*(\boldsymbol{x}_1) \rangle \le 0.
\end{equation}
Combining (\ref{eq58}) with (\ref{eq59}), we have 
\begin{align*}
\mu \|\boldsymbol{y}^*(\boldsymbol{x}_1) - \boldsymbol{y}^*(\boldsymbol{x}_2)\|^2 & \le \big\langle \nabla_{\boldsymbol{y}} G_{\epsilon, \tau, J}(\boldsymbol{x}_2, \boldsymbol{y}^*(\boldsymbol{x}_1)) - \nabla_{\boldsymbol{y}}G_{\epsilon, \tau, J}(\boldsymbol{x}_1, \boldsymbol{y}^*(\boldsymbol{x}_1)), \boldsymbol{y}^*(\boldsymbol{x}_2) - \boldsymbol{y}^*(\boldsymbol{x}_1) \big\rangle  \\
&  \overset{(i)}{\le}L_{12}(J)\|\boldsymbol{x}_1 - \boldsymbol{x}_2\| \|\boldsymbol{y}^*(\boldsymbol{x}_1)- \boldsymbol{y}^*(\boldsymbol{x}_2)\|
\end{align*}
where $(i)$ follows from Lemma \ref{lemma:5} and the Cauchy-Schwartz inequality. Then we have 
\begin{equation}   \label{eq60}  \tag{60}
\|\boldsymbol{y}^*(\boldsymbol{x}_1) - \boldsymbol{y}^*(\boldsymbol{x}_2)\| \le \frac{1}{\mu}L_{12}(J)\|\boldsymbol{x}_1 - \boldsymbol{x}_2\|.
\end{equation}

Based on the above discussion, for the $\nabla \varphi_{\epsilon, \tau, J}(\boldsymbol{x}_1)$ and $\nabla \varphi_{\epsilon, \tau, J}(\boldsymbol{x}_2)$ in (\ref{eq57}), we have 
\begin{align*}
& \|\nabla \varphi_{\epsilon, \tau, J}(\boldsymbol{x}_1) - \nabla \varphi_{\epsilon, \tau, J}(\boldsymbol{x}_2)\|  \\
&= \|\nabla_{\boldsymbol{x}} G_{\epsilon, \tau, J}(\boldsymbol{x}_1, \boldsymbol{y}^*(\boldsymbol{x}_1)) - \nabla_{\boldsymbol{x}}G_{\epsilon, \tau, J}(\boldsymbol{x}_2, \boldsymbol{y}^*(\boldsymbol{x}_2))\| \\
& \le \|\nabla_{\boldsymbol{x}} G_{\epsilon, \tau, J}(\boldsymbol{x}_1, \boldsymbol{y}^*(\boldsymbol{x}_1)) - \nabla_{\boldsymbol{x}}G_{\epsilon, \tau, J}(\boldsymbol{x}_2, \boldsymbol{y}^*(\boldsymbol{x}_1))\| \\
&~~~+ \|\nabla_{\boldsymbol{x}}G_{\epsilon, \tau, J}(\boldsymbol{x}_2, \boldsymbol{y}^*(\boldsymbol{x}_1)) - \nabla_{\boldsymbol{x}}G_{\epsilon, \tau, J}(\boldsymbol{x}_2, \boldsymbol{y}^*(\boldsymbol{x}_2))\| \\
& \overset{(i)}{\le} L_{11}(J)\|\boldsymbol{x}_1 - \boldsymbol{x}_2\| + L_{11}(J)\|\boldsymbol{y}^*(\boldsymbol{x}_1) - \boldsymbol{y}^*(\boldsymbol{x}_2)\|  \\
& \overset{(ii)} {\le} L_{11}(J)\bigg(1 + \frac{1}{\mu}L_{12}(J)\bigg)\|\boldsymbol{x}_1 - \boldsymbol{x}_2\|
\end{align*}
where $(i)$ follows from Lemma \ref{lemma:5}, $(ii)$ follows from the inequality in (\ref{eq60}). Then the proof is complete.
\end{proof}

In the following, we show that $\varphi_{\epsilon, \tau, J}(\boldsymbol{x})$ is bounded w.r.t. $\tau$, $J$, and $\boldsymbol{x}$.

\begin{lemma} \label{lemma:8}
Suppose Assumptions \ref{assum:1}, \ref{assum:2}, and \ref{assum:7} hold. Define
\begin{equation*}
M_0 := \min\limits_{\boldsymbol{x}\in \mathcal{X}, \boldsymbol{y}\in \mathcal{Y}} F(\boldsymbol{x}, \boldsymbol{y}), \qquad M_1 := \max\limits_{\boldsymbol{x}\in \mathcal{X},\boldsymbol{y}\in \mathcal{Y}} F(\boldsymbol{x}, \boldsymbol{y}), \qquad M_2 := \max\limits_{\boldsymbol{x}\in \mathcal{X}, \boldsymbol{y}\in \mathcal{Y}}|f(\boldsymbol{x}, \boldsymbol{y})|.
\end{equation*}
Then for any $\tau\in (0, 1)$, positive integer $J$, $\boldsymbol{x}\in \mathcal{X}$, we have 
\begin{equation*}
|\varphi_{\epsilon,\tau, J}(\boldsymbol{x})| \le M_3,
\end{equation*}
where $\varphi_{\epsilon, \tau, J}(\boldsymbol{x})$ is given in (15), $M_3 = \max\big\{|M_0 + \ln{c}|, |M_1 + \ln(2M_2 + \epsilon)|\big\}$, and $c$ is given in Assumption \ref{assum:7}.
\end{lemma}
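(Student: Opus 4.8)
The plan is to evaluate $\varphi_{\epsilon,\tau,J}$ at its maximizer and sandwich it between two explicit constants. First I would invoke Assumption~\ref{assum:7}, which guarantees that for every $\tau\in(0,1)$, positive integer $J$, and $\boldsymbol{x}\in\mathcal{X}$ there is a global maximizer $\boldsymbol{y}^*(\boldsymbol{x})$ of $G_{\epsilon,\tau,J}(\boldsymbol{x},\cdot)$ over $\mathcal{Y}$ lying in the set $\mathcal{Y}_J$ of (\ref{eq19}). Hence $\varphi_{\epsilon,\tau,J}(\boldsymbol{x})=G_{\epsilon,\tau,J}(\boldsymbol{x},\boldsymbol{y}^*(\boldsymbol{x}))=F(\boldsymbol{x},\boldsymbol{y}^*(\boldsymbol{x}))+\tau\ln\big(f_J(\boldsymbol{x})+\epsilon-f(\boldsymbol{x},\boldsymbol{y}^*(\boldsymbol{x}))\big)$, so it suffices to bound the two summands separately.

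Next I would establish two-sided control of the logarithm's argument $a:=f_J(\boldsymbol{x})+\epsilon-f(\boldsymbol{x},\boldsymbol{y}^*(\boldsymbol{x}))$. The lower bound $a\ge c$ is immediate from $\boldsymbol{y}^*(\boldsymbol{x})\in\mathcal{Y}_J$ together with the definition (\ref{eq19}). For the upper bound, since the projected gradient iterate $\boldsymbol{y}_J(\boldsymbol{x})$ lies in $\mathcal{Y}$ and $\boldsymbol{y}^*(\boldsymbol{x})\in\mathcal{Y}$, the definition of $M_2$ yields $|f_J(\boldsymbol{x})|\le M_2$ and $|f(\boldsymbol{x},\boldsymbol{y}^*(\boldsymbol{x}))|\le M_2$, whence $a\le 2M_2+\epsilon$. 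Thus $c\le a\le 2M_2+\epsilon$, so $\ln c\le\ln a\le\ln(2M_2+\epsilon)$, while $M_0\le F(\boldsymbol{x},\boldsymbol{y}^*(\boldsymbol{x}))\le M_1$ by definition of $M_0$ and $M_1$.

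Finally I would combine these bounds using $0<\tau<1$. For the lower estimate, since $c\le 1$ by Assumption~\ref{assum:7} we have $\ln c\le 0$, and therefore $\tau\ln a\ge\tau\ln c\ge\ln c$ (multiplying a nonpositive number by $\tau<1$ enlarges it), giving $\varphi_{\epsilon,\tau,J}(\boldsymbol{x})\ge M_0+\ln c$. For the upper estimate one uses $\tau\ln a\le\ln(2M_2+\epsilon)$ to obtain $\varphi_{\epsilon,\tau,J}(\boldsymbol{x})\le M_1+\ln(2M_2+\epsilon)$. These two inequalities locate $\varphi_{\epsilon,\tau,J}(\boldsymbol{x})$ in the interval $[\,M_0+\ln c,\;M_1+\ln(2M_2+\epsilon)\,]$, and since every point of this interval has absolute value at most $\max\{|M_0+\ln c|,\,|M_1+\ln(2M_2+\epsilon)|\}=M_3$, the bound $|\varphi_{\epsilon,\tau,J}(\boldsymbol{x})|\le M_3$ follows uniformly in $\tau$, $J$, and $\boldsymbol{x}$.

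The membership $\boldsymbol{y}^*(\boldsymbol{x})\in\mathcal{Y}_J$ and the crude estimates through $M_2$ are routine. The delicate point is the treatment of the barrier term $\tau\ln a$: because $\tau$ ranges over $(0,1)$, one must exploit the \emph{sign} of each logarithm to absorb the factor $\tau$ in the correct direction, and this is exactly where the hypothesis $c\le 1$ is used for the lower bound. I expect the main obstacle to be making the upper estimate $\tau\ln a\le\ln(2M_2+\epsilon)$ genuinely uniform: it is clean when $2M_2+\epsilon\ge 1$ (so the logarithm is nonnegative and $\tau<1$ shrinks it), whereas in the regime $2M_2+\epsilon<1$ one only gets $\tau\ln a\le 0$ and must verify directly that the resulting constant is still dominated by $M_3$.
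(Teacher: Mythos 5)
Your argument is correct and follows essentially the same route as the paper's proof: both rewrite $\varphi_{\epsilon,\tau,J}(\boldsymbol{x})$ as a maximum of $G_{\epsilon,\tau,J}(\boldsymbol{x},\cdot)$ over $\mathcal{Y}_J$ via Assumption \ref{assum:7}, sandwich the barrier argument between $c$ and $2M_2+\epsilon$, and absorb the factor $\tau\in(0,1)$ using the signs of the logarithms (with $c\le 1$ giving the lower bound). The caveat you raise at the end is well spotted but is a defect of the paper rather than of your proof: the paper's proof asserts $G_{\epsilon,\tau,J}(\boldsymbol{x},\boldsymbol{y})\le M_1+\ln(2M_2+\epsilon)$ directly from $\tau<1$, tacitly assuming $\ln(2M_2+\epsilon)\ge 0$; when $2M_2+\epsilon<1$ one only gets $\varphi_{\epsilon,\tau,J}(\boldsymbol{x})\le M_1$, so strictly $M_3$ should be $\max\{|M_0+\ln c|,\,|M_1+\max\{0,\ln(2M_2+\epsilon)\}|\}$ --- a harmless correction, since downstream (Theorem \ref{tho:3}) only the existence of some uniform bound is used.
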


\begin{proof}
Given any $\tau \in (0, 1)$, positive integer $J$, $\boldsymbol{x}\in \mathcal{X}$. Recall that 
\begin{equation*}
\varphi_{\epsilon, \tau, J}(\boldsymbol{x}) = \max\limits_{\boldsymbol{y}\in \mathcal{Y}_J} G_{\epsilon, \tau, J}(\boldsymbol{x}, \boldsymbol{y})
\end{equation*}
where $\mathcal{Y}_J$ is given in (\ref{eq19})(see the discussion in Lemma \ref{lemma:7}). By the definition of $G_{\epsilon, \tau, J}(\boldsymbol{x}, \boldsymbol{y})$ in (\ref{eq16}), we know that for any $\boldsymbol{y}\in \mathcal{Y}_J$, 
\begin{equation*}
F(\boldsymbol{x}, \boldsymbol{y}) + \tau \ln{c} \le G_{\epsilon, \tau, J}(\boldsymbol{x},\boldsymbol{y}) \le F(\boldsymbol{x}, \boldsymbol{y}) + \tau \ln (2M_2 + \epsilon).
\end{equation*}

Furthermore, since $0< \tau <1$, and $c<1$(see Assumption \ref{assum:7}), we know that 
\begin{equation*}
M_0 + \ln {c} \le G_{\epsilon, \tau, J}(\boldsymbol{x}, \boldsymbol{y}) \le M_1 + \ln(2M_2 +\epsilon)
\end{equation*}
and thus for any $\tau \in (0, 1)$, positive integer $J$, $\boldsymbol{x}\in \mathcal{X}$, we have $|\varphi_{\epsilon,\tau, J}(\boldsymbol{x})| \le M_3$. The proof is complete.
\end{proof}

\begin{lemma}  \label{lemma:9}
Suppose Assumptions \ref{assum:1}, \ref{assum:2}, \ref{assum:3}, \ref{assum:6}, and \ref{assum:7} hold. Assume that for any $\tau\in (0,1)$, positive integer $J$, $\boldsymbol{x}\in \mathcal{X}$, $\boldsymbol{y}^*(\boldsymbol{x})$ defined in (\ref{eq53}) is an interior point of $\mathcal{Y}$. Then, for any $\tau\in (0,1)$, positive integer $J$, $\boldsymbol{x}\in \mathcal{X}$, $\boldsymbol{y}^*(\boldsymbol{x})$ defined in  (\ref{eq53}) is an interior point of the set $\mathcal{Y}_J(\boldsymbol{x})$ defined in (\ref{eq21}) with $c_0 = c$($c$ given in Assumption \ref{assum:7}).
\end{lemma}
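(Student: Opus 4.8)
The plan is to exploit the gap between the threshold $c/2$ that defines $\mathcal{Y}_J(\boldsymbol{x})$ in (\ref{eq21}) and the stronger bound $c$ that $\boldsymbol{y}^*(\boldsymbol{x})$ actually satisfies. Concretely, I would show that $\boldsymbol{y}^*(\boldsymbol{x})$ lies strictly inside the inequality constraint $f_J(\boldsymbol{x})+\epsilon-f(\boldsymbol{x},\boldsymbol{y})\ge c/2$ with a uniform slack of $c/2$; since by hypothesis it is also an interior point of $\mathcal{Y}$, a sufficiently small ball around it will remain inside $\mathcal{Y}_J(\boldsymbol{x})$, which is exactly interiority in the sense of Definition~\ref{def:2}.

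First I would recall from the proof of Lemma~\ref{lemma:7} (the identity~(\ref{eq55})) that, with $c_0=c$, the maximizer $\boldsymbol{y}^*(\boldsymbol{x})$ defined over $\mathcal{Y}_J(\boldsymbol{x})$ in (\ref{eq53}) coincides with the maximizer of $G_{\epsilon,\tau,J}(\boldsymbol{x},\cdot)$ over the set $\mathcal{Y}_J$ of (\ref{eq19}); in particular $\boldsymbol{y}^*(\boldsymbol{x})\in\mathcal{Y}_J$. Taking $\boldsymbol{z}=\boldsymbol{x}$ in the defining intersection of $\mathcal{Y}_J$ in (\ref{eq19}) then yields the crucial slack estimate
\[
f_J(\boldsymbol{x})+\epsilon-f(\boldsymbol{x},\boldsymbol{y}^*(\boldsymbol{x}))\ge c.
\]

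Next I would invoke the Lipschitz continuity of $f$ (Assumption~\ref{assum:3}, constant $L_0$): for any $\boldsymbol{y}$ with $\|\boldsymbol{y}-\boldsymbol{y}^*(\boldsymbol{x})\|<c/(2L_0)$ one has $|f(\boldsymbol{x},\boldsymbol{y})-f(\boldsymbol{x},\boldsymbol{y}^*(\boldsymbol{x}))|<c/2$, whence
\[
f_J(\boldsymbol{x})+\epsilon-f(\boldsymbol{x},\boldsymbol{y})\ge c-c/2=c/2,
\]
so such $\boldsymbol{y}$ satisfies the inequality constraint of $\mathcal{Y}_J(\boldsymbol{x})$. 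By the hypothesis that $\boldsymbol{y}^*(\boldsymbol{x})$ is an interior point of $\mathcal{Y}$, there is a radius $\delta_1>0$ with $\mathbb{B}_{\delta_1}(\boldsymbol{y}^*(\boldsymbol{x}))\subset\mathcal{Y}$. Choosing $\delta=\min\{\delta_1,\,c/(2L_0)\}$, I would conclude $\mathbb{B}_{\delta}(\boldsymbol{y}^*(\boldsymbol{x}))\subset\mathcal{Y}_J(\boldsymbol{x})$, so that $\boldsymbol{y}^*(\boldsymbol{x})$ is an interior point of $\mathcal{Y}_J(\boldsymbol{x})$ by Definition~\ref{def:2}.

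The only genuinely substantive step is the first one: establishing that $\boldsymbol{y}^*(\boldsymbol{x})$ enjoys the bound $\ge c$ rather than merely the $\ge c/2$ guaranteed by membership in $\mathcal{Y}_J(\boldsymbol{x})$. This is precisely what the coincidence $\boldsymbol{y}^*(\boldsymbol{x})\in\mathcal{Y}_J$, supplied by Assumption~\ref{assum:7} through Lemma~\ref{lemma:7}, provides, and it is the source of the interior slack. Once that slack is in hand, the remaining ball construction is a routine continuity argument.
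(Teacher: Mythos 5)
Your proposal is correct and follows essentially the same route as the paper's proof: both hinge on the observation (via Lemma~\ref{lemma:7} and (\ref{eq55})) that $\boldsymbol{y}^*(\boldsymbol{x})\in\mathcal{Y}_J$, which yields the slack $f_J(\boldsymbol{x})+\epsilon-f(\boldsymbol{x},\boldsymbol{y}^*(\boldsymbol{x}))\ge c$, and then intersect a small ball inside $\mathcal{Y}$ with the region where the constraint value drops by at most $c/2$. The only difference is cosmetic: the paper invokes plain continuity of $f(\boldsymbol{x},\cdot)$ to get the radius, whereas you use the $L_0$-Lipschitz bound of Assumption~\ref{assum:3} to make it explicit as $c/(2L_0)$, which is fine since your final ball of radius $\min\{\delta_1, c/(2L_0)\}$ lies in $\mathcal{Y}$, where the Lipschitz estimate applies.
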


\begin{proof}
Given any $\tau \in (0, 1)$, positive integer $J$, $\boldsymbol{x}\in \mathcal{X}$. Based on the discussion in Lemma \ref{lemma:7}, we know that $\boldsymbol{y}^*(\boldsymbol{x})\in \mathcal{Y}_J$(see (\ref{eq55})), and thus we have
\begin{equation}  \label{eq61}  \tag{61}
\boldsymbol{y}^*(\boldsymbol{x})\in \mathcal{Y}, \qquad f_{J}(\boldsymbol{x})+ \epsilon - f(\boldsymbol{x}, \boldsymbol{y}^*(\boldsymbol{x})) \ge c
\end{equation}
which is obtained by the definition of $\mathcal{Y}_J$ in (\ref{eq19}). Since $f(\boldsymbol{x}, \cdot)$ is continuous w.r.t. $\boldsymbol{y}$ on $\mathcal{Y}$, we know that there exists a $\delta > 0$ such that for any $\boldsymbol{y}\in \mathbb{B}_{\delta}(\boldsymbol{y}^*(\boldsymbol{x})) \cap \mathcal{Y}$, we have 
\begin{equation}  \label{eq62} \tag{62}
f(\boldsymbol{x}, \boldsymbol{y}) \le f(\boldsymbol{x}, \boldsymbol{y}^*(\boldsymbol{x})) + \frac{c}{2}.
\end{equation}

Furthermore, since $\boldsymbol{y}^*(\boldsymbol{x})$ is an interior point of $\mathcal{Y}$(see the condition of Lemma \ref{lemma:9}), we know that there exists a $\delta_1 > 0$($\delta_1 \le \delta$) such that $\mathbb{B}_{\delta_1}(\boldsymbol{y}^*(\boldsymbol{x})) \subset \mathbb{B}_{\delta}(\boldsymbol{y}^*(\boldsymbol{x}))$ and $\mathbb{B}_{\delta_1}(\boldsymbol{y}^*(\boldsymbol{x})) \subset \mathcal{Y}$, and thus from (\ref{eq62}), we know that for any $\boldsymbol{y}\in \mathbb{B}_{\delta_1}(\boldsymbol{y}^*(\boldsymbol{x}))$, we have 
\begin{equation}  \label{eq63}  \tag{63}
\boldsymbol{y} \in \mathcal{Y}, \qquad f(\boldsymbol{x}, \boldsymbol{y})\le  f(\boldsymbol{x}, \boldsymbol{y}^*(\boldsymbol{x})) + \frac{c}{2}.
\end{equation}

By combining (\ref{eq61}) with (\ref{eq63}), it is easy to know that for any $\boldsymbol{y} \in \mathbb{B}_{\delta_1}(\boldsymbol{y}^*(\boldsymbol{x}))$,  we have $\boldsymbol{y}\in \mathcal{Y}$, $f_J(\boldsymbol{x}) + \epsilon - f(\boldsymbol{x}, \boldsymbol{y}) \ge c/2$, and thus $\mathbb{B}_{\delta_1}(\boldsymbol{y}^*(\boldsymbol{x})) \subset \mathcal{Y}_J(\boldsymbol{x})$. Then the proof is complete.
\end{proof}

\subsection{Proof of Theorem \ref{tho:3}}

Then, the proof of Theorem \ref{tho:3} is shown below.

\begin{proof}
Notice that for any $\sigma \in (0, 1)$, positive integer $J$, for the $\tau$ given in (\ref{eq27}), we have $\tau \in (0, 1)$, which can be obtained by using the definition of $c$ in Assumption \ref{assum:7}, and the definition of $\bar{l}(J)$ in Theorem \ref{tho:3}. Firstly, we prove that for any $\boldsymbol{x}_t\in \mathcal{X}$, $\boldsymbol{y}_K(\boldsymbol{x}_t)$, where $t \in \{0, \ldots, T-1\}$ and $\boldsymbol{y}_K(\boldsymbol{x}_t)$ is the output of the $K$-th iteration in line 10 of Algorithm \ref{alg:2} to solve problem (\ref{eq20}) with $\boldsymbol{x}=\boldsymbol{x}_t$, we have 
\begin{equation} \label{eq64}  \tag{64}
\|\nabla_{\boldsymbol{y}} G_{\epsilon, \tau, J}(\boldsymbol{x}_t, \boldsymbol{y}_K(\boldsymbol{x}_t))\| \le \sigma  
\end{equation}
where $G_{\epsilon, \tau, J}(\boldsymbol{x}, \boldsymbol{y})$ is defined in (16). In fact, for any $\boldsymbol{x_t}\in \mathcal{X}$, $\boldsymbol{y}_K(\boldsymbol{x}_t)$, we have 
\begin{align*}
\|\nabla_{\boldsymbol{y}} G_{\epsilon, \tau, J}(\boldsymbol{x}_t, \boldsymbol{y}_K(\boldsymbol{x}_t))\| &\le \|\nabla_{\boldsymbol{y}}G_{\epsilon, \tau, J}(\boldsymbol{x}_t, \boldsymbol{y}^*(\boldsymbol{x}_t))\| \\
& ~~~+ \| \nabla_{\boldsymbol{y}} G_{\epsilon, \tau, J}(\boldsymbol{x}_t, \boldsymbol{y}_K(\boldsymbol{x}_t)) - \nabla_{\boldsymbol{y}}G_{\epsilon, \tau, J}(\boldsymbol{x}_t, \boldsymbol{y}^*(\boldsymbol{x}_t))\|   \\
& \overset{(i)}{\le} L_G \|\boldsymbol{y}_K(\boldsymbol{x}_t) - \boldsymbol{y}^*(\boldsymbol{x}_t)\|  \\
& \overset{(ii)}{\le} L_G\bigg(1 - \frac{\mu}{L_G}\bigg)^{\frac{K}{2}}\|\boldsymbol{y}_0(\boldsymbol{x}_t) - \boldsymbol{y}^*(\boldsymbol{x}_t)\|   \\
&\overset{(iii)}{\le} 2ML_G\bigg(1 - \frac{\mu}{L_G}\bigg)^{\frac{K}{2}}   \overset{(iv)}{\le} \sigma
\end{align*}
where $(i)$ uses the fact that $\nabla_{\boldsymbol{y}}G_{\epsilon, \tau, J}(\boldsymbol{x}_t, \boldsymbol{y}^*(\boldsymbol{x}_t))=0$, which can be obtained by using the definition of $\boldsymbol{y}^*(\boldsymbol{x})$ in Theorem \ref{tho:3} and the conclusions in Lemma \ref{lemma:9}, and uses the conclusion in Lemma \ref{lemma:6}, $(ii)$ uses the conclusion in Lemma \ref{lemma:6}, $(iii)$ follows from Assumption \ref{assum:1}, $(iv)$ follows from the choice of $K$ in the condition of Theorem \ref{tho:3}, the fact that $\mu/L_G \le 1$ which can be obtained by using  the definition of $L_G$ in Lemma \ref{lemma:6}, and the definition of $\bar{l}(J)$ in Theorem \ref{tho:3}.

In the following, we prove that during the iterations in Algorithm \ref{alg:2}, there exists $t_1 \in \{0,\ldots, T-1\}$, such that 
\begin{equation*}
\langle \nabla_{\boldsymbol{x}}G_{\epsilon, \tau, J}(\boldsymbol{x}_{t_1}, \boldsymbol{y}_K(\boldsymbol{x}_{t_1})), \boldsymbol{x}- \boldsymbol{x}_{t_1} \rangle  \ge -\sigma, ~ \forall \boldsymbol{x}\in \mathcal{X}.
\end{equation*}

For any $\boldsymbol{x}_t$, $\boldsymbol{y}_K(\boldsymbol{x}_t)$, with $t\in \{0, \ldots, T-1\}$, by the definition of $G_{\epsilon, \tau, J}(\boldsymbol{x}, \boldsymbol{y})$ in (\ref{eq16}) and the definition of $\boldsymbol{a}_t$ in (\ref{eq22}), we have 
\begin{equation}  \label{eq65}  \tag{65}
\nabla_{\boldsymbol{x}} G_{\epsilon, \tau, J}(\boldsymbol{x}_t, \boldsymbol{y}_K(\boldsymbol{x}_t)) = \boldsymbol{a}_t + \boldsymbol{b}
\end{equation}
where 
\begin{equation}  \label{eq66}  \tag{66}
\boldsymbol{b}= \frac{\tau}{f_J(\boldsymbol{x}_t) + \epsilon - f(\boldsymbol{x}_t, \boldsymbol{y}_K(\boldsymbol{x}_t))} (\nabla f_J(\boldsymbol{x}_t) - \nabla_{\boldsymbol{x}}f(\boldsymbol{x}_t, \boldsymbol{y}_J(\boldsymbol{x}_t))).
\end{equation}
Then for any $\boldsymbol{x}\in \mathcal{X}$, we have 
\begin{align}  \label{eq67} \tag{67}
\langle \nabla_{\boldsymbol{x}} G_{\epsilon, \tau, J}(\boldsymbol{x}_t, \boldsymbol{y}_K(\boldsymbol{x}_t)), \boldsymbol{x}-\boldsymbol{x}_t \rangle &\overset{(i)}{=}\langle \boldsymbol{a}_t, \boldsymbol{x}-\boldsymbol{x}_t\rangle + \langle \boldsymbol{b}, \boldsymbol{x}- \boldsymbol{x}_t  \rangle    \\
& = \langle \boldsymbol{a}_t, \boldsymbol{x}-\boldsymbol{x}_{t+1} \rangle +  \langle \boldsymbol{a}_t, \boldsymbol{x}_{t+1} - \boldsymbol{x}_t\rangle + \langle \boldsymbol{b}, \boldsymbol{x} - \boldsymbol{x}_t \rangle    \nonumber
\end{align}
where $(i)$ uses the formula in (\ref{eq65}).

In the following, we first bound the three terms $\langle \boldsymbol{a}_t, \boldsymbol{x}- \boldsymbol{x}_{t+1}\rangle$, $\boldsymbol{a}_t$, and $\boldsymbol{b}$ in (\ref{eq67}).

Notice that $\boldsymbol{x}_{t+1} = \text{proj}_{\mathcal{X}}(\boldsymbol{x}_t - \eta \boldsymbol{a}_t)$(see line 12 in Algorithm \ref{alg:2}). By the definition of $\text{proj}_{\mathcal{X}}(\cdot)$, we have 
\begin{equation*}
\langle \boldsymbol{x}_t - \eta \boldsymbol{a}_t - \boldsymbol{x}_{t+1}, \boldsymbol{x}- \boldsymbol{x}_{t+1}\rangle  \le 0, \qquad  \forall \boldsymbol{x}\in \mathcal{X},
\end{equation*}
i.e., 
\begin{equation}   \label{eq68} \tag{68}
\langle \boldsymbol{a}_t, \boldsymbol{x}- \boldsymbol{x}_{t+1}\rangle \ge \frac{1}{\eta}\langle \boldsymbol{x}_t - \boldsymbol{x}_{t+1}, \boldsymbol{x}- \boldsymbol{x}_{t+1}\rangle
\end{equation}
for any $\boldsymbol{x}\in \mathcal{X}$. 

By the definition of $\boldsymbol{a}_t$ in (\ref{eq22}), we have 
\begin{align}  \label{eq69}    \nonumber
\|\boldsymbol{a}_t\| &\le \|\nabla_{\boldsymbol{x}} F(\boldsymbol{x}_t, \boldsymbol{y}_K(\boldsymbol{x}_t))\| +\frac{\tau}{f_J(\boldsymbol{x}_t) + \epsilon - f(\boldsymbol{x}_t, \boldsymbol{y}_K(\boldsymbol{x}_t))} \|\nabla_{\boldsymbol{x}} f(\boldsymbol{x}_t, \boldsymbol{y}_J(\boldsymbol{x}_t)) - \nabla_{\boldsymbol{x}}f(\boldsymbol{x}_t, \boldsymbol{y}_K(\boldsymbol{x}_t))\|  \\   \tag{69}
& \overset{(i)}{\le} \|\nabla_{\boldsymbol{x}} F(\boldsymbol{x}_t, \boldsymbol{y}_K(\boldsymbol{x}_t))\| + \frac{2\tau}{c}\big(\|\nabla_{\boldsymbol{x}} f(\boldsymbol{x}_t, \boldsymbol{y}_J(\boldsymbol{x}_t))\| + \|\nabla_{\boldsymbol{x}} f(\boldsymbol{x}_t, \boldsymbol{y}_K(\boldsymbol{x}_t))\|\big)  \\   \nonumber
& \overset{(ii)}{\le} h_0 + \frac{4\tau}{c}L_0 \overset{(iii)}{\le} h_0 + \frac{4}{c}L_0   
\end{align} 
where $(i)$ uses the fact that $\boldsymbol{y}_K(\boldsymbol{x}_t) \in \mathcal{Y}_J(\boldsymbol{x}_t)$, $(ii)$ follows from Assumption \ref{assum:3}, $(iii)$ follows from the fact that $ \tau \in (0, 1)$.

For $\boldsymbol{b}$ in (\ref{eq66}), we have 
\begin{align}  \label{eq70}  \nonumber
\|\boldsymbol{b}\| &\le \frac{2\tau}{c}\|\nabla f_J(\boldsymbol{x}_t) - \nabla_{\boldsymbol{x}}f(\boldsymbol{x}_t, \boldsymbol{y}_J(\boldsymbol{x}_t))\| \\   \nonumber
& \overset{(i)}{=}\frac{2\tau}{c}\|(\nabla \boldsymbol{y}_J(\boldsymbol{x}_t))^\top \nabla_{\boldsymbol{y}}f(\boldsymbol{x}_t, \boldsymbol{y}_J(\boldsymbol{x}_t))\|  \\ \tag{70}
& \overset{(ii)}{\le} \frac{2\tau}{c}L_0 M_0(J) \overset{(iii)}{\le}\frac{\sigma^2}{18H\bar{l}(J)} \overset{(iv)}{\le} \frac{\sigma}{6H}
\end{align}
where $(i)$ uses the definition of $f_J(\boldsymbol{x})$ in (\ref{eq13}), $(ii)$ follows from Assumption \ref{assum:3} and Lemma \ref{lemma:4}, $(iii)$ uses the definition of $M_0(J)$ in Lemma \ref{lemma:4} and the choice of $\tau$ in the condition of Theorem \ref{tho:3}, $(iv)$ uses the definition of $\bar{l}(J)$ in Theorem \ref{tho:3} and the fact that $\sigma\in (0, 1)$.

Combining (\ref{eq67}), (\ref{eq68}), (\ref{eq69}) with (\ref{eq70}), we know that for any $\boldsymbol{x}\in \mathcal{X}$, 
\begin{align}   \label{eq71}  \tag{71}
\langle  \nabla_{\boldsymbol{x}}G_{\epsilon, \tau, J}(\boldsymbol{x}_t, \boldsymbol{y}_K(\boldsymbol{x}_t)), \boldsymbol{x}-\boldsymbol{x}_t \rangle  \overset{(i)}{\ge} -\frac{2H}{\eta}\|\boldsymbol{x}_t - \boldsymbol{x}_{t+1}\| - \bigg(h_0 + \frac{4}{c}L_0\bigg)\|\boldsymbol{x}_t - \boldsymbol{x}_{t+1}\| - \frac{\sigma}{3}
\end{align}
where $(i)$ uses Assumption \ref{assum:1}.

Define
\begin{equation}  \label{eq72}  \tag{72}
\boldsymbol{g}_t = \max\limits_{\boldsymbol{x}\in \mathcal{X}}-\langle \nabla_{\boldsymbol{x}} G_{\epsilon, \tau, J}(\boldsymbol{x}_t, \boldsymbol{y}_K(\boldsymbol{x}_t)), \boldsymbol{x}-\boldsymbol{x}_t\rangle
\end{equation}
then from (\ref{eq71}), we have 
\begin{equation*}  
\boldsymbol{g}_t \le \bigg(\frac{2H}{\eta} + h_0 + \frac{4}{c}L_0\bigg)\|\boldsymbol{x}_t - \boldsymbol{x}_{t+1}\| + \frac{\sigma}{3}
\end{equation*}
and thus
\begin{equation}  \label{eq73} \tag{73}
\|\boldsymbol{x}_t - \boldsymbol{x}_{t+1}\|  \ge \frac{\boldsymbol{g}_t - {\sigma}/{3}}{{2H}/{\eta} + h_0 + 4/c L_0}.
\end{equation}

Since $\varphi_{\epsilon, \tau, J}(\boldsymbol{x})$ is $L_{\varphi}(J)$ smooth(see Lemma \ref{lemma:7}), we have 
\begin{align*}
\varphi_{\epsilon, \tau, J}(\boldsymbol{x}_{t+1}) & \le \varphi_{\epsilon, \tau, J}(\boldsymbol{x}_t) + \langle \nabla \varphi_{\epsilon, \tau, J}(\boldsymbol{x}_t), \boldsymbol{x}_{t+1} - \boldsymbol{x}_t \rangle  + \frac{L_{\varphi}(J)}{2}\|\boldsymbol{x}_t - \boldsymbol{x}_{t+1}\|^2   \\
& =\varphi_{\epsilon, \tau, J}(\boldsymbol{x}_t) + \langle \nabla \varphi_{\epsilon, \tau, J}(\boldsymbol{x}_t) - \boldsymbol{a}_t, \boldsymbol{x}_{t+1}-\boldsymbol{x}_t\rangle + \langle \boldsymbol{a}_t, \boldsymbol{x}_{t+1}-\boldsymbol{x}_t \rangle + \frac{L_{\varphi}(J)}{2}\|\boldsymbol{x}_{t} - \boldsymbol{x}_{t+1}\|^2  \\
& \overset{(i)}{\le} \varphi_{\epsilon, \tau, J}(\boldsymbol{x}_t) + \langle \nabla \varphi_{\epsilon, \tau, J}(\boldsymbol{x}_t) - \boldsymbol{a}_t, \boldsymbol{x}_{t+1} - \boldsymbol{x}_t\rangle - \frac{1}{\eta}\|\boldsymbol{x}_{t}- \boldsymbol{x}_{t+1}\|^2 + \frac{L_{\varphi}(J)}{2}\|\boldsymbol{x}_t - \boldsymbol{x}_{t+1}\|^2\\
& \overset{(ii)}{\le}\varphi_{\epsilon, \tau, J}(\boldsymbol{x}_t) + 2H\|\nabla \varphi_{\epsilon, \tau, J}(\boldsymbol{x}_t) - \boldsymbol{a}_t\| - \bigg(\frac{1}{\eta}- \frac{L_{\varphi}(J)}{2}\bigg)\|\boldsymbol{x}_t - \boldsymbol{x}_{t+1}\|^2
\end{align*}
where $(i)$ uses the inequality in (\ref{eq68}), $(ii)$ follows from Assumption \ref{assum:1}. Then, we have 
\begin{equation}  \label{eq74} \tag{74}
\bigg(\frac{1}{\eta} - \frac{L_{\varphi}(J)}{2}\bigg)\|\boldsymbol{x}_t - \boldsymbol{x}_{t+1}\|^2 \le \varphi_{\epsilon, \tau, J}(\boldsymbol{x}_t)- \varphi_{\epsilon, \tau, J}(\boldsymbol{x}_{t+1}) + 2 H\|\nabla \varphi_{\epsilon, \tau, J}(\boldsymbol{x}_t) - \boldsymbol{a}_t\|.
\end{equation}

In the following, we consider two cases:
\begin{enumerate}
\item[(i)] If there exists $t \in \{0, \ldots, T-1\}$ such that $\boldsymbol{g}_t \le \sigma/3$, by setting $t_1 = t$, we have 
\begin{equation*}
\langle \nabla_{\boldsymbol{x}}G_{\epsilon, \tau, J}(\boldsymbol{x}_{t_1}, \boldsymbol{y}_K(\boldsymbol{x}_{t_1})), \boldsymbol{x}- \boldsymbol{x}_{t_1}\rangle \ge -\sigma, \qquad \forall \boldsymbol{x}\in \mathcal{X},
\end{equation*}
which can be obtained by using the definition of $\boldsymbol{g}_t$ in (\ref{eq72}), and it is proved.

\item[(ii)] If $\boldsymbol{g}_t >  \sigma/3$ for any $t \in \{0, \ldots, T-1\}$, then for any $t\in \{0, \ldots, T-1\}$, by combining (\ref{eq73}) with (\ref{eq74}), we have
\begin{equation*}
\bigg(\frac{1}{\eta} - \frac{L_{\varphi}(J)}{2}\bigg) \frac{(\boldsymbol{g}_t - \sigma/3)^2}{(2H/\eta + h_0 + 4/c L_0)^2} \le \varphi_{\epsilon, \tau, J}(\boldsymbol{x}_{t})- \varphi_{\epsilon, \tau, J}(\boldsymbol{x}_{t+1}) + 2 H \|\nabla \varphi_{\epsilon, \tau, J}(\boldsymbol{x}_t) - \boldsymbol{a}_t\| 
\end{equation*}
and thus 
\begin{equation}  \label{eq75} \tag{75}
\bigg(\boldsymbol{g}_t - \frac{\sigma}{3}\bigg)^2 \overset{(i)}{\le }\bar{l}(J)(\varphi_{\epsilon, \tau, J}(\boldsymbol{x}_t)- \varphi_{\epsilon, \tau, J}(\boldsymbol{x}_{t+1})) + 2H\bar{l}(J)\|\nabla \varphi_{\epsilon, \tau, J}(\boldsymbol{x}_t) - \boldsymbol{a}_t\|
\end{equation}
where $(i)$ uses the choice of $\eta$ and the definition of $\bar{l}(J)$ in Theorem \ref{tho:3}. 

Moreover, by the definition of $\nabla \varphi_{\epsilon, \tau, J}(\boldsymbol{x}_t)$ in Lemma \ref{lemma:7} and the formula in (\ref{eq65}), we have 
\begin{align} \label{eq76} \nonumber
\|\nabla \varphi_{\epsilon, \tau, J}(\boldsymbol{x}_t) - \boldsymbol{a}_t\| &\le \|\nabla_{\boldsymbol{x}} G_{\epsilon, \tau, J}(\boldsymbol{x}_t, \boldsymbol{y}^*(\boldsymbol{x}_t)) - \nabla_{\boldsymbol{x}}G_{\epsilon, \tau, J}(\boldsymbol{x}_t, \boldsymbol{y}_K(\boldsymbol{x}_t))\| \\  \nonumber
&+ \bigg\|\frac{\tau}{f_J(\boldsymbol{x}_t) +\epsilon - f(\boldsymbol{x}_t, \boldsymbol{y}_K(\boldsymbol{x}_t))}(\nabla f_J(\boldsymbol{x}_t) - \nabla_{\boldsymbol{x}} f(\boldsymbol{x}_t, \boldsymbol{y}_J(\boldsymbol{x}_t)))\bigg\|  \\   \nonumber
&\overset{(i)}{\le}L_{11}(J)\|\boldsymbol{y}_K(\boldsymbol{x}_t) - \boldsymbol{y}^*(\boldsymbol{x}_t)\| + \frac{2\tau}{c}\bigg\|(\nabla \boldsymbol{y}_J(\boldsymbol{x}_t))^\top \nabla_{\boldsymbol{y}}f(\boldsymbol{x}_t, \boldsymbol{y}_J(\boldsymbol{x}_t))\bigg\|  \\  \tag{76}
& \overset{(ii)}{\le} L_{11}(J)\bigg(1 - \frac{\mu}{L_G}\bigg)^{\frac{K}{2}}\|\boldsymbol{y}_0(\boldsymbol{x}_t) - \boldsymbol{y}^*(\boldsymbol{x}_t)\| + \frac{2\tau}{c}M_0(J)L_0  \\
& \overset{(iii)}{\le} 2ML_{11}(J)\bigg(1 - \frac{\mu}{L_G}\bigg)^{\frac{K}{2}}+ \frac{2\tau}{c}M_0(J)L_0  \overset{(iv)}{\le} \frac{\sigma^2}{9H\bar{l}(J)}  \nonumber
\end{align}
where $(i)$ follows from Lemma \ref{lemma:5}, the fact that $\boldsymbol{y}_K(\boldsymbol{x}_t)\in \mathcal{Y}_J(\boldsymbol{x}_t)$, and the definition of $f_J(\boldsymbol{x})$ in (13), $(ii)$ follows from Lemma \ref{lemma:6}, Assumption \ref{assum:3}, and Lemma \ref{lemma:4}, $(iii)$ follows from Assumption \ref{assum:1}, $(iv)$ follows from the choice of $K$ and $\tau$ and the definiction of $\bar{l}(J)$ in Theorem \ref{tho:3}.

Combining (\ref{eq75}) with (\ref{eq76}), we have 
\begin{equation*}
\bigg(\boldsymbol{g}_t - \frac{\sigma}{3}\bigg)^2 \le \bar{l}(J)(\varphi_{\epsilon, \tau, J}(\boldsymbol{x}_t) - \varphi_{\epsilon, \tau, J}(\boldsymbol{x}_{t+1})) + \frac{2\sigma^2}{9}
\end{equation*}
and thus
\begin{equation*}
\frac{1}{T}\sum\limits_{t=0}^{T-1}\bigg(\boldsymbol{g}_t - \frac{\sigma}{3}\bigg)^2 \le \frac{\bar{l}(J)}{T}\bigg(\varphi_{\epsilon, \tau, J}(\boldsymbol{x}_0) - \min\limits_{\boldsymbol{x}\in \mathcal{X}}\varphi_{\epsilon, \tau, J}(\boldsymbol{x})\bigg) + \frac{2\sigma^2}{9} \overset{(i)}{\le}\frac{4\sigma^2}{9}
\end{equation*}
where $(i)$ follows from the choice of $T$ in Theorem \ref{tho:3} and the fact that $|\varphi_{\epsilon, \tau, J}(\boldsymbol{x})| \le M_3$ for any $\boldsymbol{x}\in \mathcal{X}$(see Lemma \ref{lemma:8}).

Therefore, there exists $t_1 \in \{0,\ldots, T-1\}$ such that 
\begin{equation*}
\bigg(\boldsymbol{g}_{t_1} - \frac{\sigma}{3}\bigg)^2 \le \frac{4\sigma^2}{9}
\end{equation*}
and as a result
\begin{equation*}
\langle \nabla_{\boldsymbol{x}}G_{\epsilon, \tau, J}(\boldsymbol{x}_{t_1}, \boldsymbol{y}_K(\boldsymbol{x}_{t_1})), \boldsymbol{x}- \boldsymbol{x}_{t_1}\rangle \ge -\sigma, \qquad \forall \boldsymbol{x}\in \mathcal{X}
\end{equation*}
which can be obtained by using the definition of $\boldsymbol{g}_{t_1}$ in (\ref{eq72}).
\end{enumerate}

Based on the above discussion, we know that there always exists $t_1 \in \{0, \ldots, T-1\}$ such that 
\begin{equation*}
\langle  \nabla_{\boldsymbol{x}} G_{\epsilon, \tau, J}(\boldsymbol{x}_{t_1}, \boldsymbol{y}_K(\boldsymbol{x}_{t_1})), \boldsymbol{x}- \boldsymbol{x}_{t_1}\rangle \ge -\sigma, \qquad \forall \boldsymbol{x}\in \mathcal{X}.
\end{equation*}
Furthermore for the above $t_1$, we also have $\|\nabla_{\boldsymbol{y}} G_{\epsilon, \tau, J}(\boldsymbol{x}_{t_1}, \boldsymbol{y}_K(\boldsymbol{x}_{t_1}))\| \le \sigma$, which follows from (\ref{eq64}). Then, $(\boldsymbol{x}_{t_1}, \boldsymbol{y}_K(\boldsymbol{x}_{t_1}))$ is a $\sigma$-FNE point of problem (\ref{eq15})(see the definition of $\sigma$-FNE point in Definition \ref{def:3}). The proof is complete.
\end{proof}

\subsection{Proof of Theorem \ref{tho:4}}
In the following, we prove Theorem \ref{tho:4}.

\begin{proof}
Since $(\boldsymbol{x}_l, \boldsymbol{y}_l)$ is the $\sigma_l$-FNE point of problem (\ref{eq15}) with $\tau = \tau_l$, $J=J_l$, from Definition \ref{def:3}, we  know that for any $\boldsymbol{x}\in \mathcal{X}$,
\begin{equation*}
\langle \nabla_{\boldsymbol{x}}G_{\epsilon, \tau_l, J_l}(\boldsymbol{x}_l, \boldsymbol{y}_l), \boldsymbol{x}-\boldsymbol{x}_l\rangle  \ge -\sigma_l, \qquad 
\|\nabla_{\boldsymbol{y}}G_{\epsilon, \tau_l, J_l}(\boldsymbol{x}_l, \boldsymbol{y}_l)\|  \le \sigma_l.
\end{equation*}
That is to say, for any $\boldsymbol{x}\in \mathcal{X}$, we have 
\begin{align}  \label{eq77}  \tag{77}
&\bigg\langle \nabla_{\boldsymbol{x}} F(\boldsymbol{x}_l, \boldsymbol{y}_l) + \frac{\tau_l}{f_{J_l}(\boldsymbol{x}_l) + \epsilon - f(\boldsymbol{x}_l, \boldsymbol{y}_l)}(\nabla f_{J_l}(\boldsymbol{x}_l) - \nabla_{\boldsymbol{x}}f(\boldsymbol{x}_l, \boldsymbol{y}_l)), \boldsymbol{x}- \boldsymbol{x}_l \bigg\rangle \ge -\sigma_{l},\\  \nonumber
& \bigg\|\nabla_{\boldsymbol{y}} F(\boldsymbol{x}_l, \boldsymbol{y}_l) - \frac{\tau_l}{f_{J_l}(\boldsymbol{x}_l) + \epsilon - f(\boldsymbol{x}_l, \boldsymbol{y}_l)}\nabla_{\boldsymbol{y}}f(\boldsymbol{x}_l, \boldsymbol{y}_l)\bigg\| \le \sigma_l
\end{align}
where we use the definition of $G_{\epsilon, \tau, J}(\boldsymbol{x}, \boldsymbol{y})$ in (\ref{eq16}).

In the following, we first show that the following two terms 
\begin{equation*}
\frac{\tau_l}{f_{J_l}(\boldsymbol{x}_l) + \epsilon - f(\boldsymbol{x}_l, \boldsymbol{y}_l)} (\nabla f_{J_l}(\boldsymbol{x}_l) - \nabla_{\boldsymbol{x}}f(\boldsymbol{x}_l, \boldsymbol{y}_l)), \qquad \frac{\tau_l}{f_{J_l}(\boldsymbol{x}_l) + \epsilon - f(\boldsymbol{x}_l, \boldsymbol{y}_l)} \nabla_{\boldsymbol{y}}f(\boldsymbol{x}_l, \boldsymbol{y}_l)
\end{equation*}
converge to zero as $l$ tends to infinity. 

In fact, On one hand, 
\begin{align*}
& \bigg\|\frac{\tau_l}{f_{J_l}(\boldsymbol{x}_l) + \epsilon - f(\boldsymbol{x}_l, \boldsymbol{y}_l)} (\nabla f_{J_l}(\boldsymbol{x}_l) - \nabla_{\boldsymbol{x}}f(\boldsymbol{x}_l, \boldsymbol{y}_l))\bigg\|   \\
& \overset{(i)}{\le} \frac{2\tau_l}{c}\bigg(\|\nabla_{\boldsymbol{x}}f(\boldsymbol{x}_l, \boldsymbol{y}_{J_l}(\boldsymbol{x}_l)) - \nabla_{\boldsymbol{x}} f(\boldsymbol{x}_l, \boldsymbol{y}_l)\| + \|(\nabla \boldsymbol{y}_{J_l}(\boldsymbol{x}_l))^\top \nabla_{\boldsymbol{y}} f(\boldsymbol{x}_l, \boldsymbol{y}_{J_l}(\boldsymbol{x}_l))\|\bigg) \\
& \overset{(ii)}{\le} \frac{2\tau_l}{c}\bigg(L_1 \|\boldsymbol{y}_{J_l}(\boldsymbol{x}_l) - \boldsymbol{y}_l\| + L_0 M_0(J_l)\bigg) \overset{(iii)}{\le}\frac{4L_1M}{c}\tau_l +\frac{2L_0}{c}J_l \tau_l \overset{(iv)}{\le} \frac{2}{c}\sigma_{l}^2(2L_1M + L_0)
\end{align*}
where $(i)$ uses the fact that $\boldsymbol{y}_l \in \mathcal{Y}_{J_l}(\boldsymbol{x}_l)$, the definition of $\mathcal{Y}_{J_l}(\boldsymbol{x}_l)$ and the definition of $f_{J_l}(\boldsymbol{x}_l)$ in (\ref{eq13}), $(ii)$ follows from Assumption \ref{assum:3} and Lemma \ref{lemma:4}, $(iii)$ follows from Assumption \ref{assum:1} and the definition of $M_0(J)$ in Lemma \ref{lemma:4}, $(iv)$ follows from the choice of $\tau_l$ and $c$ in Assumption \ref{assum:7}. Then, we have 
\begin{equation} \label{eq78} \tag{78}
\frac{\tau_l}{f_{J_l}(\boldsymbol{x}_l) + \epsilon - f(\boldsymbol{x}_l, \boldsymbol{y}_l)}(\nabla f_{J_l}(\boldsymbol{x}_l) - \nabla_{\boldsymbol{x}}f(\boldsymbol{x}_l, \boldsymbol{y}_l)) \rightarrow 0  
\end{equation} 
as $l\rightarrow \infty$ since $\sigma_l \rightarrow 0$. 

On the other hand, 
\begin{equation*}
\bigg\|\frac{\tau_l}{f_{J_l}(\boldsymbol{x}_l) + \epsilon - f(\boldsymbol{x}_l, \boldsymbol{y}_l)} \nabla_{\boldsymbol{y}} f(\boldsymbol{x}_l, \boldsymbol{y}_l)\bigg\| \overset{(i)}{\le} \frac{2\tau_l}{c} L_0 \overset{(ii)}{\le} \frac{2L_0}{c}\sigma_l^2
\end{equation*}
where $(i)$ follows from the fact that $\boldsymbol{y}_l \in \mathcal{Y}_{J_l}(\boldsymbol{x}_l)$, and Assumption \ref{assum:3}, $(ii)$ follows from the choice of $\tau_l$ in Theorem \ref{tho:4} and the fact that $c\le L_0H$(see Assumption \ref{assum:7}). Therefore, we have 
\begin{equation}  \label{eq79}  \tag{79}
\frac{\tau_l}{f_{J_l}(\boldsymbol{x}_l) + \epsilon - f(\boldsymbol{x}_l, \boldsymbol{y}_l)} \nabla_{\boldsymbol{y}}f(\boldsymbol{x}_l, \boldsymbol{y}_l) \rightarrow 0
\end{equation}
as $l\rightarrow \infty$ since $\sigma_l \rightarrow 0$.

Let $(\bar{\boldsymbol{x}}, \bar{\boldsymbol{y}})$ be the accumulation point of sequence $\{(\boldsymbol{x}_l, \boldsymbol{y}_l)\}$ with $\bar{\boldsymbol{x}}$ being the interior point of $\mathcal{X}$. Without loss of generality, we let $\boldsymbol{x}_l \rightarrow \bar{\boldsymbol{x}}$, $\boldsymbol{y}_l \rightarrow \bar{\boldsymbol{y}}$ as $l$ tends to infinity. In the following, we prove that $\bar{\boldsymbol{x}}$ is a stationary point of problem PBP$\epsilon$.

Firstly, combining (\ref{eq77}), (\ref{eq78}), with (\ref{eq79}), for any $\boldsymbol{x}\in \mathcal{X}$, we have 
\begin{equation}  \label{eq80} \tag{80}
\langle \nabla_{\boldsymbol{x}} F(\bar{\boldsymbol{x}}, \bar{\boldsymbol{y}}), \boldsymbol{x}- \bar{\boldsymbol{x}}\rangle \ge 0 
\end{equation} 
and 
\begin{equation}   \label{eq81}  \tag{81}
\nabla_{\boldsymbol{y}} F(\bar{\boldsymbol{x}}, \bar{\boldsymbol{y}}) = 0
\end{equation}
which can be obtained by letting $l$ in (\ref{eq77}) tends to infinity. Since $\bar{\boldsymbol{x}}$ is an interior point of $\mathcal{X}$, from (\ref{eq80}), we have 
\begin{equation}  \label{eq82}  \tag{82}
\nabla_{\boldsymbol{x}}F(\bar{\boldsymbol{x}}, \bar{\boldsymbol{y}}) = 0.
\end{equation}

Then, we prove that $\bar{\boldsymbol{y}} \in \mathcal{R}_{\epsilon}(\bar{\boldsymbol{x}})$. From (\ref{eq81}), we know that $\bar{\boldsymbol{y}} =  \operatorname {arg\,max}_{\boldsymbol{y}\in \mathcal{Y}} \, F(\bar{\boldsymbol{x}}, \boldsymbol{y})$ since $F(\bar{\boldsymbol{x}}, \cdot)$ is strongly concave w.r.t. $\boldsymbol{y}$ on $\mathcal{Y}$(see Assumption \ref{assum:2}). Thus, to prove that $\bar{y}\in \mathcal{R}_{\epsilon}(\bar{\boldsymbol{x}})$, we only need to prove that $\bar{\boldsymbol{y}}\in \mathcal{S}_{\epsilon}(\bar{\boldsymbol{x}})$(see the definition of $\mathcal{R}_{\epsilon}(\bar{\boldsymbol{x}})$ in (\ref{eq24})).

Notice that for each $(\boldsymbol{x}_l, \boldsymbol{y}_l)$, we have 
\begin{equation}  \label{eq83}  \tag{83}
f(\boldsymbol{x}_l, \boldsymbol{y}_l) \le f_{J_l}(\boldsymbol{x}_l) +\epsilon
\end{equation}
since $(\boldsymbol{x}_l, \boldsymbol{y}_l)$ must be a feasible point of problem (\ref{eq15}) with $\tau = \tau_l$, $J=J_l$. 

Moreover, it can be proved that 
\begin{equation}  \label{eq84} \tag{84}
f_{J_l}(\boldsymbol{x}_l) \rightarrow f^*(\bar{\boldsymbol{x}})
\end{equation}
as $l \rightarrow \infty$. In fact, for each $l$, let $\boldsymbol{y}^* \in \operatorname {arg\,min}_{\boldsymbol{y}\in \mathcal{Y}} \, f(\boldsymbol{x}_l, \boldsymbol{y})$, we have
\begin{align*}
|f_{J_l}(\boldsymbol{x}_l) - f^*(\bar{\boldsymbol{x}})| &\le |f_{J_l}(\boldsymbol{x}_l) - f^*(\boldsymbol{x}_l)| + |f^*(\boldsymbol{x}_l) - f^*(\bar{\boldsymbol{x}})|\\
&\overset{(i)}{\le}\frac{1}{2J_l}L_1\|\boldsymbol{y}_{J_l}(\boldsymbol{x}_l) - \boldsymbol{y}^*\|^2 + |f^*(\boldsymbol{x}_l) - f^*(\bar{\boldsymbol{x}})|  \\
& \overset{(ii)}{\le}2\frac{1}{J_l}L_1 M^2 +|f^*(\boldsymbol{x}_l) - f^*(\bar{\boldsymbol{x}})|
\end{align*}
where $(i)$ follows from Lemma \ref{lemma:2}, (ii) follows from Assumption \ref{assum:1}. On one hand, for any $\epsilon_1 > 0$, there exists positive integer $l_1$ such that for any $l \ge l_1$, we have $|f^*(\boldsymbol{x}_l) - f^*(\bar{\boldsymbol{x}})| \le \epsilon_1$, which can be obtained by noticing that $f^*(\boldsymbol{x})$ is continuous on $\mathcal{X}$(see Proposition \ref{pro:1}), and using the fact that $\boldsymbol{x}_l\rightarrow \bar{\boldsymbol{x}}$. On the other hand, for any $\epsilon_1 > 0$, there exists positive integer $l_2$ such that for any $l \ge l_2$, we have $2/J_lL_1M^2 \le \epsilon_1$, which can be obtained by noticing that $J_l \rightarrow \infty$. In conclusion, for any $\epsilon_1 > 0$, there existis $\bar{l} = \max\{l_1, l_2\}$ such that for any $l \ge \bar{l}$, we have $|f_{J_l}(\boldsymbol{x}_l) - f^*(\bar{\boldsymbol{x}})|\le 2\epsilon_1$, and thus $f_{J_l}(\boldsymbol{x}_l) \rightarrow f^*(\bar{\boldsymbol{x}})$ as $l$ tends to infinity. 

Combining (\ref{eq83}) with (\ref{eq84}), and let $l$ tends to infinity, we have $f(\bar{\boldsymbol{x}}, \bar{\boldsymbol{y}}) \le f^*(\bar{\boldsymbol{x}}) + \epsilon$. By the definition of $\mathcal{S}_{\epsilon}(\bar{\boldsymbol{x}})$, we know tht $\bar{\boldsymbol{y}} \in \mathcal{S}_{\epsilon}(\bar{\boldsymbol{x}})$ and thus $\bar{\boldsymbol{y}}\in \mathcal{R}_{\epsilon}(\bar{\boldsymbol{x}})$. 

Finally, from (\ref{eq81}), (\ref{eq82}), it is easy to know that $\bar{\boldsymbol{x}}$ is a stationary point of problem PBP$\epsilon$ in (\ref{eq4}) with $\lambda_1 = 1$, $\lambda_2 = 2$, $\lambda_3 = 0$, $I = 1$, $J=1$, $r_{11}=1$, $\sigma_{11}=0$, and $\boldsymbol{y}_1 = \bar{\boldsymbol{y}}$. Then the proof is complete.
\end{proof}

\section{Experimental Details of Synthetic Perturbed Pessimistic Bilevel Problems}\label{appendix:2}

In the experiment, $\epsilon$ is set to be 0.5, $c$ in Assumption \ref{assum:7} is set to be 0.25. For Algorithms 1 and 2, for each positive integer $l$, $\tau_l$, $J_l$, $T_l$, and $K_l$ are set to be $0.999^l$, $l$, $(1/0.999)^l$ and $2l$, respectively, and stepsizes $\alpha$, $\beta$, $\eta$ are set to be 0.1 and 1e-4, and $1/(l^3 + 0.1)$, respectively.

\section{Experimental Details of Generative Adversarial Networks}

\subsection{Synthetic Data}

For the experiments on the synthetic data, the noise samples are vectors of 256 independent and identically distributed Gaussian variables with mean zero and standard deviation of 1, and the number of noise samples is 512. Furthermore, the samples are fixed during the iterations. The generator network consists of a fully connected network with 2 hidden layers of size 128 with tanh activations followed by a linear projection to 2 dimensions. The discriminator network consists of a fully connected network with 2 hidden layers of size 128 with tanh activations followed by a output layer of size 1. 

For GAN, unrolled GAN, and PVFIM, we use Adam optimizers with learning rates 1e-3 and 1e-4 to optimize the parameter $\boldsymbol{x}$ of the generator and the parameter $\boldsymbol{y}$ of the discriminator, respectively. For both GAN and unrolled GAN, the parameter $\boldsymbol{y}$ of the discriminator is updated 6 times for each given parameter $\boldsymbol{x}$ of the generator. For PVFIM, we set $\epsilon$=1e-5. Furthermore, for each positive integer $l$, we set $\tau_l$, $J_l$, $T_l$, and $K_l$ in Algorithms 1 and 2 to be $0.999^l$, $3$, $1.002^l$, and $3$, respectively.

\subsection{Real-World Data}
For the MNIST dataset, the noise samples are vectors of 100 independent and identically distributed Gaussian variables with mean zero and standard deviation of 1. We sample 4000 images from MNIST dataset for training, and the number of noise samples is 4000. Furthermore, the samples are fixed during the iterations. The generator network consists of a fully connected network with hidden layer sizes to be 128, 256, 512, 1024. The discriminator network consists of a fully connected network with hidden layer sizes to be 512, 256.

For the CIFAR10 dataset, we sample 4000 images from CIFAR10 for training, and the number of noise samples is 4000. Furthermore, the samples are fixed during the iterations. The generator network is a 4 layer deconvolutional neural network, and the discriminator network is a 4 layer convolutional neural network. The number of units for the generator is $[256, 128, 64, 3]$, and the number of units for the discriminator is $[64, 128, 256, 1]$. 

For GAN, unrolled GAN, and PVFIM, we use Adam optimizers with learning rates 2e-4 and 2e-4 to optimize the parameter $\boldsymbol{x}$ of the generator and the parameter $\boldsymbol{y}$ of the discriminator, respectively. For both GAN and unrolled GAN, the parameter $\boldsymbol{y}$ of the discriminator is iterated 2 times for each given parameter $\boldsymbol{x}$ of the generator. For PVFIM, we set $\epsilon$=1e-5. Furthermore, for each positive integer $l$, we set $\tau_l$, $J_l$, $T_l$, and $K_l$ in Algorithms 1 and 2 to be $0.999^l$, $1$, $1.43^l$, and $1$, respectively.

\bibliographystyle{elsarticle-harv} 
\bibliography{mybible}

\end{document}